\def\NAT@spacechar{~}
\setlist[itemize]{topsep=0.2ex,itemsep=0ex,parsep=0.5ex}
\setlist[enumerate]{topsep=0.2ex,itemsep=0ex,parsep=0.5ex}
\theoremstyle{plain}
\newtheorem{thm}{Theorem}[section]
\newtheorem{lem}[thm]{Lemma}
\newtheorem{cor}[thm]{Corollary}
\theoremstyle{definition}
\newtheorem{example}{Example}
\crefname{lem}{Lemma}{Lemmas}
\crefname{thm}{Theorem}{Theorems}
\crefname{cor}{Corollary}{Corollaries}
\crefname{prop}{Proposition}{Propositions}
\crefname{conj}{Conjecture}{Conjectures}
\crefname{claim}{Claim}{Claims}
\crefname{example}{Example}{Examples}
\newcommand{\defn}[1]{\textcolor{purple}{\emph{#1}}}
\newcommand{\mathdefn}[1]{\textcolor{purple}{#1}}
\DeclarePairedDelimiter\ceil\lceil\rceil
\renewcommand{\geq}{\geqslant}
\renewcommand{\leq}{\leqslant}
\newcommand{\NN}{\mathbb{N}}
\newcommand{\RR}{\mathbb{R}}
\renewcommand{\v}{\textup{\textsf{v}}}
\newcommand{\e}{\textup{\textsf{e}}}
\renewcommand{\d}{\textup{\textsf{d}}}
\newcommand{\eps}{\varepsilon}
\newcommand{\cx}{c_{f}}
\newcommand{\mc}[1]{\mathcal{#1}}
\newcommand{\bb}[1]{\mathbb{#1}}
\newcommand{\bl}[1]{\boldsymbol{#1}}
\newcommand{\brm}[1]{\operatorname{#1}}
\newcommand{\fs}[2]{\left(\frac{#1}{#2}\right)}
\newcommand{\s}[1]{\left(#1\right)}
\newcommand{\ip}[2]{\left\langle #1, #2 \right\rangle}
\newcommand{\Vol}{\brm{Vol}}
\begin{document}
\begin{frontmatter}[classification=text]
%%%%%%%%%%%%%%%%%%%%	
\author[kevin]{Kevin Hendrey\thanks{Supported by the Institute for Basic Science (IBS-R029-C1).}}
\author[sergey]{Sergey Norin\thanks{Supported by  NSERC under Discovery Grant No. $2017-05010$.}}
\author[david]{David R. Wood\thanks{Supported by the Australian Research Council.}}
%%%%%%%%%%%%%%%%%%%%	
\begin{abstract}
The \defn{extremal function} $c(H)$ of a graph $H$ is the supremum of densities of graphs not containing $H$ as a minor, where the \defn{density} of a graph $G$ is the ratio of the number of edges to the number of vertices.  Myers and Thomason~(2005), Norin, Reed, Thomason and Wood~(2020), and Thomason and Wales~(2019) determined the asymptotic behaviour of $c(H)$ for  all polynomially dense graphs $H$, as well as almost all graphs $H$ of constant density. 
			
We explore the asymptotic behavior of the extremal function in the regime not covered by the above results, where in addition to having constant density the graph $H$ is in a graph class admitting strongly sublinear separators. We  establish asymptotically tight bounds in many cases. For example, we prove that for every planar graph $H$, 
$$c(H) = (1+o(1))\cdot\max\left\{\frac{|V(H)|}{2},|V(H)| - \alpha (H)\right\},$$
extending recent results of Haslegrave, Kim and Liu~(2020). We also show that an asymptotically tight bound on the extremal function of graphs in minor-closed families proposed by Haslegrave, Kim and Liu~(2020) is equivalent to a well studied open weakening of Hadwiger's conjecture. 
\end{abstract}
\end{frontmatter}

%%%%%%%%%%%%%%%%%%%%%%
\section{Introduction}
\label{s:intro}

A graph $H$ is a \defn{minor} of a graph $G$, written $H\preceq G$, if a graph isomorphic to $H$ can be obtained from a subgraph of $G$ by contracting edges. The maximum density of graphs $G$ not containing a given graph $H$ as a minor has been actively investigated within extremal graph theory.

To formalize this notion, let $\v(G)$ and $\e(G)$ be the number of  vertices and edges of a graph $G$, respectively, and let $\d(G):=\frac{\e(G)}{\v(G)}$ be the \defn{density} of a non-null graph $G$. Following \citet{MyeTho05}, for a  graph $H$ with $\v(H) \geq 2$ we define the \defn{extremal function}  $c(H)$ of $H$ as the supremum of $\d(G)$ taken over all non-null graphs $G$ not containing $H$ as a minor.  

Mader~\cite{Mader67} proved that $c(H)$ is finite for every graph $H$. The exact value has been determined for various small graphs $H$. For example, if $K_t$ is the complete graph on $t\leq 9$ vertices, then $c(K_t)=t-2$ (see \citep{Dirac64,Mader68,Jorgensen94,ST06}); and if $P$ is the Petersen graph, then $c(P)=5$ (see \citep{HW18a}). Here we focus on asymptotic results for classes of graphs $H$.

The asymptotic behaviour of $c(K_t)$ was studied in~\cite{Kostochka82,Kostochka84,Thomason84}, and  was determined precisely by Thomason~\cite{Thomason01}, who showed that
\begin{equation}\label{e:Thomason}
c(K_t)=(\lambda+o(1))t\sqrt{\log{t}},
\end{equation}
where
$$\lambda = \max_{\alpha >0} \frac{1-e^{-\alpha}}{2\sqrt{\alpha}}=0.319\ldots.$$
Improving on results of~\cite{MyeTho05,ReeWoo15}, \citet{ThoWal19} recently  extended the upper bound from (\ref{e:Thomason}) to general graphs, by showing that for every graph $H$, 
\begin{equation}\label{e:Thomason2} 
c(H) \leq (\lambda + o_{\d(H)}(1))\,\v(H)\sqrt{\log \d(H)}.
\end{equation}

The inequality (\ref{e:Thomason2}) is tight in many regimes.  \citet{MyeTho05} showed that  it is tight (up to the choice of the error term) for almost all graphs with $n$ vertices and $n^{1 + \eps}$ edges for every fixed $\eps >0$, and for all regular graphs with these parameters. They also gave an explicit asymptotic formula for $c(H)$ for all such polynomially dense graphs.

Norin, Reed, Thomason and Wood~\cite{NRTW20} recently  showed that (\ref{e:Thomason2}) is also tight for almost all graphs of constant  density; that is, for almost all graphs $H$ with $\d(H)=d$, 
\begin{equation}
\label{e:Thomason3} 
c(H) \geq (\lambda - o_{d}(1))\,\v(H)\sqrt{\log d}.
\end{equation} 

\medskip
This paper investigates graph classes for which the extremal function behaves qualitatively differently, such as for the following concrete families:
\begin{itemize}
	\item Chudnovsky, Reed and Seymour~\cite{ChuReeSey11} proved that $c(K_{2,t})=\frac{t+1}{2}$ for all $t \geq 2$;
	\item Kostochka and Prince~\cite{KosPri10} proved that $c(K_{3,t})=\frac{t+3}{2}$ for all $t \geq 6300$;
	\item More generally, Myers~\cite{Myers03} considered the asymptotic behaviour of $c(K_{s,t})$ for fixed $s$ and $t$ and conjectured that $c(K_{s,t}) \leq c_s t$ for some constant independent on $t$. K\"uhn and Osthus~\cite{KuhOst05} and Kostochka and Prince~\cite{KosPri08}  independently proved this conjecture by showing that $c(K_{s,t}) =(\frac12+o_{s}(1))t$.
	\item Cs\'{o}ka et al.~\cite{CLNWY} proved that if $H$ is a disjoint union of cycles, then \begin{equation*}
	\label{e:cycledensity}
	c(H) \leq \frac{\v(H)+\brm{comp}(H)}{2}-1,
	\end{equation*}
	which is tight whenever every component of $H$ is an odd cycle.
\end{itemize}

All of the above families are structurally sparse and the extremal function is linear in the number of vertices. (In fact, $c(H)< (1+o(1))\,\v(H)$ for all these graphs.)

We show that this property generalizes to the large and well studied class of sparse graph families defined as follows. A graph family is \defn{monotone} if it is closed under taking subgraphs. A \defn{separation} of a graph $G$ is a pair $(A_1,A_2)$ of subsets of $V(G)$, such that $G=G[A_1]\cup G[A_2]$ and $A_1\setminus A_2\neq\emptyset$ and $A_2\setminus A_1\neq\emptyset$. A separation $(A_1,A_2)$ has \defn{order} $|A_1\cap A_2|$. A separation $(A_1,A_2)$ is \defn{balanced} if $|A_1|,|A_2|\geq \frac{\v(G)}{3}$. A graph family $\mc{F}$ admits \defn{strongly sublinear separators} (written $\mc{F}$ is \defn{s.s.s.}, for brevity) if $\mc{F}$ is monotone, and  there exists $\beta<1$ and $c>0$ such that every graph $G\in\mc{F}$ has a balanced separation of order at most $c\,\v(G)^\beta$. For example, every proper minor-closed family\footnote{A family of graphs is  \defn{proper minor-closed} if it is closed under isomorphisms and taking minors, and does not include all graphs.} is s.s.s., as proved by \citet{AST90} with $\beta=\frac12$. More generally, every family with polynomial expansion is s.s.s.\ \citep{DN16}.

For a graph $H$, let \defn{$\chi(H)$} be the chromatic number of $H$, and let \defn{$\alpha(H)$} be the independence number of $H$. A \defn{vertex cover} of $H$ is a set $S\subseteq V(H)$ such that $H-S$ has no edges. Let \defn{$\tau(H)$} be the minimum size of a vertex cover of $H$. Note that $\tau(H)=\v(H)-\alpha(H)$. For $k\in\NN$, let \defn{$\alpha_k(H)$} be the maximum $|X|$ such that $X \subseteq V(H)$ and $\chi(H[X]) \leq k$. Thus $\alpha_1(H) = \alpha(H)$. Let \defn{$k\,H$} denote the union of $k$ vertex-disjoint copies of $H$.
		
Before formally stating our first main result,  we describe two natural lower bounds on $c(H)$. First, since $H$ is not a minor of $K_{\v(H)-1}$,
\begin{equation}
\label{NaiveLowerBound}
c(H) \geq \d (K_{\v(H)-1}) = \frac{\v(H)}{2} -1.
\end{equation}
For the second bound, observe that $\tau(H) \leq \tau(G)$ whenever $H$ is a minor of $G$. It follows that $H$  is not a minor of the complete bipartite graph $K_{\tau(H)-1,n}$ for any $n$. It follows that 
\begin{equation}\label{e:naive2} 
c(H) \geq \lim_{n \to \infty} \d(K_{\tau(H)-1,n}) = \tau(H)-1.
\end{equation}
We show that these lower bounds are asymptotically tight for $4$-colourable graphs in s.s.s.\ families. 

\begin{thm}\label{t:main0}
For every s.s.s.\ family $\mathcal{F}$ and for every $H \in \mc{F}$ with $\chi(H) \leq 4$,
\begin{equation}\label{e:main0}
c(H) = (1+o_{\mc{F}}(1)) \cdot \max \s{\frac{\v(H)}{2},\tau(H)},
\end{equation}
where the error term $o_{\mc{F}}(1)$ depends on $\mc{F}$ and satisfies  $o_{\mc{F}}(1)\to 0$ as $\v(H)\to\infty$.
\end{thm}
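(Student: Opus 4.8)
The plan. The lower bound is immediate: by \eqref{NaiveLowerBound} and \eqref{e:naive2}, $c(H)\ge\max\s{\tfrac{\v(H)}{2},\tau(H)}-1$, and since $\max\s{\tfrac{\v(H)}{2},\tau(H)}\ge\tfrac{\v(H)}{2}\to\infty$ this already equals $(1+o_{\mc F}(1))\max\s{\tfrac{\v(H)}{2},\tau(H)}$ from below. So the content is the upper bound, which I would prove in the form: for every $\eps>0$ and s.s.s.\ family $\mc F$ there is $N=N(\mc F,\eps)$ such that if $H\in\mc F$, $\chi(H)\le 4$, $\v(H)\ge N$, and $G$ is a graph with $\d(G)\ge d:=(1+\eps)\max\s{\tfrac{\v(H)}{2},\tau(H)}$, then $H\preceq G$.

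First I would prepare $G$. Passing to a component of maximum density, then repeatedly deleting vertices of degree at most $\d(G)$, preserves $\d(G)$, so we may assume $G$ is connected with $\d(G)\ge d$ (if helpful one can, at the cost of a negligible decrease in the density, also pass to a piece of $G$ with larger connectivity). Two facts are then forced, as needed since $\v(\cdot)$ and $\tau(\cdot)$ are minor-monotone: from $\e(G)\le\binom{\v(G)}{2}$ we get $\v(G)\ge 2d+1>(1+\eps)\v(H)$; and since every edge of $G$ meets a fixed minimum vertex cover, $\d(G)<\tau(G)$, so $\tau(G)>d\ge\tau(H)$. Here the hypothesis $\chi(H)\le 4$ enters: it gives $\alpha(H)\ge\tfrac{\v(H)}{4}$, hence $\tau(H)\le\tfrac34\v(H)$, so $d\ge(1+\eps)\tfrac{\v(H)}{2}$ and $d-\tau(H)\ge\tfrac\eps2\v(H)$; this leaves a surplus of $\Theta(\eps\v(H))$ vertices of $G$ over $\v(H)$, and of $\tau(G)$ over $\tau(H)$, to be spent by the construction. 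I would also use the known structural fact that s.s.s.\ families have bounded degeneracy, so $\e(H)\le C_{\mc F}\,\v(H)$; in particular all but $O_{\mc F}(1)$ vertices of $H$ have $H$-degree below $\tfrac{\v(H)}{2}$.

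Next, build a model of $H$ in $G$ exploiting the sparsity of $H$. Fix a minimum vertex cover $S$ of $H$, so $I:=V(H)\setminus S$ is independent with $|S|=\tau(H)$ and $|I|=\alpha(H)$, and fix a recursive decomposition of $H$ (hence of $H[S]$) by balanced separators of order $O(\v(H)^\beta)=o(\v(H))$, available because $\mc F$ is monotone and s.s.s. The construction has two coupled phases. In phase (a) we realise $H[S]$ as a minor in $G$, recursing down the separator decomposition, producing vertex-disjoint connected branch sets $\{B_v:v\in S\}$ of total size $\tau(H)+o(\v(H))$ that are kept ``well spread'' in $G$. In phase (b) we process the vertices $u\in I$ one at a time, assigning to each a previously unused vertex $x_u\in V(G)$ adjacent to $B_v$ for every $v\in N_H(u)\subseteq S$: for all but $O_{\mc F}(1)$ of the $u$ we have $|N_H(u)|<\tfrac{\v(H)}{2}\le\deg_G(x_u)$ and the spread of the branch sets makes the choice of $x_u$ routine, while the few exceptional high-degree $u$ are absorbed into the $\Theta(\eps\v(H))$ surplus; a defect-Hall argument over $I$, using $\v(G)-\sum_{v\in S}|B_v|\ge|I|+\Omega(\eps\v(H))$, makes the $x_u$ pairwise distinct. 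The branch sets $\{B_v:v\in S\}\cup\{\{x_u\}:u\in I\}$ then realise $H$ as a minor of $G$.

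The main obstacle is phase (a) and its interplay with (b): realising the sparse graph $H[S]$ as a minor ``rooted'' in the dense part of $G$ while holding every branch set small and spread enough for the $\Theta(\v(H))$ vertices of $I$ to be attachable, and keeping all accumulated losses --- from the $o(\v(H))$-order separators used in the recursion, from the vertices of $G$ consumed en route, and from the handful of high-degree vertices of $H$ --- below the $\Theta(\eps\v(H))$ surplus, uniformly over $H\in\mc F$. This near-extremal regime, where $G$ is only just dense enough, is where the argument really lives; and $\chi(H)\le 4$ is the hypothesis that makes it close, being precisely the threshold at which the instances of Hadwiger's conjecture the argument implicitly invokes --- those for $K_5$ and $K_6$ --- are theorems via the Four Colour Theorem, whereas for $\chi(H)\ge 5$ the statement is, as we discuss later, equivalent to an open weakening of Hadwiger's conjecture.
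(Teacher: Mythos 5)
There is a genuine gap, and it is not merely that the hardest step is deferred. Your argument uses the hypothesis $\chi(H)\le 4$ only through $\alpha(H)\ge\frac{\v(H)}{4}$, i.e.\ $\tau(H)\le\frac34\v(H)$, which supplies the $\Theta(\eps\,\v(H))$ surplus; nothing else in the two-phase construction depends on the chromatic number (the remark about Hadwiger for $K_5,K_6$ does not correspond to any concrete step). But the graphs $k\,H_t$ of \cref{InterestingExample} also satisfy $\tau=\frac{\v}{2}\le\frac34\v$ and live in an s.s.s.\ family of bounded degeneracy, yet for $t\ge 10$ they violate \eqref{e:main0}: the Tur\'an graph $T(4n,4)$ has density $\ge\frac{9}{8}\,\tau(k\,H_t)-1$ and no $k\,H_t$ minor. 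So a proof along your lines, if it closed, would prove a false statement; the obstruction is exactly the one you wave away in phase (b). In $T(4n,4)$ a singleton branch set $\{x_u\}$ lies in one part and so cannot see branch sets confined to that part; the counting in \cref{InterestingExample} shows every model of $H_t$ there needs $\ge 3t-3$ vertices, which exceeds the budget $\v(G)=4n$. More generally, ``$\deg_G(x_u)\ge|N_H(u)|$ plus spread'' is nowhere near enough to guarantee a vertex whose neighbourhood hits each of $|N_H(u)|$ prescribed disjoint branch sets when $|N_H(u)|$ is comparable to $\delta(G)$; this is the heart of the problem and is left as an acknowledged ``obstacle'' rather than solved.

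For comparison, the paper does not build the minor greedily at all. It first proves (\cref{t:main1}) that for s.s.s.\ families $c(H)=(1+o(1))c_f(H)$, where the fractional extremal function $c_f$ is controlled by an LP over jumbled models; the embedding into a near-extremal $G$ is done via the regularity lemma (\cref{emb}), a volume/LP-duality packing argument (\cref{lem:bipartite,l:copies}), and connectivity restoration (\cref{addedges}), with the s.s.s.\ hypothesis entering through the bounded decomposition of \cref{Eppstein}. The hypothesis $\chi(H)\le 4$ is then used in \cref{t:4colored}, via $\alpha_4(H)=\v(H)$ and \cref{lem:prelim3} with $k=4$, to pin down $c_f(H)=\max\s{\frac{\v(H)}{2},\tau(H)}$ exactly --- precisely the point at which the 4-partite Tur\'an lower bound $\frac34\s{\v(H)-\frac{\alpha_4(H)}{2}}$ ceases to beat $\frac{\v(H)}{2}$. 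Any correct proof must use $\chi(H)\le4$ in a way that fails for $H_t$; yours does not, so the plan cannot be completed as described.
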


Note that some assumption about $\chi(H)$ in \cref{t:main0} is necessary. In particular, the $t=10$ case in \cref{InterestingExample} (in \cref{s:fracintro}) shows that there exist s.s.s.\ families $\mc{F}$ containing arbitrarily large graphs $H$ with $\chi(H)=11$ that do not satisfy \eqref{e:main0}. 

\citet{HKL20} recently and independently proved\footnote{\citet{HKL20} state \cref{e:AlphaTwoUpperBound} for proper minor-closed families, but their proof holds for s.s.s.\ families.} that for every s.s.s.\ family $\mathcal{F}$ and every $H\in \mathcal{F}$,
\begin{equation}
\label{e:AlphaTwoUpperBound}
c(H) \leq  (1 + o_{\mathcal{F}}(1)) \left( \v(H) - \frac{\alpha_2(H)}{2} \right).
\end{equation}
Our methods developed for proving \cref{t:main0} readily lead to the following result that improves\footnote{\cref{t:AlphaThreeUpperBound} strengthens \cref{e:AlphaTwoUpperBound} since $\alpha_3(H)\geq\alpha_2(H)$ and $\tau(H)=\v(H)-\alpha(H)\leq \v(H)-\frac12\alpha_2(H)$.
For example, let $J$ be a complete 3-partite graph $K_{2b,b,b}$. Then $\v(J)=4b$, $\alpha_2(J) = 3b$, $\alpha_3(J)=4b$ and $\tau(H)=2b$. Let $\mc{F}$ consist of all subgraphs of unions of vertex-disjoint copies of $J$. Then $\mc{F}$ is s.s.s.\ and $k\,J\in \mc{F}$ for each $k\in \NN$. \cref{t:AlphaThreeUpperBound} implies $c(k\,J)  \leq (2+o(1))bk$, whereas
\cref{e:AlphaTwoUpperBound} only gives $c(k\,J)  \leq (\frac{5}{2}+o(1))bk$.} upon the upper bound in \cref{e:AlphaTwoUpperBound}.
\begin{restatable}{thm}{AlphaThreeUpperBound}\label{t:AlphaThreeUpperBound} 
	For every s.s.s.\ family $\mathcal{F}$ and every $H\in \mathcal{F}$,
	$$c(H)\leq (1+o_{\mathcal{F}}(1)) \max\s{ \v(H)-\frac{\alpha_3(H)}{2}, \tau(H) }.$$
\end{restatable}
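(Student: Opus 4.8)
The plan is to show that if $G$ is a graph with $\v(G) = n$ that does not contain $H$ as a minor, then $\d(G) \le (1+o_{\mc{F}}(1)) \max\{\v(H) - \tfrac{1}{2}\alpha_3(H), \tau(H)\}$. The starting point is a standard density-increment / minimal-counterexample setup: we may assume $G$ is edge-maximal without an $H$-minor and, by passing to a suitable subgraph, that $G$ has minimum degree close to $\d(G)$ and that every small separator of $G$ is ``useless'' in the sense that $G$ is highly connected relative to the separator sizes permitted by the s.s.s.\ property of $\mc{F}$. Write $h = \v(H)$; if $\v(G)$ is bounded in terms of $h$ there is nothing to prove, so assume $\v(G) \gg h$.

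The heart of the argument is a rooted-minor / linkage step. Since $H \in \mc{F}$ and $\mc{F}$ is s.s.s., recursively applying balanced separations of order $O(h^\beta)$ decomposes $H$ into pieces whose union has small ``interface'': concretely, for any $\eps > 0$ one can find a set $W \subseteq V(H)$ with $|W| \le \eps h$ such that every component of $H - W$ has at most, say, $O_\eps(1)$ vertices, or alternatively one obtains a tree-decomposition of $H$ of bounded adhesion modulo a small set of apex vertices. Now suppose $G$ has average degree exceeding $2(\v(H) - \tfrac12\alpha_3(H)) + \eps h$ but also exceeding $2\tau(H) + \eps h$. First I would embed the small interface set $W$ of $H$, together with a large ``reservoir'' of branch sets, into $G$ using the high connectivity of $G$: the key point is that after embedding $W$ and a $3$-colourable chunk of $H$ of size close to $\alpha_3(H)$, the remaining part of $H$ has order at most $\v(H) - \alpha_3(H)$ plus lower-order terms, and its vertices can be greedily assigned to distinct vertices of $G$ while maintaining the required adjacencies, because the density of $G$ forces enough ``spare'' neighbours. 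The role of $\alpha_3$ (rather than $\alpha_2$) is that a $3$-colourable induced subgraph $H[X]$ can be realized inside $G$ by contracting a large clique-like structure or a long subdivided structure obtained from the density hypothesis, at a cost of only $\tfrac12|X|$ in the density count rather than $|X|$; this is exactly the improvement over \cref{e:AlphaTwoUpperBound}, and it parallels the argument behind \cref{t:main0} for $4$-colourable $H$ (a $3$-colourable part plus a small vertex cover of the rest).

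The steps, in order, are: (1) reduce to $G$ edge-maximal, of min-degree $\ge \d(G)(1-o(1))$, and well-connected relative to $h^\beta$; (2) use s.s.s.\ on $H$ to extract a small interface set $W$ and a decomposition of $H - W$ into cheap pieces; (3) show that if $\d(G)$ exceeds both claimed bounds by $\Omega(h)$, then $G$ is dense enough that one can either (a) find $K_s$-minors or subdivisions rich enough to absorb a $3$-chromatic block of size $\approx \alpha_3(H)$ using only $\approx \tfrac12 \alpha_3(H)$ ``budget,'' or (b) directly embed $H$ greedily because $\tau(H)$ is small; (4) combine (a) and (b) to build all of $H$ as a minor, contradiction. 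The main obstacle I expect is step (3)(a): controlling the interaction between the cheap embedding of the $3$-colourable block and the greedy embedding of the remaining $\le \v(H) - \alpha_3(H) + o(h)$ vertices, i.e.\ ensuring the branch sets stay disjoint and the cross-edges between the block and the rest are all realized. This is where the connectivity reductions from step (1) must be strong enough, and where the error term's dependence on $\mc{F}$ (through $\beta$ and $c$) enters; getting the quantifiers to line up so that $o_{\mc{F}}(1) \to 0$ as $\v(H) \to \infty$ is the delicate part.
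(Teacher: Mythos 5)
You have correctly identified the main obstacle, but that obstacle is the entire content of the theorem, and your proposal does not contain an idea for overcoming it. The assertion in step (3)(a) --- that a $3$-colourable block $H[X]$ of size $\approx\alpha_3(H)$ can be realized as a minor in an arbitrary dense host graph at a cost of only $\tfrac12|X|$ in the density count, ``by contracting a large clique-like structure or a long subdivided structure'' --- is precisely the improvement over \cref{e:AlphaTwoUpperBound} that has to be proved, and no greedy or clique-contraction mechanism is known to deliver it. (Even for $H$ a disjoint union of triangles this is the nontrivial result of Cs\'oka et al.\ cited in the introduction.) The difficulty is that the two bounds in the maximum correspond to different extremal host structures (balanced tripartite versus unbalanced bipartite), so any direct embedding argument must adapt to the shape of $G$; your sketch does not say how the dichotomy in step (3) is decided, nor how the branch sets of the cheap block and of the greedily embedded remainder are kept disjoint while all cross-edges are realized. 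Steps (1) and (2) are standard and do appear in the paper (as \cref{c:denser} and \cref{Eppstein}/\cref{Reduce}), but there they serve only to prove the equivalence $c(H)=(1+o(1))c_f(H)$, not to carry out an embedding of $H$ itself.

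For comparison, the paper's proof of this statement is a four-line deduction: take an independent set $X$ with $|X|=\alpha(H)$ and a $3$-colourable set $Y$ with $|Y|=\alpha_3(H)$, note that $H':=H[X\cup Y]$ is $4$-colourable, apply \cref{t:4colored} to get $c_f(H')\leq\max(\v(H')/2,\v(H')-\alpha(H))$, lift this to $H$ using the Lipschitz bound $c_f(H)\leq c_f(H')+\v(H)-\v(H')$ of \cref{lem:boundedDiff}, and finally convert $c_f$ to $c$ via \cref{t:main1}. The ``half cost per vertex of the $3$-colourable part'' accounting is not done by an embedding at all: it lives inside the proof of \cref{t:4colored}, which reduces to weighted complete multipartite hosts (\cref{lem:cf3}) and then runs an LP-duality and convexity argument (\cref{l:normbound,l:vectors}) over weight vectors. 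If you want to salvage your approach, you would essentially be reproving the Haslegrave--Kim--Liu theorem with $\alpha_2$ replaced by $\alpha_3$, and you would still need a substitute for the LP step.
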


The methods used in \cite{HKL20} are substantially different from ours and extend from minors to subdivisions, while ours do not. On the other hand, we are able to gain finer understanding of the extremal function for graphs in some s.s.s.\ families. 

To see this, consider a planar graph $H$. By the Four Color Theorem~\citep{RSST97}, $\tau(H)\leq\frac{3}{4}\v(H)$ and $\alpha_2(H)\geq\frac{1}{2}\v(H)$. Thus, 
\cref{t:main0} or \cref{e:AlphaTwoUpperBound} imply that $c(H) \leq (\frac{3}{4}+o(1))\,\v(H)$, where unions of disjoint copies of $K_4$ give tight examples by \eqref{e:naive2}. As noted by Haslegrave, Kim and Liu~\cite{HKL20}, this gives an asymptotic answer to the question of Reed and Wood~\cite{ReeWoo15} asking for the maximum of the ratio $\frac{c(H)}{\v(H)}$ for planar graphs. \cref{t:main0} gives more information by asymptotically determining the extremal function for all planar graphs.

Reed and Wood~\cite{ReeWoo15} also posed the problem of determining the maximum of the ratio $\frac{c(H)}{\v(H)}$ taken over  graphs $H$ with no $K_t$ minor.  Haslegrave, Kim and Liu~\cite[Problem F]{HKL20} more generally asked to determine the minimum $\rho_{\mc{F}}$ such that $c(H) \leq (\rho_{\mc{F}} + o(1))\,\v(H)$ for every minor closed graph family $\mc{F}$.  We give an explicit formula for $\rho_{\mc{F}}$ under a technical assumption, in the more general setting of s.s.s.\ families. 

\begin{thm}\label{t:upper0}
	Let $\mathcal{F}$ be a  s.s.s.\ graph family and let $$r_{\mathcal{F}} := \sup \left\{\frac{\tau(H)}{\v(H)} \: | \: k\,H \in \mc{F} \; \text{for every} \; k \in \NN  \right\} .$$ If $r_{\mc{F}} \geq \frac{2}{3}$ then for every graph  $H \in \mc{F}$, 
	$$c(H) \leq \s{r_{\mc{F}} + o(1)}\v(H),$$
	and there exists arbitrarily large graphs $H \in \mc{F}$ for which equality holds.
\end{thm}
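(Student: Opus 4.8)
The plan is to derive the upper bound from \cref{t:AlphaThreeUpperBound} together with a reduction to ``repeatable'' graphs, and to read off the matching lower bound from \cref{e:naive2}. Call a graph $J$ \emph{repeatable} if $k\,J\in\mc{F}$ for every $k\in\NN$; since $\mc{F}$ is monotone, every subgraph of a repeatable graph is repeatable, and $r_{\mc{F}}=\sup\{\tau(J)/\v(J): J\text{ repeatable}\}$. For the lower bound, choose repeatable graphs $J$ with $\tau(J)/\v(J)\to r_{\mc{F}}$; for each such $J$ and each $k$, the graph $H=k\,J$ lies in $\mc{F}$, has $\v(H)=k\,\v(J)$ (arbitrarily large) and $\tau(H)=k\,\tau(J)$, so \cref{e:naive2} gives $c(H)\ge\tau(H)-1=\tfrac{\tau(J)}{\v(J)}\,\v(H)-1$. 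Letting $k\to\infty$ produces arbitrarily large $H\in\mc{F}$ with $c(H)\ge(r_{\mc{F}}-o(1))\v(H)$.

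For the upper bound, set $g(H):=\max\{\v(H)-\tfrac12\alpha_3(H),\ \tau(H)\}$, so that \cref{t:AlphaThreeUpperBound} says $c(H)\le(1+o_{\mc{F}}(1))\,g(H)$. Both $\v-\tfrac12\alpha_3$ and $\tau$ are additive over disjoint unions and non-increasing under deletion of a vertex or an edge, so $g$ is subadditive over disjoint unions, monotone under subgraphs, satisfies $g(H)\le\v(H)$, and $g(H)\le g(H-v)+1$. The heart of the argument is the reduction: for every $\eps>0$ there is $n_0$ so that every $H\in\mc{F}$ with $\v(H)\ge n_0$ satisfies $g(H)\le(r_{\mc{F}}^{\ast}+\eps)\v(H)$, where $r_{\mc{F}}^{\ast}:=\sup\{g(J)/\v(J): J\text{ repeatable}\}$. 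To see this, fix $\eps$ and use the s.s.s.\ hypothesis (iterating balanced separations) to pick $t=t(\eps,\mc{F})$ so large that every $H\in\mc{F}$ has a set $S$ with $|S|\le\eps\,\v(H)$ for which all components of $H-S$ have order at most $t$. Then $g(H)-\eps\,\v(H)\le g(H-S)\le\sum_{C}g(C)$, the sum over components $C$ of $H-S$, while $\sum_C\v(C)\le\v(H)$; so once $\v(H)$ is large a fraction of $\v(H)$ bounded below in terms of $\eps$ is covered by components $C$ with $g(C)/\v(C)\ge g(H)/\v(H)-\eps$. As there are only finitely many isomorphism types on at most $t$ vertices, for any prescribed $K$ one such type $T$ occurs among these components at least $K$ times once $\v(H)$ is large enough, giving $K\,T\subseteq H$ and hence $K\,T\in\mc{F}$. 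Running this for all $K\in\NN$ and pigeonholing over the finite list of possible $T$ yields a single type $T$ with $k\,T\in\mc{F}$ for all $k$ --- a repeatable $T$ --- with $g(T)/\v(T)\ge g(H)/\v(H)-\eps$; comparing with $r_{\mc{F}}^{\ast}$ and adjusting $\eps$ gives the claim.

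It remains to show $r_{\mc{F}}^{\ast}\le r_{\mc{F}}$, and this is where $r_{\mc{F}}\ge\tfrac23$ enters. Let $J$ be repeatable; then $\tau(J)/\v(J)\le r_{\mc{F}}$, so we only need $\v(J)-\tfrac12\alpha_3(J)\le r_{\mc{F}}\v(J)$, i.e.\ $\alpha_3(J)\ge 2(1-r_{\mc{F}})\v(J)$. Greedily pick a maximum independent set $I_1$ of $J$, a maximum independent set $I_2$ of $J-I_1$, and a maximum independent set $I_3$ of $J-I_1-I_2$, so $\alpha_3(J)\ge|I_1|+|I_2|+|I_3|$. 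Each of $J$, $J-I_1$, $J-I_1-I_2$ is a subgraph of $J$ and hence repeatable, so its independence number is at least $(1-r_{\mc{F}})$ times its order; writing $n=\v(J)$, $n_1=\v(J-I_1)\le r_{\mc{F}}n$, $n_2=\v(J-I_1-I_2)\le r_{\mc{F}}n_1\le r_{\mc{F}}^{2}n$, one gets $\alpha_3(J)\ge n-n_2+|I_3|\ge n-r_{\mc{F}}n_2\ge(1-r_{\mc{F}}^{3})n$. Since $r_{\mc{F}}\ge\tfrac23$, we have $1-r_{\mc{F}}^{3}\ge 2(1-r_{\mc{F}})$ (an elementary inequality, as $r^3-2r+1=(r-1)(r^2+r-1)\le 0$ for $r\in[\tfrac23,1]$), which is exactly what is needed, so $g(J)\le r_{\mc{F}}\v(J)$ and $r_{\mc{F}}^{\ast}\le r_{\mc{F}}$. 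Combined with the reduction and \cref{t:AlphaThreeUpperBound} this gives $c(H)\le(r_{\mc{F}}+o_{\mc{F}}(1))\v(H)$ for all $H\in\mc{F}$, matching the lower bound. The main obstacle is the reduction step: besides making the separator decomposition quantitative, the delicate point is upgrading ``$K\,T\in\mc{F}$ for each $K$ separately'' to ``$T$ is repeatable'', which is precisely what the pigeonhole over $K$ on the finite set of bounded-order types accomplishes.
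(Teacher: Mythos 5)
Your proof is correct, and while its skeleton (reduce to the set of ``repeatable'' graphs --- the paper's nucleus --- via the s.s.s.\ decomposition, then bound the nucleus ratio using $r_{\mc{F}}\ge\frac23$) matches the paper's, the second half takes a genuinely lighter route. The paper obtains \cref{t:upper0} by combining \cref{t:upper}, which performs the reduction to the nucleus directly on $c_f$ (using \cref{t:main1}, \cref{Eppstein}, \cref{lem:boundedDiff} and subadditivity of $c_f$), with \cref{c:rho} and \cref{c:tau}; the latter rests on \cref{t:twothirds}, the hardest technical result of the paper, and requires the bound $\alpha_k(J)\ge(1-r^k)\v(J)$ together with the inequality $\frac{k-1}{k}\bigl(1-\frac12(1-r^k)\bigr)\le r$ for \emph{every} $k$. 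You instead run the reduction on the purely combinatorial functional $g=\max\bigl(\v-\frac12\alpha_3,\tau\bigr)$ and only then invoke \cref{t:AlphaThreeUpperBound}, so that on the nucleus you need just the $k=3$ case of the greedy bound, $\alpha_3(J)\ge(1-r^3)\v(J)$, plus the elementary inequality $1-r^3\ge 2(1-r)$, which holds exactly because the relevant root $(\sqrt5-1)/2\approx 0.618$ of $r^2+r-1$ lies below $\frac23$; this bypasses \cref{t:twothirds} entirely, though the heavy machinery of \cref{t:main1} still enters through \cref{t:AlphaThreeUpperBound}. Two small points to tighten: to extract a volume fraction bounded below you should relax the threshold, e.g.\ components with $g(C)/\v(C)\ge g(H)/\v(H)-2\eps$ carry at least $\eps\,\v(H)$ vertices; and the ``upgrading'' of $K\,T\in\mc{F}$ for one large $K$ to repeatability of $T$ is cleanest phrased as in the paper's proof of \cref{t:upper}: among the finitely many non-repeatable isomorphism types on at most $t$ vertices there is a uniform threshold $k_0$ with $k_0\,T\notin\mc{F}$, so exhibiting $k_0$ disjoint copies certifies membership in the nucleus. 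Your lower bound via \cref{e:naive2} applied to $k\,J$ coincides with the paper's.
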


Hadwiger's famous conjecture~\cite{Had43} states that $\chi(H) \leq t$ for every graph $H$ such that $K_{t+1} \not \preceq H$. As observed in~\cite{HKL20}, if Hadwiger's conjecture holds, then $r_{\mc{F}}=1-\frac{1}{t}$ for every minor-closed family $\mc{F}$ closed under disjoint union such that $K_{t} \in \mc{F}$ and $K_{t+1} \not\in \mc{F}$ and $t \geq 2$.  

Hadwiger's conjecture implies the following 
\begin{description} \item[$\star(t)$] 
 $\alpha(H) \geq \frac{\v(H)}{t}$ for every graph $H$ such that $K_{t+1} \not \prec H$.
\end{description}
This well studied weakening of Hadwiger's conjecture is still open, see  \cite[Section~4]{Sey16Survey}  for a detailed overview of known results.
Interestingly, \cref{t:upper0} implies  that for the family $\mc{F}$ of all $K_{t+1}$-minor free graphs, $r_{\mc{F}} \leq 1-\frac{1}{t}$ if and only if $\star(t)$ holds. (See \cref{thm:minorclosed} for details.)\ Thus an explicit answer to the above-mentioned questions from~\cite{ReeWoo15,HKL20}  would require one to first resolve the validity of this weakening of Hadwiger's conjecture. 

We prove Theorems~\ref{t:main0}--\ref{t:upper0} by showing that the extremal function in s.s.s.\ graph classes is asymptotically equal to its fractional variant. We introduce this variant, the `fractional extremal function' in ~\cref{s:fracintro} and summarize its properties. It turns out that the fractional extremal function is much better behaved. For example, we are able to determine it exactly for all graphs $H$ with $\alpha(H) \leq \frac{\v(H)}{3}$.
 
The bulk of the paper, namely Sections~\ref{s:frac}--\ref{s:upper}, is devoted to investigation of the fractional  extremal function, including the proof of its asymptotic equivalence to the ordinary extremal function, mentioned above, and proofs of sharper analogues of Theorems~\ref{t:main0}--\ref{t:upper0} for the fractional extremal function. A more detailed outline of the organization of these sections is given in \cref{s:fracintro}.

Finally, in \cref{s:general} we turn our attention to the extremal function of regular graphs with superconstant density. In particular, we show that the extremal function of hypercubes is linear in the number of vertices, while the extremal function of any regular graph with density superlogarithmic in the number of vertices is superlinear  in the number of vertices.

\subsection{Notation}

Let $\NN:=\{1,2,\dots\}$ and $\NN_0:=\{0,1,\dots\}$. For $n,m\in\NN_0$, let $\mathdefn{[m,n]} := \{m,m+1,\dots, n\}$ and $\mathdefn{[n]} := \{1,2,\dots, n\}$. Let $\mathbb{R}$ be the set of real numbers, and let $\mathbb{R}_+$ be the set of non-negative real numbers.% Let $\mathbb{Z}_+$ be the set of non-negative integers. \comment{We should use $\NN_0$ xor $\mathbb{Z}_+$.}

We denote by \defn{$\delta(G)$} the minimum degree of a graph $G$, and by \defn{$\kappa(G)$}  the vertex connectivity of $G$. For $X \subseteq V(G)$, let \defn{$G[X]$} be the subgraph of $G$ induced by $X$.

A \defn{$k$-blowup $G^{(k)}$} of a graph $G$ is obtained from $G$ by replacing every vertex $v$ of $G$ with an independent set $I(v)$ of size $k$ and for every edge $uv \in E(G)$ joining every vertex of $I(u)$ to every vertex of $I(v)$.
 
A \defn{linkage} in a graph $G$ is a collection of pairwise vertex-disjoint paths in $G$. A linkage $(P_1,\dots,P_k)$ is an \defn{$(s_1t_1,\dots, s_kt_k)$-linkage} if $P_i$ has ends $s_i$ and $t_i$ for each $i \in [k]$.
 
We denote the components of a vector $\bl{w} \in \bb{R}^d$ by $w_1,w_2,\dots,w_d$. For finite $X \subseteq \NN$, let \defn{$\bl{1}_{X}$} be the characteristic vector of $X$; that is, the $i$-th component of $\bl{1}_{X}$ is equal to $1$ if $i \in X$, and is equal to $0$ otherwise.

%%%%%%%%%%%%%%%%%%%%%%
\section{The fractional extremal function}\label{s:frac}

\subsection{Introduction}\label{s:fracintro}

In this section we discuss the properties of fractional extremal functions and state our main technical results, which immediately imply \cref{t:main0} and \cref{t:upper0}. The formal definition of the fractional extremal function $c_f(H)$ is rather technical and we give it in the next subsection. However, as we eventually prove at the end of \cref{s:connect}, it can be alternatively defined as a natural scaling limit\footnote{Analogously, for example, the fractional chromatic number of a graph $H$ (see \citep{SU97}) can be defined as the scaling limit, 
$ \chi_f(H) = \lim_{k \to \infty} \frac{\chi(H \boxtimes K_k)}{k}$, where $H\boxtimes K_k$ is the strong product of $H$ and $K_k$ (the graph obtained from $H$ by replacing each vertex $v$ of $H$ by a $k$-clique $C_v$ and replacing each edge $vw$ of $H$ by a complete bipartite graph between $C_v$ and $C_w$).
} of the usual extremal function; that is, 
\begin{equation}\label{e:frac0}
c_f(H) = \lim_{k \to \infty}\frac{c(k\,H)}{k}. 
\end{equation}
We use \eqref{e:frac0} as the definition of the fractional extremal function for the purposes of this introduction. Note that the extremal function $c(k\,H)$ has been investigated previously, for example when $H$ is complete \cite{Tho08} or when $H$ is a cycle \cite{HarWoo15,CLNWY}.

The fractional extremal function has several desirable properties, which makes it substantially easier to work with in comparison to the original:
\begin{itemize}
	\item \cref{lem:boundedDiff} shows that $c_f(H) \leq c_f(H \setminus v)+1$ for every graph $H$ and every $v \in V(H)$. This ``continuity'' implies in particular that $c_f(H) \leq \v(H)-1$ for every graph $H$, with equality for complete graphs.
	\item For a graph $H$, let $$\mathdefn{\gamma_H} := \sup\{\, \d(G) \: | \:  G \;\text{is complete multipartite}, H \not\preceq G\}.$$
	\cref{lem:cf3} and \cref{l:newlower} show that $\gamma_H\leq c_f(H)\leq \gamma_H+2$. 	 
	Thus it (mostly) suffices to consider only complete multipartite graphs in our investigation of $c_f(H)$. As $\gamma_H \leq c(H)$, it also follows that ${c_f(H)\leq c(H)+2}$.
\end{itemize}

As noted above, $c_f(K_t) = t-1$ for every $t \geq 2$. Thus by \cref{e:Thomason}, the fractional and integral extremal function of the complete graphs differ substantially. However, as the next theorem shows the difference becomes negligible as we consider large graphs in s.s.s.\ families.

\begin{restatable}{thm}{Main}\label{t:main1} 
For every s.s.s.\ family $\mc{F}$ and for every $H \in \mc{F}$,
$$c(H) = (1+o(1))c_f(H).$$
\end{restatable}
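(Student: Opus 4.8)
The inequality $c_f(H) \leq c(H) + 2$ is already noted (via $\gamma_H \leq c(H)$ and $c_f(H) \leq \gamma_H + 2$), and since $c_f(H) \to \infty$ with $\v(H)$ whenever the graphs are getting large, this gives $c_f(H) \leq (1+o(1))c(H)$ for free. So the entire content of \cref{t:main1} is the reverse bound: for $H$ in an s.s.s.\ family $\mc{F}$ with separator bound $c\,\v(G)^\beta$ ($\beta < 1$), every graph $G$ with $\d(G) > (1+o(1))c_f(H)$ must contain $H$ as a minor. The plan is to take such a $G$ of minimum order, reduce to the case where $G$ is highly connected and has large minimum degree (standard: if $\d(G)$ is large then $G$ has a subgraph of minimum degree $> \d(G)$, and repeatedly splitting off small-order separations cannot decrease the density much), and then to build the $H$-minor by a greedy/branching-set argument that exploits the sublinear separator of $H$.

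The core step is the model-building argument. Order a near-optimal balanced separator decomposition of $H$: since $\mc{F}$ is s.s.s., $H$ admits, recursively, a tree-like decomposition in which the ``adhesion'' sets have size $O(\v(H)^\beta)$. Equivalently, one obtains an ordering $v_1, \dots, v_n$ of $V(H)$ (an \emph{elimination ordering} coming from recursive separators) such that each $v_i$ has only $O(\v(H)^\beta)$ neighbours among $v_1,\dots,v_{i-1}$ that ``matter'' — more precisely, $H$ has a vertex ranking / tree-depth-like layering where the relevant back-degree is sublinear. We then embed the branch sets of $H$ into $G$ one vertex at a time following this ordering. When we come to place the branch set $B_i$ of $v_i$, we have already embedded branch sets $B_1, \dots, B_{i-1}$ using only a bounded number of vertices of $G$ in total (here is where we need $\v(H) - \alpha$-type bookkeeping: branch sets corresponding to an independent-ish part of $H$ can be single vertices, while only $O(\v(H)^\beta)$ of them need to be grown into connected ``reaching'' sets at any one time). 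Using that $G$ has minimum degree and connectivity $\gg \v(H)$, we route $B_i$ as a path/connected set avoiding the $O(\v(H))$ previously-used vertices and hitting the $O(\v(H)^\beta)$ already-embedded neighbours of $v_i$; a linkage/Menger argument (of the kind used for $K_t$-minors) supplies these disjoint paths, and the sublinearity of $\beta$ guarantees that the total number of vertices consumed is $(1 + o(1))\v(H)$ times the right constant, matching $c_f(H)$ up to the stated error.

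To pin the constant to $c_f(H)$ rather than something larger, I would not try to reprove the $c_f$ bounds from scratch; instead I would feed in the characterization $c_f(H) = (1+o(1))\gamma_H$ up to an additive constant (from \cref{lem:cf3} and \cref{l:newlower}) and the ``continuity'' $c_f(H) \leq c_f(H\setminus v) + 1$ (\cref{lem:boundedDiff}). The reduction is: it suffices to show that if $G$ is $(1+o(1))\gamma_H$-dense then $H \preceq G$. But $\gamma_H$ is, by definition, realized (up to the error) by a complete multipartite graph $G_0$ with $H \not\preceq G_0$; so the task becomes to show that any $G$ denser than this extremal complete multipartite graph already contains $H$ as a minor — and here the point is that a dense-enough $G$, after cleaning, looks ``multipartite-like'' on a large scale (contains a large complete multipartite minor, or a large blowup of a clique), into which $H$ embeds because $H$ is structurally sparse, i.e.\ has the sublinear separators. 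The quantitative version of ``a large clique-blowup contains every sparse $H$ on few vertices'' is exactly where $\beta<1$ is used.

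\textbf{Main obstacle.} The hard part is the quantitative model-building: controlling, simultaneously and uniformly over the whole embedding process, (i) the number of vertices of $G$ used — it must be $(1+o(1))$ times the $c_f$-value, not merely $O(\v(H))$ — and (ii) the feasibility of each routing step, i.e.\ that the needed disjoint paths exist in the portion of $G$ not yet consumed, despite that portion shrinking as the embedding proceeds. Getting the error term to go to $0$ as $\v(H)\to\infty$ (with a rate depending on $\mc{F}$ through $\beta$ and $c$) requires that the number of ``expensive'' branch sets active at any moment be $O(\v(H)^\beta) = o(\v(H))$, which is precisely what the strongly sublinear separators buy us; the bookkeeping to make the accumulated cost telescope to $(1+o(1))c_f(H)$, and to re-establish enough connectivity in the leftover graph at each stage, is the technical heart of the argument and is what Sections~\ref{s:connect}--\ref{s:upper} are presumably devoted to.
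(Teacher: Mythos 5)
Your treatment of the easy direction ($c_f(H)\leq c(H)+2$, hence $c_f(H)\leq(1+o(1))c(H)$) matches the paper. For the hard direction, however, there is a genuine gap: the strategy you sketch is not the paper's, and the step you yourself flag as the ``main obstacle'' is exactly the step for which you supply no idea. A greedy branch-set embedding along a low-back-degree elimination ordering, with Menger-type routings, has no visible mechanism for pinning the vertex cost down to $(1+o(1))c_f(H)$: when $c_f(H)$ is as small as $\v(H)/2$, almost every branch set must be a single vertex or a pair, and which vertices may be singletons is governed by the independence/colouring structure of $H$ (the sets $\alpha_k(H)$, encoded in the paper by jumbled models); routing paths consume extra internal vertices and destroy exactly this accounting. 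Likewise, your fallback claim that a graph denser than the extremal complete multipartite graph ``looks multipartite-like on a large scale'' is unsubstantiated and close to circular --- nothing forces such a $G$ to contain a dense complete multipartite minor, and proving that it does is essentially the theorem itself.

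The paper's actual route avoids both problems and is worth contrasting with yours. The separators are used only on $H$, not on $G$: by \cref{Eppstein} and \cref{PH}, $H$ is sandwiched (as in \cref{Reduce}) between a graph $H'$ such that $H'\setminus F$ is isomorphic to $\ell\,J$ for a bounded-size $J$ and an edge set $F$ with $|F|\leq\delta\,\v(H)$, and a subgraph of $H$. On the $G$ side, Szemer\'edi's regularity lemma (\cref{emb}) replaces $G$ by a bounded reduced graph $R$ whose blowup embeds all bounded-component subgraphs into $G$; the definition of $c_f$ gives $\Vol_J(R)\geq\d(R)/c_f(J)$ directly, \emph{whatever} graph $R$ is, and \cref{m1} converts this fractional packing into an integral model of $\ell\,J$. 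The few edges of $F$ are then restored using a random linker set $Z$ in a highly connected minor of $G$ (\cref{c:denser}, \cref{linkage}). So the complete-multipartite reduction lives entirely inside the definition and analysis of $c_f$ (\cref{lem:cf3}, via LP duality and the density-splitting \cref{lem:weightedCliques1}), never as a structural claim about the host graph $G$. Without the regularity-plus-volume mechanism and the $\ell\,J+F$ decomposition, your outline does not close.
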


By \cref{t:main1} it suffices to prove Theorems~\ref{t:main0} and~\ref{t:upper0} for the fractional extremal function. We show that \cref{e:main0} holds for the fractional extremal function of 4-colourable graphs.

\begin{restatable}{thm}{Fourcolored}\label{t:4colored} 
For every 4-colorable graph $H$, 
	\begin{equation}\label{e:3colored}
	c_f(H) =  \max \s{\frac{\v(H)}{2},\tau(H)}.
	\end{equation}
\end{restatable}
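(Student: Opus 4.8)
The lower bound $c_f(H)\ge\max(\tfrac{\v(H)}2,\tau(H))$ is routine. Using the scaling-limit description $c_f(H)=\lim_{k\to\infty}c(kH)/k$, it suffices to exhibit, for each $k$, two graphs with no $kH$-minor whose densities tend to $\tfrac{k\,\v(H)}2$ and to $k\,\tau(H)$. The clique $K_{k\v(H)-1}$ has fewer than $\v(kH)$ vertices and so contains no $kH$-minor, and $\d(K_{k\v(H)-1})=\tfrac{k\v(H)-2}2$. Since $\tau$ is minor-monotone with $\tau(kH)=k\,\tau(H)$, the complete bipartite graph $K_{k\tau(H)-1,N}$ contains no $kH$-minor, and $\d(K_{k\tau(H)-1,N})\to k\,\tau(H)-1$ as $N\to\infty$. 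Dividing by $k$ and letting $k\to\infty$ gives the bound.

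For the upper bound I would first pass to a statement about minors of complete multipartite graphs. Combining $\gamma_{H'}\le c_f(H')\le\gamma_{H'}+2$ (\cref{lem:cf3} and \cref{l:newlower}) with the identity $c_f(kH)=k\,c_f(H)$ (a consequence of the scaling limit), one gets $c_f(H)=\lim_{k\to\infty}\gamma_{kH}/k$. Since $kH$ is $4$-colourable whenever $H$ is, it is therefore enough to prove: \emph{there is an absolute constant $C$ with $\gamma_{H'}\le\max(\tfrac{\v(H')}2,\tau(H'))+C$ for every $4$-colourable graph $H'$.} Indeed, applying this with $H'=kH$, dividing by $k$ and letting $k\to\infty$ gives $c_f(H)\le\max(\tfrac{\v(H)}2,\tau(H))$, matching the lower bound.

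To prove this multipartite statement, write $G=K_{m_1,\dots,m_p}$, $N=\v(G)$, so $\d(G)=\tfrac N2-\tfrac{\sum_i m_i^2}{2N}$, and suppose $H'\not\preceq G$ with $H'$ $4$-colourable having colour classes of sizes $a\ge b\ge c\ge d$. The case $\alpha(H')\le\tfrac13\v(H')$ is handled by the exact evaluation of $c_f$ quoted earlier (there $\tau(H')\ge\tfrac23\v(H')>\tfrac12\v(H')$, so the claimed maximum is $\tau(H')$), so assume $\alpha(H')>\tfrac13\v(H')$. The core is an embedding lemma that exploits the fact that colour classes are independent and hence freely splittable: $H'\preceq G$ as soon as $G$'s part-multiset splits into three disjoint groups of totals $\ge a,b,c$ hosting three classes, together with enough room for the $d$ vertices of the last class --- one unit per $G$-vertex lying outside the three groups and half a unit per vertex of slack inside them, a slack vertex carrying only half of a two-part (contracted) branch set. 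One then dichotomises on the density formula. If $\d(G)$ is within $O(1)$ of $\tfrac12\v(G)$, then $\sum_i m_i^2=\v(G)+O(\v(G))$, so $G$ differs from a complete graph by only $O(\v(G))$ edges; one argues that such a graph, once it has enough more than $\v(H')$ vertices, contains the $4$-colourable $H'$ as a minor (absorbing the deficiencies with a bounded number of contracted branch sets), which forces $\v(G)\le\v(H')+O(1)$ and hence $\d(G)\le\tfrac12\v(H')+O(1)$. Otherwise $\d(G)$ is bounded away from $\tfrac12\v(G)$, which forces a part of size $\Omega(\v(G))$; parking a maximum independent set of $H'$ into it (using $\alpha(H')\le m_1$) and embedding the remaining $\tau(H')$ vertices into $G$ minus that part --- iterating if the remainder is again so dominated --- caps $\d(G)$ near $\tau(H')$. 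Four-colourability is used precisely to guarantee that after three independent sets are placed only $d\le\tfrac14\v(H')$ vertices of $H'$ remain, which is exactly what keeps the leading constant equal to $\tfrac12\v(H')$ rather than larger.

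The step I expect to be hardest is making the embedding lemma and the case analysis sharp enough to produce a \emph{universal} additive constant; an error of size $o(\v(H'))$ would be useless here, since it would not survive the passage from $H'$ to $kH$. Naive sufficient conditions for $H'\not\preceq G$ are far too weak --- a single large part can absorb an independent set while a bounded number of contracted branch sets rebalance what remains, so that, for instance, $K_{5,4,1}\preceq K_{3,3,N}$ although $\{3,3,N\}$ admits no partition into groups of sizes $\ge 5,4,1$. One therefore needs an embedding statement robust enough to combine the flexibility of splitting independent classes, of dumping a maximum independent set into one huge part, and of using a few contracted branch sets, and this then has to be dovetailed with the density formula to cover cleanly the transition between the ``$\tfrac12\v(H')$ regime'' (a nearly complete, vertex-limited $G$) and the ``$\tau(H')$ regime'' (a $G$ dominated by a few huge parts).
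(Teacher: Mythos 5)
Your lower bound and your reduction to complete multipartite graphs are both sound, and the latter genuinely parallels what the paper does: \cref{lem:cf3} reduces the computation of $c_f$ to complete (weighted) graphs, i.e.\ to weight vectors, which is the fractional analogue of your reduction to $\gamma_{kH}$. The problem is that the entire upper bound then rests on the claim you yourself flag as the hard part --- that every $4$-colourable $H'$ satisfies $\gamma_{H'}\le\max\{\frac{\v(H')}{2},\tau(H')\}+C$ for a universal constant $C$ --- and that claim is never proved. The two steps you describe are each nontrivial and, as stated, incomplete. In the ``nearly complete'' regime you need: if $G=K_{m_1,\dots,m_p}$ has $\sum_i m_i^2=O(\v(G))$ and $\v(G)\ge\v(H')+C$ then $H'\preceq G$; a counting argument shows this forces all but $O(1)$ branch sets to be singletons, so you must exhibit a proper colouring of almost all of $H'$ whose colour classes fit exactly into the given part sizes, and making this work for arbitrary part-size profiles (not just $K_{2,2,\dots,2}$) with only an additive constant of slack is a real argument, not an observation. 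In the other regime, ``parking a maximum independent set into the large part'' presupposes $\alpha(H')\le m_1$, which need not hold, and the iteration you gesture at does not obviously terminate with the right constant: the extremal host graphs are not only ``nearly complete'' or ``one dominant part plus a $\tau$-sized rest''. Indeed, the paper's \cref{l:vectors} shows that four qualitatively different dual configurations ($\bl{u}^1,\dots,\bl{u}^4$, e.g.\ one part of relative weight $\frac25$ against many of weight $\frac65$) all arise as tight cases, and a two-case dichotomy that ignores the intermediate ones will not close.

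For comparison, the paper never leaves the fractional setting and so never has to fight for a universal additive constant. It takes a separating hyperplane $\bl{x}$ for the convex body $\{\bl{w}:\Vol_H(\bl{w})\ge1\}$, uses $4$-colourability only through \cref{lem:prelim3} (namely $\alpha_4(H)=\v(H)$ and $\alpha_1(H)=\alpha(H)$) to force $\bl{x}\cdot\bl{v}^1\ge1$ and $\bl{x}\cdot\bl{v}^2\ge1$, reduces to a vertex enumeration of a $4$-dimensional polytope (\cref{l:vectors}), and finishes each case with the discriminant computation of \cref{l:normbound}. If you want to salvage your route, you would essentially have to reprove this LP analysis in integral form with explicit error terms; it is cleaner to do it fractionally from the start.
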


The arguments presented in \cref{s:intro} immediately imply that $c_f(H) \geq \max \s{\frac{\v(H)}{2},\tau(H)}$ for every graph $H$.  The other direction does not always hold, as the following example shows, with the help of \cref{t:main1}.

\begin{example}
	\label{InterestingExample}
Let $H_t$ be the graph obtained from $K_{2t}$ by deleting the edges of some complete subgraph on $t$ vertices. Let $T(4n,4)$ be the balanced complete $4$-partite graph $K_{n,n,n,n}$. Every subgraph of $T(4n,4)$ that has a $K_{t+1}$ minor has at least $2t-2$ vertices, since all but four vertices of  the minor must be obtained by contraction. A similar argument shows that every subgraph of $T(4n,4)$ that has an $H_t$ minor has at least $3t-3$ vertices. 

Suppose that $n = \lfloor\frac{k(3t-3)-1}{4}\rfloor$ for some  $k \in \NN$. Then $k\,H_t$ is not a minor of $T(4n,4)$, and thus $$c(k\,H_t) \geq \d(T(4n,4)) = \frac{3}{2}n \geq \frac{3}{8}k(3t-3)-1.$$ 
It follows that $c_f(H_t) \geq \frac{9}{8}(t-1)$. Meanwhile, $\tau(H_t)= \frac{\v(H_t)}{2} =t$, and so  for $t \geq  10$, 
\begin{equation} \label{e:ht}
c_f(H_t) \geq \frac{81}{80}\max \s{\frac{\v(H_t)}{2},\tau(H_t)}.
\end{equation} 
Note that $H_t$ is $(t+1)$-colourable.   

Now assume $t \geq 10$, and let $\mc{F}_t$ be the family consisting of all graphs $G$ isomorphic to a subgraph of $k\,H_t$ for some $k\in\NN$. Clearly, $\mc{F}_t$ is an  s.s.s.\ family. By \eqref{e:ht},  $$c_f( k\,H_t)  = k c_f(H_t) \geq  k\s{\frac{81}{80}\max \s{\frac{\v(k H_t)}{2},\tau(k H_t)}}=\frac{81}{80} \max \s{\frac{\v(k\,H_t)}{2},\tau(k\,H_t)}.$$ 
It now follows from \cref{t:main1} that the sequence of graphs $\{k\,H_t\}_{k \in \bb{N}}$ does not satisfy \eqref{e:main0}. 
Thus the condition $\chi(H) \leq 4$ in \cref{t:main0} can not be relaxed  to $\chi(H) \leq 11$, as claimed in the introduction.
\end{example}	

A similar proof shows the following:

\begin{lem}
For every graph $H$ and $i \in \bb{N}$,
$$c_f(H) \geq \frac{i-1}{i}\s{\v(H) - \frac{\alpha_i(H)}{2}}.$$
\end{lem}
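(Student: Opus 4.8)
The plan is to imitate the computation in \cref{InterestingExample}, using a balanced complete $i$-partite graph in place of $T(4n,4)$. Throughout write $n:=\v(H)$ and $a:=\alpha_i(H)$, and recall from \eqref{e:frac0} that $c_f(H)=\lim_{k\to\infty}\frac{c(k\,H)}{k}$, so it suffices to bound $c(k\,H)$ from below for all large $k$.

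The key lemma I would isolate is the following: \emph{if $G$ is a complete multipartite graph with at most $i$ parts and $J\preceq G$, then $\v(G)\ge 2\v(J)-\alpha_i(J)$.} To prove it, fix a minor model of $J$ in $G$, that is, pairwise disjoint sets $(B_v:v\in V(J))$ with $G[B_v]$ connected and an edge of $G$ between $B_u$ and $B_v$ whenever $uv\in E(J)$. Let $S:=\{v\in V(J):|B_v|=1\}$, and for $v\in S$ let $\phi(v)\in[i]$ be the index of the part of $G$ containing the single vertex of $B_v$. If $u,v\in S$ and $uv\in E(J)$, then these two single vertices are adjacent in $G$, hence lie in distinct parts, so $\phi(u)\neq\phi(v)$; thus $\phi$ is a proper colouring of $J[S]$ with at most $i$ colours, giving $|S|\le\alpha_i(J)$. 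Since the $B_v$ are pairwise disjoint, $\v(G)\ge\sum_{v\in V(J)}|B_v|\ge|S|+2(\v(J)-|S|)=2\v(J)-|S|\ge 2\v(J)-\alpha_i(J)$, as claimed.

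To finish, I would fix $k\in\NN$ and apply the lemma with $J=k\,H$. Here $\v(k\,H)=kn$, and $\alpha_i(k\,H)=k\,a$ because a subset of $V(k\,H)$ induces an $i$-colourable subgraph precisely when its intersection with each of the $k$ copies of $H$ does. Hence every complete multipartite graph $G$ with at most $i$ parts and $k\,H\preceq G$ satisfies $\v(G)\ge k(2n-a)$. Now set $m:=\lceil k(2n-a)/i\rceil-1$ (note $m\ge 1$ for $k$ large, since $2n-a\ge\v(H)\ge 1$) and let $G_k:=K_{m,\dots,m}$ be the balanced complete $i$-partite graph. Then $\v(G_k)=im<k(2n-a)$, so $k\,H\not\preceq G_k$, whence $c(k\,H)\ge\d(G_k)=\frac{(i-1)m}{2}>\frac{i-1}{2}\left(\frac{k(2n-a)}{i}-1\right)$. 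Dividing by $k$ and letting $k\to\infty$, equation~\eqref{e:frac0} gives $c_f(H)=\lim_{k\to\infty}\frac{c(k\,H)}{k}\ge\frac{(i-1)(2n-a)}{2i}=\frac{i-1}{i}\left(\v(H)-\frac{\alpha_i(H)}{2}\right)$.

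I do not expect a genuine obstacle here. The only points requiring care are the bookkeeping in the choice of $m$, so that the additive term $\frac{i-1}{2}$ is absorbed in the limit (this is exactly why one must pass to $k\,H$ rather than argue with a single complete multipartite graph, as in \cref{lem:cf3}), and a quick check of the degenerate cases $i=1$ (where $J[S]$ is edgeless and the claimed bound is $0$) and $i\ge\chi(H)$ (where $\alpha_i(H)=n$ and the argument applies verbatim).
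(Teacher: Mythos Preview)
Your proof is correct and follows essentially the same route as the paper: both use the balanced complete $i$-partite graph, argue that branch sets of size one induce an $i$-colourable subgraph (so at most $\alpha_i$ of them), deduce the lower bound $k(2\v(H)-\alpha_i(H))$ on the host size, and pass to the limit via \eqref{e:frac0}. Your presentation is in fact a bit more careful than the paper's (you spell out the colouring $\phi$ and the identity $\alpha_i(k\,H)=k\,\alpha_i(H)$, whereas the paper applies the bag-count bound copy-by-copy), but the underlying argument is identical.
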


\begin{proof} Generalizing the notation in \cref{InterestingExample}, let $T(in,i)$ denote the graph with the vertex set admitting a partition $(A_1,\dots,A_i)$ such that $|A_j|=n$ for every $j \in [i]$, and two vertices of $T(in,i)$ are adjacent if and only if they belong to different parts of this partition.
	
Let $G$ be a subgraph of $T(in,i)$ such that $H$ is a minor of $G$. Then all but at most $\alpha_i(H)$ vertices of $H$ must be obtained by contraction, implying
$$\v(G) \geq \alpha_i(H) + 2 ( \v(H) - \alpha_i(H) ) = 2\v(H) - \alpha_i(H).$$
Thus, if $k\,H$ is a minor of $T(in,i)$ then $in \geq k ( 2\v(H) - \alpha_i(H) )$. 
Let $n = \ceil{  \frac{k}{i} ( 2\v(H) - \alpha_i(H) ) }-1$.
Thus	$in < k ( 2\v(H) - \alpha_i(H) )$, implying $k\,H$ is not a minor of $T(in,i)$. 
Hence
	$$c(k\, H) \geq \d( T(in,i) ) = \frac{\binom{i}{2}N^2}{iN} = 
	\frac{i-1}{2} \left( \ceil*{  \frac{k}{i} ( 2\v(H) - \alpha_i(H) ) }-1 \right)$$
	and
$$c_f(H) = \lim_{k\to\infty} \frac{c(k\, H)}{k}  \geq 
 \frac{i-1}{i} \left(  \v(H) - \frac{\alpha_i(H)}{2} \right),$$	as desired.
\end{proof}

This lemma motivates us to define 
$$\mathdefn{c_T(H)} := \sup\left\{ \tau(H), \frac{2}{3}\s{\v(H) - \frac{\alpha_3(H)}{2}},\dots, \frac{i-1}{i}\s{\v(H) - \frac{\alpha_i(H)}{2}},\dots\right\}.$$
It is tempting to conjecture that $c_f(H)=c_T(H)$ for every graph $H$; that is, the  balanced complete multipartite (Tur\'an) graphs  are the only source of extremal examples. We are unable to prove this in general, but have proved it for large enough $c_f(H)$. 

\begin{restatable}{thm}{Twothirds}\label{t:twothirds}
	For every graph $H$ such that $c_f(H) > \frac{2}{3}\v(H)$, 
	$$c_f(H)=c_T(H).$$
\end{restatable}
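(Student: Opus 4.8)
I would first observe that $c_f(H)\ge c_T(H)$ holds for every graph $H$, with no hypothesis: by definition $c_T(H)$ is the supremum of the quantities $\tfrac{i-1}{i}\bigl(\v(H)-\tfrac{\alpha_i(H)}{2}\bigr)$, which are lower bounds for $c_f(H)$ by the preceding lemma, together with $\tau(H)$, which is a lower bound for $c_f(H)$ by~\eqref{e:naive2}. So the whole content of the theorem is the reverse inequality under the hypothesis $c_f(H)>\tfrac23\v(H)$, and the plan is to prove it through the complete multipartite description of $c_f$. By the results of Sections~\ref{s:frac}--\ref{s:upper} — which refine the bounds $\gamma_H\le c_f(H)\le\gamma_H+2$ by identifying $c_f(H)$ exactly with a supremum of (fractional) densities $\d(G)$ of complete multipartite graphs $G$ with $H\not\preceq G$ — it is enough to show
\begin{equation*}
\d(G)\le\max\!\bigl(c_T(H),\ \tfrac23\v(H)\bigr)\qquad\text{for every complete multipartite graph }G\text{ with }H\not\preceq G.
\end{equation*}
Indeed this gives $c_f(H)\le\max(c_T(H),\tfrac23\v(H))$, and together with the hypothesis it forces $c_f(H)\le c_T(H)$, hence equality.

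To prove the displayed bound, write $G=K_{n_1,\dots,n_p}$ with $n_1\ge\dots\ge n_p$ and $N=\v(G)$, and assume $H\not\preceq G$. I would first dispose of the case $n_1\ge\v(H)$: here a minor model of $H$ can use $A_1$ as an independent ``dumping ground'', expanding a minimum vertex cover $C$ of $H$ into $|C|$ pairwise disjoint $2$-vertex branch sets, each consisting of one vertex of $A_1$ and one vertex of $A_2\cup\dots\cup A_p$, and placing $V(H)\setminus C$ as singletons inside $A_1$. All adjacencies of $H$ are present, so this model exists whenever $n_2+\dots+n_p\ge\tau(H)$; thus $H\not\preceq G$ forces $n_2+\dots+n_p\le\tau(H)-1$, and from $\e(G)\le\binom{N}{2}-\binom{n_1}{2}$ we get $\d(G)<N-n_1=n_2+\dots+n_p<\tau(H)\le c_T(H)$. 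So from now on every part of $G$ has size less than $\v(H)$, and we may assume $\d(G)>\tfrac23\v(H)$ (otherwise there is nothing to prove).

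In this regime the plan is a smoothing/convexity argument on the profile $(n_1,\dots,n_p)$. The key structural input is a criterion — of Hall/flow type — for when $H$ is a minor of a complete multipartite graph: an optimal minor model may always be taken with every branch set a single vertex or a pair of vertices in two distinct parts, so that $H\preceq K_{n_1,\dots,n_p}$ if and only if $V(H)$ partitions as $S\sqcup T$ where $H[S]$ admits a proper colouring $c\colon S\to[p]$ and each $v\in T$ is assigned an unordered pair of colours, in such a way that for every $j\in[p]$ the number of vertices of $S$ coloured $j$ plus the number of $v\in T$ whose pair contains $j$ is at most $n_j$. Using this criterion, one equalises the part sizes as far as possible while keeping $H\not\preceq G$ and $\d(G)$ non-decreasing. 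The density assumption $\d(G)>\tfrac23\v(H)$ is precisely what keeps this process admissible — it excludes the configurations with at most three parts, which are $H$-avoiding only below that threshold — so it terminates at a Tur\'an graph $T(in,i)$. For such graphs the (tight, fractional form of the) computation in the proof of the preceding lemma gives that $H\not\preceq T(in,i)$ forces $\d\bigl(T(in,i)\bigr)\le\tfrac{i-1}{i}\bigl(\v(H)-\tfrac{\alpha_i(H)}{2}\bigr)$, which is at most $c_T(H)$ for every $i$ (it is a defining term of $c_T(H)$ when $i\ge3$, and at most $\tfrac{\v(H)}{2}\le c_T(H)$ when $i\le2$). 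This proves the displayed bound.

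The main obstacle is this last part: establishing the exact minor-embedding criterion for complete multipartite hosts in terms of $\tau(H)$ and the $\alpha_i(H)$, and then the convexity argument showing that, subject to this criterion, the density is maximised by a balanced (Tur\'an) profile. Moreover, everything has to be carried out inside the fractional framework of Sections~\ref{s:frac}--\ref{s:upper}, so that the additive slack in $\gamma_H\le c_f(H)\le\gamma_H+2$ does not leak into the final identity; once that framework is in place, the remaining steps are routine estimates.
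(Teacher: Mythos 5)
Your opening reduction is fine: $c_f(H)\ge c_T(H)$ holds unconditionally (\cref{cfLowerBound}), so only the upper bound is at issue, and the paper likewise attacks it through the complete-multipartite/weight-vector description of $c_f$. But from there the proposal is a plan rather than a proof, and the two steps you defer are exactly where the content lies. First, the condition you propose to work with is the \emph{integral} one, $H\not\preceq G$ for complete multipartite $G$: your Hall-type embedding criterion characterises genuine minor models, so what it controls is $\gamma_H$, and the paper only gives $c_f(H)\le\gamma_H+2$. That additive $2$ is fatal to an exact identity, and your closing remark that the slack ``does not leak in once the framework is in place'' is not a resolution. The correct object is the fractional condition $\Vol_H(\bl{w})<1$ from \cref{e:cf4}, which involves convex combinations of jumbled models; the paper works with it directly (via \cref{lem:prelim2,lem:prelim3}) and never passes through an integral embedding criterion.

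Second, the smoothing claim --- that after splitting off the one-huge-part case the extremal profiles equalise to a balanced Tur\'an graph $T(in,i)$ --- is asserted without argument and does not match what actually happens. In the paper's \cref{l:technical0} the weight vectors that must be handled are mixtures $\bl{x}+\bl{y}+\bl{z}$, where $\bl{z}$ is a \emph{nested} sum of Tur\'an layers $\bl{1}_{[i]}$ over several different $i$, $\bl{y}$ is a matchable vector, and $\bl{x}$ is concentrated on a single dominant coordinate; bounding $\|\bl{w}\|$ against $|\bl{w}|\Vol_H(\bl{w})$ for such mixtures requires controlling all the cross terms $s_is_j$, which is the system of inequalities \cref{e:ld0}, verified by the separate and genuinely delicate \cref{l:technical2}. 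Nothing in your sketch explains why equalising part sizes preserves $H\not\preceq G$ (or $\Vol_H<1$) while not decreasing density, why the process terminates at a single balanced profile rather than a mixture, or how the hypothesis $c_f(H)>\tfrac23\v(H)$ enters beyond the bare assertion that it ``keeps the process admissible''. Labelling these steps ``routine estimates'' understates them: they constitute essentially the whole of the paper's Section~5.
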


\subsection{Formal definition and basic properties}

In this  subsection, we  formally define the fractional extremal function and establish a number of its basic properties.

Let $\mc{P}(X)$ denote the collection of all subsets of a set $X$. A \defn{model} of a graph $H$ in a graph $G$ is a function $\mu: V(H) \to \mc{P}(V(G))$ such that:
\begin{itemize} 
	\item[(M1)] $G[\mu(v)]$ is connected for every $v \in V(H)$,
	\item[(M2)] for every edge $uv \in E(H)$ there exists $u' \in \mu(u)$ and $v' \in \mu (v)$ such that 
	$u'v' \in E(G)$,
	\item[(M3)]  $\mu(v) \cap \mu(u) = \emptyset$ for every pair of distinct $u,v \in V(H)$.
\end{itemize}
Observe that $H$ is a minor of $G$ if and only if there exists a model of $H$ in $G$

We need the following weakening of the notion of a model. A \defn{jumbled model}\footnote{A different notion of fractional model restricted to complete graphs, called \defn{fractional brambles}, was independently introduced by \citet{Fox11} and \citet{Pedersen11}.} of a graph $H$ in a graph $G$ is a function $\mu: V(H) \to \mc{P}(V(G))$ satisfying the properties (M1) and (M2) above, but not necessarily (M3). That is, we allow the images of vertices of $H$ in $G$ to overlap. We keep track of the overlaps by defining $\mu^{\#}(u)$, for $u \in V(G)$, to be the number of vertices $v \in V(H)$ such that $u \in \mu(v)$.

Define the \defn{$H$-volume} of a graph $G$, denoted by $\mathdefn{\brm{Vol}_{H}(G)}$, to be the maximum weight of a fractional packing of jumbled models of $H$. That is, 
$$\brm{Vol}_{H}(G) := \sup \sum_{i=1}^n \alpha_i,$$
taken over all choices of $n \in \bb{N}$, $\alpha_1,\dots,\alpha_n \in \bb{R}_+$ and jumbled models $\mu_1,\dots,\mu_n$ of $H$ in $G$ such that for every $v \in V(G)$, 
\begin{equation}
\label{e:Vol} \sum_{i=1}^n \alpha_i\mu_i^{\#}(v) \leq 1.
\end{equation}
It is not hard to see that the supremum in the definition of $H$-volume is always achieved, since it is a maximum of a  linear function on a bounded polytope in $\bb{R}_+^{\v(G)}$.

If $H \preceq G$ then there exists a model $\mu$ of $H$ in $G$. Taking $k=1$, $\alpha_1=1$ and $\mu_1 = \mu$, we see that $\brm{Vol}_{H}(G) \geq 1$. The converse  does not hold in general.

Similarly, if $\ell H \preceq G$ then $\brm{Vol}_{H}(G) \geq \ell$. The next lemma shows that if $G$ is a large blowup of a bounded size graph, then  a partial converse of this statement holds.

\begin{lem}\label{lem:multipartite1}\label{m1}
	For all $T  > 0$ there exists $K=K_{\ref{m1}}(T)>0$ such that for all graphs $H$ and $G$  such that $\v(H),\v(G) \leq T$ and for all $k,\ell \in\NN_0$ such that $$\ell \leq k\brm{Vol}_H(G) - K,$$
	there exists a model $\mu$ of $\ell H$ in $G^{(k)}$ such that $|\mu(v)| \leq \v(G)$ for every $v \in V(\ell H) $. In particular, $\ell H$ is a minor of $G^{(k)}$.
\end{lem}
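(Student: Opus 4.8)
The plan is to round an optimal fractional packing of jumbled models of $H$ in $G$ into an integral packing of pairwise vertex-disjoint ordinary models of $H$ inside the blow-up $G^{(k)}$, using the $k$ copies of each vertex to undo the overlaps that the failure of (M3) permits for jumbled models.

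It suffices to take $K:=2^{T^2}$. Since $\v(H),\v(G)\le T$, the number of functions $V(H)\to\mc{P}(V(G))$, hence the number of distinct jumbled models of $H$ in $G$, is at most $2^{\v(H)\v(G)}\le 2^{T^2}=K$. The supremum defining $\brm{Vol}_H(G)$ is attained, so fix an optimal fractional packing; discarding models of weight $0$ and merging the weights of any two equal models (which preserves the objective and every constraint in the definition of $\brm{Vol}_H(G)$) gives distinct jumbled models $\mu_1,\dots,\mu_n$ of $H$ in $G$ with $n\le K$ and weights $\alpha_1,\dots,\alpha_n>0$ such that $\sum_{i=1}^n\alpha_i\mu_i^{\#}(w)\le 1$ for all $w\in V(G)$ and $\sum_{i=1}^n\alpha_i=\brm{Vol}_H(G)$.

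Put $m_i:=\floor{\alpha_i k}$ for $i\in[n]$. For each $i\in[n]$ and $j\in[m_i]$ I will build a ``lift'' of $\mu_i$ to $G^{(k)}$, namely a family of branch sets of a copy of $H$: I choose, for each $u\in V(H)$ and each $w\in\mu_i(u)$, a vertex $f_i^{(j)}(u,w)$ of the independent set $I(w)\subseteq V(G^{(k)})$, and set $B_i^{(j)}(u):=\{f_i^{(j)}(u,w):w\in\mu_i(u)\}$. Because $f_i^{(j)}(u,w)\in I(w)$ and the classes $I(w)$ are pairwise disjoint, the map $w\mapsto f_i^{(j)}(u,w)$ is automatically injective; two facts then follow at once from the definition of $G^{(k)}$ together with (M1) and (M2) for $\mu_i$: the subgraph $G^{(k)}[B_i^{(j)}(u)]$ is isomorphic to the connected graph $G[\mu_i(u)]$ (two blow-up vertices are adjacent precisely when their images in $G$ are adjacent), which gives (M1); and for $uu'\in E(H)$, choosing $w\in\mu_i(u)$ and $w'\in\mu_i(u')$ with $ww'\in E(G)$ makes $f_i^{(j)}(u,w)f_i^{(j)}(u',w')$ an edge, which gives (M2). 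So it remains only to choose the vertices $f_i^{(j)}(u,w)$ so that, for each fixed $w\in V(G)$, they are pairwise distinct as $(i,j,u)$ ranges over all triples with $w\in\mu_i(u)$. Since two branch sets (in the same or in different lifts) can intersect only inside a common class $I(w)$, this one condition forces both (M3) within each copy and vertex-disjointness of the $\sum_{i=1}^n m_i$ copies.

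To meet the condition, observe that the number of triples $(i,j,u)$ requiring a vertex of $I(w)$ equals $\sum_{i=1}^n m_i\mu_i^{\#}(w)\le\sum_{i=1}^n\alpha_i k\,\mu_i^{\#}(w)=k\sum_{i=1}^n\alpha_i\mu_i^{\#}(w)\le k=|I(w)|$, so distinct vertices of $I(w)$ can be assigned to all of them; doing this independently for every $w\in V(G)$ defines all the $f_i^{(j)}(u,w)$ and yields $\sum_{i=1}^n m_i$ pairwise vertex-disjoint models of $H$ in $G^{(k)}$, each branch set of size $|\mu_i(u)|\le\v(G)$. Finally $\sum_{i=1}^n m_i\ge\sum_{i=1}^n(\alpha_i k-1)=k\,\brm{Vol}_H(G)-n\ge k\,\brm{Vol}_H(G)-K\ge\ell$, so retaining $\ell$ of these models and taking the union of their branch sets produces a model $\mu$ of $\ell H$ in $G^{(k)}$ with $|\mu(v)|\le\v(G)$ for every $v\in V(\ell H)$; in particular $\ell H\preceq G^{(k)}$. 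The only step demanding real care is this count: one must notice that a single lift of $\mu_i$ consumes exactly $\mu_i^{\#}(w)$ vertices of $I(w)$, so that the demand on each $I(w)$ is governed precisely by $k$ times the packing constraint at $w$; the rest is routine bookkeeping, and I anticipate no further obstacle.
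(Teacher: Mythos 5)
Your proof is correct and follows essentially the same route as the paper's: round the weights of an optimal fractional packing down to $\lfloor \alpha_i k\rfloor$, lift each jumbled model to vertex-disjoint genuine models in the blow-up using the constraint $\sum_i\alpha_i\mu_i^{\#}(w)\le 1$ to fit all demands inside each class $I(w)$, and absorb the rounding loss by bounding the number of distinct jumbled models by a function of $T$. Your write-up is somewhat more explicit than the paper's about verifying (M1)--(M3) and disjointness for the lifted models, but the argument is the same.
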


\begin{proof}
	We show that $K=T^{2^{T}}$ satisfies the lemma.
	By definition of the $H$-volume there exist  $n\in \bb{N}$, $\alpha_1,\dots,\alpha_n \in \bb{R}_+$ and jumbled models $\mu_1,\dots,\mu_n$ of $H$ in $G$ such that $k \sum_{i=1}^n \alpha_i \geq \ell+K$ and for every $v \in V(G)$, 
	\begin{equation*}
	\sum_{i=1}^n \alpha_i\mu_i^{\#}(v) \leq 1,
	\end{equation*}
	For $v \in V(H)$, let $I(v)$ be the set of $k$ vertices of  $G^{(k)}$corresponding to $v$, as in the definition of the blowup.
	
	Note that for each $i$, and any $Z \subseteq V(G^{(k)})$ such that $|Z \cap I(v)| = \mu_i^{\#}(v)$ there exists a model $\mu'_i$ of $H$ in $G^{(k)}$ naturally corresponding to the jumbled model $\mu_i$, and  $|\mu'_i(v)| \leq  \v(G)$ for every $v \in V(H)$.
	
Since there are at most ${\v(H)}^{2^{\v(G)}}$ distinct jumbled models of $H$ in $G$ we assume $n \leq K$. Let $\beta_i = \lfloor k \alpha_i \rfloor$ for $i \in [n]$. By the above, we can find a model of $\beta_i H$  using   $\beta_i\mu_i^{\#}(v)$ vertices in $I(v)$,
	and we can choose these models to be disjoint for different $i$.
	Now since
	$$\sum_{i=1}^{n}\beta_i \geq  \s{k \sum_{i=1}^n \alpha_i} - n \geq k\brm{Vol}_H(G) - K\geq \ell,$$
	the union of the above models is a model of $\ell' H$  for some $\ell'\geq \ell$, and so $\ell H$ is a minor of $G^{(k)}$. 
\end{proof}

Secondly, using LP duality we establish a lower bound on $K_{s,t}$-volume. 

\begin{lem}\label{lem:bipartite}
	For all integers $s \geq t > 0$ and every graph $G$, 
	$$\brm{Vol}_{K_{s,t}}(G) \geq \min\left\{ \frac{\v(G)}{s+t}, \frac{\delta(G)}{t} \right\}. $$
\end{lem}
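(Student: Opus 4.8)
The plan is to set up a linear program whose optimal value is a lower bound for $\brm{Vol}_{K_{s,t}}(G)$ and then exhibit a feasible dual solution, or equivalently, directly construct a fractional packing of jumbled models achieving the claimed weight. The cleanest route is to build jumbled models of $K_{s,t}$ whose ``centres'' are single vertices: for a vertex $v \in V(G)$ of high degree, take $\mu$ to send the $s$ vertices on one side of $K_{s,t}$ to $\{v\}$ (so $\mu^{\#}(v) = s$ from this model alone — but we will weight it down) and send the $t$ vertices on the other side to $t$ distinct neighbours of $v$. Property (M1) holds since each $\mu(x)$ is a single vertex, (M2) holds since $v$ is adjacent to each chosen neighbour. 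More symmetrically, and to get the $\frac{\v(G)}{s+t}$ term, I would instead consider models supported on small vertex sets distributed evenly across $G$, and then the LP that assigns a weight to each such model subject to the packing constraint $\sum_i \alpha_i \mu_i^{\#}(v) \le 1$ at every $v$.

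The key steps, in order: (1) Write the $H$-volume as the LP $\max \bl{1}^\top \bl{\alpha}$ subject to $M\bl{\alpha} \le \bl{1}$, $\bl{\alpha} \ge 0$, where $M$ is the (possibly infinite, but WLOG finite) matrix with $M_{v,i} = \mu_i^{\#}(v)$ ranging over all jumbled models $\mu_i$ of $K_{s,t}$ in $G$. (2) Pass to the dual: $\min \bl{1}^\top \bl{y}$ subject to $M^\top \bl{y} \ge \bl{1}$, $\bl{y} \ge 0$; that is, assign a nonnegative weight $y_v$ to each vertex so that every jumbled model $\mu$ of $K_{s,t}$ satisfies $\sum_v y_v \mu^{\#}(v) \ge 1$, and we want to show every such $\bl{y}$ has $\sum_v y_v \ge \min\{\frac{\v(G)}{s+t}, \frac{\delta(G)}{t}\}$. (3) Contrapositive: suppose $\sum_v y_v < \min\{\frac{\v(G)}{s+t}, \frac{\delta(G)}{t}\}$; produce a jumbled model $\mu$ of $K_{s,t}$ with $\sum_v y_v \mu^{\#}(v) < 1$. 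Pick a vertex $v$ minimising $y_v$ over $V(G)$; since $\sum_v y_v < \frac{\v(G)}{s+t}$ we can in fact find a set of vertices of small total $\bl{y}$-weight, and since $\sum_v y_v < \frac{\delta(G)}{t}$, among the $\ge \delta(G)$ neighbours of a low-weight vertex we can pick $t$ of them of total $\bl{y}$-weight less than $t \cdot \frac{1}{t}\cdot(\text{something} < 1)$ — more carefully, choose the $t$ neighbours of smallest $y$-value. Assign the $s$-side of $K_{s,t}$ all to one minimum-weight vertex $u_0$ (contributing $s\,y_{u_0}$) and the $t$-side to the $t$ lowest-weight neighbours of $u_0$; then bound $s\,y_{u_0} + \sum_{j} y_{w_j}$ and show it is $< 1$ using both hypotheses (the first to control $y_{u_0}$ via averaging over all of $V(G)$, the second to control the neighbour sum via averaging over the $\ge \delta(G)$ neighbours).

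The main obstacle I expect is getting the two terms of the minimum to emerge from a single model construction with the right constant: the $s$-side should be ``paid for'' by the cheap vertex (bounded using $\sum y_v/\v(G)$, which needs $s \le$ something, hence the roles $s \ge t$ and the $s+t$ in the denominator), while the $t$-side is paid for by the cheap neighbours (bounded using $\delta(G)$). One has to be careful that the same vertex may be forced to play on both sides if $u_0$ is its own neighbour's neighbour, but since the $s$-side is a single vertex and the $t$-side consists of neighbours of $u_0$, which are distinct from $u_0$, property (M3) is irrelevant (we only need a jumbled model) and overlaps only help keep $\mu^{\#}$ small. A secondary point is justifying that the supremum defining $\brm{Vol}_{K_{s,t}}(G)$ is attained by an LP over finitely many models — but the excerpt already notes this, since there are at most $\v(G)^{2^{\v(G)}}$ jumbled models, so strong LP duality applies directly. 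Once the dual bound is established, $\brm{Vol}_{K_{s,t}}(G) = $ primal optimum $=$ dual optimum $\ge \min\{\frac{\v(G)}{s+t},\frac{\delta(G)}{t}\}$, as required.
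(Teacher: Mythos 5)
Your overall framework is the same as the paper's: lower-bound $\brm{Vol}_{K_{s,t}}(G)$ by the restricted LP whose variables are weights on jumbled models that send each side of $K_{s,t}$ to a single vertex, pass to the dual, and show every dual-feasible weighting $\bl{y}$ has $\sum_v y_v \geq \min\{\frac{\v(G)}{s+t},\frac{\delta(G)}{t}\}$. The paper does exactly this, with the dual constraints being $s\beta(u)+t\beta(v)\geq 1$ for every (ordered) edge $uv$.

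However, your step (3) has a genuine quantitative gap. Writing $S=\sum_v y_v$, $n=\v(G)$, $\delta=\delta(G)$, your plan is to take the globally cheapest vertex $u_0$ (so $y_{u_0}\leq S/n$) and its $t$ cheapest neighbours (whose total weight, by averaging over the $\geq\delta$ neighbours, is at most $tS/\delta$), and conclude that $s\,y_{u_0}+\sum_j y_{w_j}<1$. But under the hypothesis $S<\min\{\frac{n}{s+t},\frac{\delta}{t}\}$ these two averaging bounds only give $s\,y_{u_0}<\frac{s}{s+t}$ and $\sum_j y_{w_j}<1$, so the total is bounded by $\frac{s}{s+t}+1$, not by $1$; and one can check that $S\cdot(\frac{s}{n}+\frac{t}{\delta})<1$ would require $S<(\frac{s}{n}+\frac{t}{\delta})^{-1}$, which is \emph{smaller} than $\min\{\frac{n}{s+t},\frac{\delta}{t}\}$, so the hypothesis is too weak for this construction. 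The paper avoids averaging over the neighbourhood entirely: letting $\beta_0=\min_v\beta(v)$, either $\beta_0\geq\frac{1}{s+t}$ (and then $S\geq\frac{n}{s+t}$), or else every edge constraint at the argmin $u$ forces each neighbour individually to satisfy $\beta(v)\geq\frac{1-s\beta_0}{t}$, whence $S\geq\delta\cdot\frac{1-s\beta_0}{t}+(n-\delta)\beta_0$; this is linear in $\beta_0$ on $[0,\frac{1}{s+t}]$ and equals $\frac{\delta}{t}$ and $\frac{n}{s+t}$ at the two endpoints, giving the bound. Equivalently, in your contrapositive language, the violating model concentrates the $t$-side on a \emph{single} carefully chosen neighbour of $u_0$ (one with $s y_{u_0}+t y_v<1$), whose existence follows from the two-term sum above rather than from averaging; to repair your argument you should replace the averaging over $N(u_0)$ by this pointwise use of the edge constraints.
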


\begin{proof} 
	Let $\bar{E}$ denote the set of ordered pairs  of adjacent vertices of $G$.
	For $(u,v) \in \bar{E}$   define $\mu_{uv}$ to be the jumbled model of $K_{s,t}$ which maps the  part of the bipartition of $K_{s,t}$ of size $s$ to $\{u\}$, and the other part to $\{v\}$. Thus $\mu^{\#}_{uv}(u)=s$, $\mu^{\#}_{uv}(v)=t$, and $\mu^{\#}_{uv}(x)=0$  for every $x \in V(G) -\{u,v\}$. It follows from the definition of $K_{s,t}$-volume that $\Vol_{K_{s,t}}(G)$ is at least the maximum of $\sum_{(u,v) \in \bar{E}}\alpha_{u,v}$ taken over the sequences of  non-negative real numbers $(\alpha_{uv})_{(u,v) \in \bar{E}}$ such that 
	$$ \sum_{v: uv \in E(G)}(s\alpha_{uv}+t\alpha_{vu}) \leq 1,$$
	for every $u \in V(G)$. By LP duality the above maximum is equal to the minimum of  $\sum_{v \in V(G)}\beta(v)$ over the functions $\beta: V(G) \to \bb{R}_{+}$ such that
	$s\beta(u)+t\beta(v) \geq 1$ for all $uv \in E(G)$. Thus it suffices to show that, for all such functions  
	$$\sum_{v \in V(G)}\beta(v) \geq \min \left\{ \frac{\v(G)}{s+t}, \frac{\delta(G)}{t} \right\}.$$
	Let $u \in V(G)$ be a vertex with $\beta(u)$ minimum and let $\beta_0 = \beta(u)$. If $\beta_0 \geq \frac{1}{s+t}$  then $\sum_{v \in V(G)} \beta(v) \geq \beta_0 \v(G) \geq \frac{\v(G) }{s+t}$, as desired. Otherwise, $\beta(v) \geq \frac{1-s\beta_0}{t}$ for each neighbour $v$ of $u$. It follows that
	\begin{align*}
	\sum_{v \in V(G)} \beta(v)\geq \frac{1 - s\beta_0}{t} \delta(G)  + \beta_0(\v(G)-\delta(G)) \geq \min \left\{ \frac{\v(G)}{s+t}, \frac{\delta(G)}{t} \right\}, 
	\end{align*}  
	where the second inequality follows as the linear function of $\beta_0$ on the interval $[0,\frac{1}{s+t}]$ achieves its minimum for $\beta_0=0$ or $\beta_0= \frac{1}{s+t}$.
\end{proof}

Finally, define the \defn{fractional extremal function} of a graph $H$ to be
\begin{equation}\label{e:cf1} 
c_f(H) := \sup\frac{\d(G)}{\brm{Vol}_{H}(G)} 
\end{equation} 
taken over all non-null graphs $G$. 

We finish this section by proving a crucial lemma that in particular implies that $c(k\,H) \leq (1+o(1))k\,c_f(H)$ for every $H$. Its proof relies on the following standard corollary of Szemer\'{e}di's regularity lemma combined with an embedding lemma, which follows immediately from the degree form of the regularity lemma~\cite[Theorem 1.10]{KomSim93} and a version of the embedding lemma given in ~\cite[Theorem 2.1]{KomSim93} and a follow-up remark. The famous blowup lemma of Koml\'{o}s, S\'{a}rk\"{o}zy and Szemer\'{e}di~\cite{KSS97Blowup} implies that the restriction on the component size in \cref{emb} can be replaced by the much weaker restriction that the maximum degree of $G'$ is at most $K$. The current version, however, is sufficient for our purposes and is fairly straightforward to prove. For completeness we include a proof in Appendix~A.

\begin{restatable}{thm}{Emb}\label{t:embedding}\label{emb} 
	For all $\eps > 0$ there exists $T=T_{\ref{emb}}(\eps)$ such that for every $K$ there exists $N=N_{\ref{emb}}(\eps,K)$ satisfying the following. For every graph $G$ with $\v(G) \geq N$, there exists a graph $R$ with $\v(R) \leq T$ and $k\in\NN$ such that: 
	\begin{itemize}
		\item $(1-\eps)\,\v(G) \leq k\v(R) \leq \v(G)$,
		\item $k \delta(R) \geq \delta(G) - \eps\,\v(G)$,
		\item $k^2\e(R) \geq \e(G) - \eps\,\v^2(G)$,
		\item if  $G'$ is a subgraph of $R^{(k)}$ such that every component of $G'$ has at most $K$ vertices, then $G'$ is isomorphic to a subgraph of $G$.
		\end{itemize}	
\end{restatable}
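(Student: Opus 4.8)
The plan is to derive this as a black-box consequence of two standard tools: the degree form of Szemer\'edi's regularity lemma, and the embedding lemma for bounded-component subgraphs of the ``reduced graph'' (both as cited from Koml\'os--Simonovits). I would start by fixing $\eps>0$, applying the degree form of the regularity lemma to $G$ with parameters chosen in terms of $\eps$ (roughly: regularity parameter $\sim\eps$, density threshold $\sim\eps$, lower bound $1/\eps$ on the number of parts), obtaining an $\eps'$-regular equipartition of $V(G)$ into $V_0,V_1,\dots,V_m$ with $|V_0|$ small and $|V_1|=\dots=|V_m|=:k$. This $k$ will be the blowup parameter in the statement, and $m$ will be bounded by the constant $M_0(\eps)$ produced by the regularity lemma, which I will take to be $T=T_{\ref{emb}}(\eps)$ (this $T$ depends only on $\eps$, as required — the order of quantifiers in the statement matches what the regularity lemma delivers). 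The value $N=N_{\ref{emb}}(\eps,K)$ comes from taking $\v(G)$ large enough that $|V_0|\le\eps\,\v(G)$ and that the cluster size $k$ is large relative to $K$, so that the embedding lemma can embed $K$ vertices into each pair.

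Next I would define the reduced graph $R$ on vertex set $[m]$, putting $ij\in E(R)$ exactly when the pair $(V_i,V_j)$ is $\eps'$-regular with density at least the chosen density threshold $\sim\eps$. The three ``counting'' bullet points then follow by routine estimates: $k\v(R)=km=\v(G)-|V_0|$ lies in $[(1-\eps)\v(G),\v(G)]$; every vertex of $G$ outside $V_0$ loses at most $|V_0|$ neighbours to $V_0$, at most $\eps'\,\v(G)$-ish neighbours to each of the boundedly many irregular pairs it lies in, and at most (density threshold)$\cdot\v(G)$ neighbours to low-density pairs, so after dividing appropriately one gets $k\delta(R)\ge\delta(G)-\eps\,\v(G)$ by absorbing all these losses into $\eps\,\v(G)$; similarly the number of edges of $G$ not accounted for by high-density regular pairs is at most $\binom{|V_0|}{2}+|V_0|\v(G)+\eps'\binom{m}{2}k^2+(\text{threshold})\binom{m}{2}k^2+m\binom{k}{2}$, all of which is $O(\eps)\v(G)^2$ for suitable choices, giving $k^2\e(R)\ge\e(G)-\eps\,\v(G)^2$. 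One has to be a little careful to choose $\eps'$ and the density threshold as explicit small multiples of $\eps$ (depending on nothing but $\eps$) before invoking the regularity lemma, so that all error terms genuinely sum to at most $\eps\,\v(G)$ and $\eps\,\v(G)^2$ respectively; this is the kind of bookkeeping I would do carefully in the appendix but not here.

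For the last bullet, I would invoke the embedding lemma (Koml\'os--Simonovits, Theorem 2.1 plus the follow-up remark): given a subgraph $G'$ of $R^{(k)}$ in which every connected component has at most $K$ vertices, assign to each vertex of $G'$ the cluster $V_i$ indexed by the part of $R^{(k)}$ it lies in; since $G'$ only uses edges of $R$, every edge of $G'$ maps to a high-density $\eps'$-regular pair. The embedding lemma then greedily embeds $G'$ into $G$ one vertex at a time, provided each cluster $V_i$ is asked to host only a bounded number of vertices and the regularity/density are good enough relative to $K$ — the component-size bound $K$ guarantees exactly the needed locality (each embedded vertex has at most $K-1$ already-embedded neighbours to respect), and $k$ large (i.e. $\v(G)\ge N(\eps,K)$) guarantees enough room. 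Hence $G'$ is isomorphic to a subgraph of $G$.

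The main obstacle is not conceptual but organizational: getting the quantifier order exactly right ($T$ before $K$, then $N$ depending on both) and verifying that the degree-form regularity lemma as stated in \cite{KomSim93} outputs precisely the bounds needed for the second and third bullets without extra hypotheses — in particular that the min-degree transfer bullet is not standard in all formulations and may require the ``degree form'' specifically (which is why the excerpt cites that version). I would also need to double-check that the embedding lemma variant cited permits each cluster to receive up to a constant (depending on $K$ and $m\le T$) many vertices rather than just one, since $G'\subseteq R^{(k)}$ can place many vertices in a single part; the follow-up remark in \cite{KomSim93} is exactly what licenses this, and I would cite it explicitly. None of these steps is deep, so the full argument belongs in Appendix~A as the excerpt indicates.
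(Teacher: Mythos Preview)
Your approach is essentially the same as the paper's: apply the degree form of the regularity lemma, take $R$ to be the reduced graph, and invoke the embedding lemma for the final bullet. The counting estimates for the first three bullets are exactly as you describe.

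There is one genuine technical slip. You take $k$ to be the common cluster size $|V_i|$ itself. The paper instead sets $k' := |V_i|$ and then $k := \lfloor (1-\eps')k'\rfloor$ for a small $\eps'>0$ chosen in advance (depending only on $\eps$). This slack is what the embedding lemma in \cite{KomSim93} actually requires: its hypothesis is that $H\subseteq R^{(k)}$ with $k\le(1-\eps')k'$ and every component of $H$ has at most $\eps'k$ vertices. With your choice $k=k'$ the greedy embedding can exhaust a cluster before all components are placed, and regularity of the remaining pairs is lost. Relatedly, your description of the embedding lemma as permitting ``up to a constant (depending on $K$ and $m\le T$) many vertices'' per cluster is not the right picture: each cluster may receive up to $k$ vertices (linear in $\v(G)$), and the relevant constraint is the component-size bound $K\le\eps'k$, which is arranged via the choice of $N$. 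Once you shrink $k$ to $\lfloor(1-\eps')k'\rfloor$ and state the embedding hypothesis correctly, everything goes through as in the paper; the first three bullets only improve under this change since $k\le k'$.
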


Combining \cref{lem:bipartite} with \cref{t:embedding} we obtain the following.

 \begin{lem}\label{l:copies}
 	For all $\eps > 0$ and every graph $H$ there exists $L=L_{\ref{l:copies}}(\eps,H)$ satisfying the following. Let $G$ be a graph and let  $\ell \geq L$ be an integer such that at least one of the following conditions holds:
 	\begin{enumerate}
 		\item[(i)] $\d(G) \geq \ell c_f(H) +\eps\,\v(G)$, or
 		\item[(ii)] $\v(G) \geq 2(1+\eps) \ell \v(H)$ and $\delta(G) \geq \ell \tau(H)+\eps\,\v(G)$. 
 	\end{enumerate}
 	Then $\ell\, H$ is a minor of $G$.
 \end{lem}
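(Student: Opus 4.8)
The plan is to reduce both cases to \cref{m1} after replacing $G$ by a blowup of a bounded-size graph via \cref{emb}. We may assume $\eps<\tfrac12$. Set $\tau:=\tau(H)$, $h:=\v(H)$, $\eps':=\tfrac{\eps}{100}$, $T:=\max\{h,T_{\ref{emb}}(\eps')\}$, $K_1:=K_{\ref{m1}}(T)$, $K_2:=hT$, $N:=N_{\ref{emb}}(\eps',K_2)$, and $L:=\max\{N,\lceil 2K_1/\eps\rceil\}$. Given $G$ and $\ell\ge L$ as in the statement, I would first note that $\v(G)\ge N$: in case (ii) because $\v(G)\ge 2(1+\eps)\ell h\ge \ell\ge L\ge N$, and in case (i) because $\d(G)\le\tfrac12\v(G)$ together with the hypothesis forces $\v(G)>2\ell\,c_f(H)\ge\ell\ge L$ (using the easy bound $c_f(H)\ge\tfrac12$). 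I would then apply \cref{emb} to $G$ with parameters $\eps'$ and $K_2$, obtaining a graph $R$ with $\v(R)\le T$ and $k\in\NN$ satisfying its four properties.

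The crux is to prove $k\,\Vol_H(R)\ge\ell+K_1$. Granting this, \cref{m1} (applicable since $\v(H),\v(R)\le T$) gives a model $\mu$ of $\ell H$ in $R^{(k)}$ with $|\mu(v)|\le\v(R)\le T$ for every $v$. Taking $G'$ to be the union of a spanning tree of each branch set $R^{(k)}[\mu(v)]$ together with one edge of $R^{(k)}$ realizing each edge of $\ell H$, we get a subgraph $G'$ of $R^{(k)}$ with $\ell H\preceq G'$, and each component of $G'$ lies inside the union of the branch sets of a single component of $\ell H$, so has at most $hT=K_2$ vertices. By the last property of \cref{emb}, $G'$ is isomorphic to a subgraph of $G$, so $\ell H\preceq G$, as wanted.

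To establish $k\,\Vol_H(R)\ge\ell+K_1$ I would use the bounds from \cref{emb}: $k\,\d(R)\ge\d(G)-\eps'\v(G)$, $(1-\eps')\v(G)\le k\,\v(R)\le\v(G)$, and $k\,\delta(R)\ge\delta(G)-\eps'\v(G)$. In case (i), the definition of $c_f$ gives $\Vol_H(R)\ge\d(R)/c_f(H)$, whence $k\,\Vol_H(R)\ge\bigl(\ell\,c_f(H)+(\eps-\eps')\v(G)\bigr)/c_f(H)\ge\ell+2(\eps-\eps')\ell\ge(1+\tfrac{\eps}{2})\ell$, using $\v(G)>2\ell\,c_f(H)$. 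In case (ii) I would first record the elementary fact that $H\preceq K_{\tau(H),\v(H)}$: take a vertex cover $S$ of $H$ with $|S|=\tau$, realize its vertices as $\tau$ singleton branch sets on the size-$\tau$ side of $K_{\tau,h}$ and the independent set $V(H)\setminus S$ as singleton branch sets on the size-$h$ side, and then attach one further vertex of the size-$h$ side to each branch set indexed by a vertex cover of $H[S]$ to supply the edges missing among those branch sets; this uses at most $\tau$ and at most $h$ vertices of the two sides respectively. (If $\tau=0$ then $\ell H$ is edgeless and $\v(G)\ge\ell h$ finishes immediately, so assume $\tau\ge 1$.) Since $\Vol_H(R)\ge\Vol_{H'}(R)$ whenever $H\preceq H'$ — a jumbled model of $H'$ composed with a model of $H$ in $H'$ yields a jumbled model of $H$ with no larger overlap counts — \cref{lem:bipartite} applied to $K_{h,\tau}$ gives $\Vol_H(R)\ge\min\{\v(R)/(h+\tau),\delta(R)/\tau\}$. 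Multiplying by $k$ and using the displayed bounds together with $h+\tau\le2h$, $h\ge\tau$, and $\v(G)\ge2(1+\eps)\ell h$, both terms come out at least $(1+\tfrac{\eps}{2})\ell$. In all cases $k\,\Vol_H(R)\ge(1+\tfrac{\eps}{2})\ell\ge\ell+K_1$ since $\ell\ge L\ge 2K_1/\eps$.

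I expect the main friction to be the bookkeeping of constants: $\eps'$ must be small enough (compared to $\eps$) to absorb the multiplicative losses in \cref{emb}, and $L$ must be large enough (compared to $K_1=K_{\ref{m1}}(T)$, which itself depends on $\eps$ through $T=T_{\ref{emb}}(\eps')$) so that the slack $\tfrac{\eps}{2}\ell$ swallows the additive loss $K_1$ coming from \cref{m1}. The only other genuinely non-routine point is the embedding $H\preceq K_{\tau(H),\v(H)}$ used in case (ii): routing the edges inside the vertex cover through fresh vertices would cost $\binom{\tau}{2}$ of them and push the size-$h$ side well past $\v(H)$, so one must route them through a vertex cover of $H[S]$ instead. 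This is exactly what makes the useful term $\v(R)/(h+\tau)$ — rather than the far smaller bound coming from a larger complete bipartite graph — available, which is what matters when $H$ is dense.
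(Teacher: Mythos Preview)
Your proof is correct and follows essentially the same route as the paper's: apply \cref{emb} to pass to a bounded-size reduced graph $R$, use either the definition of $c_f$ (case~(i)) or the monotonicity $\Vol_H(R)\ge\Vol_{K_{\v(H),\tau(H)}}(R)$ combined with \cref{lem:bipartite} (case~(ii)) to lower-bound $k\Vol_H(R)$, then invoke \cref{m1} and the embedding clause of \cref{emb}. The paper asserts $H\preceq K_{\v(H),\tau(H)}$ without argument; your justification via a vertex cover of $H[S]$ works, though the simpler move of attaching one extra vertex of the size-$h$ side to \emph{every} vertex of $S$ already fits in $K_{\tau,h}$ and suffices.
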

 
 \begin{proof} If $H$ is the null graph, then the lemma is trivial. Now assume $H$ is non-null, and so $c_f(H) \geq\frac12$. 
 	Let $t:=\v(H)$, let $T :=T_{\ref{emb}}(\frac{\eps}{2})$, $K := tT$, $K':=K_{\ref{m1}}(\max\{T,t\})$, and $N:=N_{\ref{emb}}(\frac{\eps}{2},K)$. We now show that $L:=\max\{2N, 4\eps^{-1}K'\v(H) \}$ satisfies the lemma. Both (i) and (ii) imply that $\v(G) \geq L/2\geq N$. Thus by \cref{t:embedding} there exists a graph $R$ with $\v(R)\leq T$ and $k\in\NN$ satisfying the conditions of \cref{t:embedding}.
 	In particular,
 	\begin{itemize}
 	\item $k\d(R)  \geq \d(G)- \frac{\eps}{2}\v(G)$,
 	\item $k\delta(R) \geq \delta(G) - \frac{\eps}{2}\v(G)$,
 	\item $k\v(R) \geq (1-\frac{\eps}{2})\,\v(G)$.
 	\item if   $G'$ is a subgraph of $R^{(k)}$ such that every component of $G'$ has at most $K$ vertices, then $G'$ is isomorphic to a subgraph of $G$.
 \end{itemize}
 	If (i) holds then since $\v(G)\geq \d(G)$, 
 	$$ k\Vol_H(R) \geq \frac{k\d(R)}{c_f(H)} \geq \frac{1}{c_f(H)}\s{\d(G)-\frac{\eps}{2}\v(G)} \geq  \ell + \frac{\eps\,\v(G)}{2c_f(H)} \geq \ell+K'.$$
 	If (ii) holds then   $$k\Vol_H(R) \geq k\Vol_{K_{\v(H),\tau(H)}}(R) \geq \min\left\{ \frac{k\v(R)}{2\v(H)}, \frac{k\delta(R)}{\tau(H)} \right\}, $$ where the first inequality holds since $H$ is a minor of $K_{\v(H),\tau(H)}$, and the second inequality holds by \cref{lem:bipartite}. Since $k\v(R) \geq 2\ell \v(H) + \frac{\eps}{2}\v(G) $ and $k\delta(R) \geq \ell \tau(H) + \frac{\eps}{2}$ by (ii) and the properties of $R$ listed above,
 	$$\min\left\{ \frac{k\v(R)}{2\v(H)}, \frac{k\delta(R)}{\tau(H)} \right\} \geq \ell + \frac{\eps\,\v(G)}{4\v(H)} \geq \ell+K'.$$
 	 	
 	Thus, in both cases $k\Vol_H(R) \geq \ell+K'$. By \cref{m1} there exists a model $\mu$ of $\ell H$ in $R^{(k)}$ such that $|\mu(v)| \leq T$ for every $v \in V(\ell H)$. It follows that there exists a subgraph of $G'$ of  $R^{(k)}$  such that every component of $G'$ has at most $K$ vertices and  $\ell H$ is a minor of $G'$. By the choice of $R$ it follows that $G'$ is a subgraph of $G$, and so $\ell H$  is a minor of $G$ as desired.
 \end{proof}

\section{From  graphs to weight vectors}\label{s:weight}

It is convenient for us to further the fractional point of view and extend the definitions of density and $H$-volume to  weighted graphs. For our purposes, a \defn{weighted graph} is a pair $(G,w)$, where $w:V(G)\to \bb{R}_+$ is a function such that $|w|= \sum_{v \in V(G)} w(v) >0$. Let 
$$\e(G,w) := \sum_{uv\in E(G)} w(u) w(v) \quad \text{and} 
\quad \d(G,w) := \frac{\e(G,w)}{|w|}.$$

As a natural extension of the definition of  $\brm{Vol}_{H}(G)$, define the  \defn{$H$-volume $\brm{Vol}_{H}(G,w)$} of  $(G,w)$ by replacing condition \eqref{e:Vol} by
\begin{equation}
\label{e:wVol}\sum_{i=1}^n \alpha_i\mu_i^{\#}(v) \leq w(v),
\end{equation}
for every $v \in V(G)$. Note that $H$-volume scales linearly with the weights; that is, 
$$\brm{Vol}_{H}(G,\alpha \cdot w) = \alpha\brm{Vol}_{H}(G,w) \qquad \text{for every} \; \alpha >0.$$

We can naturally define the sum of weighted graphs $(G_1,w_1)$ and $(G_2,w_2)$ by 
$$ (G_1,w_1)+ (G_2,w_2) := (G_1 \cup G_2,w_1+w_2),$$ 
using the convention $w_i(v)=0$ for $i \in [2]$ and $v \in V(G_{3-i})-V(G_i)$. 
The $H$-volume is clearly superadditive with respect to this operation; that is, 
\begin{equation}\label{e:volumeAdd}
\brm{Vol}_{H}((G_1,w_1)+ (G_2,w_2)) \geq \brm{Vol}_{H}(G_1,w_1) + \brm{Vol}_{H}(G_2,w_2).
\end{equation}

In the special case that $w:V(G)\to \NN_0$, the weighted graph $(G,w)$ naturally corresponds to an unbalanced blowup of $G$, which we denote by $G^w$ obtained from $G$ by replacing every vertex $v$ of $G$ with an independent set $I(v)$ of size $w(v)$ and for every edge $uv \in E(G)$ joining every vertex of $I(u)$ to every vertex of $I(v)$. Note that a jumbled model $\mu$ of a graph $H$ in $G^{w}$ corresponds to a jumbled model $\mu_{*}$ of $H$ in $G$ such that $\mu_{*}^{\#}(v) = \sum_{u \in I(v)}\mu^{\#}(u)$ for every $v \in V(G)$. It follows that $\Vol_H(G,w) = \Vol_H(G^{w}),$ when $w$ is integral.
From this it can easily be shown that $$\Vol_H(G,w) = \lim_{k \to \infty} \frac{1}{k}\Vol_H(G^{\lfloor k \cdot w \rfloor}). $$

Since $\d(G,w) = \lim_{k \to \infty} \frac{1}{k}\d(G^{\lfloor k \cdot w \rfloor})$, it follows that
$$\frac{\d(G,w)}{\Vol_H(G,w)} = \lim_{k \to \infty} \frac{\d(G^{\lfloor k \cdot w \rfloor})}{\Vol_H(G^{\lfloor k \cdot w \rfloor})},$$
implying that the fractional extremal function can be equivalently defined by 
\begin{equation}\label{e:cf11} 
c_f(H) = \sup\frac{\d(G,w)}{\brm{Vol}_{H}(G,w)} 
\end{equation} 
taken over all weighted graphs $(G,w)$. Since both the density and volume are linear in the weight function we can equivalently define 
\begin{equation}\label{e:cf2} 
c_f(H) = \sup_{(G,w) \: : \: \brm{Vol}_{H}(G,w) < 1} \d(G,w).
\end{equation}
The second interpretation makes the connection to the non-fractional  extremal function perhaps more evident. A definition of $c(H)$ can be obtained from \eqref{e:cf2} by replacing weighted graphs with unweighted ones, and the condition $\brm{Vol}_{H}(G,w) < 1$ with the condition that $G$ has no $H$ minor.

Our next goal is to  simplify the  definition of the fractional extremal function by showing that it suffices to take the supremum in \eqref{e:cf11} over complete weighted graphs. Towards this goal, the following lemma shows that the density of a weighted graph can be split over its complete subgraphs.  

\begin{lem}\label{lem:weightedCliques1}
	For every weighted graph $(G,w)$ there exist complete weighted graphs $(G_1,w_1),\dots,(G_m,w_m)$ such that 
	$$(G,w) = \sum_{i=1}^{m} (G_m,w_m) \qquad and \qquad \d(G,w) =\sum_{i=1}^m \d(G_i,w_i).$$ 
\end{lem}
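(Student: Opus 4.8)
The plan is to decompose $(G,w)$ into complete weighted subgraphs one clique at a time, peeling off a maximal clique with an appropriately chosen weight at each step, and to show that this decomposition can be arranged so that no edge-weight $w(u)w(v)$ is ever ``split'' between two pieces — which is exactly what the additivity of densities $\d(G,w)=\sum_i \d(G_i,w_i)$ demands (recall $\d(G_i,w_i)=\e(G_i,w_i)/|w_i|$ and $|w|=\sum_i|w_i|$, so what we really need is $\e(G,w)=\sum_i\e(G_i,w_i)$ together with the decomposition of $w$ itself). First I would reduce to the case where $w$ is rational: since both sides of both desired equalities are continuous (indeed piecewise-polynomial) in $w$ for a fixed underlying graph $G$, and since the statement for rational $w$ scales to the statement for any $\frac1N w$ with $N\in\NN$, a limiting/scaling argument lets us assume $w$ takes values in $\NN_0$. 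Equivalently, by the correspondence $(G,w)\leftrightarrow G^w$ noted just before the lemma, it suffices to prove: every graph $G'$ (namely $G'=G^w$) decomposes as an edge-disjoint union of cliques $G'=G_1'\cup\dots\cup G_m'$ with $V(G_i')\subseteq V(G')$; the corresponding weighted cliques on $V(G)$ are then obtained by collapsing each blown-up vertex class back to its original vertex.

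The core combinatorial fact is therefore: \emph{the edges of any graph $G'$ can be partitioned into cliques} (allowing isolated-vertex cliques to account for weight on vertices incident to no edge, if necessary). This is immediate — the edge set of $G'$ is covered by its edges themselves, each of which is a $K_2$, i.e. a complete graph — but to get the \emph{weighted} statement with the right $|w_i|$ bookkeeping I would instead peel off inclusion-maximal cliques greedily: pick a maximal clique $Q$ in the current graph (with respect to the current support), let $\varepsilon$ be the minimum of the current weights over $V(Q)$, set $w_i := \varepsilon\cdot\bl 1_{V(Q)}$, let $G_i:=G[V(Q)]$, replace $w$ by $w-w_i$, and repeat until $w\equiv 0$. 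Termination follows since at each step at least one vertex is removed from the support or the number of distinct positive weight values among the support decreases — standard for this type of ``peeling'' argument, and in the integral reduction it is literally finite. By construction $(G,w)=\sum_i(G_i,w_i)$, so the remaining point is the edge count: we must verify $\e(G,w)=\sum_i \e(G_i,w_i)$. This does \emph{not} follow from naive peeling of maximal cliques, because an edge $uv$ can lie in several of the chosen cliques, so $\sum_i w_i(u)w_i(v)$ would \emph{exceed} $w(u)w(v)$ — superadditivity of $\e$, not additivity.

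Hence the main obstacle is precisely engineering an edge-\emph{disjoint} clique decomposition rather than a mere clique cover, and the clean way to do it is to abandon maximality and instead use an \emph{edge partition into single edges}: take $G_{uv}:=G[\{u,v\}]=K_2$ for each edge $uv\in E(G)$, but now weight cleverly. For a vertex $v$ of degree $d_v>0$ in $G$, distribute its weight $w(v)$ among its incident edges in proportion to the other endpoint's weight, i.e. assign to the piece $G_{uv}$ the weight $w_{uv}(u):=w(u)\,w(v)\big/\!\!\sum_{x\sim u}w(x)\cdot(\text{normalisation})$ — more simply, on the piece $G_{uv}$ put $w_{uv}(u)=\lambda_{uv}w(u)$ and $w_{uv}(v)=\lambda_{uv}w(v)$ for a single scalar $\lambda_{uv}\ge 0$, and choose the $\lambda_{uv}$ so that $\sum_{u\sim v}\lambda_{uv}w(v)=w(v)$ for every $v$, i.e. $\sum_{u\sim v}\lambda_{uv}=1$ at every vertex simultaneously. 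Such a system of $\lambda_{uv}$ exists whenever $G$ has minimum degree $\ge 1$ on the support of $w$ (for instance $\lambda_{uv}=\frac12(\frac1{d_u}+\frac1{d_v})$ does \emph{not} quite work, but $\lambda_{uv}$ defined via any flow-type balancing does; if one insists on an explicit choice, orient $G$ so every vertex has out-degree considerations balanced, or simply invoke that the polytope $\{\lambda\ge0:\sum_{u\sim v}\lambda_{uv}=1\ \forall v\}$ is nonempty — it contains, e.g., the uniform point on each edge after rescaling). Then $|w_{uv}|=\lambda_{uv}(w(u)+w(v))$, and $\sum_{uv}|w_{uv}|=\sum_v w(v)\sum_{u\sim v}\lambda_{uv}=|w|$; moreover $\e(G_{uv},w_{uv})=w_{uv}(u)w_{uv}(v)=\lambda_{uv}^2w(u)w(v)$ — which is \emph{not} $w(u)w(v)$, so even this overcounts unless $\lambda_{uv}=1$, i.e. each edge is isolated. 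The resolution: use \emph{one} clique per edge but with full weight, $w_{uv}=\bl 1_{\{u,v\}}$-type only when edges are vertex-disjoint; in general split each \emph{vertex} $v$ into $d_v$ copies (this is exactly passing to a further blowup/splitting), decompose the resulting graph — where every vertex now has degree $1$ — into a perfect matching of $K_2$'s, and recombine. Under the vertex-splitting the density is preserved (weight of $v$ is partitioned among its copies, edge-weights unchanged), so $\e$ is genuinely additive and $|w|$ is genuinely additive, giving the lemma. I expect the write-up to be short once the vertex-splitting reduction is in place; the only subtlety to be careful about is the case of vertices carrying weight but no incident edges, handled by a trivial one-vertex clique $(G_i,w_i)$ with $\e(G_i,w_i)=0$.
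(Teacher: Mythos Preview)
Your proposal rests on a mistaken reduction in the very first paragraph: you assert that, given $(G,w)=\sum_i (G_i,w_i)$ (so that $|w|=\sum_i|w_i|$ automatically), the density identity $\d(G,w)=\sum_i\d(G_i,w_i)$ is equivalent to $\e(G,w)=\sum_i\e(G_i,w_i)$. This is false. The density is $\e/|w|$, not $\e$, and there is no reason for $\sum_i \e(G_i,w_i)/|w_i|$ to equal $\bigl(\sum_i\e(G_i,w_i)\bigr)/\bigl(\sum_i|w_i|\bigr)$. Concretely, take $G=P_3$ with unit weights: $\e(G,w)=2$, $|w|=3$, $\d(G,w)=\tfrac23$. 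In the decomposition the paper gives (edge $\{1,2\}$ with weights $(1,\tfrac12)$ and edge $\{2,3\}$ with weights $(\tfrac12,1)$), one has $\e(G_1,w_1)+\e(G_2,w_2)=\tfrac12+\tfrac12=1\neq 2$, yet $\d(G_1,w_1)+\d(G_2,w_2)=\tfrac13+\tfrac13=\tfrac23$. So edge-weight additivity is neither necessary nor what you should be aiming for, and the long discussion about edge-disjoint clique partitions and $\lambda_{uv}$-balancing is solving the wrong problem.

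The final ``vertex-splitting'' fix also fails: when you split $v$ into $d_v$ copies carrying weights summing to $w(v)$ and replace the star at $v$ by a matching, an edge $uv$ that contributed $w(u)w(v)$ now contributes only $w(u_i)w(v_j)$ for a single pair of copies, which is strictly smaller unless the split is trivial. So $\e$ drops, $|w|$ is unchanged, and the density is \emph{not} preserved, contrary to your claim.

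The paper's argument is different in kind. It inducts on $\v(G)$: if $G$ is complete we are done; otherwise pick non-adjacent $a,b$, set $\alpha=w(a)/(w(a)+w(b))$, and let $(G^a,w^a)$ be $G\setminus b$ with $w^a(a)=w(a)$ and $w^a(v)=\alpha w(v)$ for all other $v$; symmetrically for $(G^b,w^b)$. Then $(G,w)=(G^a,w^a)+(G^b,w^b)$, and the key computation is that $|w^a|=\alpha|w|$ while $\e(G^a,w^a)=\alpha^2 f+\alpha d_a$ (where $f$ is the edge-weight inside $G\setminus\{a,b\}$ and $d_a$ the weighted degree of $a$), so $\d(G^a,w^a)=(\alpha f+d_a)/|w|$; summing with the analogous expression for $b$ gives exactly $\d(G,w)$. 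The point is that scaling the \emph{rest} of the graph by $\alpha$ --- rather than trying to keep edge contributions intact --- is what makes densities (not edge-weights) add.
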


\begin{proof}
	We prove the lemma by induction on $\v(G)$. The base case $\v(G)=1$ is trivial.
	
	For the induction step, note that if $G$ is complete, then the lemma trivially holds. Further, if $w(v)=0$ for some $v \in V(G)$, then the lemma follows by applying the induction hypothesis to the graph $G \setminus v$.
	
Thus we assume that there exist non-adjacent $a,b \in V(G)$, such that $w_0 := w (a)+w(b)>0$. Let $\alpha=\frac{w(a)}{w_0}$. Let $G^a = G \setminus b$, and let  $w^a: V(G^a) \to \bb{R}$ be defined by $w^a(a)=w(a)$ and $w^a(v)=\alpha w(v)$ for every $v \in V(G) - \{a,b\}$. Similarly, define $G^b = G \setminus a$, and  define $w^b: V(G^b) \to \bb{R} $ by 	$w^b(b)=w(b)$ and $w^b(v)=(1 -\alpha) w(v)$ for every $v \in V(G) - \{a,b\}$. Note that 
	\begin{equation} 
	\label{e:clique1} (G,w)=(G^a,w^a) + (G^b,w^b).
	\end{equation}
	
	Let $f=\sum_{e \in E(G \setminus \{a,b\}), e=uv} w(u)w(v)$, let $d_a = \sum_{u : au \in E(G)}w(a)w(u)$ and let $d_b = \sum_{u : au \in E(G)}w(a)w(u)$. Then $\d(G,w)=(f+d_a+d_b)/w(V(G))$, $\d(G^a,w^a)=(\alpha^2 f+\alpha d_a)/(\alpha w(V(G)))$ and $\d(G^b,w^b)=((1-\alpha)^2 f+(1-\alpha) d_b)/((1-\alpha) w(V(G)))$.
	Thus 
	\begin{equation} 
	\label{e:clique2}\d(G,w)=\d(G^a,w^a)+\d(G^b,w^b).
	\end{equation}
	The lemma immediately follows from \eqref{e:clique1}, \eqref{e:clique2} and the induction hypothesis applied to $(G^a,w^a)$ and $(G^b,w^b)$.
\end{proof}

\begin{lem}\label{lem:cf3} 
For every graph $H$, 
	 \begin{equation}\label{e:cf3} 
	 c_f(H) = \sup_{(G,w)\: :\: G \; \mathrm{is \; complete} }\frac{\d(G,w)}{\brm{Vol}_{H}(G,w)}. 
	 \end{equation}
\end{lem}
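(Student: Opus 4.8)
The inequality $\ge$ in \eqref{e:cf3} is immediate, since the supremum on the right-hand side is over a subfamily of the weighted graphs appearing in \eqref{e:cf11}. So the plan is to establish the reverse inequality: given an arbitrary weighted graph $(G,w)$, I want to produce a complete weighted graph witnessing a ratio at least as large as $\d(G,w)/\Vol_H(G,w)$.

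First I would invoke \cref{lem:weightedCliques1} to write $(G,w) = \sum_{i=1}^m (G_i,w_i)$ with each $G_i$ complete and $\d(G,w) = \sum_{i=1}^m \d(G_i,w_i)$. The key remaining ingredient is that $H$-volume is superadditive, \eqref{e:volumeAdd}, so $\Vol_H(G,w) \ge \sum_{i=1}^m \Vol_H(G_i,w_i)$. Now I would apply the elementary ``mediant'' inequality: if $a_i,b_i \ge 0$ with the $b_i$ not all zero (one has to handle the degenerate case where some $\Vol_H(G_i,w_i)=0$ separately, or note $\Vol_H(G_i,w_i)>0$ whenever $G_i$ has an edge and $H$ is non-null, and that edgeless cliques contribute $0$ to the density and can be discarded), then $\frac{\sum a_i}{\sum b_i} \le \max_i \frac{a_i}{b_i}$. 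With $a_i = \d(G_i,w_i)$ and $b_i = \Vol_H(G_i,w_i)$ this gives
$$\frac{\d(G,w)}{\Vol_H(G,w)} \le \frac{\sum_i \d(G_i,w_i)}{\sum_i \Vol_H(G_i,w_i)} \le \max_i \frac{\d(G_i,w_i)}{\Vol_H(G_i,w_i)},$$
and the right-hand side is at most the supremum over complete weighted graphs. Taking the supremum over all $(G,w)$ on the left gives $\le$ in \eqref{e:cf3}.

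The one point that needs a little care — and the only real obstacle — is the treatment of the case $H$ null or $\Vol_H$ vanishing. If $H$ is null, both sides of \eqref{e:cf3} should be interpreted consistently (or the statement is vacuous); for non-null $H$, if some complete $G_i$ in the decomposition is edgeless then $\d(G_i,w_i)=0$ and $\Vol_H(G_i,w_i)$ may also be $0$, but such a term contributes nothing to $\d(G,w)$ and can simply be dropped from the decomposition before applying superadditivity and the mediant bound; if \emph{every} term is edgeless then $\d(G,w)=0$ and the inequality is trivial. I would also remark that one may equivalently phrase the conclusion via \eqref{e:cf2}: each complete $(G_i,w_i)$ can be rescaled so that $\Vol_H(G_i,w_i)<1$ (using the linear scaling of both density and volume noted before \eqref{e:volumeAdd}), which recovers the form of $c_f$ restricted to complete weighted graphs. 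This is all routine; the substance is entirely in \cref{lem:weightedCliques1} and \eqref{e:volumeAdd}, which are already established.
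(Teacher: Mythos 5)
Your proposal is correct and follows essentially the same route as the paper: decompose $(G,w)$ into complete weighted graphs via \cref{lem:weightedCliques1} and combine with the superadditivity of $\Vol_H$ from \eqref{e:volumeAdd}. The only cosmetic difference is that the paper avoids the mediant inequality (and its degenerate cases) by letting $c$ denote the right-hand supremum and directly bounding $\d(G_i,w_i)\leq c\,\Vol_H(G_i,w_i)$ for each piece before summing.
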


\begin{proof}
	Let $c$ be the supremum on the right side of \eqref{e:cf3}. It suffices to show that $\d(G,w) \leq c \brm{Vol}_{H}(G,w)$ for every weighted graph $(G,w)$. Let $(G_1,w_1),\dots,(G_m,w_m)$ be the complete weighted graphs satisfying the conditions of \cref{lem:weightedCliques1}. Then
	\begin{align*}
	\d(G,w)  = \sum_{i=1}^m \d(G_i,w_i) \leq c \sum_{i=1}^m \brm{Vol}_{H}(G_i,w_i) \stackrel{\eqref{e:volumeAdd}}{\leq}  c\brm{Vol}_{H}(G,w),
	\end{align*}
	as desired.  
\end{proof}	

 \cref{lem:cf3} allows us to dispense with the graph structure of $G$ in the definition of the fractional extremal function and work in the linear algebraic setting with the vector of weights. Note that in any minimal jumbled model $\mu$ of $H$ in a complete graph, $|\mu(v)| \in \{1,2\}$ for every $v \in V(H)$. Thus we define a \defn{jumbled  model} of $H$ in $\bb{N}$ to be a map $\mu: V(H) \to \mc{P}(\bb{N})$ such that:
 \begin{itemize} 
 	\item[(N1)] $|\mu(v)| \in \{1,2\}$ for every $v \in V(H)$, and
 	\item[(N2)] If $\mu(v)=\mu(u)  =\{i\}$ for some $i \in \bb{N}$ and $u,v \in V(G)$ then $u$ and $v$ are non-adjacent.
 \end{itemize}
 We define a vector $\bl{\mu}^{\#}=(\mu^{\#}_i)_{i \in \bb{N}}$ corresponding to $\mu$ as before; that is, $\mu^{\#}_i$ is equal to the number of vertices $v \in V(H)$ such that $i \in \mu(v)$. Let $\#(H)$ denote the convex hull of  the vectors $\bl{\mu}^{\#}$  of all jumbled  models $\mu$ of $H$ in $\bb{N}$. 
 Given $X \subseteq \bb{R}$ let
$X^{\bb{N}}$ (non-standardly) denote the space of vectors $\bl{v}=(v_i)_{i \in \bb{N}}$ with finite support.\footnote{We often work with $\bb{R}_+^{\bb{N}}$ rather then a finite dimensional space to avoid repeatedly specifying the dimension.} For a vector $\bl{w}\in \bb{R}_{+}^{\bb{N}}$ define the a \defn{$H$-volume} of $\bl{w}$, denoted by $\Vol_H(\bl{w})$, as the supremum of $\alpha \geq 0$ such that there exists $\bl{x} \in \#(H)$  such that $\alpha \bl{x} \leq \bl{w}$.  Note that if $G$ is the complete graph with the (finite) vertex set $\brm{supp}(\bl{w})$  then $$\Vol_H(G,\bl{w}|_{V(G)}) = \Vol_H(\bl{w}).$$
Note also that it follows from \eqref{e:volumeAdd} that $\Vol_H$ is superadditive.

Next, to convert the notion of density, define a (non-standard) inner product on $\bb{R}_+^{\bb{N}}$  of vectors $\bl{w}=(w_i)_{i \in \bb{N}}$ and $\bl{w}'=(w'_i)_{i \in \bb{N}}$ by $$\ip{\bl{w}}{\bl{w}'} = \sum_{(i,j) \in \bb{N}^2, i\neq j} w_iw'_j.$$

Let $|\bl{w}| =\sum_{i \in \bb{N}}  w_i$ and 
$\|  \bl{w}\|=\ip{\bl{w}}{\bl{w}}$. 
For  $\bl{w} \in \bb{R}_+^{\bb{N}} \setminus \{\bl{0}\}$, let  $$\d(\bl{w})= \frac{\| \bl{w}\|}{2|\bl{w}|}.$$

We identify $\bb{R}_+^d$ with the set of vectors in $\bb{R}_+^{\bb{N}}$ with support in  $[d]$, which in particular allows us to use the above notation for vectors in $\bb{R}_+^d$.  

Restating \cref{lem:cf3} in our new setting we obtain that for every graph $H$,
\begin{equation}\label{e:cf4} 
c_f(H) = \sup_{\bl{w} \in \bb{R}_+^{\bb{N}} \setminus \{\bl{0}\}}\frac{\d(\bl{w})}{\brm{Vol}_{H}(\bl{w})}. 
\end{equation}

\begin{lem}\label{cfLowerBound}
	For every graph $H$, 
	$$c_f(H) \geq c_T(H).$$
\end{lem}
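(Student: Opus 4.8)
The plan is to prove $c_f(H) \geq c_T(H)$ by showing separately that $c_f(H) \geq \tau(H)$ and that $c_f(H) \geq \frac{i-1}{i}\bigl(\v(H) - \tfrac{1}{2}\alpha_i(H)\bigr)$ for every $i \geq 2$; since $c_T(H)$ is defined as the supremum of exactly these quantities, the result follows. The lower bound $c_f(H) \geq \frac{i-1}{i}\bigl(\v(H) - \tfrac{1}{2}\alpha_i(H)\bigr)$ is precisely the content of the lemma proved just before \cref{t:twothirds} in the excerpt, so I may invoke it directly. The only remaining piece is $c_f(H) \geq \tau(H)$, equivalently $c_f(H) \geq \v(H) - \alpha(H)$, which handles the $i=1$ (or $i\to\infty$-type) term.

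For $c_f(H) \geq \tau(H)$, the cleanest route is via the characterization in \cref{e:cf4} (or directly \cref{e:cf11}): it suffices to exhibit, for each $n$, a weighted complete graph $(G,w)$ — or a vector $\bl{w} \in \bb{R}_+^{\bb{N}}$ — with $\d(\bl{w})/\brm{Vol}_H(\bl{w})$ tending to $\tau(H)$. Following the heuristic of \cref{e:naive2}, I would take $G$ to be a complete graph on vertex set $\{1,\dots,\tau(H)-1\} \cup \{z_1,\dots,z_n\}$ with weight $1$ on each of $z_1,\dots,z_n$ and weight $W$ (a large parameter) on each of $1,\dots,\tau(H)-1$; this is the weighted analogue of $K_{\tau(H)-1,n}$ after noting that edges within the small side and within the big side contribute lower-order terms. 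I then need two estimates: first, $\d(\bl{w}) = \|\bl{w}\|/(2|\bl{w}|) \to \tfrac{(\tau(H)-1)W \cdot n + \binom{n}{2} + \binom{\tau(H)-1}{2}W^2}{(\tau(H)-1)W + n}$, which, letting $n \to \infty$ first and then $W\to\infty$ appropriately (or setting $n = W^2$), approaches $\tau(H)-1$ up to lower order — the point being the dominant term is $(\tau(H)-1)Wn / n = (\tau(H)-1)W$ against denominator $\approx (\tau(H)-1)W$ once $n$ dominates, so one must tune the two parameters. Second, and this is the substantive point, I must show $\brm{Vol}_H(\bl{w})$ stays bounded (ideally $\leq 1 + o(1)$, or at least does not grow with the parameters), i.e. that any fractional packing of jumbled models of $H$ respecting the weight constraints has total weight essentially at most $1$: this is because every jumbled model $\mu$ of $H$ must "use up" weight at least $\tau(H) - (\tau(H)-1) = 1$ worth of overlap among the light vertices $z_1,\dots,z_n$ (since a model can place at most $\tau(H)-1$ pairwise-nonadjacent vertices — an independent set — but the heavy side is a clique so only independent-set-many $H$-vertices can map to singletons there; more carefully, since $\alpha(H) = \v(H) - \tau(H)$, at least $\v(H) - (\tau(H)-1)$ vertices of $H$ must map into $\{z_1,\dots,z_n\}$ with multiplicity, forcing $\sum_j \mu^\#(z_j) \geq \v(H)-\tau(H)+1$... ). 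Actually the cleaner bound: among the heavy vertices, which form a clique of size $\tau(H)-1$, any jumbled model uses them with total $\mu^\#$ contribution, and since the constraint there is $w = W$, this is not the binding constraint; the binding constraint is on the $z_j$'s each with weight $1$, and a careful argument that no single $z_j$ can be "spared" forces $\brm{Vol}_H(\bl{w})$ to be controlled by $\delta$-type considerations as in \cref{lem:bipartite}.

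The cleanest implementation, in fact, is to avoid re-deriving this and instead invoke \cref{l:copies}(ii) in the limiting form, or better, mirror the proof of the lemma preceding \cref{t:twothirds}: use $G = K_{\tau(H)-1, n}$ directly, observe that $K_{\tau(H)-1,n}$ has no $H$-minor (shown in \cref{e:naive2}), hence $k\,K_{\tau(H)-1,n}$ has no $k\,H$... no — rather, argue at the level of $\brm{Vol}$. Concretely, if $H$ is a minor of a graph $G$ then $\tau(H) \leq \tau(G)$, and more generally a weight-$\alpha$ fractional packing of $H$-models forces a corresponding bound; one shows $\brm{Vol}_H(G') \leq \tau(G')/\tau(H)$ type inequality fails to be quite linear, so instead I would argue directly: take $(G,w) = (K_{\tau(H)-1} + \overline{K_n}$ with a complete join, heavy/light weights as above$)$, and bound $\brm{Vol}_H$ using the dual LP exactly as in \cref{lem:bipartite} — assign $\beta$-values $0$ to heavy vertices and $1/(\text{something})$ to light vertices so that every $H$-model (which must touch at least a fixed amount of light weight) has $\mu^\#$-weighted $\beta$-sum $\geq 1$, yielding $\brm{Vol}_H(\bl w) \leq |w|_{\text{light}}$-independent bound. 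The main obstacle is precisely this volume upper bound: verifying that every jumbled model of $H$ in the weighted complete graph is forced to consume a bounded-below amount of the light-vertex weight, which rests on the identity $\tau(H) = \v(H) - \alpha(H)$ together with the fact that vertices of $H$ mapping to singletons on the heavy clique must form an independent set in $H$. Once that is in place, taking $W, n \to \infty$ suitably gives $\sup \d(\bl w)/\brm{Vol}_H(\bl w) \geq \tau(H) - 1 + o(1) \to \tau(H)$, and combined with the earlier lemma this completes the proof that $c_f(H) \geq c_T(H)$.
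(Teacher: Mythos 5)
Your overall decomposition is the right one and matches the paper's: handle the terms $\frac{i-1}{i}\s{\v(H)-\frac{\alpha_i(H)}{2}}$ and the term $\tau(H)$ separately. One caveat on the first half: the unlabelled lemma in \cref{s:fracintro} that you invoke is proved there via the scaling-limit characterization \eqref{e:frac0}, whose equivalence with the formal definition is only established at the end of \cref{s:connect}; the paper's proof of the present lemma instead re-derives those bounds directly in the weight-vector setting of \eqref{e:cf4}, taking $\bl{w}=\bl{1}_{[k]}$ and observing that any jumbled model supported on $[k]$ satisfies $|\bl{\mu}^{\#}|\geq 2\v(H)-\alpha_k(H)$, because the vertices with singleton image over a fixed index form an independent set. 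Your citation is acceptable in substance, but the dependency should be flagged.

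The genuine gap is in the step $c_f(H)\geq\tau(H)$, which is exactly the part you call ``the substantive point'' and never complete. Your first construction --- a weighted \emph{complete} graph with $n$ light vertices of weight $1$ --- fails outright: in a complete graph the light vertices form a clique, so $H$ has many disjoint genuine models living entirely among them, $\Vol_H$ grows like $n/\v(H)$ while $\d$ grows like $n/2$, and the ratio tends only to the trivial $\v(H)/2$. Your fallback, the join of $K_{\tau(H)-1}$ with $\overline{K_n}$, is legitimate under \eqref{e:cf11} and can be made to work, but the volume mechanism you describe is the wrong one: a jumbled model need not touch the light side at all, so no bound of the form $\Vol_H\leq 1+o(1)$ with $W\to\infty$ can hold (it would give $c_f(H)=\infty$). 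The correct argument is dual to yours: a vertex of $H$ whose image avoids the heavy side must be a light singleton, and the set of such vertices is independent in $H$, so every jumbled model places at least $\tau(H)$ units of $\mu^{\#}$ on the heavy side, whence $\Vol_H\leq (\tau(H)-1)W/\tau(H)$; one then takes $W$ near $\tau(H)/(\tau(H)-1)$ (not large) and $n\to\infty$. The paper achieves all of this with a two-coordinate vector: $\bl{w}^r=(r,1,0,\dots)$ has $\d(\bl{w}^r)=\frac{r}{r+1}$ and $\Vol_H(\bl{w}^r)\leq\frac{1}{\tau(H)}$ because the $H$-vertices with $\mu(v)=\{1\}$ form an independent set, forcing $\mu^{\#}_2\geq\tau(H)$; letting $r\to\infty$ gives $c_f(H)\geq\tau(H)$. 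Until the volume bound is pinned down in one of these forms, the proof is incomplete.
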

 
\begin{proof}Let $k \geq 2$ be an integer, and let $\mu$ be a jumbled model of $H$ in $\bb{N}$ such that $\mu(v) \subseteq [k]$ for every $v \in V(H)$. Let $S = \{v \in V(H)\: | \: |\mu(v)|=1\}$. Then $\{v \in S | \mu(v) = \{i \} \}$ is independent for every $i \in [k]$, and so $|S| \leq \alpha_k(H)$. Thus $|\bl{\mu}^{\#}| \geq |S| + 2 (\v(H)-|S|) \geq 2\v(H) - \alpha_k(H)$. It follows that $|\bl{x}|_1 \geq  2\v(H) - \alpha_k(H)$ for every $\bl{x} \in \#(H)$ such that $\brm{supp}(\bl{x}) \subseteq [k]$. Consequently, for every $\bl{w} \in \bb{R}_{+}^{\bb{N}}$ such that $\brm{supp}(\bl{w}) \subseteq [k]$, 
	$$\Vol_H(\bl{w}) \leq \frac{|\bl{w}|}{2\v(H) - \alpha_k(H)}.$$
	
Let $\bl{w} \in \bb{R}_{+}^{\bb{N}}$ be defined by $w_i=1$ for $i \leq k$ and $w_i=0$ for $i>k$. Then $\d(\bl{w}) = \frac{k-1}{2}$, and so
$$c_f(H) \geq \frac{\d(\bl{w})}{\brm{Vol}_{H}(\bl{w})} \geq \frac{(k-1)}{2} \frac{2\v(H) - \alpha_k(H)}{k} = \frac{k-1}{k}\s{\v(H)-\frac{\alpha_{k}(H)}{2}}. $$

It remains to show that $c_f(H) \geq \v(H) - \alpha(H).$ Let  $\bl{w}^{r}=(r,1,0,\dots,0,\dots)$ for $r \in \bb{R}_{+}$. By definition $\Vol_{H}(\bl{w}^{r})$ is the supremum of $\gamma > 0$ such that there exists a convex combination $\bl{x} \in \bb{R}_+^{\bb{N}}$ of vectors $\bl{\mu}^{\#}$ of jumbled models of $H$ in $\bb{N}$ such that $\gamma\bl{x} \leq \bl{w}^{r}$. As $\bl{w}^{r}_i=0$ for $i \geq 3$ it suffices to consider jumbled models $\bl{\mu}$ of $H$ such that $\mu(v) \subseteq \{1,2\}$  for every $v \in V(H)$. As noted before, $\{v \in V(H) \: | \: \mu(v) = \{1 \}  \}$ is independent. Therefore, $\mu^{\#}_2 \geq  \v(H) - \alpha(H)$ for every such $\mu$, and so   $x_2 \geq  \v(H) - \alpha(H)$ for every $\bl{x} \in \#(H)$ such that $\brm{supp}(\bl{x}) \subseteq  \{1,2\}$. 

Thus if  $\gamma > 0$ and $\gamma\bl{x} \leq \bl{w}^{r}$ for some $\bl{x} \in \#(H)$ then  $\gamma \leq \frac{1}{\v(H) - \alpha(H)}.$ Thus $\Vol_H(\bl{w}^r) \leq  \frac{1}{\v(H) - \alpha(H)},$ and 
$$c_f(H) \geq \sup_{r \in \bb{R}_{+}}\frac{\d(\bl{w}^r)}{\brm{Vol}_{H}(\bl{w}^r)} \geq (\v(H) - \alpha(H)) \sup_{r \in \bb{R}_{+}}\frac{r}{r+1} = \v(H) - \alpha(H),$$
as desired.
\end{proof}

Next we show that $c_f(H)$ is not much smaller than $c(H)$. As in the introduction, given a graph $H$ we define $\mathdefn{\gamma_H} := \sup\{\, \d(G) \: | \:  G \;\text{is complete multipartite}, H \not\preceq G\}$.
	\begin{lem}\label{l:newlower}
For every graph $H$, 
		$$c(H)\geq \gamma_H\geq \sup\{d(\bl{w})|\bl{w}\in \NN_0^{\NN}\setminus\{\bl{0}\},\,\Vol_H(\bl{w})<1\}\geq c_f(H)-2.$$
	\end{lem}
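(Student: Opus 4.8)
The plan is to prove the chain of three inequalities in \cref{l:newlower} from left to right, with the last one being the crux.

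\textbf{First inequality: $c(H)\geq \gamma_H$.} This is immediate from the definition of $c(H)$ as the supremum of $\d(G)$ over all $H$-minor-free graphs $G$, since every complete multipartite graph $G$ with $H\not\preceq G$ is in particular an $H$-minor-free graph, so its density is bounded above by $c(H)$.

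\textbf{Second inequality: $\gamma_H\geq \sup\{\d(\bl{w})\mid \bl{w}\in\NN_0^{\NN}\setminus\{\bl{0}\},\ \Vol_H(\bl{w})<1\}$.} Here I would translate the weight-vector language back into graphs. Given $\bl{w}\in\NN_0^{\NN}\setminus\{\bl{0}\}$ with $\Vol_H(\bl{w})<1$, let $G$ be the complete graph on vertex set $\brm{supp}(\bl{w})$; then by the remark following the definition of $\Vol_H(\bl{w})$ we have $\Vol_H(G,\bl{w}|_{V(G)})=\Vol_H(\bl{w})<1$, and since $\bl{w}$ is integral, $\Vol_H(G,\bl{w})=\Vol_H(G^{\bl{w}})$, where $G^{\bl{w}}$ is the unbalanced blowup. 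Now $G^{\bl{w}}$ is a complete multipartite graph (the parts being the sets $I(v)$ for $v\in V(G)$), and $\Vol_H(G^{\bl{w}})<1$ forces $H\not\preceq G^{\bl{w}}$, because $H\preceq G^{\bl{w}}$ would give a model, hence $\Vol_H(G^{\bl{w}})\geq 1$. Also $\d(\bl{w})=\d(G,\bl{w})=\d(G^{\bl{w}})$ by the identities established in \cref{s:weight}. Hence $\d(\bl{w})$ is the density of a complete multipartite graph with no $H$ minor, so it is at most $\gamma_H$; taking the supremum gives the inequality.

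\textbf{Third inequality: $\sup\{\d(\bl{w})\mid \bl{w}\in\NN_0^{\NN}\setminus\{\bl{0}\},\ \Vol_H(\bl{w})<1\}\geq c_f(H)-2$.} This is the main obstacle and requires a rounding argument. By \eqref{e:cf4}, for any $\eps>0$ pick a real vector $\bl{w}\in\RR_+^{\bb{N}}\setminus\{\bl{0}\}$ with $\d(\bl{w})/\Vol_H(\bl{w})\geq c_f(H)-\eps$; by homogeneity of both $\d$ (degree $1$) and $\Vol_H$ (degree $1$) I can scale so that, say, $\Vol_H(\bl{w})$ is small or rather so that the integer part behaves well — the cleanest route is to scale $\bl{w}$ by a large integer $k$ and consider $\bl{w}' := \lceil k\bl{w}\rceil$ (coordinatewise ceiling) or $\lfloor k \bl{w}\rfloor$, landing in $\NN_0^{\bb{N}}$. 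The key estimates are: $\Vol_H$ is monotone and, for $\bl{w}'\leq k\bl{w}+\bl{1}_{S}$ on the support $S=\brm{supp}(\bl{w})$ of size $s$, one gets $\Vol_H(\bl{w}')\leq \Vol_H(k\bl{w})+\Vol_H(\bl{1}_S)\leq k\Vol_H(\bl{w})+1$ using superadditivity in reverse — actually one needs an upper bound, so instead use that $\Vol_H(\bl{1}_S)\leq 1$ is false in general; rather I should bound $\Vol_H(\lfloor k\bl{w}\rfloor)$ from below and its density from below, and then rescale to enforce the strict bound $\Vol_H(\bl{w}')<1$. Concretely: scale $\bl{w}$ so that $\Vol_H(\bl{w})=1-\eta$ for tiny $\eta>0$, set $\bl{w}_k=\lfloor k\bl{w}\rfloor$; then $\Vol_H(\bl{w}_k)\geq \Vol_H(k\bl{w}) - (\text{error from rounding down}) \geq k(1-\eta)-s$ and $\d(\bl{w}_k)\to k\cdot\d(\bl{w})$ up to lower-order terms; choosing $k$ large and then dividing $\bl{w}_k$ by a suitable factor slightly larger than $k(1-\eta)-s$ to drop its $H$-volume just below $1$, the density drops by a factor $(1+o_k(1))$, so $\d$ of the rescaled-and-rounded vector is at least $(1-o_k(1))\d(\bl{w})$. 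The persistent loss of an additive constant $2$ (rather than $1$) comes from combining the rounding error in $\Vol_H$ with the rounding error when re-integralizing after the final rescaling; I would account for this by tracking that each of the two roundings (down to $\NN_0$, then a second adjustment) costs at most $1$ in the relevant normalized quantity. The main obstacle is making this two-step rounding precise while keeping $\bl{w}'$ integral and $\Vol_H(\bl{w}')$ strictly below $1$ — the additive $-2$ is exactly the slack needed to absorb both roundings, so the bookkeeping, not any deep idea, is the hard part.
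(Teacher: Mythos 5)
Your handling of the first two inequalities is correct and matches the paper: $c(H)\geq\gamma_H$ is definitional, and the second inequality is exactly the paper's observation that for integral $\bl{w}$ the blowup $G^{\bl{w}}$ of the complete graph on $\brm{supp}(\bl{w})$ is complete multipartite, has no $H$ minor when $\Vol_H(\bl{w})<1$, and has density $\d(\bl{w})$.

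The third inequality, however, is not established, and the route you sketch cannot work as described. The constraint $\Vol_H(\bl{x})<1$ pins the integral vector $\bl{x}$ to a fixed small scale (its total mass $|\bl{x}|$ is bounded in terms of $\v(H)$), so there is no room to amplify: after you form $\bl{w}_k=\lfloor k\bl{w}\rfloor$, whose $H$-volume is about $k$, you must divide by a factor of order $k$ to get the volume back below $1$, and the resulting vector is essentially $\bl{w}$ again and is no longer integral. Integralizing it is precisely the original problem, so the argument is circular; and the real danger is genuine, since coordinatewise rounding of a vector at the scale $\Vol_H(\bl{w})<1$ can be catastrophic (e.g.\ $\bl{w}=\frac{2\v(H)}{m}\bl{1}_{[m]}$ with $m$ large rounds down to $\bl{0}$). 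Your fallback claim that ``each of the two roundings costs at most $1$'' is exactly the assertion that needs proof and is not justified. The paper's actual argument rounds $\bl{w}$ \emph{directly, without any scaling}, but not coordinatewise: after sorting $w_1\geq\cdots\geq w_m$ it sets $x_i=\lfloor\sum_{j\geq i}w_j\rfloor-\lfloor\sum_{j\geq i+1}w_j\rfloor$, so that the tail partial sums of $\bl{x}$ are the floors of those of $\bl{w}$ and $|\bl{x}|=\lfloor|\bl{w}|\rfloor$. The condition $\Vol_H(\bl{x})<1$ is then certified by LP duality: one takes a dual vector $\bl{a}$ with $\sum_i a_iw_i<1$ and $\sum_i a_i\mu^{\#}_i\geq1$ for all jumbled models $\mu$, reorders it increasingly (which does not increase $\sum_i a_iw_i$), and shows $\sum_i a_ix_i\leq\sum_i a_iw_i<1$ by Abel summation, using that the coefficients $a_i-a_{i+1}$ are nonpositive against the tail sums. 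Finally $\d(\bl{x})\geq\d(\bl{w})-2$ follows from a direct comparison of $\ip{\bl{x}}{\bl{x}}$ with $\ip{\bl{w}}{\bl{w}}$ via the same tail-sum identities, each floor costing at most $|\bl{x}|+|\bl{w}|$ in total, which is where the additive $2$ comes from. The LP-duality certificate and the cumulative-floor rounding are the two ideas missing from your proposal.
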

\begin{proof}
	The first inequality follows from the definition of $c(H)$ and the second follows from the fact that $\Vol_{H}(\bl{w})$	is equal to $\Vol_H(G^{\bl{w}})$ whenever $\bl{w}\in \NN_0^{\NN}$ and  $G$ is a complete graph with vertex set $\brm{supp}(\bl{w})$.
	The final inequality is trivial when $c_f(H)\leq 2$, so we may assume $c_f(H)> 2$. Thus it suffices to show that for every $\bl{w}\in \bb{R}_+^{\NN}\setminus\{\bl{0}\}$ with $d(\bl{w})\geq 2$ and $\Vol_H(\bl{w})<1$ there is some $\bl{x}\in \NN_0^{\NN}\setminus\{\bl{0}\}$ such that $\Vol_H(\bl{x})<1$ and $d(\bl{x})\geq d(\bl{w})-2$.

	We assume without loss of generality that for some $m \in \bb{N}$ we have $w_1 \geq w_2 \geq \ldots \geq w_m$ and $w_i=0$ for $i  > m$.
	By LP duality, $\brm{Vol}_{H}(\bl{w}) < 1$ if and only if there exists a vector $\bl{a}  \in \bb{R}_+^{\bb{N}}$ such that $\sum_{i \in \bb{N}}a_iw_i < 1$ and $\sum_{i \in \bb{N}}a_i\mu^{\#}_i  \geq 1$ for every jumbled model  $\mu$ of $H$. 
	
We assume, by reordering components of $\bl{a}$ if needed, that $a_1 \leq a_2 \leq \dots \leq a_m$, as such a reordering does not increase  $\sum_{i =1}^ma_iw_i$.
	
Define $\bl{x} \in \NN_0^{\NN}$, by setting for every $i \in \bb{N}$, 
	$$x_i = \left\lfloor\sum_{j \geq i}w_j\right\rfloor -  \left\lfloor\sum_{j \geq i+1}w_j\right\rfloor.$$
	Note that $|\bl{x}|=\lfloor|\bl{w}|\rfloor\geq 2d(\bl{w})\geq 4$, so in particular $\bl{x}\neq \bl{0}$.
	We claim that $\brm{Vol}_{H}(\bl{x}) < 1$. By the LP duality observation above, it suffices to show that  $\sum_{i \in \bb{N}}a_ix_i < 1$. We have	
	\begin{align*}
		\sum_{i \in \bb{N}}a_ix_i = \sum_{i \in \bb{N}}a_i\left(\left\lfloor\sum_{j \geq i}w_j\right\rfloor -  \left\lfloor\sum_{j \geq i+1}w_j\right\rfloor\right) 
		&= \sum_{i =1}^{m}(a_i-a_{i+1})\left\lfloor \sum_{j \geq i}w_j \right\rfloor \\ 
		&\leq \sum_{i =1}^{m}(a_i-a_{i+1})\left(\sum_{j \geq i}w_j\right) =  \sum_{i =1}^{m}a_iw_i < 1, 
	\end{align*} as claimed.
	
	Next we show that $d(\bl{x}) \geq d(\bl{w})-2$. First, 
	\begin{align*}
	\frac{1}{2}\ip{\bl{x}}{\bl{x}} 
	= \sum_{i \in \bb{N}}x_i \left(\sum_{j \geq i+1}x_j\right)  = \sum_{i =1}^{m-1}x_i \left\lfloor\sum_{j \geq i+1}w_j\right\rfloor 
	& \geq \sum_{i =1}^{m-1}x_i\left(\sum_{j \geq i+1}w_j - 1 \right) \\
	& \geq  \sum_{j =2}^{m}w_j\sum_{i =1}^{j-1}x_i - |x|\\ 
	& = \sum_{j =2}^{m}w_j\left(\left\lfloor\sum_{i \geq 1}w_i\right\rfloor -  \left\lfloor\sum_{i \geq j}w_i\right\rfloor\right)  - |x| \\ 
	&\geq \sum_{j =2}^{m}w_j\left\lfloor\sum_{i =1}^{j-1}w_i\right\rfloor  - |x| \\
	& \geq \sum_{j =2}^{m}w_j\left(\sum_{i = 1}^{j-1}w_i - 1\right)  - |x| \\ 
	&= \frac{1}{2}\ip{\bl{w}}{\bl{w}}  - |x|- |w|.
	\end{align*}        
	It follows that  
	$$d(\bl{x})=\frac{\ip{\bl{x}}{\bl{x}}}{2|x|} \geq \frac{\frac{1}{2}\ip{\bl{w}}{\bl{w}}  - |x|- |w|}{|x|} \geq \frac{\frac{1}{2}\ip{\bl{w}}{\bl{w}}  -  |w|}{|w|}-1 = d(\bl{w})-2,$$
	as desired.
\end{proof}

Say that a vector $\bl{w} \in \{0,1\}^{\bb{N}}$ is an \defn{edge vector} if $|\brm{supp}(\bl{w})|=2$; that is, exactly two components of $\bl{w}$ equal 1, and the rest are 0. Say that a vector $\bl{w} \in \bb{R}^{\bb{N}}_+$ is \defn{matchable} if $w_i \leq \frac{1}{2}|\bl{w}|$ for every $i\in\NN$. The following lemma is straightforward.

\begin{lem}\label{lem:prelim1}
A vector $\bl{w} \in \bb{R}^{\bb{N}}_+$ is matchable if and only if $\bl{w}$ is a linear combination of edge vectors with non-negative coefficients.
\end{lem}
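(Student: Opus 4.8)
The plan is to prove the two implications separately, the forward one being immediate and the converse by an induction on the size of the support.

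\textbf{The easy direction.} Suppose $\bl{w}=\sum_{k=1}^m c_k\bl{f}_k$ with each $c_k\ge 0$ and each $\bl{f}_k$ an edge vector, say with $\brm{supp}(\bl{f}_k)=\{a_k,b_k\}$. Then $|\bl{w}|=\sum_k c_k|\bl{f}_k|=2\sum_k c_k$, and for each $i\in\NN$ the coordinate $w_i=\sum_{k\,:\,i\in\{a_k,b_k\}}c_k$ is at most $\sum_k c_k=\tfrac12|\bl{w}|$. Hence $\bl{w}$ is matchable.

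\textbf{The converse.} Here I would induct on $n:=|\brm{supp}(\bl{w})|$. The cases $n\le 2$ are base cases: $n=0$ gives $\bl{w}=\bl{0}$, the empty combination; $n=1$ cannot occur since matchability would force $w_i\le\tfrac12 w_i$ for the unique $i\in\brm{supp}(\bl{w})$; and if $n=2$ then matchability forces the two positive coordinates to be equal, so $\bl{w}$ is a scalar multiple of an edge vector. For $n\ge 3$, I would first dispose of the case that some coordinate $w_k$ equals exactly $\tfrac12|\bl{w}|$: then $\bl{w}=\sum_{\ell\in\brm{supp}(\bl{w})\setminus\{k\}}w_\ell\,\bl{1}_{\{k,\ell\}}$, because the $k$-th coordinate of the right-hand side is $|\bl{w}|-w_k=\tfrac12|\bl{w}|=w_k$ and the other coordinates obviously match. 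Otherwise every coordinate is strictly below $\tfrac12|\bl{w}|$. Let $i,j$ index the two largest coordinates, $w_i\ge w_j\ge w_k$ for all $k\in\brm{supp}(\bl{w})\setminus\{i,j\}$, and set
$$t^*:=\min\Bigl(\{w_j\}\cup\{\tfrac12|\bl{w}|-w_k\,:\,k\in\brm{supp}(\bl{w})\setminus\{i,j\}\}\Bigr),$$
which is positive: $w_j>0$ since $j\in\brm{supp}(\bl{w})$, and for $k\ne i,j$ we have $|\bl{w}|\ge w_i+w_j+w_k\ge 3w_k>2w_k$. Put $\bl{w}':=\bl{w}-t^*\bl{1}_{\{i,j\}}$. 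Then $\bl{w}'\ge\bl{0}$ (as $t^*\le w_j\le w_i$), and $\bl{w}'$ is matchable: one needs each coordinate to be at most $\tfrac12|\bl{w}'|=\tfrac12|\bl{w}|-t^*$, which for the $i$- and $j$-coordinates is just matchability of $\bl{w}$ and for the remaining coordinates is exactly $t^*\le\tfrac12|\bl{w}|-w_k$. By the definition of $t^*$, either $w_j'=0$, so $|\brm{supp}(\bl{w}')|<n$ and the induction hypothesis writes $\bl{w}'$ as a nonnegative combination of edge vectors, or $w_k'=\tfrac12|\bl{w}'|$ for some $k$, so $\bl{w}'$ falls into the case already handled and is directly such a combination. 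In either case $\bl{w}=t^*\bl{1}_{\{i,j\}}+\bl{w}'$ is a nonnegative combination of edge vectors.

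\textbf{Expected obstacle.} The one subtle point is ensuring the induction actually terminates: simply subtracting an edge vector on the two largest coordinates need not shrink the support, and can be blocked earlier by the matchability constraint coming from a \emph{smaller} coordinate. The fix is to stop the subtraction at precisely the threshold $t^*$, at which point either the second-largest coordinate has vanished (support strictly drops) or some coordinate has just reached half of the new total (a configuration solved outright). I would take care to verify the strict inequality $|\bl{w}|>2w_k$ used for positivity of $t^*$ and the matchability check for $\bl{w}'$, both of which are the routine computations indicated above.
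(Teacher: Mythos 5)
Your proof is correct. Both directions check out: the easy direction is the same computation as in the paper, and in the converse the key verifications all go through — positivity of $t^*$ (using $|\bl{w}|\ge w_i+w_j+w_k\ge 3w_k>2w_k$ for $k$ in the support), nonnegativity and matchability of $\bl{w}'=\bl{w}-t^*\bl{1}_{\{i,j\}}$, and the fact that the stopping rule guarantees either a strict drop in support size (so the induction hypothesis applies, and matchability of $\bl{w}'$ rules out the impossible support-one case) or a coordinate equal to half the total, which your star decomposition $\bl{w}'=\sum_{\ell\ne k}w'_\ell\,\bl{1}_{\{k,\ell\}}$ handles outright. The route is genuinely different from the paper's, though in the same elementary spirit of peeling off edge vectors. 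The paper avoids induction: it first subtracts edge vectors pairing the largest coordinate with the tail (using $w_1-w_2\le\sum_{i\ge3}w_i$, which is exactly matchability) to reach a vector with $w'_1=w'_2\ge w'_i$ for all $i$, then greedily removes a maximal nonnegative combination of edge vectors supported off $\{1,2\}$ to shrink the support to three coordinates, and finishes with an explicit three-term formula. Your induction on support size, with the threshold $t^*$ chosen as the first moment either the second-largest coordinate vanishes or the matchability constraint from a smaller coordinate becomes tight, makes the termination issue — which you correctly identify as the one delicate point — fully explicit, at the cost of a slightly longer case analysis; the paper's version is more compact but leaves the reader to check that its maximal-$\bl{z}'$ step really does reduce to support $\{1,2,3\}$.
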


\begin{proof}
	The ``if'' direction is trivial. 	For the ``only if'' direction, without loss of generality, $w_1 \geq w_2 \geq \ldots \geq w_i \geq \ldots$. Since $w_1 -w_2 \leq \sum_{i \geq 3}w_i$, there exists $\bl{z}\in \bb{R}^{\bb{N}}_+$ such that  $\bl{z} \leq \bl{w}, z_1=z_2=0$, and $|\bl{z}|=w_1 -w_2$. 
	Then $\bl{z} + |z|\bl{1}_{1}$  is a linear combination of edge vectors with non-negative coefficients. Let $\bl{w}' =  \bl{w} - \bl{z} - |z|\bl{1}_{1}$. Then $\bl{w}'$   is a matchable vector such that
	$w'_1=w'_2 \geq w'_i$ for every $i \in \bb{N}$. It suffices to show that $\bl{w}'$ is a linear combination of edge vectors with non-negative coefficients. Choose a maximal $\bl{z}' \in \bb{R}^{\bb{N}}_+$ such that $\bl{z}'\leq \bl{w}'$, $z'_1 =z'_2 = 0$ and $\bl{z}'$  is a linear combination of edge vectors with non-negative coefficients. By replacing $\bl{w}'$ with $\bl{w}' - \bl{z}'$ we may further assume that $|\brm{supp}(\bl{w}')| \subseteq \{1,2,3\}$. In this final case, 
	$$\bl{w}' = \frac{w_3}{2}\s{ \bl{1}_{\{1,3\}} + \bl{1}_{\{2,3\}} } + \s{w_1-\frac{w_3}{2}}\bl{1}_{\{1,2\}},$$
 as desired.
\end{proof}

\begin{lem}\label{lem:prelim2}
	Let $H$ be a graph and let $X \subseteq V(H)$. Let $\bl{z},\bl{w} \in  \bb{R}^{\bb{N}}_+$  be such that $\Vol_{H \setminus X}(\bl{w})\geq 1$ and $\bl{z}$ is a matchable vector with $\|\bl{z}\|=2|X|$. Then  $$\Vol_{H}(\bl{w} + \bl{z}) \geq 1. $$
\end{lem}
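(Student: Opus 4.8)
The plan is to start from an optimal fractional packing of jumbled models of $H\setminus X$ witnessing $\Vol_{H\setminus X}(\bl w)\ge 1$ and to enlarge each model occurring in it to a jumbled model of $H$ by hosting every vertex of $X$ on an edge supplied by $\bl z$. Concretely, since the supremum defining $\Vol_{H\setminus X}(\bl w)$ is attained there is $\bl x\in\#(H\setminus X)$ with $\bl x\le\bl w$; being a point of a convex hull, $\bl x=\sum_{j}\lambda_j\bl{\mu}_j^{\#}$ for some finite convex combination with each $\mu_j$ a jumbled model of $H\setminus X$ in $\NN$ (and, since $\bl x\le\bl w$, each relevant $\mu_j$ has image inside $\brm{supp}(\bl w)$). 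On the other side, $\bl z$ is matchable, so \cref{lem:prelim1} gives a decomposition $\bl z=\sum_{k}c_k\bl e_k$ into non-negative multiples of edge vectors $\bl e_k=\bl 1_{\{a_k,b_k\}}$; comparing $\ell_1$-weights (each edge vector has weight $2$) and invoking the weight hypothesis on $\bl z$ gives $\sum_k c_k=|X|$, so $\pi(k):=c_k/|X|$ is a probability distribution on the edges used.

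Next I would carry out the extension. Enumerate $X=\{u_1,\dots,u_{|X|}\}$, and for an index $j$ and a tuple $\bl k=(k_1,\dots,k_{|X|})$ define $\nu_{j,\bl k}\colon V(H)\to\mc P(\NN)$ by letting $\nu_{j,\bl k}$ agree with $\mu_j$ on $V(H)\setminus X$ and setting $\nu_{j,\bl k}(u_\ell):=\{a_{k_\ell},b_{k_\ell}\}$, a $2$-element set. I would then check that each $\nu_{j,\bl k}$ is a jumbled model of $H$ in $\NN$: condition (N1) is immediate, and for (N2) note that if two vertices receive the same singleton image then neither lies in $X$ (its vertices get $2$-element images), so those two vertices lie in $V(H)\setminus X$, where they are non-adjacent by (N2) for $\mu_j$, hence non-adjacent in $H$ since $H\setminus X$ is the induced subgraph on $V(H)\setminus X$.

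Now I would assemble the point
$$\bl y:=\sum_{j}\lambda_j\sum_{\bl k}\Bigl(\prod_{\ell=1}^{|X|}\pi(k_\ell)\Bigr)\bl{\nu}_{j,\bl k}^{\#},$$
whose coefficients are non-negative and sum to $\sum_j\lambda_j=1$, so $\bl y\in\#(H)$. Since $\bl{\nu}_{j,\bl k}^{\#}=\bl{\mu}_j^{\#}+\sum_{\ell}\bl e_{k_\ell}$, expanding and using $\sum_{k}\pi(k)\bl e_k=\bl z/|X|$ (the remaining product factors summing to $1$) gives $\bl y=\sum_j\lambda_j\bl{\mu}_j^{\#}+\sum_j\lambda_j\cdot|X|\cdot\tfrac{1}{|X|}\bl z=\bl x+\bl z\le\bl w+\bl z$. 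Thus $\#(H)$ contains a vector dominated by $\bl w+\bl z$, which is exactly the assertion $\Vol_H(\bl w+\bl z)\ge 1$.

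I expect the main obstacle to be the bookkeeping in the last two steps: one must organise the product distribution over the $|X|$ edge-choices so that after averaging the load contributed by the vertices of $X$ equals $\bl z$ \emph{exactly} (not merely $\le\bl z$), while the load inherited from the $\mu_j$'s is untouched; and one must see clearly that giving each vertex of $X$ a $2$-element image is precisely what reduces (N2) for $\nu_{j,\bl k}$ to (N2) for $\mu_j$, with no extra constraints between $X$ and $V(H)\setminus X$ — these hold automatically for models into a complete ground graph because the $X$-side images have size $2$. The role of the weight hypothesis on $\bl z$ is only to equate the edge budget $\sum_k c_k$ with the number $|X|$ of vertices to be hosted, so that the averaging yields $\bl z$ on the nose.
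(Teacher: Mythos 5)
Your proof is correct and follows essentially the same route as the paper: decompose $\bl{z}$ into edge vectors via \cref{lem:prelim1}, extend each jumbled model of $H\setminus X$ to one of $H$ by assigning each vertex of $X$ a two-element image (so that (N2) reduces to (N2) for the original model), and use convexity of $\#(H)$ to recover $\bl{z}$ exactly on average. The only difference is bookkeeping: the paper places \emph{all} of $X$ on a single edge $\{i,j\}$ and averages over edges with weights $c_k/|X|$, whereas you use a product distribution over tuples of edges, one per vertex of $X$ --- both yield the same vector $\bl{x}+\bl{z}\in\#(H)$.
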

\begin{proof} For every  $\{i,j\} \subseteq \bb{N}$,  every jumbled model $\mu$ of $H \setminus X$ in $\bb{N}$ can be extended to a jumbled model $\mu^{+}$ of $H$ by setting $\mu^{+}(v) := \{i,j\}$ for each $v\in X$. Thus $\bl{\mu}^{\#} + |X|\bl{1}_{\{i,j\}} \in \# H$. It follows from \cref{lem:prelim1} that $\bl{\mu}^{\#} + \bl{z} \in \# H$ for every matchable $\bl{z} \in  \bb{R}^{\bb{N}}_+$ such that $\|\bl{z}\|=2|X|$, and so
	$\# (H \setminus X) + \bl{z} \subseteq \# H$ for every such $\bl{z}$. Since $\Vol_{H \setminus X}(\bl{w})\geq 1$ there exists $\bl{w}' \in \# H$ such that $\bl{w}' \leq \bl{w}$. Since $\bl{w} + \bl{z} \geq \bl{w}' + \bl{z} \in \# H $, it follows that $\Vol_{H}(\bl{w} + \bl{z}) \geq 1,$ as	desired.
\end{proof}	

\begin{lem}\label{lem:prelim3}
	Let $k \in \bb{N}$, let $H$ be a graph, and let $\bl{z}$ be a matchable vector with $\|\bl{z}\|=2(\v(H)-\alpha_k(H))$. Then  $$\Vol_{H}\s{\frac{\alpha_k(H)}{k}\bl{1}_{[k]} + \bl{z}}\geq 1. $$
\end{lem}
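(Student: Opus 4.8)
The plan is to reduce this to \cref{lem:prelim2}. Take $X \subseteq V(H)$ to be a minimum vertex set such that $\chi(H \setminus X) \leq k$, so that $|X| = \v(H) - \alpha_k(H)$ and $\|\bl{z}\| = 2|X|$ by hypothesis. The key observation is that if $\chi(H \setminus X) \leq k$, then $H \setminus X$ has a $k$-colouring, giving a partition of $V(H\setminus X)$ into $k$ independent sets $S_1,\dots,S_k$. Using the colour classes, I would construct a single jumbled model $\mu$ of $H \setminus X$ in $[k]$: for each vertex $v$ in colour class $S_j$, set $\mu(v) = \{j\}$. This satisfies (N1) since each $|\mu(v)| = 1$, and (N2) since $S_j$ is independent. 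Hence $\bl{\mu}^{\#} = \bl{1}_{[k]} \cdot$(something)—more precisely $\mu^{\#}_j = |S_j|$, so $\bl{\mu}^\# = (|S_1|,\dots,|S_k|,0,\dots)$, and in particular $\sum_j |S_j| = \alpha_k(H)$.

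The slight subtlety is that I want $\frac{\alpha_k(H)}{k}\bl{1}_{[k]}$ to dominate a convex combination of such model vectors, not just the one coming from a single colouring. Here I would average over the $k!$ colourings obtained by permuting the colour classes: the average of the permuted vectors $\sigma \cdot \bl{\mu}^\#$ over all $\sigma \in S_k$ is exactly $\frac{1}{k}\big(\sum_j |S_j|\big)\bl{1}_{[k]} = \frac{\alpha_k(H)}{k}\bl{1}_{[k]}$. Since each permuted vector still lies in $\#(H \setminus X)$ (permuting coordinates of a jumbled model in $\bb{N}$ gives another jumbled model), their convex combination $\frac{\alpha_k(H)}{k}\bl{1}_{[k]}$ lies in $\#(H\setminus X)$ too, witnessing $\Vol_{H\setminus X}\big(\frac{\alpha_k(H)}{k}\bl{1}_{[k]}\big) \geq 1$.

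Finally I would apply \cref{lem:prelim2} with $\bl{w} = \frac{\alpha_k(H)}{k}\bl{1}_{[k]}$ and the given matchable $\bl{z}$: the hypotheses $\Vol_{H\setminus X}(\bl{w}) \geq 1$ and $\|\bl{z}\| = 2|X| = 2(\v(H)-\alpha_k(H))$ are met, so $\Vol_H(\bl{w} + \bl{z}) \geq 1$, which is the desired conclusion. I do not anticipate a genuine obstacle here; the only point requiring minor care is confirming that coordinate permutations of jumbled models are again jumbled models (immediate from (N1), (N2) being invariant under relabelling $\bb{N}$) so that the symmetrization step stays inside the polytope $\#(H\setminus X)$. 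If one prefers to avoid the averaging argument, an alternative is to note that the vectors $\bl{1}_{[k]}$ scaled appropriately can be built from a single colouring whenever the colour classes can be taken of equal size; but since that balancing is not available in general, the symmetrization over $S_k$ is the clean route.
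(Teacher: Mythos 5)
Your proposal is correct and follows essentially the same route as the paper's proof: choose $X$ with $|X|=\v(H)-\alpha_k(H)$ so that $H\setminus X$ is $k$-colourable, build a jumbled model supported on $[k]$ from the colouring, symmetrize to obtain $\frac{\alpha_k(H)}{k}\bl{1}_{[k]}\in\#(H\setminus X)$, and apply \cref{lem:prelim2}. The only cosmetic difference is that the paper averages over cyclic shifts modulo $k$ rather than over all of $S_k$.
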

\begin{proof} 
	Let $X \subseteq V(H)$ with $|X| = \v(H)-\alpha_k(H)$ be such that $H' = H \setminus X$ is $k$-colourable. Then there exists a jumbled model $\mu$ of $H'$ in $\bb{N}$ such that
	$\brm{supp}(\mu)=[k]$ and $|\bl{\mu}^{\#}|=\v(H)' = \alpha_k(H)$. By considering a convex combination of vectors obtained from $\bl{\mu}^{\#}$ by shifts of components modulo $k$ we conclude that $\Vol_{H'}\s{\frac{\alpha_k(H)}{k}\bl{1}_{[k]}} \geq 1$.
	The lemma now follows from \cref{lem:prelim2}.
\end{proof}		

\begin{lem}\label{lem:boundedDiff}
For every graph $H$ and vertex $v\in V(H)$,
\begin{equation}\label{e:bd} 
\cx(H) \leq \cx(H \setminus v) + 1.
\end{equation}
\end{lem}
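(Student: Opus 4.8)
The plan is to work entirely in the weight-vector model of \cref{s:weight}, using the reformulation \eqref{e:cf4}: it suffices to show
$$\frac{\d(\bl{w})}{\Vol_{H}(\bl{w})}\;\leq\; \cx(H\setminus v)+1$$
for every $\bl{w}\in\bb{R}_+^{\bb{N}}\setminus\{\bl{0}\}$ (we may assume $\v(H)\ge2$, the case $\v(H)=1$ being trivial; and we may ignore $\bl{w}$ with $\Vol_H(\bl{w})=0$, since, as $H$ is then non-null, such $\bl{w}$ must be supported on a single coordinate and hence have $\d(\bl{w})=0$). Fix such a $\bl{w}$ and write $d:=\d(\bl{w})>0$, $c:=\cx(H\setminus v)$, and $t:=\Vol_{H\setminus v}(\bl{w})>0$; by \eqref{e:cf4} applied to $H\setminus v$ we have $d\le ct$. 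The goal is to certify that $\Vol_H(\bl{w})\ge\frac{td}{t+d}$, since then
$$\frac{\d(\bl{w})}{\Vol_H(\bl{w})}\le\frac{d(t+d)}{td}=1+\frac dt\le 1+c,$$
which is the required inequality, and taking the supremum over $\bl{w}$ via \eqref{e:cf4} gives the lemma.

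To prove this lower bound on $\Vol_H(\bl{w})$ I would apply \cref{lem:prelim2} with $X=\{v\}$. Set $\bl{w}_0:=\bl{w}/t$, so that $\Vol_{H\setminus v}(\bl{w}_0)=1$ by homogeneity of $\Vol$. Then for any matchable vector $\bl{z}$ of the size demanded by \cref{lem:prelim2}, that lemma gives $\Vol_H(\bl{w}_0+\bl{z})\ge1$. If moreover $\bl{z}\le\bl{w}/d$, then $\bl{w}_0+\bl{z}\le\bl{w}/t+\bl{w}/d=(\frac1t+\frac1d)\bl{w}$, so by monotonicity and homogeneity of $\Vol_H$ we get $(\frac1t+\frac1d)\Vol_H(\bl{w})\ge1$, i.e.\ $\Vol_H(\bl{w})\ge\frac{td}{t+d}$, exactly as wanted.

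So everything comes down to producing a matchable vector $\bl{z}$ of the right size with $\bl{z}\le\bl{u}$, where $\bl{u}:=\bl{w}/d$, and this is the step I expect to be the main obstacle. The key observations are that $\bl{u}$ has density exactly $1$ (homogeneity of $\d$) and that $|\bl{u}|=2|\bl{w}|^2/\|\bl{w}\|\ge2$, since $\|\bl{w}\|\le|\bl{w}|^2$. I would then split into two cases. If $\bl{u}$ is itself matchable, then $\bl{u}\le\bl{u}$ already exhibits a matchable vector with $\|\bl{u}\|=2|\bl{u}|\ge4$ and $|\bl{u}|\ge2$, and one takes for $\bl{z}$ a suitable scalar multiple of $\bl{u}$. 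If $\bl{u}$ is not matchable, order its entries decreasingly so that $u_1>\frac12|\bl{u}|$, and replace $u_1$ by $B:=\sum_{i\ge2}u_i$ (which is $<u_1$): this yields a matchable vector $\bl{u}'\le\bl{u}$ with $|\bl{u}'|=2B$. A short computation from the identity $\|\bl{u}\|=2|\bl{u}|$, using only $\sum_{i\ge2}u_i^2\le B^2$, shows both that $B>1$ and that $\bl{u}'$ is large (for instance $|\bl{u}'|=2B>2$ and $\|\bl{u}'\|\ge2B^2>2$), and one takes for $\bl{z}$ a suitable scalar multiple of $\bl{u}'$. Once this case analysis is in hand, the remainder is just the arithmetic above together with the routine homogeneity and monotonicity properties of $\d$ and $\Vol$ established in \cref{s:weight}.
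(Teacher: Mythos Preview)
Your argument is correct and is in fact cleaner than the paper's. Both proofs ultimately rest on \cref{lem:prelim2} with $X=\{v\}$, but you apply it directly to establish the neat inequality $\Vol_H(\bl{w})\ge \frac{td}{t+d}$ (with $t=\Vol_{H\setminus v}(\bl{w})$, $d=\d(\bl{w})$), from which the bound $\d(\bl{w})/\Vol_H(\bl{w})\le 1+d/t\le 1+c$ is immediate. The paper instead argues by contradiction: it builds vectors $\bl{x},\bl{y},\bl{z}$ with $\bl{w}=\bl{z}+(\bl{w}-\bl{z})$ where $\bl{w}-\bl{z}$ is matchable with $|\bl{w}-\bl{z}|=2$, assumes $\Vol_{H\setminus v}(\bl{z})<1$, and then pushes through several lines of inner-product algebra to reach a contradiction. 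Your route isolates the key content as a monotone volume bound and avoids that computation entirely.

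Two small remarks. First, the condition you need from \cref{lem:prelim2} is $|\bl{z}|=2$ (the paper's stated ``$\|\bl{z}\|=2|X|$'' is a typo for $|\bl{z}|=2|X|$, as its own proof and its later use in the proof of \cref{lem:boundedDiff} make clear), so your task is exactly to find a matchable $\bl{z}\le\bl{u}$ with $|\bl{z}|=2$. Second, in your non-matchable case the inequality $\sum_{i\ge2}u_i^2\le B^2$ is what gives $\|\bl{u}'\|\ge 2B^2$, but the claim $B>1$ actually comes from the trivial bound $\sum_{i\ge2}u_i^2\ge 0$: writing $S:=\sum_{i\ge2}u_i^2$, the identity $\|\bl{u}\|=2|\bl{u}|$ reads $2u_1B+B^2-S=2u_1+2B$, so $S\ge0$ forces $2u_1(B-1)\ge B(2-B)$, which fails for $0<B\le1$. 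With that adjustment your case analysis is complete.
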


\begin{proof} 
Let $c:= \cx(H \setminus v)$. We show that $\Vol_{H}(\bl{w}) \geq 1$ for every  $\bl{w} \in \bb{R}^{\bb{N}}_+$ with $\|\bl{w} \| >   2(c+1)|\bl{w}|$. This implies the lemma.
	
Define vectors $\bl{x}$, $\bl{y}$ and $\bl{z}$  as follows.
If $\bl{w}$ is matchable, then $\bl{x}=\bl{y}=\s{\frac{1}{2}-\frac{1}{|\bl{w}|}}\bl{w}$.
Otherwise, assume without loss of generality that $w_1$ is the maximum component of $w$. Let $\bl{x}=(w_1-1)\bl{1}_{1}$ and $\bl{y}=\s{1 - \frac{1}{|\bl{w}-\bl{x}|} }(\bl{w}-\bl{x})$.
In both cases, let $\bl{z}=\bl{x}+\bl{y}$.
	
	Routine verification shows that $$\bl{w} =  \s{1+\frac{1}{|\bl{x}|}}\bl{x} +  \s{1+\frac{1}{|\bl{y}|}}\bl{y},$$
	$\bl{w} -\bl{z}$ is matchable and $|\bl{w} -\bl{z}|=2$.

	It thus follows from \cref{lem:prelim2} that if $\Vol_{H \setminus X}(\bl{z})\geq 1$, then $\Vol_{H}(\bl{w})\geq 1$ as desired. Thus we assume  $\Vol_{H \setminus X}(\bl{z})< 1$. Therefore  $\|\bl{z} \| \leq  2c|\bl{z}|$ by the choice of $c$.
	
	By the choice of $\bl{w}$,
	\begin{align*}(2c+2)&(|\bl{z}|+2)  =  2(c+1)|\bl{w}|\\ & < \|\bl{w} \| = \|\bl{z} \| + 2 \ip{\bl{x}+\bl{y}}{\frac{\bl{x}}{|\bl{x}|}+ \frac{\bl{y}}{|\bl{y}|}} +  \left\|\frac{\bl{x}}{|\bl{x}|}+ \frac{\bl{y}}{|\bl{y}|} \right\| \\&\leq 2c|\bl{z}| + 2 \ip{\bl{x}+\bl{y}}{\frac{\bl{x}}{|\bl{x}|}+ \frac{\bl{y}}{|\bl{y}|}} + 4.\end{align*}
	After canceling terms and replacing  $c$ by an upper bound $\frac{\|\bl{x}+\bl{y}\|}{2(|\bl{x}| +|\bl{y}|)}$ in the above, we obtain
	\begin{equation}\label{e:bd1}|\bl{x}|+|\bl{y}|+ \frac{\|\bl{x}+\bl{y}\|}{|\bl{x}| +|\bl{y}|} < \ip{\bl{x}+\bl{y}}{\frac{\bl{x}}{|\bl{x}|}+ \frac{\bl{y}}{|\bl{y}|}}.\end{equation}
	Let $x=|\bl{x}|$ and $y=|\bl{y}|$ for brevity. Rewrite \eqref{e:bd1} as
	\begin{equation}\label{e:bd2}x+y < \ip{\bl{x}+\bl{y}}{\s{\frac{1}{x}-\frac{1}{x+y}}\bl{x}+ \s{\frac{1}{y}-\frac{1}{x+y}}\bl{y}}.\end{equation}
	However,
	\begin{align*}\label{e:bd1}&\ip{\bl{x}+\bl{y}}{\s{\frac{1}{x}-\frac{1}{x+y}}\bl{x}+ \s{\frac{1}{y}-\frac{1}{x+y}}\bl{y}} \\ &\leq \left| \bl{x}+\bl{y}\right|\left| \s{\frac{1}{x} -\frac{1}{x+y}}\bl{x}  +\s{\frac{1}{y}-\frac{1}{x+y}}\bl{y} \right|  \\&= x+y,
	\end{align*}
	contradicting \eqref{e:bd2}. 
\end{proof}

\section{Proof of ~\cref{t:4colored}}\label{s:fourcolored}

\begin{lem}\label{l:normbound} Let $a,b \in \bb{R}_{+}$ and $n \in \bb{N}$  be such that \begin{equation}\label{e:norm1} 
	a^2 \geq kb^2 \qquad \mathrm{and} \qquad 4a \geq 4+  kb^2,
\end{equation}
and let $\bl{w}\in \bb{R}_{+}^{\bb{N}}$ be such that 
\begin{equation}\label{e:norm2} 
a|\bl{w}| - b\sum_{i=1}^k w_i \leq 1.
\end{equation}
Then $\|\bl{w}\|\leq |\bl{w}|$.
\end{lem}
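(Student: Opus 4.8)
The plan is to restate the target inequality $\|\bl{w}\|\le|\bl{w}|$ purely in terms of the scalars $s:=|\bl{w}|$ and $t:=\sum_{i=1}^{k}w_i$, and then reduce it to a one-variable quadratic inequality. The starting point is the identity $\|\bl{w}\|=\ip{\bl{w}}{\bl{w}}=|\bl{w}|^{2}-\sum_i w_i^{2}$, which shows that the conclusion is equivalent to $\sum_i w_i^{2}\ge s^{2}-s$. Applying Cauchy--Schwarz to the first $k$ coordinates gives $\sum_i w_i^{2}\ge\sum_{i=1}^{k}w_i^{2}\ge t^{2}/k$, so it suffices to prove $t^{2}/k\ge s^{2}-s$; note that no lower bound on the ``tail'' coordinates $w_{k+1},w_{k+2},\dots$ is available or needed, since they may be split into arbitrarily many small pieces.

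Next I would run a short case analysis. The hypothesis $4a\ge 4+kb^{2}$ forces $a\ge 1$, so if $s\le 1$ then $s^{2}-s\le 0\le t^{2}/k$ and we are done. Assume therefore $s>1$; then $as>1$, and \eqref{e:norm2} (which says $as-bt\le 1$) gives $bt\ge as-1>0$, whence $b>0$ and also $a>1$ (otherwise $kb^{2}\le 4(a-1)=0$ would force $b=0$). Consequently $t\ge (as-1)/b$, and using $kb^{2}\le 4(a-1)$ from the hypothesis we obtain $t^{2}/k\ge (as-1)^{2}/(kb^{2})\ge (as-1)^{2}/\bigl(4(a-1)\bigr)$.

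At this point the lemma reduces to the inequality $(as-1)^{2}\ge 4(a-1)(s^{2}-s)$, which I expect to follow from the clean algebraic identity $(as-1)^{2}-4(a-1)(s^{2}-s)=\bigl((a-2)s+1\bigr)^{2}\ge 0$ (expanding, the $s^{2}$-coefficient on the left is $a^{2}-4a+4=(a-2)^{2}$, the $s$-coefficient is $2(a-2)$, and the constant term is $1$, which is exactly the expansion of the square). So the argument is essentially a short computation once the reduction is set up; the part needing the most care is the reduction itself — one must peel off the case $s\le 1$ before dividing by $b$, and one must feed the hypotheses in as ``$a\ge 1$ together with $kb^{2}\le 4(a-1)$'' rather than through $a^{2}\ge kb^{2}$. (In fact $a^{2}\ge kb^{2}$ is redundant here, being implied by $4a\ge 4+kb^{2}$ and $(a-2)^{2}\ge 0$; attempting to drive the argument through it instead leaves a residual upper bound on $s$ that still has to be established.)
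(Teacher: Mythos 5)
Your proof is correct and follows essentially the same route as the paper's: rewrite $\|\bl{w}\|=|\bl{w}|^2-\sum_i w_i^2$, apply Cauchy--Schwarz to the first $k$ coordinates, feed in the constraint \eqref{e:norm2}, and verify a resulting quadratic inequality (the paper checks that the discriminant $kb^2(kb^2+4-4a)$ is nonpositive, which is the same computation as your perfect square $((a-2)s+1)^2$ after substituting the extremal value of $kb^2$). Your explicit peeling off of the case $s\le 1$ before squaring the bound $bt\ge as-1$ is a point the paper's writeup glosses over, so your version is if anything slightly more careful; your observation that $a^2\ge kb^2$ is implied by $4a\ge 4+kb^2$ is also correct.
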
                                                

\begin{proof}
	If $b=0$ then $a \geq 1$ by \eqref{e:norm1} and thus $|\bl{w}| \leq 1$, implying $\|\bl{w}\|^2 \leq (|\bl{w}|)^2 \leq |w|$, as desired. Thus we assume $b >0$.
	Let $x = |\bl{w}|$. Then 
	\begin{align*}
	\|\bl{w}\| \leq x^2 - \sum_{i=1}^k w^2_i \leq x^2 - \frac{1}{k}\s{\sum_{i=1}^k w_i}^2 \stackrel{\eqref{e:norm2}}{\leq} x^2 - \frac{1}{k}\fs{1-ax}{b}^2. 
 	\end{align*}
 	Thus it suffices to show that 
 	$$x^2 - \frac{1}{k}\fs{1-ax}{b}^2 \leq x,$$
 	or equivalently
 	\begin{equation}\label{e:norm3} (a^2 - kb^2)x^2 - (2a-kb^2)x +1 \geq 0\end{equation}
 	The discriminant of the quadratic polynomial on the left side of  \eqref{e:norm3} is 
 	$$(2a-kb^2)^2 - 4(a^2 - kb^2) = kb^2(kb^2+4 - 4a)\stackrel{\eqref{e:norm1}}{\leq} 0,$$
 	and its leading coefficient is  non-negative, also by \eqref{e:norm1}. Thus \eqref{e:norm3}
 	holds for all $x\in\RR$, as desired.
\end{proof}

\begin{lem}\label{l:vectors} 
Let 
$$\bl{v}^1=\s{1,\frac{1}{2},0,0},
 \quad \bl{v}^2=\s{\frac{1}{4},\frac{1}{4},\frac{1}{4},\frac{1}{4}} $$ 
 and 
$$
\bl{u}^1=\s{0,2,2,2},  
\quad \bl{u}^2=\s{\frac{2}{3},\frac{2}{3},\frac{2 }{3},\frac{4}{3}},
\quad \bl{u}^3=\s{\frac{2}{5},\frac{6}{5},\frac{6}{5},\frac{6}{5}},
\quad \bl{u}^4=\s{1,1,1,1}. 
$$
Let $\bl{y} \in \bb{R}_{+}^4$ be such that $0 \leq y_1 \leq y_2 \leq y_3 \leq y_4$, $\bl{y} \cdot \bl{v}^1 \geq 1$ and $\bl{y} \cdot \bl{v}^2 \geq 1$. Then there exists $\bl{z} \leq \bl{y}$ such that $\bl{z}$ is a convex combination of $\{\bl{u}^{i}\}.$
\end{lem}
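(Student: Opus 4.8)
The plan is to reformulate the statement as a containment between two explicit polyhedra in $\bb{R}^4$ and then reduce it to a finite check. Set
$$R := \left\{\bl{y}\in\bb{R}_+^4 \;:\; y_1\le y_2\le y_3\le y_4,\ \bl{y}\cdot\bl{v}^1\ge 1,\ \bl{y}\cdot\bl{v}^2\ge 1\right\},$$
so that the hypothesis on $\bl{y}$ is exactly $\bl{y}\in R$, and let $P:=\brm{conv}\{\bl{u}^1,\bl{u}^2,\bl{u}^3,\bl{u}^4\}$. The conclusion asks for $\bl{z}\in P$ with $\bl{z}\le\bl{y}$, which is precisely the assertion $R\subseteq P+\bb{R}_+^4$.

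First I would reduce to the vertices of $R$. Since $R\subseteq\bb{R}_+^4$, the polyhedron $R$ contains no line, so $R=\brm{conv}(V)+\brm{recc}(R)$ for its finite vertex set $V$, and its recession cone $\brm{recc}(R)=\{\bl{d}\in\bb{R}_+^4: d_1\le d_2\le d_3\le d_4\}$ is contained in $\bb{R}_+^4$. Hence, if for each $\bl{v}\in V$ there is some $\bl{z}^{\bl{v}}\in P$ with $\bl{z}^{\bl{v}}\le\bl{v}$, then for an arbitrary $\bl{y}\in R$, writing $\bl{y}=\sum_{\bl{v}\in V}\lambda_{\bl{v}}\bl{v}+\bl{d}$ with $\lambda_{\bl{v}}\ge0$, $\sum_{\bl{v}}\lambda_{\bl{v}}=1$ and $\bl{d}\in\brm{recc}(R)$, the vector $\bl{z}:=\sum_{\bl{v}\in V}\lambda_{\bl{v}}\bl{z}^{\bl{v}}$ lies in $P$ and satisfies $\bl{z}\le\sum_{\bl{v}}\lambda_{\bl{v}}\bl{v}\le\bl{y}$, using $\bl{d}\ge\bl{0}$.

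Next I would enumerate $V$. Every vertex of $R$ is the unique point at which four linearly independent constraints among $y_1\ge0$, $y_2\ge y_1$, $y_3\ge y_2$, $y_4\ge y_3$, $\bl{y}\cdot\bl{v}^1\ge1$, $\bl{y}\cdot\bl{v}^2\ge1$ hold with equality while the remaining two stay satisfied. Running through the $\binom{6}{4}$ choices is routine and yields exactly five vertices:
$$\bl{u}^1=(0,2,2,2),\qquad \bl{u}^4=(1,1,1,1),\qquad \bl{u}^3=\s{\tfrac25,\tfrac65,\tfrac65,\tfrac65},\qquad \s{\tfrac23,\tfrac23,\tfrac23,2},\qquad \s{\tfrac23,\tfrac23,\tfrac43,\tfrac43}.$$
For the first three I would take $\bl{z}^{\bl{v}}=\bl{v}$ itself. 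The remaining two vertices each coordinatewise dominate $\bl{u}^2=(\tfrac23,\tfrac23,\tfrac23,\tfrac43)$, so for them I would take $\bl{z}^{\bl{v}}=\bl{u}^2$. Combined with the reduction above, this proves the lemma.

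Essentially the only work here is the vertex enumeration, and the points to watch are its completeness (no vertex omitted, and each listed point genuinely feasible) and the fact that $R$ is unbounded, which is what forces the recession-cone bookkeeping in the reduction rather than a bare ``$R$ is the convex hull of its vertices'' argument. Everything else is immediate, since four of the five vertices already belong to $\{\bl{u}^i\}$ and the other two dominate $\bl{u}^2$.
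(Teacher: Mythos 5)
Your proof is correct and follows essentially the same route as the paper: reduce to the vertices of the feasible polyhedron and check each one against the $\bl{u}^i$. The only difference is presentational---you enumerate the five vertices explicitly and spell out the recession-cone step that the paper leaves implicit, whereas the paper organizes the finitely many tight-constraint patterns into a short case analysis.
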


\begin{proof}
	Let $Y \subseteq \bb{R}_{+}^4$ be a polyhedron consisting of all $\bl{y} \in \bb{R}_{+}^4$ satisfying the conditions of the lemma. It suffices to verify that the lemma holds for the vertices of $Y$, which satisfy at least four out of six linear inequalities defining $Y$ with equality. 
	
	The rest of the proof is a routine case analysis of possible quadruples of tight inequalities. If $y_1=0$ then $y_2 \geq 2$ and so $\bl{y} \geq \bl{u}^1$, as desired. Thus we assume $y_1 > 0$. If $y_1=y_2$ and $ \bl{y} \cdot \bl{v}^1 = 1$ then $y_1=y_2 = \frac23 $, and $y_3+y_4\geq \frac83$, implying that $\bl{y} \geq \bl{u}^2$. Otherwise, $y_2 = y_3 = y_4$, $\bl{y} \cdot \bl{v}^2 = 1$, and either $y_1=y_2$, in which case $\bl{y} = \bl{u}^4$, or $ \bl{y} \cdot \bl{v}^1 = 1$, in which case $\bl{y}=\bl{u}^3$.
\end{proof}

We are now ready to prove \cref{t:4colored} which we restate for convenience.

\Fourcolored*

\begin{proof} 
	Let $c := \max \s{\frac{\v(H)}{2},\tau(H)}$.	It suffices to show that
\begin{equation}\label{e:bip1} 
2c|\bl{w}|  \geq  \|\bl{w}\|	
\end{equation}
for every $\bl{w} \in \bb{R}^{\bb{N}}_+$ such that $\Vol_H(\bl{w}) \leq 1$. Assume without loss of generality that $\brm{supp}(\bl{w})=[n]$ for some $n \in \bb{N}$. 
 
Since $\{\bl{w}' \in \bb{R}^{n}_+ \: | \: \Vol_H(\bl{w}') \geq  1 \}$	is convex, there exists $b \in \bb{R}$  and $\bl{x} \in \bb{R}^{n} \setminus \{\bl{0}\}$ such that  $\bl{x} \cdot \bl{w} \leq b$ and $\bl{x} \cdot \bl{w}' \geq b$ for every $\bl{w}' \in \bb{R}_+^{n}$ such that $ \Vol_H(\bl{w}') \geq 1$. The last condition implies that  $\bl{x} \in \bb{R}_+^{\bb{N}}$. Thus $b > 0$ and by rescaling $\bl{x}$ we assume (with foresight) that  $b=2c$ and that $0 \leq x_1 \leq x_2 \leq \dots \leq x_n$.

If $n=1$ then $\|\bl{w}\|=0$ and \eqref{e:bip1} trivially holds. Now assume $n \geq 2$. 
Let $\bl{v}^1 :=\s{1,\frac{1}{2},0,0,\dots}$. Then $2c \bl{v}^1 \geq (\v(H),\tau(H))$, and by  \cref{lem:prelim3} with $k=1$,  
$$ \Vol_H(2c\bl{v}^1) \geq \Vol_H(\alpha(H)\bl{1}_{\{1\}} + (1 - \alpha(H))\bl{1}_{\{1,2\}}) \geq 1.$$
Thus $\bl{x} \cdot (2c \, \bl{v}^1) \geq 2c$, and so $\bl{x} \cdot   \bl{v}^1 \geq 1$. 

Suppose first that $n \geq 4$. 
Let $\bl{v}^2 :=\s{\frac{1}{4},\frac{1}{4},\frac{1}{4},\frac{1}{4}}$.
Similar to the above, since $\alpha_4(H)=\v(H) \leq 2c$,   \cref{lem:prelim3} applied with $k=4$ implies $\Vol_H(2c\,\bl{v}^2) \geq 1$, and thus $\bl{x} \cdot   \bl{v}^2 \geq 1$.
Define $\bl{y} \in \bb{R}_+^{n}$ by $y_i := x_i$ for $i\in[3]$ and $y_i := x_4$ for $i\in[4, n]$. Then $\bl{y} \leq \bl{x}$  and $\bl{y} \cdot   \bl{v}^i =\bl{x} \cdot   \bl{v}^i \geq 1$ for $i\in[2]$. Let $\bl{u}^1, \dots, \bl{u}^4 \in \bb{R}_+^{n}$ be given by 
\begin{align*}
\bl{u}^1 & :=\s{0,2,2,\dots,2},\quad   & 
\bl{u}^2 & :=\s{\frac{2}{3},\frac{2}{3},\frac{2 }{3},\frac{4}{3},\dots,\frac{4}{3}}, \\
\bl{u}^3 & :=\s{\frac{2}{5},\frac{6}{5}, \dots, \frac{6}{5}}, \quad & 
\bl{u}^4 & :=\s{1,1,\dots,1}. 
\end{align*}
By \cref{l:vectors} applied to the projections of the above vectors to the first four cooordinates, there exists $\bl{z}\in \bb{R}_+^{n}$ such that $\bl{z} \leq \bl{y}$ and $\bl{z}$ is a convex combination of $\{\bl{u}^{i}\}$. It follows that $\bl{u}^i \cdot \bl{w} \leq 2c$ for some $i \in [4]$. Let $\bl{w}' := \frac{1}{2c} \bl{w}$. Then $$\bl{u}^i \cdot \bl{w}' \leq 1.$$

If $i=1$ then we can rewrite this inequality as $2|\bl{w}'|-2w'_1 \leq 1$. Thus $\bl{w}'$ satisfies \eqref{e:norm2} with $a=b=2$ and $k=1$. Since \eqref{e:norm1} holds for this $a,b$ and $k$, \cref{l:normbound} implies that $|\bl{w}'| \geq \|\bl{w}'\|$, implying \eqref{e:bip1} as desired.

For the remaining values of $i$, the same argument applies, but with different choices of $a,b$ and $k$. Namely, we choose $(a,b,k)= (\frac43 ,\frac23 ,3)$ for $i=2$; $(a,b,k)= (\frac65,\frac25,2)$  for $i=3$, and $(a,b,k)= (1,0,1)$ for $i=4$. 

It remains to consider the case $n \in \{2,3\}$. We deal with these cases simultaneously; that is, we assume $n=3$, but relax the assumption $x_3 >0$. Since $\bl{x} \cdot   \bl{v}^1 \geq 1$, we have $x_1 + \frac{x_2}{2} \geq 1$. Thus $x_3 \geq x_2 \geq 1 - 2x_1$, implying that $\bl{x} \geq (x,2-2x,2-2x)$ for some $x \leq \frac23 $. Defining $\bl{w}'$ as in the previous case, $x|\bl{w}'| + (2-3x)(|\bl{w}'| - w'_1) \leq 1$. Thus either $|\bl{w}'| - w'_1 \leq \frac12$, in which case  $|\bl{w}'| \geq \|\bl{w}'\|$ by  \cref{l:normbound}, as before, or 
$|\bl{w}'| \leq \frac{3}{2}$. In this last case, 
$$\|\bl{w}'\| = |\bl{w}'|^2 - (w'_1)^2 - (w'_2)^2 - (w'_3)^2 \leq \frac{2}{3}|\bl{w}'|^2 \leq |\bl{w}'|,$$
as desired.
\end{proof}

We are now ready to prove \cref{t:AlphaThreeUpperBound} which we restate for convenience.

\AlphaThreeUpperBound*

\begin{proof}
	Let $X$ be an independent set of $H$ of size $\alpha(H)$, and let $Y$ be a set of $\alpha_3(H)$ vertices inducing a $3$-colourable subgraph in $H$.
	Let $H' := H[X \cup Y]$. Then $\tau(H') = \v(H') - \alpha(H')=\v(H')-\alpha(H)$. 
	Note that $\chi(H') \leq 4$. By \cref{t:4colored}, 
	$$c_f(H') \leq \max\left( \frac{\v(H')}{2}, \v(H')-\alpha(H) \right).$$
	By \cref{lem:boundedDiff}, 
	\begin{align*}
	c_f(H) \leq c_f(H') + (\v(H)-\v(H')) \leq & \max\left(\v(H) - \frac{\v(H')}{2}, \v(H) -\alpha(H)\right)\\
	\leq & \max\left(\v(H) - \frac{\alpha_3(H)}{2}, \tau(H)\right).
	\end{align*}
	The result follows from \cref{t:main1}.
\end{proof}

%%%%%%%%%%%%%%%%%%%%%%%%%%%%%%%%%%%%%%%%%%%%%%%%%%%%%%%%%%%%%%%%%
\section{Proof of \cref{t:twothirds}}\label{s:twothirds}

\begin{lem}\label{l:technical0}
	Let $H$ be a graph, and for all $i\in \mathbb{N}$ let $\alpha_i := \frac{\alpha_i(H)}{\v(H)}$ for brevity, and let $c \geq 1 - \alpha_1$ be real such that for all $j\in[2,i]$,
\begin{align}\label{e:ld0} 
c &\geq  1 - \alpha_1, \notag\\
	c&(4 -\alpha_1 - \alpha_i) \geq \alpha_i(2-\alpha_1) +
	(1-\alpha_i)(3-2\alpha_1), 
	\\
	c&(4 -\alpha_i - \alpha_j) \geq \s{2 -\alpha_j}\s{\frac{i-1}{i}\alpha_i + \frac{j-1}{j}(2-2\alpha_i)}.\notag
	\end{align}
Then $c_f(H) \leq c \v(H)$.
\end{lem}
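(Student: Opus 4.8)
The plan is to follow the weight-vector strategy behind the proof of \cref{t:4colored}. Write $N:=\v(H)$. By \eqref{e:cf4} it suffices to prove that $\|\bl{w}\|\le 2cN\,|\bl{w}|$ for every $\bl{w}\in\bb{R}_+^{\bb{N}}\setminus\{\bl{0}\}$ with $\Vol_H(\bl{w})\le 1$; fix such a $\bl{w}$, assume $\brm{supp}(\bl{w})=[n]$, and note that if $n=1$ then $\|\bl{w}\|=0$, so assume $n\ge 2$. The set $\{\bl{w}'\in\bb{R}_+^n:\Vol_H(\bl{w}')\ge 1\}$ is closed, convex (as $\Vol_H$ is concave) and upward closed, and does not contain $\bl{w}$ in its interior, so a separating functional exists: there is $\bl{x}\in\bb{R}^n$ with $\bl{x}\cdot\bl{w}\le b\le\bl{x}\cdot\bl{w}'$ for every $\bl{w}'$ with $\Vol_H(\bl{w}')\ge 1$. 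Upward closedness forces $\bl{x}\in\bb{R}_+^n$, and $b>0$; rescaling, take $b=2cN$. Permuting the coordinates of $\bl{x}$ and $\bl{w}$ together (which changes neither $\|\bl{w}\|$, $|\bl{w}|$, nor the defining properties of $\bl{x}$, since $\Vol_H$ is permutation invariant), assume $0\le x_1\le x_2\le\cdots\le x_n$.

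Next I would turn \cref{lem:prelim3} into linear constraints on the sorted vector $\bl{x}$. For every $i\ge 1$ and every matchable vector $\bl{z}$ of total weight $2(N-\alpha_i(H))$, \cref{lem:prelim3} gives $\Vol_H\!\left(\frac{\alpha_i(H)}{i}\bl{1}_{[i]}+\bl{z}\right)\ge 1$, hence $\bl{x}\cdot\!\left(\frac{\alpha_i(H)}{i}\bl{1}_{[i]}+\bl{z}\right)\ge 2cN$. Choosing $\bl{z}$ supported on the smallest coordinates makes the resulting inequalities strongest: $i=1$ with $\bl{z}=(N-\alpha_1(H))\bl{1}_{\{1,2\}}$ yields $Nx_1+\tau(H)x_2\ge 2cN$, and $i\ge 2$ with $\bl{z}=\frac{2(N-\alpha_i(H))}{i}\bl{1}_{[i]}$ (matchable by \cref{lem:prelim1}) yields $(2N-\alpha_i(H))\cdot\frac{1}{i}\sum_{l=1}^{i}x_l\ge 2cN$; interpolating among admissible $\bl{z}$ produces further half-space constraints, each involving only a prefix $x_1,\dots,x_i$ of $\bl{x}$. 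These are the quantitative forms of the Turán lower bounds behind \cref{cfLowerBound}.

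The core is then a parametrised analogue of \cref{l:vectors}: one fixes a family of ``step'' vectors $\bl{u}=a\bl{1}_{[n]}-b\bl{1}_{[k]}$ whose triples $(a,b,k)$ are built from $\alpha_1,\dots,\alpha_i$ and satisfy the side conditions $a^2\ge kb^2$ and $4a\ge 4+kb^2$ of \cref{l:normbound}, and shows that for every sorted $\bl{x}$ obeying the constraints of the previous paragraph, some convex combination $\bl{u}$ of these step vectors lies coordinatewise below $\bl{x}$. Then $\bl{u}\cdot\bl{w}\le\bl{x}\cdot\bl{w}\le 2cN$, so some step vector $a\bl{1}_{[n]}-b\bl{1}_{[k]}$ occurring in $\bl{u}$ satisfies $(a\bl{1}_{[n]}-b\bl{1}_{[k]})\cdot\bl{w}\le 2cN$; writing $\bl{w}':=\frac{1}{2cN}\bl{w}$, this reads $a|\bl{w}'|-b\sum_{l=1}^{k}w'_l\le 1$, and \cref{l:normbound} gives $\|\bl{w}'\|\le|\bl{w}'|$, that is $\|\bl{w}\|\le 2cN|\bl{w}|$, as needed. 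The hypotheses \eqref{e:ld0} are exactly what make the requisite dominating combination available: $c\ge 1-\alpha_1$ supplies the baseline step vector from the $\tau$-bound (handling the low-dimensional cases as in the last paragraph of the proof of \cref{t:4colored}); the second inequality supplies a valid step vector when the two ``active'' colour indices are $1$ and $i$; and the third supplies one for every active pair $(j,i)$ with $2\le j\le i$, the factors $2-\alpha_j$, $\frac{j-1}{j}$, $\frac{i-1}{i}$ reflecting that the colour-$i$ and colour-$j$ constraints pin down prefixes of $\bl{x}$ of those lengths.

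I expect the main obstacle to be the combinatorial bookkeeping of this last step. Unlike \cref{l:vectors}, with its two constraints and four candidate vectors and a short case check, here there is one constraint per colour index, and one must argue that — according to which prefix-sum constraints are tight for the given sorted $\bl{x}$ — a single well-chosen pair of indices already yields an adequate step vector; controlling the tail coordinates $x_{k+1},\dots,x_n$ of the functional (which is precisely what forces the constant-on-a-tail shape of the step vectors, and hence the exact form of \eqref{e:ld0}) is where the real work lies. Once the pair of indices is fixed, verifying that \eqref{e:ld0} is equivalent to the solvability of the corresponding two-variable quadratic inequality through \cref{l:normbound} is a finite computation in the spirit of the proof of \cref{l:vectors}.
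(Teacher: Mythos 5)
Your overall framing is right at the top level (reduce via \eqref{e:cf4} to showing $\|\bl{w}\|\leq 2c\,\v(H)|\bl{w}|$ whenever $\Vol_H(\bl{w})\leq 1$, and feed the Tur\'an-type lower bounds of \cref{lem:prelim3} into the argument), but the proof has a genuine gap at exactly the point you flag as ``where the real work lies'': you never construct the family of step vectors $a\bl{1}_{[n]}-b\bl{1}_{[k]}$, never verify that their parameters satisfy \eqref{e:norm1}, and never prove that some convex combination of them is dominated by every admissible sorted functional $\bl{x}$. In \cref{t:4colored} this step is a finite check because $\alpha_4(H)=\v(H)$ truncates everything to four coordinates and \cref{l:vectors} can enumerate the vertices of a fixed four-dimensional polyhedron; here the constraint polyhedron has one facet per colour index and unboundedly many relevant coordinates, so the analogous claim is not a routine verification but the entire content of the lemma. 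Moreover it is far from clear that the hypotheses \eqref{e:ld0} are the right certificate for your dual statement: their pairwise form, one inequality for each pair $(i,j)$, is the signature of a quadratic-form coefficient comparison rather than of a convex-hull domination.

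That is in fact how the paper proceeds, and it never touches \cref{l:normbound} or a separating hyperplane in this proof. It works primally: after disposing of the case $(2-\alpha_1)w_1\geq|\bl{w}|$ (which is where $c\geq 1-\alpha_1$ is used), it decomposes $\bl{w}$ itself as $\bl{x}+\bl{y}+\bl{z}$ with $\bl{z}$ a ``staircase'' vector, $\bl{y}$ matchable, and $\bl{x}$ concentrated on the first coordinate, then writes $\bl{y}+\bl{z}=\sum_i s_i(\bl{z}^i+\bl{y}^i)$ with $\bl{z}^i=\frac{\alpha_i\v(H)}{i}\bl{1}_{[i]}$. \cref{lem:prelim3} and superadditivity of $\Vol_H$ give $\Vol_H(\bl{w})\geq s_1+\sum_i s_i$, while $|\bl{w}|$ and $\|\bl{w}\|$ expand as linear and quadratic forms in the $s_i$; the three families of inequalities in \eqref{e:ld0} are precisely the statements that each coefficient of $s_is_j$ on the left of \eqref{e:ld1} dominates the corresponding coefficient on the right. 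If you want to salvage your dual route you would have to exhibit the step vectors explicitly and show that \eqref{e:ld0} implies the required domination; as written, the proposal asserts rather than proves the lemma's crux.
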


\begin{proof}
	It suffices to show that for every $\bl{w} \in  \bb{R}^{\bb{N}}_+$, 
	\begin{equation}\label{e:ld1} 
	2c \: \v(H)|\bl{w}| \Vol_H(\bl{w}) \geq  \|\bl{w}\|	.
	\end{equation} 
Assume without loss of generality that $w_1\geq w_2 \geq \dots \geq w_k > 0$ and that $w_i=0$ for $i > k$.
	
	First, consider the case $(2 - \alpha_1) w_1 \geq |\bl{w}|$. Let $w_0 :=  |\bl{w}| - w_1$. Then $\Vol_H(\bl{w}) \geq \frac{w_0}{\v(H)-\alpha(H)}$ and $\|\bl{w}\| \leq 2|\bl{w}|w_0$. 
	Substituting these bounds reduces \eqref{e:ld1} to $c \geq 1 - \alpha_1$, which holds by the choice of $c$.
	
Now assume $(2 - \alpha_1) w_1 < |\bl{w}|$. For  $\bl{z} \in  \bb{R}^{\bb{N}}_+$, define 
	$$\xi(z) :=\sum_{i=1}^{k}\frac{1-\alpha_i}{\alpha_i}i(z_i - z_{i+1}).$$
Choose $\bl{z} \in  \bb{R}^{\bb{N}}_+$ so that:
	\begin{itemize}
		\item[(i)]  there exists $\ell\in[2,k]$ such that $z_1=z_2=\dots=z_\ell \leq w_l$, and $z_{i}=w_i$ for every $i > \ell$,
		\item[(ii)]  $|\bl{w} - \bl{z}| \geq 2\xi(z)$, and
		\item[(iii)] $|\bl{w} - \bl{z}| \geq (2 -\alpha_1)(w_1 -z_1) + \alpha_1\xi(z)$.
	\end{itemize}  
	and subject to these conditions, $|\bl{z}|$ is maximum. Such a choice is possible since $\bl{z}=\bl{0}$ satisfies (i), (ii) and (iii).	By the maximality of $z$, at least one (ii) and (iii) holds with equality.
		
	Now define $\bl{x},\bl{y} \in  \bb{R}^{\bb{N}}_+$ such that $\bl{w}=\bl{x}+\bl{y}+\bl{z}$, and
		\begin{itemize}
			\item[(iv)]  $\bl{y}$ is a matchable vector with 
			$
			|\bl{y}| = 2\xi(\bl{z}),  
			$
			\item[(v)] $\bl{x} = \bl{0}$, or $(2 - \alpha_1) x_1 = |\bl{x}|$
			and $y_1=\frac{1}{2}|\bl{y}|$, 
		\end{itemize}  
	as follows.	
	Let $\bl{w}' := \bl{w} - \bl{z}$. If $\bl{w}'$ is matchable, then  $2(w_1 - z_1) \leq  |\bl{w} - \bl{z}|$, and so if the inequality in (ii) is strict, then so is the one in (iii). Thus (ii) holds with equality; that is, $|\bl{w} - \bl{z}| = 2\xi(\bl{z})$, and $\bl{x}=\bl{0}$ and $\bl{y}=\bl{w} - \bl{z}$ satisfy (iv) and (v).
	
	Thus we assume that $2w'_1 > |\bl{w}'|$. Then similarly to the previous case we deduce that  $|\bl{w}'| = (2 -\alpha_1)w'_1 + \alpha_1\xi(\bl{z})$. Let 
	$$\bl{y} := \xi(\bl{z})\bl{1}_{[1]} + \frac{\xi(\bl{z})}{|\bl{w}'| - w'_1}(\bl{w}' - w'_1\bl{1}_{[1]}),$$ 
	and let $\bl{x} := \bl{w}' - \bl{y}$. Then $|\bl{y}| = 2\xi(\bl{z})$, and $y_1 = \frac{1}{2}|\bl{y}|$. Thus $\bl{y}$ is  matchable and (iv) holds.
Since $2w_1'  > |\bl{w}'| \geq 2\xi(\bl{z})$ we have $|\bl{w}'|-w'_1 =  (1 -\alpha_1)w'_1 + \alpha_1\xi(\bl{z}) \geq \xi(\bl{z})$. Thus $\bl{y} \leq \bl{w}'$, and so $\bl{x} \geq \bl{0}$. Moreover, $$(2 -\alpha_1)x_1 = |\bl{w}'| - 2 \xi(\bl{z}) = |\bl{w}'| - |\bl{y}| =|\bl{x}|. $$
Thus (v) also holds. This finishes our construction of $\bl{x}$ and $\bl{y}$. 
	
	It follows from (v) that $$\ip{\bl{x}}{\bl{y}} \leq |\bl{x}||\bl{y}|-x_1y_1 = \s{1 - \frac{1}{2(2-\alpha_1)}}|\bl{x}||\bl{y}|$$
	For $i\in[\ell,k]$, let $\bl{z}^i := \frac{\alpha_i \v(H)}{i}\bl{1}_{[i]}$ and let $\bl{y}^i := 2(1-\alpha_i)\,\v(H)\frac{\bl{y}}{|\bl{y}|}$. By \cref{lem:prelim3}, $\Vol_H(\bl{z}^i + \bl{y}^i) \geq 1$. Let $s_i := \frac{i (z_i - z_{i+1})}{\alpha_i \v(H)}$. Since $\bl{y}+\bl{z}=\sum_{i=\ell}^{k} s_i(\bl{z}^i + \bl{y}^i)$, we have $\Vol_H(\bl{y}+\bl{z}) \geq \sum_{i=\ell}^{k} s_i$. Let $s_1 := \frac{x_1}{\v(H)}$. Then $\Vol_H(\bl{x}) \geq s_1$ and $|\bl{x}| = s_1(2-\alpha_1)\,\v(H)$.

	 Combining\begin{align*}
	\Vol_H(\bl{w}) &\geq s_1 + \sum_{i=\ell}^{k} s_i,\\
	|\bl{w}|&= \v(H) \cdot \s{(2 - \alpha_1)s_1 + \sum_{i=\ell}^{k}(2-\alpha_i)s_i}
	\end{align*}
	Next we upper bound $\|\bl{w}\|$.
	For $i,j\in[\ell,k]$ with  $j \leq i$, 
	\begin{align*}
	\ip{\bl{z}^i}{\bl{z}^j+\bl{y}^j} &=
 \frac{i-1}{i}|\bl{z}^i||\bl{z}^j+\bl{y}^j|=\frac{i-1}{i}\alpha_i(2-\alpha_j) \v^2(H),  \\
	\ip{\bl{y}^i}{\bl{z}^j+\bl{y}^j} &\leq \frac{j-1}{j}|\bl{y}^i||\bl{z}^j|+\frac{\ell-1}{\ell}|\bl{y}^i||\bl{y}^j| \leq \frac{j-1}{j} (2-2\alpha_i)(2-\alpha_j)\v^2 (H),  \\
	\ip{\bl{x}}{\bl{z}^i+\bl{y}^i} &\leq |\bl{z}^i||\bl{x}|+\s{1-\frac{1}{2(2-\alpha_1)}} |\bl{y}^i||\bl{x}| \\ &= \s{\alpha_i(2-\alpha_1) +
	(1-\alpha_i)(3-2\alpha_1)}\v^2(H)s_1, \\
\ip{\bl{x}}{\bl{x}} &\leq (3-\alpha_1)(1-\alpha_1)\v^2(H)s^2_1 .
	\end{align*}
	Therefore,
	\begin{align*}
		\|\bl{w}\| 
		&= \ip{\bl{x}}{\bl{x} + 2\sum_{i=\ell}^{k}s_i(\bl{z}^i+\bl{y}^i)} \\
		& \quad + \sum_{j=\ell}^{k}s_j\ip{(\bl{z}^j+\bl{y}^j)}{s_j(\bl{z}^j+\bl{y}^j)+ 2\sum_{i=j+1}^{k}s_i(\bl{z}^i+\bl{y}^i)} \\ 
		&\leq \v^2(H) \s{ s_1 \s{s_1(3-\alpha_1)(1-\alpha_1) + 2\sum_{i=l}^{k}s_i\s{\alpha_i(2-\alpha_1) +
					(1-\alpha_i)(3-2\alpha_1)} }}\\	
	& \quad + \sum_{j=l}^{k}s_j(2-\alpha_j)\s{s_j\frac{j-1}{j}(2-\alpha_j) +  2\sum_{i=j+1}^{k}s_i\s{\frac{i-1}{i}\alpha_i + \frac{j-1}{j} (2-2\alpha_i)}}.
		\end{align*}
	Substituting the above bounds on $|\bl{w}|,\Vol_H(\bl{w})$ and $\|\bl{w}\|$ in to \eqref{e:ld1}, we see that by \eqref{e:ld0} in the resulting inequality the coefficient of $s_is_j$ on the left side is at least as large as the  coefficient on the left side for all $i,j \in \{1\} \cup [\ell,k]$. Hence \eqref{e:ld1} holds, as desired.
	\end{proof}

\begin{lem}\label{l:technical2}
	Let $2 \leq j \leq i$ be integers, and let $ 0 \leq \alpha_j \leq \alpha_i \leq 1$, and $c$ be real such that
	\begin{equation}\label{e:c}
	c \geq \max \left\{ \frac{2}{3}, \frac{i-1}{i}\s{1 -\frac{\alpha_i}{2}}, \frac{j-1}{j}\s{1 -\frac{\alpha_j}{2}}\right\}.
	\end{equation}
	Then \begin{equation}\label{e:localdensity}
	c(4 -\alpha_i - \alpha_j) \geq \s{2 -\alpha_j}\s{\frac{i-1}{i}\alpha_i + \frac{j-1}{j}(2-2\alpha_i)}.
	\end{equation}
\end{lem}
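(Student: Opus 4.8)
The statement is a purely algebraic inequality in the parameters $\alpha_i,\alpha_j\in[0,1]$ and $c$ (with $j\le i$), so the plan is to verify \eqref{e:localdensity} directly from the lower bounds on $c$ in \eqref{e:c}. First I would rewrite the right-hand side of \eqref{e:localdensity} by splitting it over the two terms in the inner bracket:
\begin{equation*}
(2-\alpha_j)\cdot\tfrac{i-1}{i}\alpha_i \quad\text{and}\quad (2-\alpha_j)\cdot\tfrac{j-1}{j}(2-2\alpha_i),
\end{equation*}
and similarly write the left-hand side as $c(2-\alpha_i)+c(2-\alpha_j)$. The natural strategy is to bound each piece of the right-hand side by a corresponding piece of the left-hand side, using the relevant clause of \eqref{e:c}. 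Concretely, since $c\ge\tfrac{i-1}{i}(1-\tfrac{\alpha_i}{2})=\tfrac{i-1}{i}\cdot\tfrac{2-\alpha_i}{2}$, we get $\tfrac{i-1}{i}\alpha_i\cdot(2-\alpha_j)\le c\cdot\tfrac{2\alpha_i}{2-\alpha_i}(2-\alpha_j)$, and one checks whether $\tfrac{2\alpha_i}{2-\alpha_i}(2-\alpha_j)\le(2-\alpha_i)+(2-\alpha_j)$ or the analogous split; and since $c\ge\tfrac{j-1}{j}(1-\tfrac{\alpha_j}{2})$, bound $\tfrac{j-1}{j}(2-2\alpha_i)(2-\alpha_j)$ using that clause together with $c\ge\tfrac23$.

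A cleaner route, which I would actually pursue, is to treat the inequality as linear (in fact affine) in $c$: it suffices to prove it at $c=\tfrac23$ and, separately, to check that increasing $c$ along the constraint never violates it — but since the coefficient of $c$ on the left, namely $4-\alpha_i-\alpha_j$, is nonnegative, the inequality only gets easier as $c$ grows, so it is enough to verify \eqref{e:localdensity} when $c$ equals the maximum in \eqref{e:c}. That splits into three cases according to which term achieves the maximum:
\begin{itemize}
\item If $c=\tfrac23$: then necessarily $\tfrac{i-1}{i}(1-\tfrac{\alpha_i}{2})\le\tfrac23$ and $\tfrac{j-1}{j}(1-\tfrac{\alpha_j}{2})\le\tfrac23$, which give lower bounds $\alpha_i\ge 2-\tfrac{4i}{3(i-1)}$ and $\alpha_j\ge 2-\tfrac{4j}{3(j-1)}$; substitute and reduce to a polynomial inequality in $\alpha_i,\alpha_j,i,j$.
\item If $c=\tfrac{i-1}{i}(1-\tfrac{\alpha_i}{2})$: substitute this value of $c$ and use the constraint $c\ge\tfrac23$ (equivalently $\tfrac{i-1}{i}(1-\tfrac{\alpha_i}{2})\ge\tfrac23$) and $c\ge\tfrac{j-1}{j}(1-\tfrac{\alpha_j}{2})$.
\item If $c=\tfrac{j-1}{j}(1-\tfrac{\alpha_j}{2})$: symmetric to the previous case, using $j\le i$.
\end{itemize}
In each case the resulting inequality should factor or be handled by monotonicity in $\alpha_i$ and $\alpha_j$ over $[0,1]$ (checking the endpoints, and noting that $\tfrac{i-1}{i},\tfrac{j-1}{j}\in[\tfrac12,1)$).

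The main obstacle I anticipate is the bookkeeping in the case $c=\tfrac{i-1}{i}(1-\tfrac{\alpha_i}{2})$ (and its mirror): after substitution one has a quadratic in $\alpha_i$ with coefficients depending on $\alpha_j,i,j$, and the two side constraints on $c$ must be invoked at exactly the right moment to kill the bad terms. I would reduce clutter by setting $p=\tfrac{i-1}{i}$, $q=\tfrac{j-1}{j}$ so that $\tfrac12\le q\le p<1$, rewriting \eqref{e:c} as $c\ge\max\{\tfrac23,\,p(1-\tfrac{\alpha_i}{2}),\,q(1-\tfrac{\alpha_j}{2})\}$ and \eqref{e:localdensity} as $c(4-\alpha_i-\alpha_j)\ge(2-\alpha_j)(p\alpha_i+q(2-2\alpha_i))$, and then grinding through the three cases; none of the sub-inequalities is deep, but the third term $q(2-2\alpha_i)$ having the ``wrong'' sign of $\alpha_i$-dependence is what forces the use of $c\ge\tfrac23$ rather than the other bounds.
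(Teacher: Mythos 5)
Your reduction to the case $c=\max\{\frac23,\frac{i-1}{i}(1-\frac{\alpha_i}{2}),\frac{j-1}{j}(1-\frac{\alpha_j}{2})\}$ is valid, since the left side of \eqref{e:localdensity} is nondecreasing in $c$ while the right side does not involve $c$. (The term-by-term matching sketched in your first paragraph fails as stated --- for $\alpha_i=1$, $\alpha_j=0$ one has $\frac{2\alpha_i}{2-\alpha_i}(2-\alpha_j)=4>3=(2-\alpha_i)+(2-\alpha_j)$ --- but you discard that route yourself.) The genuine gap is that the proposal stops exactly where the content of the lemma begins: none of the three cases is verified, and they are not routine. In each case the two inactive clauses of \eqref{e:c} must be re-imported as constraints and combined in a specific way. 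For example, in your third case ($c=\frac{j-1}{j}(1-\frac{\alpha_j}{2})$) one must first notice that the case is vacuous for $j=2$, since then $c\le\frac12<\frac23$; for $j\ge3$, writing $y=2-\alpha_i$, the required inequality follows from the constraint $c\ge\frac{i-1}{i}(1-\frac{\alpha_i}{2})$ alone only when $y\ge\frac43$, and in the complementary subcase one has to combine $c\ge\frac23$ with the bound $\frac{i-1}{2i}\le\frac12$. Until at least one case is pushed through in full, ``should factor or be handled by monotonicity'' is a conjecture, not a proof.

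For comparison, the paper's proof avoids the case split altogether. It sets $x=2-\alpha_j$, $y=2-\alpha_i$ (so $1\le y\le x\le2$), assumes \eqref{e:localdensity} fails, and uses the three clauses of \eqref{e:c} simultaneously but asymmetrically: splitting $c(x+y)=cx+cy$ and bounding $cx$ via the $i$-clause and $cy$ via the $j$-clause gives $c(x+y)\ge(1-\frac{1}{2i}-\frac{1}{2j})xy$, which forces $y<\frac43$; the $j$-clause alone then gives an upper bound on $x$; and substituting that bound into the inequality obtained from $c\ge\frac23$ yields the factorization $(4-3y)(j-3)(2+(j-2)y)<0$, a contradiction for $j\ge3$ (with $j=2$ handled directly). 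This weighted pairing of the clauses of \eqref{e:c} with the factors $x$ and $y$ is the idea your plan lacks, and it is what keeps the verification short. Your three-case plan can very likely be completed, but the work you have deferred is essentially the entire proof.
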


\begin{proof}
	Let $x := 2 -\alpha_j$ and $y := 2 - \alpha_i$. Then $1 \leq y \leq x \leq 2$, and \eqref{e:c} can be rewritten as 
	\begin{equation}\label{e:c2}
	c \geq \max \left\{ \frac{2}{3}, \; \frac{i-1}{2i}y, \: \frac{j-1}{j}x \right\}.
	\end{equation}
	
	Suppose for a contradiction that \eqref{e:localdensity} does not hold. Expressing $\alpha_i$ and $\alpha_j$ in terms of $x$ and $y$, we thus have
		\begin{equation}\label{e:ld2}
		c(x+y) < x \s{y - \frac{2-y}{i} - \frac{2y-2}{j}}. 
		\end{equation}	
	By \eqref{e:c2} $$ c(x+y) \geq \s{\frac{i-1}{2i}y} x  +  \s{\frac{j-1}{2j}x}y = \s{1 - \frac{1}{2i} - \frac{1}{2j}}xy,$$	and so \eqref{e:ld2} implies
	$$ \s{1 - \frac{1}{2i} - \frac{1}{2j}} y < \s{1 + \frac{1}{i} - \frac{2}{j}}y - \frac{2}{i} + \frac{2}{j},$$
	which in turn implies 
		\begin{equation}\label{e:y}
	y < \frac{4}{3}. 
	\end{equation}
		
	Secondly, as $c \geq  \frac{j-1}{2j}x$,  \eqref{e:ld2} similarly implies 	$$ \frac{j-1}{2j}(x+y) < \s{1 + \frac{1}{i} - \frac{2}{j}}y - \frac{2}{i} + \frac{2}{j},$$
	which simplifies to \begin{equation}\label{e:x0} x < y + \frac{2(i-j)}{i(j-1)}(2-y).\end{equation}
	Since $ i \geq j$ and $y \leq 2$, \eqref{e:x0} in turn implies  \begin{equation}\label{e:x00} x < y + \frac{2}{(j-1)}(2-y) = \frac{j-3}{j-1}y + \frac{4}{j-1}.\end{equation}
	Finally, since $c \geq \frac23 $ and $y \leq 2$, \eqref{e:ld2}  implies
	\begin{equation}\label{e:ld3}
	\frac{2}{3}(x+y) - x \s{\s{1 - \frac{2}{j}}y + \frac{2}{j}} < 0
	\end{equation}	
	
	Suppose first that $j \geq 3$.
Since $y \geq 1$, 
	$$\frac{2}{3} - \s{1 - \frac{2}{j}}y + \frac{2}{j} < 0.$$
	Thus the left side of \eqref{e:ld3} decreases with $x$, and so by \eqref{e:x00}, it suffices to show that \eqref{e:ld3} does not hold when $x = \frac{j-3}{j-1}y + \frac{4}{j-1}$. Substituting this value of $x$ into \eqref{e:ld3} we obtain	\begin{equation*}
	\frac{2}{3}\s{\frac{2j-4}{j-1}y +  \frac{4}{j-1}} - \s{\frac{j-3}{j-1}y + \frac{4}{j-1}} \s{\s{1- \frac{2}{j}}y + \frac{2}{j}} < 0,
	\end{equation*}	
	which luckily simplifies to 
	$$(4-3y)(j-3)(2+(j-2)y) < 0.$$
 Since $j \geq 3$ and $1 \leq y < \frac43 $ by \eqref{e:y}, this last inequality yields the desired contradiction.
	
	It remains to consider the case $j=2$. In this case, \eqref{e:ld2} simplifies to $\frac{2}{3}y -\frac{1}{3}x <0$, which is again a contradiction since $y \geq 1$ and $x \leq 2$.
	\end{proof}	

With the technical aspects of the proof of \cref{t:twothirds} handled by \cref{l:technical0,l:technical2}, we now finish the argument. We restate the theorem for convenience.

\Twothirds*

\begin{proof}
	Let  $\alpha_i := \frac{\alpha_i(H)}{\v(H)}$, and let $c := \max \{ \frac23 , \frac{c_T(H)}{\v(H)} \}$. Then 	for every $i\in\NN$, 
	$$c \geq \max\left\{ \frac{2}{3}, 1 - \alpha,  \frac{i-1}{i}\left(1-\frac{1}{2}\alpha_i \right)\right\}.$$
It suffices to show that $c_f(H) \leq c\v(H)$. By \cref{l:technical0} in turn it suffices to show that the inequalities \eqref{e:ld0} hold.
	The first holds since  $c \geq 1 -\alpha$, and the last holds by \cref{l:technical2}. 
It remains to show that 
	\begin{equation}
	\label{e:23}c(4 -\alpha_1 - \alpha_i) \geq \alpha_i(2-\alpha_1) +
	(1-\alpha_i)(3-2\alpha_1).
	\end{equation}  
	We have $$c(4 -\alpha_1 - \alpha_i) \geq \frac{2}{3}(3-\alpha_1) + (1-\alpha_1)(1-\alpha_i),$$ and so \eqref{e:23} is implied by
	$$\frac{2}{3}(3-\alpha_1) + (1-\alpha_1)(1-\alpha_i) \geq  \alpha_i(2-\alpha_1) +
	(1-\alpha_i)(3-2\alpha_1),$$
	which reduces to $\frac{\alpha_1}{3} \geq 0$ after cancellations.
\end{proof}

We finish this section by deriving from  \cref{t:twothirds}  a bound on the fractional extremal function of $H$ in terms of the maximum of $\frac{\tau(H')}{\v(H')}$ taken over non-null subgraphs $H'$ of $H$. This bound is used in the proof of \cref{t:upper0}.

\begin{cor}\label{c:tau}
	Let $H$ be a graph, and let $r \geq \frac23$ be such that for every  non-null subgraph $H'$ of $H$.
	\begin{equation}\label{Hprime}
	\tau(H') \leq r\,\v(H').
	\end{equation}
	Then $c_f(H) \leq r\,\v(H)$.
\end{cor}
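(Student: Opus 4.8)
The plan is to route everything through \cref{t:twothirds}: reduce the corollary to the estimate $c_T(H)\le r\,\v(H)$, and then bound each term of $c_T(H)$ using \eqref{Hprime}. We may assume $H$ is non-null and $\tfrac23\le r\le 1$, since for $r>1$ the bound $c_f(H)\le\v(H)-1<r\,\v(H)$ from \cref{lem:boundedDiff} is immediate. If $c_f(H)\le\tfrac23\v(H)$ then $c_f(H)\le\tfrac23\v(H)\le r\,\v(H)$ and we are done, so assume $c_f(H)>\tfrac23\v(H)$; by \cref{t:twothirds} this gives $c_f(H)=c_T(H)$. Hence it remains to show $\tau(H)\le r\,\v(H)$, which is exactly \eqref{Hprime} with $H'=H$, and $\tfrac{i-1}{i}\bigl(\v(H)-\tfrac12\alpha_i(H)\bigr)\le r\,\v(H)$ for every integer $i\ge 3$.

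The main point is a lower bound on $\alpha_i(H)$ extracted from \eqref{Hprime} by a greedy colouring argument. Write $n:=\v(H)$ and $a_i:=\alpha_i(H)$. Given a maximum $(i-1)$-colourable induced subgraph $H[X]$ of $H$ (so $|X|=a_{i-1}$), either $a_{i-1}=n$, or $H-X$ is a non-null subgraph and \eqref{Hprime} applied to $H'=H-X$ gives $\alpha(H-X)=\v(H-X)-\tau(H-X)\ge(1-r)(n-a_{i-1})$; adjoining a maximum independent set of $H-X$ to $X$ produces an $i$-colourable induced subgraph, so in either case
\begin{equation*}
a_i \;\ge\; a_{i-1}+(1-r)(n-a_{i-1}) \;=\; (1-r)n+r\,a_{i-1}.
\end{equation*}
Since $a_1=n-\tau(H)\ge(1-r)n$ by \eqref{Hprime}, induction yields $a_i\ge(1-r^i)n$ for every $i\ge1$ when $r<1$ (and for $r=1$ the rest is trivial, since then $r\ge\tfrac{i-1}{i}$ for all $i$).

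Now fix $i\ge3$. If $r\ge\tfrac{i-1}{i}$ then $\tfrac{i-1}{i}(n-\tfrac12 a_i)\le\tfrac{i-1}{i}n\le r\,n$, so assume $r<\tfrac{i-1}{i}$ (in particular $r<1$). Substituting $a_i\ge(1-r^i)n$,
\begin{equation*}
\tfrac{i-1}{i}\bigl(n-\tfrac12 a_i\bigr) \;\le\; \tfrac{i-1}{i}\cdot\tfrac{1+r^i}{2}\,n,
\end{equation*}
so it suffices to prove the scalar inequality $(i-1)(1+r^i)\le 2ir$ for $r\in[\tfrac23,1]$. Set $h(r):=2ir-(i-1)(1+r^i)$, so $h'(r)=i\bigl(2-(i-1)r^{i-1}\bigr)$, which vanishes only at $r^\ast:=\bigl(\tfrac{2}{i-1}\bigr)^{1/(i-1)}$. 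A short check (using $(i-1)(\tfrac23)^{i-1}\le\tfrac89$ for $i\ge3$) shows $\tfrac23<r^\ast<1$ when $i\ge4$, while $r^\ast=1$ when $i=3$; hence $h$ is unimodal on $[\tfrac23,1]$ and its minimum there is attained at an endpoint. Since $h(1)=2>0$ and $h(\tfrac23)=\tfrac{i}{3}+1-(i-1)(\tfrac23)^i\ge 2-\tfrac{16}{27}>0$ (the term $(i-1)(\tfrac23)^i$ being at most $\tfrac{16}{27}$ for all $i\ge3$), the scalar inequality holds. This proves $c_T(H)\le r\,\v(H)$, and therefore $c_f(H)\le r\,\v(H)$.

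The step I expect to be delicate is not conceptual but the uniform verification in the last paragraph: one has to confirm that the greedy bound $\alpha_i(H)\ge(1-r^i)\v(H)$ is \emph{strong enough}, i.e.\ that the elementary inequality $(i-1)(1+r^i)\le 2ir$ holds for all $i\ge3$ and all $r\in[\tfrac23,1]$ simultaneously. The regime to watch is $r$ just below $\tfrac{i-1}{i}$ for large $i$; there the left side is roughly $(i-1)(1+1/e)$ while the right side is roughly $2(i-1)$, so it is precisely the geometric decay of $r^i$ that makes the bound go through.
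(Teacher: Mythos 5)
Your proof is correct and follows essentially the same route as the paper: reduce via \cref{t:twothirds} to bounding $c_T(H)$, derive $\alpha_k(H)\geq(1-r^k)\v(H)$ by induction from \eqref{Hprime}, and then verify the resulting scalar inequality on $r\in[\frac23,1]$ by checking the endpoints (the paper invokes convexity of the left-hand side in $r$ where you use unimodality of $h$, which amounts to the same reduction). The only addition is that you spell out the greedy colouring step behind the induction, which the paper leaves implicit.
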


\begin{proof}If $c_f(H) \leq  \frac{2}{3} \v(H)$ then the corollary clearly holds. Thus we assume that $c_f(H) \geq  \frac{2}{3} \v(H)$.
	By \cref{cfLowerBound} we have  $c_f(H) - c_T(H) \geq \tau(H) \geq \frac{2}{3}{\v(H)}$. Thus by \cref{t:twothirds} it suffices to show that for every $k\in\NN$. 
	\begin{equation}\label{e:tau} 
		\frac{k-1}{k}\left(\v(H)-\frac{1}{2}\alpha_k(H) \right) \leq  r\v(H).
	\end{equation}
Note that  $\alpha(H') \geq (1 - r)\,\v(H')$ for every subgraph $H'$ of $H$ by \eqref{Hprime}. It follows by induction on $k$ that $\alpha_k(H) \geq (1-r^k) \v(H)$. After substitution of this bound into \eqref{e:tau}  it suffices to show that
	\begin{equation}\label{e:tau2}  
	\frac{k-1}{k}\left(1-\frac{1}{2}(1-r^k) \right) \leq r.
	\end{equation}
Since the left side of \eqref{e:tau2} is a convex function of $r$ and $r \in [\frac23 ,1]$ it suffices to verify \eqref{e:tau2} for $r =1$ and $r=\frac23 $. The case $r=1$ is trivial, while for $r=\frac23 $, we may assume $k\geq 2$ and  \eqref{e:tau2} reduces to
	$1+\fs{2}{3}^k \leq \frac{4k}{3(k-1)}$. 
Since  $1+ \fs{2}{3}^3 \leq \frac43 $, this last inequality holds for $k \geq 3$, and it clearly holds for $k=2$.
\end{proof}

\section{Connectivity and density gain}\label{s:connect}

Proving \cref{t:main1} requires us to show that for every $\eps > 0$ if graphs $G$ and $H$ satisfy $\d(G) \geq (1+\eps)c_f(H)$ and $\v(H)$ is large enough as a function of $\eps$,  then $H$ is a minor of $G$.
The goal of this section is to reduce the proof of \cref{t:main1} to the ``dense'' case, more specifically, to the case when $G$ has connectivity linear in $\v(G)$. Then  \cref{addedges} shows, roughly, that if we find a graph $H'$  obtained from $H$ by deleting a sublinear number of edges, as a minor in $G$, we can use the connectivity to restore the deleted edges. This result allows us to reduce the proof of \cref{t:main1} to the case when $H$ has bounded 
component size, which reduces to \cref{l:copies}.

Our first lemma is used to ``stitch'' together subgraphs of highly connected graph, restoring a fraction of connectivity.

\begin{lem}\label{l:restoreConnectivity}
For $k,n\in\NN$, let $G_0, G_1,\dots,G_k$ be vertex-disjoint subgraphs of a graph $G$ such that $\v(G_0) \geq kn$, $\kappa(G) \geq kn$ and $\kappa(G_i) \geq n$ for $i\in[0,k]$. Then there exists  a minor $G'$ of $G$ such that $\kappa(G') \geq n$ and $G_0 \cup G_1 \cup \dots \cup G_k$ is isomorphic to a spanning subgraph of $G'$.
\end{lem}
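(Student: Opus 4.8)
The plan is to build $G'$ by routing disjoint linkages through $G$ to merge $G_1,\dots,G_k$ into $G_0$ while preserving connectivity $n$. First I would fix, for each $i\in[k]$, a set $S_i\subseteq V(G_i)$ of $n$ vertices (this is possible since $\kappa(G_i)\ge n$ forces $\v(G_i)\ge n+1$), and a set $S_0\subseteq V(G_0)$ of $kn$ vertices, partitioned into $k$ blocks $S_0^1,\dots,S_0^k$ of size $n$ each. The goal is to find $k$ pairwise vertex-disjoint linkages, where the $i$-th linkage consists of $n$ internally disjoint paths from $S_i$ to $S_0^i$, with all paths internally disjoint from $G_0\cup G_1\cup\dots\cup G_k$. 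Contracting each such path to a single edge then identifies the endpoints, and the union of all $G_i$ together with these $kn$ identifications yields a minor $G'$ of $G$ containing $G_0\cup\dots\cup G_k$ as a spanning subgraph.

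The key step is producing these linkages simultaneously with the required disjointness. I would appeal to the fact that $\kappa(G)\ge kn$: consider the $kn$ ``terminals'' $\bigcup_{i\in[k]} S_i$ on one side and the $kn$ terminals $S_0$ on the other (these two sets may overlap $G_0$ only in $S_0$, and the $S_i$ lie in the disjoint $G_i$, so the two terminal sets are disjoint as subsets of $V(G)$). Since $G$ is $kn$-connected and we have $kn$ sources and $kn$ sinks, Menger's theorem (or the fan/linkage version for sets) gives $kn$ pairwise vertex-disjoint paths from $\bigcup S_i$ to $S_0$. By relabeling the blocks $S_0^i$ according to which source-set $S_j$ each path emanates from — i.e. defining $S_0^i$ to be the set of $S_0$-endpoints of the paths starting in $S_i$ — the paths group into the desired $k$ linkages automatically. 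One must also truncate each path at its first vertex in $S_0\cup\bigcup S_j$ so that it is internally disjoint from all the $G_\ell$, which is routine.

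After contracting the $kn$ paths, every vertex of every $S_i$ is identified with a vertex of $S_0\subseteq V(G_0)$, so $V(G')\supseteq V(G_0)\cup\bigcup_{i\in[k]}(V(G_i)\setminus S_i)$ and $G'$ spans $G_0\cup G_1\cup\dots\cup G_k$. For the connectivity bound $\kappa(G')\ge n$: take any set $T$ of fewer than $n$ vertices of $G'$ and show $G'-T$ is connected. Since $\kappa(G_0)\ge n$, $G_0-T$ is connected (here using $\v(G_0)\ge kn > n$). For each $i$, $G_i-T$ need not be connected, but $G_i$ has $\kappa(G_i)\ge n$, so $G_i-T$ is connected; moreover the $n$ identified vertices $S_i$ cannot all lie in $T$ (as $|T|<n$), so some vertex of $S_i$ survives in $G'-T$ and is glued to the surviving part of $G_0-T$ — hence $G_i-T$ attaches to the main component. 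Thus $G'-T$ is connected, giving $\kappa(G')\ge n$.

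The main obstacle I anticipate is the bookkeeping to guarantee the linkages are genuinely internally disjoint from all of $G_0\cup\dots\cup G_k$ and from each other simultaneously; the clean way is to apply a single application of Menger between the two $kn$-sets and let the combinatorics of endpoints define the blocks $S_0^i$, rather than trying to build the $k$ linkages one at a time (which would require repeatedly re-establishing connectivity after deletions). A secondary subtlety is the edge case where some $S_i$ intersects $S_0$ or where a path has length zero; truncating paths at their first terminal handles this, and a degenerate zero-length path simply means the identification is trivial, which causes no harm.
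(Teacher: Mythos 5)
Your overall strategy is the same as the paper's (a single application of Menger's theorem between $G_0$ and $\bigcup_{i\ge 1}G_i$ to get $kn$ pairwise disjoint paths, grouped a posteriori into $k$ linkages of size $n$, followed by contraction and a connectivity check), but there is a genuine error in how you perform the contraction. You \emph{identify} each vertex of $S_i$ with its partner in $S_0\subseteq V(G_0)$, so your $G'$ has vertex set $V(G_0)\cup\bigcup_{i}(V(G_i)\setminus S_i)$, which has $kn$ fewer vertices than $\v(G_0)+\sum_{i=1}^k\v(G_i)$. A graph on that many vertices cannot contain the \emph{disjoint} union $G_0\cup G_1\cup\dots\cup G_k$ as a spanning subgraph (or even as a subgraph), so the construction does not deliver the stated conclusion; this matters downstream in \cref{FindG'}, where both the vertex count $\v(G')\ge Cs$ and the degree bound $\delta(G')\ge s-3Cn$ are read off from the disjoint spanning copies. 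The fix is what your first sentence actually says before the slip: contract each connecting path onto a single new \emph{edge}, keeping its two terminal vertices distinct (the paper contracts only the portions of the paths lying outside $\bigcup_i V(G_i)$). Your connectivity argument must then be adjusted: a surviving vertex of $S_i$ is no longer automatically ``glued'' to $G_0-T$, since its partner in $S_0$ may lie in $T$; instead one observes that the $n$ disjoint connectors between $G_i$ and $G_0$ cannot all be hit by a set $T$ with $|T|<n$, so some connector survives with both ends intact. (The paper phrases this dually, via a separation $(A,B)$ of order less than $n$ that all $n$ disjoint $X_0$--$X_i$ paths would have to cross.)

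A secondary gap: truncating each Menger path at its first vertex of $S_0\cup\bigcup_j S_j$ does not make it internally disjoint from $G_0\cup\dots\cup G_k$ — its interior can still pass through $V(G_\ell)\setminus S_\ell$ for various $\ell$, and contracting such a path (whether to a point or to an edge) would absorb vertices of some $G_\ell$ that are needed for the spanning copy of $G_\ell$. Truncating at the full vertex sets $V(G_\ell)$ instead can reassign a path from one linkage to another and spoil the count of $n$ paths per $G_i$. The paper avoids this altogether by not insisting that each connector become a single edge: it contracts only the edges of the paths incident with vertices outside $\bigcup_i V(G_i)$, so each connector becomes a path in $G'$ through vertices of $\bigcup_i V(G_i)$; this is still enough for the separation argument, since each such contracted path still joins $V(G_0)$ to $V(G_i)$ and the $n$ of them are pairwise disjoint.
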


\begin{proof}
	Let $X_i := V(G_i)$ for $i\in[0,k]$. 	By Menger's theorem there exist linkages $\mc{P}_1,\dots, \mc{P}_k$ in $G$ with the following properties: 
	\begin{itemize}
		\item $|\mc{P}_i|=n$ for every  $i\in[k]$, 
		\item $\mc{P} :=\mc{P}_1 \cup \dots \cup \mc{P}_k$ is a linkage in $G$,
		\item For every $i\in[k]$, every $P \in \mc{P}_i$ has one end in $X_0$ and the other end in $X_i$.
	\end{itemize}

	Let $G'' := (\bigcup_{i=0}^k G_i) \cup (\bigcup_{P \in \mc{P}} P)$. We obtain a minor $G'$ of $G''$ by repeatedly contracting edges of the paths in $\mc{P}$ that have at least one end in $V(G'') - \bigcup_{i=0}^k X_i $. We claim that $G'$ is as required. By construction,  $G_0 \cup G_1 \cup \dots \cup G_k$ is isomorphic to a spanning subgraph of $G'$. Moreover, slightly abusing the notation, we may assume that $\mc{P}_1,\dots,\mc{P}_k$ are the linkages in $G'$ satisfying all the conditions listed above. 
	
	It remains to show that $\kappa(G') \geq n$. Suppose not. Then there exists a separation $(A,B)$ of $G'$ of order less than $n$. Since $\kappa(G_0) \geq n$, we may assume without loss of generality that $X_0 \subseteq A$. Further, $X_i \cap (B-A) \neq \emptyset$ for some $i\in[k]$. Since   $\kappa(G_i) \geq n$ it follows that $X_i \subseteq B$. Thus every path $P \in \mc{P}_i$ has one end in $A$ and the other in $B$, implying that $V(P) \cap A \cap B \neq \emptyset$. However, $|\mc{P}_i|=n > |A \cap B|$, a contradiction.
\end{proof}

The next lemma accomplishes the most technical step towards the goal of this section.

\begin{lem}
	\label{FindG'}
	For all $C\geq 5$ and all $s,n \in\NN_0$, such that $s \geq 9Cn$, and for every graph $G$ such that $\v(G)\geq Cs$, $\kappa(G) \geq 3Cn$, $\d(G)\leq 2s$, and every edge of $G$ is in at least $s$ triangles, there exists a minor $G'$ of $G$ such that $Cs \leq \v(G') \leq 2(C+1)s$, $\kappa(G')\geq n$ and $\delta(G')\geq s-3Cn$. 
\end{lem}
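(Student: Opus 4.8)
The plan is to extract $G'$ as a minor of $G$ in two stages: first build a small, highly connected "core" that sees almost all the structure we need, then use the triangle hypothesis to recover the minimum degree. First I would apply \cref{l:restoreConnectivity}-style reasoning in reverse: since $\kappa(G)\geq 3Cn$, I want to find $C+1$ vertex-disjoint pieces $G_0,G_1,\dots,G_C$ inside $G$, each of order roughly $s$ and each of connectivity at least $3n$, with $\v(G_0)\geq Cs\cdot(3n)/s$ — more precisely arranged so that the hypotheses of \cref{l:restoreConnectivity} are met. The existence of such pieces follows from the density bound $\d(G)\leq 2s$: a graph of density at most $2s$ on $\geq Cs$ vertices can be greedily partitioned (after discarding low-degree vertices) into pieces each inducing a subgraph of average degree bounded, and inside each such piece one finds a subgraph of connectivity $\geq 3n$ provided each piece has $\gtrsim Cn$ vertices and enough edges; here the bound $s\geq 9Cn$ gives the needed slack. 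Applying \cref{l:restoreConnectivity} then yields a minor $G''$ with $\kappa(G'')\geq 3n$ (I would keep a constant factor of room so that later deletions leave $\kappa\geq n$) and $\v(G'')$ in the window $[Cs, 2(C+1)s]$.

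The second stage is to boost the minimum degree to $s-3Cn$ while keeping $\v(G')\geq Cs$ and $\kappa(G')\geq n$. Here is where every edge lying in $\geq s$ triangles does the work: if a vertex $v$ of the current minor has degree below $s$, I pick an edge $vu$ and look at the $\geq s$ triangles on it in the \emph{original} graph $G$; most of the "apex" vertices of these triangles are still available (only a bounded — sublinear in $s$ — number have been used up by the core construction and by earlier steps, since everything is controlled by $Cn \ll s$), and contracting a short path through an unused apex into $v$ raises $\deg(v)$. More cleanly, I would argue globally: starting from $G''$, repeatedly delete any vertex of degree less than $s-3Cn$; I claim this process terminates while $\geq Cs$ vertices remain. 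The point is that deleting such a vertex destroys $< s-3Cn$ edges, but the triangle condition means the graph is locally dense enough — e.g. $\e(G)\geq s\,\v(G)/3$ is forced by the triangle hypothesis combined with $\delta$-type bounds — so too much deletion would contradict $\v(G)\geq Cs$ together with $\d(G)\leq 2s$; a careful edge-count of the form "at most $(\v(G'')-Cs)(s-3Cn)$ edges removed, versus a lower bound on $\e(G'')$" closes this. Connectivity $\kappa(G')\geq n$ is preserved because we built $\kappa(G'')\geq 3n$ and only ever delete $<2Cn \le$ a controlled number of vertices, each deletion dropping $\kappa$ by at most $1$; alternatively one re-applies \cref{l:restoreConnectivity} at the end.

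The main obstacle I expect is the bookkeeping in the first stage: partitioning a merely density-bounded (not bounded-degree) graph into the right number of pieces of the right size, each with connectivity $\geq 3n$, and simultaneously arranging $G_0$ large enough that \cref{l:restoreConnectivity} applies with parameter $k=C$. The density bound $\d(G)\leq 2s$ only controls \emph{average} degree, so one must first peel off vertices of degree below, say, $s/2$ — and verify this peeling removes at most half the vertices (using $\v(G)\geq Cs$ with $C\geq 5$) so that $\geq Cs$ vertices of degree $\geq s/2$ survive — and then find highly connected subgraphs inside, invoking a Mader-type result that a graph with enough edges contains a subgraph of large connectivity. Getting all the constants ($C\geq 5$, $s\geq 9Cn$, the two-sided window on $\v(G')$) to line up is the delicate part; the triangle-based degree-boosting and the connectivity preservation are comparatively routine once the core is in place.
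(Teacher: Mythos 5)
Your plan inverts the order of operations relative to what actually works, and the inversion creates a gap that I don't see how to close. You build connectivity first (stitching Mader-type highly connected pieces with \cref{l:restoreConnectivity}) and only then try to recover minimum degree $s-3Cn$ from the triangle condition. The problem is that after your first stage the core $G''$ carries no lower bound on its minimum degree or edge count beyond what connectivity gives (roughly $3n \ll s$, since $s \ge 9Cn$): the pieces produced by a Mader-type extraction from a graph whose density is only bounded \emph{above} can be small and sparse, and the triangle condition of $G$ does not transfer to an arbitrary minor $G''$. Consequently your ``repeatedly delete vertices of degree $< s-3Cn$'' process has no reason to terminate with $\geq Cs$ vertices remaining --- the edge-count you invoke needs $\e(G'') \gtrsim s\,\v(G'')$, which you never establish for $G''$ (only for $G$ itself, via $\delta(G)\ge s$, which is irrelevant once you have passed to a minor). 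The connectivity bookkeeping also fails: the number of deletions in the degree-boosting stage is not $O(Cn)$ but potentially $\Theta(Cs)$, and even $2Cn$ vertex deletions from a $3n$-connected graph can reduce the connectivity to zero when $C\ge 2$. Finally, your first stage leans on the density bound $\d(G)\le 2s$ to ``discard low-degree vertices,'' but an upper bound on density says nothing about how many low-degree vertices there are; it is the triangle hypothesis that forces $\delta(G)\ge s$.

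The paper's proof uses the triangle condition at the very start, and in a localized way you are missing: for any vertex $u$ with $\deg(u)\le 4s-1$, the induced subgraph $G[N[u]]$ has at most $4s$ vertices and minimum degree at least $s$, because each neighbour $v$ of $u$ has $\ge s$ common neighbours with $u$, all inside $N[u]$. The density bound $\d(G)\le 2s$ is used only to show at least half the vertices have degree $\le 4s-1$, so one can take a minimal union $G''$ of such closed neighbourhoods with $(C+1)s \le \v(G'') \le (C+5)s$ and $\delta(G'')\ge s$. Connectivity is then restored \emph{afterwards}, at a controlled cost to the minimum degree: repeatedly split any piece admitting a separation of order $\le n$ into its two sides (each split costs at most $n$ in minimum degree, and a counting argument caps the number of splits at $3C$, hence the loss $3Cn$), and stitch the at most $3C+1$ resulting $n$-connected, high-minimum-degree pieces back together with \cref{l:restoreConnectivity}, using $\kappa(G)\ge 3Cn$ to supply the linkages. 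If you want to salvage your write-up, the essential fix is to produce the minimum degree $s$ before taking any minors, via these closed neighbourhoods, and to treat connectivity as the thing to be repaired last.
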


\begin{proof} If $\v(G) \leq 2(C+1)s$ then $G'=G$ satisfies the lemma and so we assume that $\v(G) \geq 2(C+1)s$. Let $S := \{v \in V(G) \, | \, \deg(v) \leq  4s -1 \}$. Then $2s\,\v(G) \geq \e(G) \geq 4s(\v(G)-|S|)$, and therefore $|S| \geq \v(G)/2 \geq (C+1)s$. 
	
	For $u \in V(G)$, let $G(u)$ denote the subgraph of $G$ induced by $N[u]$. 
	Let $S' \subseteq S$ be chosen minimal such that $ \v (\cup_{u \in S'} G(u)) \geq (C+1)s$. (Such a choice is possible as $S'=S$ satisfies the  inequality.)
	Since $|S| \geq Cs$, and $\v(G(u)) \leq 4s$ for every $u \in S$, it follows that $$(C+1)s \leq \v (\cup_{u \in S'} G(u)) \leq (C+5)s.$$ 
	Let $G'' :=\cup_{u \in S'} G(u)$. Then $\delta(G'') \geq s$, since $\delta(G(u)) \geq s$ for every $u \in V(G)$. 
	
	Choose a maximum integer  $k\in[0,3C]$ so that  $G''$ contains vertex-disjoint subgraphs $G_0,G_1,\dots,G_k$, where $\delta(G_i) \geq s-kn$ for every $i\in[0,k]$, and $\sum_{i=0}^k \v(G_i) \geq (C+1)s -kn$. (Such a choice is possible, since $G_0=G''$ satisfies the above conditions for $k=0$.)\ Note that $k \leq 3C-1$, as otherwise 
	$$\sum_{i=0}^k \v(G_i) \geq 3C(s-kn) \geq 3C(s-3Cn) > 3C \cdot \frac{2}{3}s = 2Cs \geq (C+5)s$$ since $C \geq 5$ and $9Cn \leq s$.
	Note further that $\sum_{i=0}^k v(G_i) \geq (C+1)s -3Cn \geq Cs$.
	
	Next we verify that $G_0, G_1,\dots,G_k$ satisfy the conditions of \cref{l:restoreConnectivity}. We have $\v(G_0) > \delta(G_0) \geq s-3Cn \geq 6Cn \geq kn$,
	and  $\kappa(G) \geq kn$.
	It remains to verify that $\kappa(G_i) \geq n$ for every $i\in[0,k]$. Suppose not. Then, without loss of generality, $G_k$ admits a separation $(A,B)$ of order at most $n$. In such a case, $G_0,G_1,\dots,G_{k-1},G_k[A\setminus B],G_k[B\setminus A]$ is a collection of $k+2$ subgraphs of $G''$, which contradicts the choice of $k$.
	
	By \cref{l:restoreConnectivity}, $G''$ has a minor $G'$ such that $\kappa(G') \geq k$ and  $G_0 \cup G_1 \cup \dots \cup G_k$ is isomorphic to a spanning subgraph of $G'$. Thus $G'$ is as required.
\end{proof}

To be able to apply \cref{FindG'} we need to replace $G$ by a graph with reasonably high connectivity, such that every edge belongs to many triangles, while losing only a small fraction of density. The argument accomplishing this is due to Mader~\cite{Mader68}, and is part of the standard toolkit in the area. We include the proof for completeness.

\begin{lem}[{\cite{Mader68}}]\label{Mader}
	Let $G$ be a graph and let $d,k\in\NN_0$ such that $\d(G) \geq d \geq 2k$. Then there exists a minor $G'$ of $G$ such that $\kappa(G') \geq k$, $d \geq \d(G') \geq d-k$,  and every edge of $G'$ lies in at least $d$ triangles.
\end{lem}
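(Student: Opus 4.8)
This is Mader's classical minimum-degree/connectivity argument, adapted to track triangle counts as well. The plan is to repeatedly pass to a minor that is ``locally denser'' until we are forced to have high connectivity. First I would set up the extremal object: among all minors $G'$ of $G$ with $\d(G')\geq d-k$, choose one minimizing $\v(G')$ (such minors exist since $G$ itself qualifies, using $\d(G)\geq d\geq d-k$; and by standard manipulation one may also assume $\d(G')\le d$, since a graph of density $\ge d$ has a subgraph of density between $d-k$ and $d$ after repeatedly deleting low-degree vertices, though one has to be a little careful about which inequality is needed). The key claim is that this extremal $G'$ automatically satisfies $\kappa(G')\geq k$ and that every edge of $G'$ lies in at least $d$ triangles.

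\textbf{Key steps.} (1) \emph{Connectivity.} Suppose $G'$ has a separation $(A,B)$ of order $<k$. Then write $G'=G'[A]\cup G'[B]$; using $\v(G')=|A|+|B|-|A\cap B|$ and $\e(G')\le \e(G'[A])+\e(G'[B])$ (edges inside the cut counted once), a counting argument shows that one of $G'[A]$, $G'[B]$ has density $\geq d-k$: indeed if both had density $<d-k$ then $\e(G')<(d-k)(|A|+|B|)\le (d-k)(\v(G')+k)\le (d-k)\v(G')+\tfrac{dk}{2}$, which contradicts $\e(G')\ge d\,\v(G')$ once one is careful with the arithmetic (the cut has order $<k$ so the slack is controlled). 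This contradicts minimality of $\v(G')$. (2) \emph{Triangles.} Let $uv\in E(G')$ lie in fewer than $d$ triangles, i.e.\ $u$ and $v$ have fewer than $d$ common neighbours. Contract the edge $uv$ to get a minor $G''$ with $\v(G'')=\v(G')-1$ and $\e(G'')\geq \e(G')-1-(d-1)=\e(G')-d$ (we lose the edge $uv$ and identify at most $d-1$ pairs of parallel edges). Then $\e(G'')\ge \e(G')-d\ge (d-k)\v(G')-d=(d-k)(\v(G'')+1)-d = (d-k)\v(G'')-k$, hence $\d(G'')\ge d-k - k/\v(G'')$... — here one instead argues more carefully: from $\e(G')\ge d\,\v(G')$ one gets $\e(G'')\ge d\v(G')-d = d\v(G'')$, so $\d(G'')\ge d\ge d-k$, again contradicting minimality. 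Actually the cleaner bookkeeping is to note contraction drops the number of vertices by exactly $1$ and edges by at most $d$, so $\d$ drops by at most $\frac{d-\d(G')}{\v(G')-1}\le 0$ when $\d(G')\ge d$; combined with $\d(G')\ge d-k$ being preserved... I would phrase this so that after contraction the density is still $\geq d-k$.

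\textbf{Main obstacle.} The delicate point is getting the arithmetic of the two reductions to be mutually consistent: we want a \emph{single} extremal minor that simultaneously witnesses ``no small separator'' and ``every edge in many triangles'', and the two operations (passing to a dense side of a separation; contracting an edge in few triangles) must both strictly decrease $\v(G')$ while keeping $\d(G')\ge d-k$. The separation step is the one where the $-k$ slack is genuinely consumed (the cut of order $<k$ costs us up to $k$ in the density bound), whereas the triangle step only needs $\d(G')\ge d$ to be maintained, which is why the statement asks for ``$d$ triangles'' rather than ``$d-k$ triangles''. So I would first establish, as a preliminary, that one can take $G'$ with $d\ge \d(G')\ge d-k$ and $\delta(G')\ge ?$ — actually one does not need a minimum degree bound here, only density — and then run the extremality argument. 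The one genuine subtlety to get right is that in step (1) the extremal minor might have density strictly between $d-k$ and $d$, and then the contraction in step (2) needs the hypothesis $\d(G')\ge d$ which may fail; the resolution is that the final $G'$ we output is the one of \emph{minimum order} among minors of density $\ge d-k$, and for \emph{that} graph one shows directly (not via further contraction) that no edge lies in $<d$ triangles, by observing that contracting such an edge yields a minor of order $\v(G')-1$ still having $\e \ge d\v(G')-d = d(\v(G')-1) \ge (d-k)(\v(G')-1)$ — wait, this requires $\e(G')\ge d\v(G')$, i.e.\ $\d(G')\ge d$, not merely $\ge d-k$. Hence the correct setup is: minimize $\v(G')$ over minors with $\d(G')\ge d$ \emph{if} such exist with the contraction staying above $d$... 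The honest fix, which I expect the authors use, is to choose $G'$ minimizing $\v(G')$ subject to $\d(G')\ge d-k$, then separately delete vertices of degree $\le \delta$ to boost minimum degree, and handle triangles via the fact that in a minimal such graph every vertex has degree $\ge d-k \ge ?$; I would work out that the triangle bound ``$d$'' comes from the minimum degree being close to $2d$-ish after the reduction. I expect this bookkeeping to be the part requiring the most care, while the separation argument is routine.
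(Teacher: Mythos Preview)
You have the right architecture --- take a minor-minimal object, then show that both a low-triangle edge and a small separation would permit a further reduction --- and you have correctly located the real difficulty. But you never resolve it: with the invariant $\d(G')\ge d-k$ the contraction step fails (contracting an edge in $\le d-1$ triangles drops $\e$ by up to $d$ but $\v$ by only $1$, and $(d-k)\v(G')-d < (d-k)(\v(G')-1)$ once $k\ge 1$), while with the invariant $\d(G')\ge d$ the separation step fails (both sides having density $<d$ only gives $\e(G')<d(\v(G')+k)$, which does not contradict $\e(G')\ge d\,\v(G')$). Your sketch oscillates between these two invariants and neither handles both reductions.

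The fix, which the paper uses and which you did not find, is to track an \emph{affine} invariant in the $(\v,\e)$-plane rather than a homogeneous (density) one. Let
\[
\mc{E}_{d,k}=\bigl\{G'': \v(G'')\ge d \text{ and } \e(G'')\ge d\,\v(G'')-kd\bigr\},
\]
and take $G'$ minor-minimal in $\mc{E}_{d,k}$; note $G\in\mc{E}_{d,k}$ since $\d(G)\ge d$. Edge-minimality forces $\e(G')=d\,\v(G')-kd$, whence $d\ge\d(G')=d-kd/\v(G')\ge d-k$ (using $\v(G')\ge d$), and in fact $\v(G')>d$. Now both steps go through with no tension:
\begin{itemize}
\item Contracting an edge in at most $d-1$ triangles decreases $\v$ by $1$ and $\e$ by at most $d$, so the quantity $\e-d\,\v$ does not decrease and the contracted graph stays in $\mc{E}_{d,k}$.
\item If $(A,B)$ has order $\le k-1$ then $G'[A],G'[B]\notin\mc{E}_{d,k}$, so $\e(G'[A])<d|A|-kd$ and likewise for $B$; summing yields $\e(G')<d(|A|+|B|)-2kd<d(\v(G')+k)-2kd=d\,\v(G')-kd$, a contradiction.
\end{itemize}
The slope $d$ is exactly what makes the contraction step tight, while the constant term $-kd$ is what absorbs the order-$(<k)$ overlap in the separation step; a line through the origin cannot do both jobs. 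Once you replace ``minimize $\v(G')$ subject to $\d(G')\ge d-k$'' by ``take a minor-minimal member of $\mc{E}_{d,k}$'', your outline becomes the paper's proof.
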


\begin{proof}
	Let $\mc{E}_{d,k}$ be the class of graphs $G''$ such that $\v(G'') \geq d$ and $\e(G'') \geq d\,\v(G'') - kd$. We show that  $\kappa(G') \geq k$, $\d(G') \geq d-k$,  and every edge of $G'$ lies in at least $d$ triangles for every minor-minimal graph $G'$ in $\mc{E}_{d,k}$. Since $G$ is an element of $\mc{E}_{d,k}$ this will imply the lemma.
	
	First note that $d \geq \d(G') \geq  d-k$, since  $\e(G') = d\,\v(G') - kd$. Second,
	note that $\v(G') > d$, as otherwise $\e(G') < \frac{d}{2} \,\v(G') \leq d\,\v(G') - kd$, contradicting the assumption that $G'\in \mc{E}_{d,k}$. Thus if some edge $e \in E(G')$ lies in at most $d-1$ triangles, then contracting $e$ gives a graph in $\mc{E}_{d,k}$, contradicting the choice of $G'$.
	
	Finally, let $k \geq 1$ and suppose for a contradiction that there exists a separation $(A,B)$ of $G'$ with order at most $k-1$. Then $G'[A]$ and $G'[B]$ are not members of $\mc{E}_{d,k}$ by the choice of $G'$. Therefore,
	$\e(G'[A]) \leq d|A|-kd$ and $\e(G'[B]) \leq d|B|-kd$. Summing these inequalities, 
	$$\e(G) \leq \e(G'[A])+\e(G'[B]) \leq d(|A|+|B|) -2kd < d(\v(G')+k) -2kd \leq d\,\v(G')-kd,$$
	a contradiction.
\end{proof}	  

Combining \cref{FindG',Mader} we obtain the following.

\begin{cor}\label{c:denser}
For every $\eps, C > 0$ there exists $\eps'=\eps'_{\ref{c:denser}}(\eps,C) > 0$ such that  every graph $G$  contains a minor $G'$ such that $\kappa(G') \geq \eps' \v(G')$, and either:
\begin{itemize}
	\item[(i)] $\v(G') \geq C\d(G)$ and $\delta(G') \geq (1 -\eps)\d(G)$, or
	\item[(ii)] $\d(G') \geq (1 - \eps)\d(G)$.
\end{itemize}
\end{cor}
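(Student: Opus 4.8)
The plan is to chain together Lemmas~\ref{Mader}, \ref{FindG'}, and \ref{l:restoreConnectivity} with carefully chosen constants, tracking how density and connectivity degrade at each step. Write $d := \d(G)$; if $d$ is smaller than some absolute constant the statement is trivial (take $G'$ to be a single edge, or $G'=G$), so assume $d$ is large. First apply \cref{Mader} with the parameters $d$ and $k := \lceil \eps d / 2\rceil$ (so that $d \ge 2k$ holds once $\eps \le 1$, and shrinking $\eps$ is harmless). This produces a minor $G_1$ of $G$ with $\kappa(G_1) \ge k \ge \tfrac{\eps d}{2}$, with $\d(G_1) \ge d - k \ge (1 - \tfrac{\eps}{2})d$, and such that every edge of $G_1$ lies in at least $d$ triangles.

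Now I would split into two cases according to $\v(G_1)$. If $\v(G_1)$ is already at least $C' d$ for a suitable $C' = C'(\eps, C)$, then conclusion~(i)-type control is within reach but we still need the \emph{minimum} degree bound, not just the average degree bound — so apply \cref{FindG'} to $G_1$. To do this one must match notation: set $s$ so that $2s$ is (an upper bound on) $\d(G_1)$ — concretely $s := \lceil \d(G_1)\rceil$ — and pick $n := \lfloor \eps s / (3C_0)\rfloor$ for the constant $C_0 \ge 5$ that \cref{FindG'} calls $C$; one checks $s \ge 9 C_0 n$, $\kappa(G_1) \ge 3 C_0 n$ (using $\kappa(G_1) \ge \tfrac{\eps d}{2}$ and $s \approx d$, valid for $C_0$ absolute and $\eps$ small), $\d(G_1) \le 2s$, and the triangle hypothesis. \cref{FindG'} then yields a minor $G'$ with $C_0 s \le \v(G') \le 2(C_0+1)s$, with $\kappa(G') \ge n = \Theta_{\eps,C_0}(s) = \Omega_{\eps}(\v(G'))$, and $\delta(G') \ge s - 3C_0 n \ge (1-\eps)s \ge (1-\eps)d$; the lower bound $\v(G') \ge C_0 s$ can be upgraded to $\v(G') \ge C d$ by choosing $C_0$ large relative to $C$ and using $s \ge d$, giving conclusion~(i). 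In the remaining case where $\v(G_1) < C' d$, the graph $G_1$ is small but has density $\ge (1-\tfrac\eps2)d$ and connectivity $\ge \tfrac{\eps d}{2} = \Omega_{\eps}(\v(G_1)/C')$, so we can just take $G' := G_1$ and this already satisfies conclusion~(ii) with an appropriate $\eps' = \eps'(\eps, C)$.

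The main subtlety — and the step I'd be most careful with — is the bookkeeping that makes the connectivity bound \emph{linear in $\v(G')$} throughout, since \cref{FindG'} outputs $\kappa(G') \ge n$ where $n$ is only a fixed fraction of $s$, and $\v(G')$ can be as large as $2(C_0+1)s$; so $n / \v(G') \ge \eps/(6 C_0(C_0+1))$, which is a legitimate positive constant $\eps'$ depending only on $\eps$ and $C_0$ (hence on $\eps$ and $C$). One also has to be slightly cautious that the hypothesis $\kappa(G) \ge 3Cn$ in \cref{FindG'} is about the ambient graph it is applied to (namely $G_1$), and that the "$\v(G)\ge Cs$" hypothesis of \cref{FindG'} is exactly what the case split guarantees; if it fails, we are in the second case and done without invoking \cref{FindG'} at all. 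A final cosmetic point: the corollary as stated asks for $\eps'$ depending on $\eps$ and $C$, and indeed every constant introduced ($k$, $s$, $n$, $C_0$, $C'$) is a function of $\eps$, $C$, and absolute constants only, so collecting them yields the desired $\eps'_{\ref{c:denser}}(\eps,C) > 0$.
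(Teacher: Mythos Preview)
Your approach is essentially the paper's: apply \cref{Mader} to get a minor $G_1$ with linear connectivity and many triangles per edge, then case-split on $\v(G_1)$ --- in the small case $G':=G_1$ gives (ii), and in the large case \cref{FindG'} gives (i). The paper streamlines the bookkeeping by assuming $C\geq 5$ at the outset (so the corollary's $C$ and \cref{FindG'}'s $C$ coincide), taking $s:=d$ rather than $s:=\lceil\d(G_1)\rceil$, and first passing to a minor of maximum density so that $\d(G_1)\leq d$ is automatic. Two small arithmetic slips in your write-up: you use ``$s\geq d$'' to get $\delta(G')\geq(1-\eps)d$ and $\v(G')\geq Cd$, but your $s=\lceil\d(G_1)\rceil$ only satisfies $s\geq(1-\tfrac{\eps}{2})d$; and the check $\kappa(G_1)\geq 3C_0 n$ needs $\tfrac{\eps d}{2}\geq \eps s$, which fails when $s$ is close to $d$ --- both are repaired by halving the $\eps$ fed into \cref{Mader} and into the definition of $n$, so the argument goes through.
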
		

\begin{proof} 
	Without loss of generality, assume that $\eps \leq \frac12$ and $C \geq 5$. We show that $\eps' = \frac{\eps}{6C(C+1)}$ satisfies the lemma.
	
	We further assume that $\d(H) \leq \d(G)$ for every minor $H$ of $G$, as otherwise we can replace $G$ by $H$. Let $d  := \d(G)$. 
	By \cref{Mader} applied with $k = \eps d$  there exists a minor $G_1$ of $G$ such that $\kappa(G_1) \geq \eps d$, $\d(G_1) \geq (1 - \eps)d$,  and every edge of $G_1$ lies in at least $d$ triangles. 
	If $\v(G_1) \leq Cd$, then $\kappa(G_1) \geq \frac{\eps}{C} \v(G_1) \geq \eps' \v(G_1)$ and $G':=G_1$ satisfies (ii).
	In the remaining case $\v(G_1) \geq Cd$, and by \cref{FindG'} applied with $s=d$, $n = \frac{\eps}{3C} d$  there exists a minor $G'$ of $G_1$ (and thus of $G$) such that $Cd \leq \v(G') \leq 2(C+1)d$, $\kappa(G')\geq \frac{\eps}{3C} d$ and $\delta(G')\geq (1 -\eps)d$. In particular, $$\kappa(G') \geq \frac{\eps}{3C} \frac{\v(G')}{2(C+1)} = \eps' \v(G'),$$ and so $G'$ satisfies (i). 
\end{proof}

Our final ingredient en route to the main result of this section establishes the existence of a small set of vertices in highly connected graphs that can be used to add edges to a minor found in the rest of the graph. It is inspired by \cite[Lemma 4.2]{Thomason01} and its application, although the argument in \cite{Thomason01} is more subtle.

\begin{lem} 
	\label{linkage}
	For every $\eps> 0$ there exists $\delta=\delta_{\ref{linkage}}(\eps)>0$ such that for every graph $G$ with  $\kappa(G)\geq \eps\,\v(G)$, there exists $Z\subseteq V(G)$ such that $|Z|\leq \eps \v(G)$ and for all $p_1,q_1,\dots p_t,q_t\in V(G)\setminus Z$ with $t \leq \delta\,\v(G)$, there exist a $(p_1q_1,\dots,p_tq_t)$-linkage $\mc{P}$ in $G$ such that all the internal vertices of paths in $\mc{P}$ lie in $Z$. 
\end{lem}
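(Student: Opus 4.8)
The plan is to find the set $Z$ by a random sampling argument, then verify the linkage property using Menger's theorem applied to an auxiliary graph. First I would fix a small constant, say let $Z$ be a uniformly random subset of $V(G)$ of size $\lceil \eps\,\v(G)/2 \rceil$ (or include each vertex independently with probability $\eps/2$ and discard bad outcomes). The key structural fact I want is that $Z$ remains ``highly connected to everything'': for every pair $u,v \in V(G)$, there should be many internally-disjoint $u$--$v$ paths whose internal vertices all lie in $Z$. Since $\kappa(G)\ge \eps\,\v(G)$, for any $u,v$ there are at least $\eps\,\v(G)$ internally disjoint $u$--$v$ paths in $G$; by a result of the Bollob\'as--Thomason / Mader flavour about linkedness, or more simply by taking a large highly-connected minor or a bramble, I can assume these paths are short — of bounded length, or at least I can route through a linear-sized highly-connected ``core''. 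The cleanest approach: pass to the setting where $G$ has a subset $W$ of linear size with the property that $G[W]$ is $\Omega(\eps\,\v(G))$-connected and every vertex of $G$ has $\Omega(\eps\,\v(G))$ neighbours in (or short paths to) $W$; then sample $Z$ inside $W$.

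The main step is then the following counting/probabilistic claim: if $Z$ is a random $\eps\,\v(G)/2$-subset, then with positive probability, simultaneously for \emph{all} pairs $u,v\notin Z$, the graph $G$ contains at least $2\delta\,\v(G)$ internally disjoint $u$--$v$ paths with internal vertices in $Z$. I would prove this by first establishing (deterministically, from connectivity) that each pair $u,v$ has $\ge \eps\,\v(G)$ internally disjoint $u$--$v$ paths, noting these paths can be taken to have internal vertices forming a set that is ``spread out'' — e.g. by an averaging argument, many of them are short — and then showing a random $Z$ of density $\eps/2$ hits a positive fraction of a fixed large family of disjoint short paths with exponentially small failure probability, so a union bound over the $\binom{\v(G)}{2}$ pairs succeeds. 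This gives the single-pair linkage; here $|Z|\le \eps\,\v(G)$.

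To upgrade from the single-pair property to the simultaneous $(p_1q_1,\dots,p_tq_t)$-linkage for $t\le \delta\,\v(G)$, I would use the standard ``many disjoint paths through $Z$'' $\Rightarrow$ ``linkage through $Z$'' reduction via Menger. Build an auxiliary graph $G^\ast$: split each $p_i,q_i$ appropriately and add a new set of terminal vertices, then observe that a cut separating the $t$ prescribed terminal pairs inside $Z$ of size less than $t$ would, together with the bound $t\le\delta\,\v(G)$, contradict the fact that \emph{each individual} pair has $\ge 2\delta\,\v(G)$ internally-disjoint paths through $Z$ — more precisely, I'd run a greedy/rerouting argument: process the pairs one at a time, each time using that after removing the at most $t\cdot(\text{max path length})$ vertices already used, still $\ge 2\delta\,\v(G) - O(\delta\,\v(G))>0$ paths through $Z$ survive for the current pair, provided the paths can be taken of bounded length. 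This is exactly where bounded path length matters.

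The hard part will be ensuring the paths through $Z$ can be taken \emph{short} (bounded length), since otherwise each path consumes a non-constant chunk of $Z$ and the greedy argument for $t=\Theta(\v(G))$ pairs fails — $\delta\,\v(G)$ paths each of length $\Theta(\v(G))$ would need $\Theta(\v(G)^2)$ internal vertices. To get around this I would, at the outset, replace $G$ by (or work inside) a minor/subgraph of bounded diameter: a graph with $\kappa(G)\ge\eps\,\v(G)$ has a subset of linear size inducing a subgraph of bounded diameter and still linear connectivity (e.g. contract to a bounded-size highly connected ``skeleton'' as in the proof of \cref{c:denser}, or use that high minimum degree forces small diameter after passing to a dense minor). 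Sampling $Z$ inside that bounded-diameter piece makes all the routing paths of bounded length, and then both the probabilistic union bound and the Menger-based greedy linkage argument go through with $\delta$ depending only on $\eps$ (via the diameter bound and the connectivity constant). I would then lift the linkage back to $G$ using that the bounded-diameter piece is a minor and each of its edges/vertices corresponds to a bounded-size connected piece of $G$, absorbing the blow-up into the constants.
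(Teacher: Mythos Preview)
Your overall architecture---sample $Z$ randomly, show every pair has many internally disjoint paths through $Z$, then link greedily---is exactly the paper's approach, and you even name the right idea for short paths (``by an averaging argument, many of them are short''). But you then distrust this idea and label obtaining short paths as ``the hard part,'' proposing to pass to a bounded-diameter minor and lift back. That detour is both unnecessary and problematic: the statement is about $G$ itself, and routing through a contracted skeleton does not straightforwardly produce paths in $G$ whose internal vertices lie in your sampled set $Z\subseteq V(G)$.

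The averaging observation you mention in passing is already the whole story. For any pair $u,v$, Menger gives $\kappa(G)\ge \eps\,\v(G)$ internally disjoint $u$--$v$ paths; their internal vertices are pairwise disjoint and all sit inside $V(G)$, so at most $\v(G)/(2/\eps)=\eps\,\v(G)/2$ of these paths can have more than $2/\eps$ internal vertices. Hence at least $\eps\,\v(G)/2$ of them are ``short'' (length $\le 2/\eps+1$). Now include each vertex in $Z$ independently with probability $\eps/2$: a fixed short path survives (all internal vertices land in $Z$) with probability at least $(\eps/2)^{2/\eps}$, and Chernoff plus a union bound over the $\binom{\v(G)}{2}$ pairs shows that with positive probability $|Z|\le\eps\,\v(G)$ and every pair has $\Omega_\eps(\v(G))$ short internally disjoint paths through $Z$. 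The linkage is then a straight greedy argument: each previously chosen path blocks at most $2/\eps$ vertices of $Z$, hence at most $2/\eps$ of the short paths for the current pair, and $t\le\delta\,\v(G)$ with $\delta$ small enough leaves one available. There is no need for an auxiliary Menger construction, and no need to leave $G$.
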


\begin{proof} 
	Assume $\eps < 1$ without loss of generality. Let $p := \frac{\eps}{2}$. Choose $\delta$ so that $ 0 < \delta < p^{\frac{1}{p}+2}/2$ and for every $n \geq 1/\delta$, 
	\begin{equation}\label{e:delta} 
	p^{\frac{1}{p}+ 1} n \geq 16 \log n. 
	\end{equation} 
Note that, in particular, $\delta \leq p/6$.
	
	Let $n := \v(G)$. Since the lemma is trivial for $n < 1/\delta$, we assume that $n \geq 1/\delta$. Let  $Z\subseteq V(G)$ be random with each vertex $v \in V(G)$ being added to $Z$ with probability $p$. By the Chernoff bound, 
	$$\brm{Pr}[|Z|\geq \eps \v(G)  ] \leq \exp\left(-\frac{pn}{3}\right) \leq \exp\left(-\frac{p }{3 \delta}\right) \leq \frac12.$$
	
	Say that a path in $G$ is \defn{short} if it has at most $1/p$ internal vertices. 
	Say that a pair of distinct vertices $u,v \in V(G)$ is \defn{good} if for some $t' \geq \delta n / p$  there exist internally disjoint short paths $Q_1,Q_2, \dots, Q_{t'}$ with ends $u$ and $v$ in $G$  such that all the internal vertices of these paths lie in $Z$. Note that if every pair of vertices of $G$ is good then  we can greedily construct the linkage $\mc{P}$ by selecting a short path from $p_i$ to $q_i$ disjoint from the previously selected paths. 
	
	It remains to show that a pair of  distinct vertices $u,v \in V(G)$ is not good with probability at most $\frac{1}{n^2}$, since then the union bound implies that with positive probability $|Z|\leq \eps\,\v(G)$  and every pair of  distinct vertices is good.
	
	There exist paths $P_1,P_2, \dots, P_{\kappa(G)}$  in $G$ with ends $u$ and $v$, and otherwise pairwise disjoint.  Since  $\kappa(G)/(2p) \geq n$  at least $\kappa(G)/2$ of these paths are short. The probability that all the internal vertices of a short path  lie in $Z$ is then at least $p^{1/p}$, and so the expected number $\mu$ of short paths among  $P_1,P_2, \dots, P_{\kappa(G)}$ that have all the internal vertices in $Z$ is at least $p^{1/p}\kappa(G)/2.$
	As $ p^{1/p}\kappa(G)/2 \geq p^{1/p+1}n \geq 2 \delta n/p$, 
	by the Chernoff bound the probability that the pair $(u,v)$ is not good is at most $$\exp\left(-\frac{\mu}{8} \right) \leq \exp\left(-p^{\frac{1}{p}+ 1} \frac{n}{8}\right) \stackrel{(\ref{e:delta})}{\leq} \frac{1}{n^2},$$
	as desired. 
\end{proof}

Finally, we combine Lemmas~\ref{l:copies}, ~\ref{c:denser} and~\ref{linkage} to obtain our main result. 

\begin{thm}\label{addedges} For all $\eps>0$ there exists $\delta=\delta_{\ref{addedges}}(\eps)>0$ such that for all $C > 0$ there exists  $L=L_{\ref{addedges}}(\eps,C)$ satisfying the following.
Let $J$ be a graph with $\v(J) \leq C$, let $\ell \geq L$ be an integer, and let $H$ be a graph such that $H \setminus F$ is a subgraph of $\ell\,J$ for some $F \subseteq E(H)$ with $|F| \leq \delta \v(H)$. Then $$c(H) \leq (1 + \eps)\ell\,c_f(J).$$ 	
\end{thm}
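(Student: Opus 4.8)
The plan is to prove the contrapositive-style statement directly: take a graph $G$ with no $H$ minor and show that $\d(G) \le (1+\eps)\ell\,c_f(J)$. Fix a small auxiliary constant $\eps_1$ depending on $\eps$ (to be tuned), and let $\delta := \delta_{\ref{linkage}}(\eps_1)$. Suppose for contradiction that $\d(G) > (1+\eps)\ell\,c_f(J)$. First I would apply \cref{c:denser} with parameters $\eps_1$ and a large constant $C'$ (chosen later so that a blown-up copy of $\ell J$ fits comfortably) to replace $G$ by a minor $G'$ with $\kappa(G') \ge \eps' \v(G')$ and $\d(G') \ge (1-\eps_1)\d(G)$, where either $\v(G') \ge C'\d(G)$ with $\delta(G') \ge (1-\eps_1)\d(G)$, or $\d(G') \ge (1-\eps_1)\d(G)$ outright. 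In either case $G'$ has connectivity linear in its order, which is exactly what \cref{linkage} needs. Note that since a minor of a minor is a minor, $G'$ still has no $H$ minor.

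Next I would invoke \cref{linkage} with $\eps_1$ to obtain a set $Z \subseteq V(G')$ with $|Z| \le \eps_1 \v(G')$ having the linkage property: any $\le \delta\,\v(G')$ demands among vertices outside $Z$ can be routed through $Z$ by internally disjoint paths. Set $G'' := G' - Z$. Removing $|Z|$ vertices costs only a $(1-O(\eps_1))$ factor in density (in case (ii)) or in minimum degree (in case (i)), so $G''$ still satisfies one of the two hypotheses of \cref{l:copies} with $\ell$ and $J$ in place of $H$ there, provided $\eps_1$ is small relative to $\eps$ and $\ell \ge L_{\ref{l:copies}}$. By \cref{l:copies}, $\ell\,J$ is a minor of $G''$. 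Now take a model of $\ell\,J$ in $G''$; since $H \setminus F$ is a subgraph of $\ell\,J$, restricting this model gives a model of $H \setminus F$ in $G''$ with all branch sets inside $V(G') \setminus Z$. For each edge $uv \in F$ we pick one vertex $p$ in the branch set of $u$ and one vertex $q$ in the branch set of $v$; these are $|F| \le \delta\,\v(H)$ demands. The remaining step is to bound $\v(H)$ in terms of $\v(G')$: since $H \setminus F \subseteq \ell J$ we have $\v(H) \le \ell\,\v(J) \le \ell C$, and $\v(G') \ge \v(G'') \gtrsim \ell$ as well (this is where case (i) of \cref{c:denser}, or simply the fact that $G''$ contains $\ell J$, guarantees $\v(G')$ is large enough that $|F| \le \delta\,\v(H) \le \delta\,\v(G')$, after possibly enlarging $L$). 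Then \cref{linkage} produces a linkage realising all demands in $F$ with internal vertices in $Z$, and contracting each such path into the corresponding pair of branch sets restores the missing edges, yielding a model of $H$ in $G'$ — contradicting the choice of $G'$.

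Finally, tracking the constants: $\d(G) \le (1-\eps_1)^{-1}\d(G')$ and $\d(G') \le \ell\,c_f(J) + \eps_1\,\v(G')$ is what we get from the failure of \cref{l:copies}(i) for $G''$ after accounting for the deletion of $Z$; choosing $\eps_1$ small enough in terms of $\eps$ and $C$, and $L$ large enough in terms of $\eps$, $C$ (absorbing $L_{\ref{l:copies}}$, $\eps'_{\ref{c:denser}}$, $\delta_{\ref{linkage}}$ and the lower bound on $\v(G')$ needed for $|F| \le \delta\,\v(G')$), makes the accumulated error at most $\eps$, so $\d(G) \le (1+\eps)\ell\,c_f(J)$ as required. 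The slightly delicate point is case (ii) of \cref{c:denser}: there $\v(G')$ is only bounded below by $\d(G) \gtrsim \ell c_f(J)$, which is still $\gtrsim \ell$ since $c_f(J) \ge \tfrac12$, so $|F| \le \delta\,\v(H) \le \delta \ell C \le \delta\,\v(G')$ still holds once $C'$ (hence the ratio $\v(G')/\ell$) is chosen appropriately; I would handle both cases uniformly by demanding $\v(G') \ge C\v(J)\ell$ via the choice of $C'$ in \cref{c:denser}.

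I expect the main obstacle to be the bookkeeping that simultaneously (a) keeps the density/degree loss from deleting $Z$ negligible, (b) ensures $|F| \le \delta\,\v(H)$ does not exceed the routing budget $\delta_{\ref{linkage}}(\eps_1)\,\v(G')$ — which forces $\v(G')$ to be $\gtrsim \v(H)$, handled via case (i) of \cref{c:denser} or the size of the embedded $\ell J$ — and (c) propagates the single error parameter $\eps$ cleanly through \cref{c:denser}, \cref{linkage} and \cref{l:copies} with the right order of quantifiers ($\delta$ chosen before $C$, then $L$ after $C$, matching the statement).
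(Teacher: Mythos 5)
Your proposal is correct and follows essentially the same route as the paper: apply \cref{c:denser} to pass to a minor $G'$ with linear connectivity, reserve a routing set $Z$ via \cref{linkage}, find $\ell\,J$ in $G'\setminus Z$ by verifying the hypotheses of \cref{l:copies} in each of the two cases, and restore the edges of $F$ through $Z$. You also correctly identify the one delicate point (that the routing budget requires $\v(G')\geq \v(H)$, which is supplied by the embedded copy of $\ell\,J$), so there is nothing substantive to add.
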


\begin{proof}
	Assume $\eps < \frac16$ without loss of generality. 
	Let $\eps' = \min\{\frac16,\eps'_{\ref{c:denser}}(6,\eps/2)\}$, $\delta=\delta_{\ref{linkage}}\s{\frac{1}{6}\eps\eps'}$, $\eps''= \frac{1}{6}\eps\eps'$, and  $L=\max_{J : \v(J) \leq C} L_{\ref{l:copies}}(J,\eps'').$ We show that these $\delta$ and $L$ satisfy the theorem.
	
	It suffices to show that  $H$ is a minor of every graph $G$ satisfying  $\d(G) \geq (1+\eps)\ell\,c_f(J)$. By \cref{c:denser} such a graph $G$  contains a minor $G'$ such that $\kappa(G') \geq  \eps' \v(G')$, and either
	\begin{itemize}
		\item[(i)] $\v(G') \geq 6\d(G)$ and $\delta(G') \geq (1 -\frac{\eps}{2})\d(G)$, or
		\item[(ii)] $\d(G') \geq (1 - \frac{\eps}{2})\d(G)$.
	\end{itemize}
	By the choice of $\delta$, there exists $Z \subseteq V(G')$ such that $|Z|\leq \frac{\eps\eps'}{6}\v(G')$ satisfying the conclusion of \cref{linkage}. Let $G''=G'\setminus Z$. It follows from the properties of $Z$ that if there exists a model of $\ell\,J$ in $G''$ then it can be extended to a model of $H$  in $G'$, and consequently $H$ is a minor of $G$.

	It remains to show that $\ell\,J$ is a minor of $G''$. We do this by verifying that the requirements of \cref{l:copies} are satisfied. Suppose first that (i) holds. Then $$\v(G'') \geq (1-\eps')\,\v(G') \geq 5\d(G) \geq 4(1+\eps'') \ell\,c_f(J) \geq 2(1+\eps'') \ell\,\v(J),$$ 
	and  
	 \begin{align*}\delta(G'') \geq\delta(G') - |Z|
	  & \geq  (1-\tfrac{\eps}{3} )(1 - \tfrac{\eps}{2} )\d(G) + \tfrac{\eps}{3}\kappa(G') -|Z| \\
	  &\geq \ell\, c_f(J) + \tfrac{\eps\eps'}{6}\v(G') \geq \ell \tau(J) + \eps''\v(G'') .
	\end{align*}
	Thus \cref{l:copies} (ii) is satisfied.	
	
	If (ii) holds then analogously $\d(G'') \geq \ell c_f(J)  + \eps''\v(G'')$,  and 	 \cref{l:copies} (i) is satisfied.
\end{proof}	

\cref{addedges} and \cref{l:newlower} imply that  $$\ell\, c_f(J) - 2 = \, c_f(\ell\,J) - 2  \leq c(\ell\,J) \leq (1+\eps)\ell\, c_f(J),$$
%\comment{KH: should this be:$\ell\, c_f(J) - 1 = \, c_f(\ell\,J) - 1  \leq c(\ell\,J) \leq (1+\eps)\ell\, c_f(J)$?}
for any graph $J$, $\eps >0$ and $\ell$ sufficiently large as a function of $J$ and $\eps$. Thus
$$c_f(J) = \lim_{\ell \to \infty}\frac{c(\ell\,J)}{\ell},$$
finally establishing the validity of the alternative definition \eqref{e:frac0} of the fractional extremal function, used in \cref{s:fracintro}.

\section{Decompositions}\label{s:decompose}

In this section we show that large graphs in  s.s.s.\ graph families are in a certain usable sense close to graphs with bounded maximum component size. This allows us to finish the proof of \cref{t:main1} deriving it from  \cref{addedges}. 

For a graph $G$, a collection $\mc{B}$ of subsets of $V(G)$ is a \defn{decomposition} of $G$ if for every $e \in E(G)$ there exists $B \in \mc{B}$ such that both ends of $e$ belong to $B$. Define the \defn{excess} of $\mc{B}$ as $\sum_{B \in \mc{B}}|B|- \v(G)$. We say that $\mc{B}$ is \defn{$C$-bounded} if $|B| \leq C$ for every $B \in \mc{B}$. The following lemma is well known \citep{Eppstein10}, although it has not been exactly stated as follows, so we include the proof for completeness. 

\begin{lem}
\label{Eppstein} 
Let $\mc{F}$ be a  graph family with strongly sublinear separators. Then for every $\eps>0$ there exists $C=C_{\ref{Eppstein}}(\mc{F},\eps)$ such that every graph  $G \in \mc{F}$ admit a $C$-bounded decomposition with excess at most $\eps\, \v(G)$.	
\end{lem}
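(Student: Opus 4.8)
The statement is a standard recursive-separator argument. Let $\mathcal{F}$ be s.s.s.\ with separator parameters $\beta<1$ and $c_0>0$ (so every graph in $\mathcal{F}$ has a balanced separation of order at most $c_0\,\v(G)^{\beta}$). Fix $\eps>0$. The idea is to build the decomposition top-down: repeatedly split the current piece with a balanced separator, adding the separator vertices to \emph{every} descendant piece (so that no edge is lost — each edge stays inside at least one leaf piece), and stop recursing once a piece has at most $C$ vertices, where $C=C_{\ref{Eppstein}}(\mathcal F,\eps)$ is chosen large at the end. The two things to control are: (1) every edge of $G$ is covered by some leaf bag, and (2) the total excess $\sum_{B}|B|-\v(G)$ stays below $\eps\,\v(G)$.

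\textbf{Step 1: the recursion.} Given $G\in\mathcal F$ with $\v(G)>C$, apply the s.s.s.\ property to get a balanced separation $(A_1,A_2)$ of order $|A_1\cap A_2|\le c_0\,\v(G)^{\beta}$. Set $S:=A_1\cap A_2$, and recurse on $G[A_1]$ and $G[A_2]$ (both in $\mathcal F$ by monotonicity, both of size at most $\tfrac23\v(G)+|S|$, hence strictly smaller than $\v(G)$ once $C$ is large enough that $|S|<\tfrac13\v(G)$). In each bag produced by the two recursive calls, \emph{add all of $S$}. When $\v(G)\le C$, return the single bag $V(G)$. This is a finite recursion since the piece size drops by a constant factor (up to the sublinear additive term), and the resulting collection $\mc B$ is $C$-bounded by construction. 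Every edge $uv\in E(G)$ has both ends in $A_1$ or both in $A_2$ (since $(A_1,A_2)$ is a separation), so by induction it lies in some bag; hence $\mc B$ is a decomposition.

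\textbf{Step 2: bounding the excess.} Let $g(n)$ denote the maximum excess produced by the recursion on an $n$-vertex graph. The recursion gives, for $n>C$,
$$g(n)\le \max_{n_1+n_2\le n+2s,\ n_i\le \frac23 n+s,\ s\le c_0 n^{\beta}}\ \bigl(g(n_1)+g(n_2)+2s\bigr),$$
where the extra $2s$ accounts for duplicating the separator $S$ into both sides (each of the $s$ separator vertices is added once to every bag below it, but each \emph{added} copy is charged once; more carefully, adding $S$ to all bags on the $A_i$ side contributes $s$ times the number of bags on that side — so the clean way is to charge $s\cdot(\text{\#bags below})$). The robust estimate: define the potential $\Phi(G):=(\text{excess of }\mc B)$ and observe $\Phi(G)\le \Phi(G[A_1])+\Phi(G[A_2])+2|S|\cdot(\text{bound on number of bags})$ is too lossy; instead, track the quantity $\sum_{B\in\mc B}|B|$ directly. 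A cleaner route: show by induction that the excess is at most $\eps\,\v(G)$ by choosing $C$ so large that $c_0 n^{\beta-1}$ and the geometric tail $\sum_{i\ge0}(\tfrac23)^{i(1-\beta)}c_0 C^{\beta-1}\cdot(\text{const})$ are both below a prescribed constant. Concretely, the recursion depth to reach pieces of size $\le C$ from size $n$ is $O(\log(n/C))$ and at depth $i$ the pieces have size $O((2/3)^i n)$, so the total number of separator-vertex insertions, summed over all levels, is $O\bigl(\sum_i 2^i\cdot c_0((2/3)^i n)^{\beta}\bigr)=O\bigl(c_0 n^{\beta}\sum_i (2\cdot(2/3)^{\beta})^i\bigr)$. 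Since $\beta<1$ we have... wait, $2\cdot(2/3)^\beta$ could exceed $1$; the correct accounting uses that the total vertex count across a level is at most $n$ (pieces at a fixed level are "almost disjoint"), so the separators at level $i$ have total order $O(c_0\cdot(\text{\#pieces at level }i)\cdot((2/3)^i n)^\beta)$ and the number of pieces at level $i$ is $O((3/2)^i)$, giving total $O(c_0 n^\beta \sum_i (3/2)^i (2/3)^{i\beta}) = O(c_0 n^\beta \sum_i ((3/2)^{1-\beta})^i)$ — which still diverges. The honest fix, which is exactly what Eppstein's argument does, is to bound the excess by a recurrence $E(n)\le E(n_1)+E(n_2)+O(n^\beta)$ with $n_1,n_2\le\tfrac23 n$ and $n_1+n_2\le n+O(n^\beta)$, whose solution is $E(n)=O(n^\beta)=o(n)$, since $\beta<1$; this master-theorem-type estimate gives excess $\le K\,\v(G)^\beta$ for a constant $K=K(\mathcal F)$, and then choosing $C$ large forces $K\,\v(G)^\beta\le\eps\,\v(G)$ — but that only holds for $\v(G)$ large, so for small $\v(G)$ (below some threshold depending on $\eps$) we take $C$ at least that threshold and use the trivial decomposition $\{V(G)\}$ with zero excess.

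\textbf{Main obstacle.} The only real subtlety is the excess bookkeeping: naively duplicating each separator into all descendant bags can blow up the excess, so one must either (a) solve the recurrence $E(n)\le E(n_1)+E(n_2)+cn^\beta$ carefully to get $E(n)=O(n^\beta)$ — using $n_1,n_2\le\frac23 n$, for which the function $n\mapsto n^\beta$ with $\beta<1$ is the right comparison since $(\frac23)^\beta+(\frac23)^\beta=2\cdot(\frac23)^\beta$ may exceed $1$, so one actually needs the sharper split $n_1+n_2\le n+cn^\beta$ \emph{together with} $\max(n_1,n_2)\le\frac23 n+cn^\beta$ and an induction hypothesis $E(m)\le K m^\beta - Lm$ or similar subtractive form — or (b) only add a separator $S$ to the two bags it directly spawns, not to all descendants, and instead argue that iterating this still yields a decomposition (it does, because any edge survives in some $A_i$ and we recurse there). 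Option (b) is the standard clean formulation and makes the excess recurrence immediately $E(n)\le E(n_1)+E(n_2)+2c_0 n^\beta$; I would take that route. Everything else — $C$-boundedness, the decomposition property, finiteness of the recursion — is routine induction.
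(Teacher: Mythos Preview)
Your option (b) is precisely the paper's approach: recurse on $G[A_1]$ and $G[A_2]$ and take the union $\mc{B}_1\cup\mc{B}_2$ of the two decompositions, giving the excess recurrence $E(n)\le E(|A_1|)+E(|A_2|)+|A_1\cap A_2|$. So the strategy is right.

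The genuine gap is that you never actually solve this recurrence. You correctly observe that the ansatz $E(n)\le Kn^\beta$ fails (since $2(2/3)^\beta>1$), and you gesture at a ``subtractive form'', but the form you write, $E(m)\le Km^\beta-Lm$, has the wrong sign and would be negative for large $m$. The paper's strengthened hypothesis is the reverse: prove by induction that $E(n)\le \eps n-\gamma n^\beta$ for a suitable constant $\gamma>0$. The point of the subtracted $\gamma n^\beta$ is to absorb the additive $c_0 n^\beta$ separator term, and what makes this work is the \emph{lower} bound $|A_i|\ge n/3$ from the balanced-separator definition, which you never invoke. By concavity of $x\mapsto x^\beta$ one gets
\[
|A_1|^\beta+|A_2|^\beta\ \ge\ \bigl(\tfrac{1}{3}\bigr)^\beta n^\beta+\bigl(\tfrac{2}{3}\bigr)^\beta n^\beta\ >\ n^\beta,
\]
so the induction step needs $\gamma\bigl((\tfrac13)^\beta+(\tfrac23)^\beta-1\bigr)\ge c_0$, which determines $\gamma$; then $C$ is chosen so that the base case $\eps n-\gamma n^\beta\ge 0$ holds once $n\ge C/3$. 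Your attempts that only used the upper bound $|A_i|\le\tfrac23 n$ were doomed for exactly the reason you discovered; the missing ingredient is this concavity lower bound coming from $|A_i|\ge n/3$.
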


\begin{proof}
We may assume that $\eps\leq 1$. Let $\beta < 1$ and $c >0$ be such that every graph $G\in \mc{F}$ has a separator of order at most $c\,{\v(G)^{\beta}}$.
Let $\gamma := c  \left(\left(\frac{1}{3}\right)^{\beta} + \left(\frac{2}{3}\right)^{\beta} - 1 \right)^{-1}$. We show that $C := 3 \left(\frac{\gamma}{\eps}\right)^{1/(1 - \beta)}$ satisfies the lemma.
We prove by induction on $\v(G)$ that every graph $G \in \mc{F}$ with $\v(G) \geq \frac{C}{3}$ has a $C$-bounded decomposition with excess at most $\eps \v(G) - \gamma{\v(G)^{\beta}}$. Clearly, this implies the lemma. In the base case, $\frac{C}{3} \leq \v(G) \leq C$, the trivial decomposition $\{V(G)\}$ satisfies the claim.

For the induction step, let $G$ be a graph in $\mc{F}$ with  $n:=\v(G) > C$. Let $(A_1,A_2)$ be a separation of $G$ with order at most $c n^{\beta}$, where $|A_1|,|A_2| \geq \frac{n}{3}$. By the induction hypothesis, for $i\in[2]$ there exists a  $C$-bounded decomposition $\mc{B}_i$ of $G[A_i]$ with excess at most $\eps |A_i| - \gamma|A_i|^{\beta}$. Then $\mc{B}_1 \cup \mc{B}_2$  is a  $C$-bounded decomposition of $G$ with excess at most
\begin{align*}
\eps (|A_1| +|A_2|) - &\gamma (|A_1|^{\beta}+|A_2|^{\beta}) \leq \eps n + c n^{\beta} - \gamma n^{\beta}\left(\left(\tfrac{1}{3}\right)^{\beta} + \left(\tfrac{2}{3}\right)^{\beta}\right)\\  &=
(\eps n - \gamma n^{\beta}) - \left(\left(\tfrac{1}{3}\right)^{\beta} + \left(\tfrac{2}{3}\right)^{\beta} - 1 -\tfrac{c}{\gamma} \right) \gamma n^{\beta} = 
\eps n - \gamma n^{\beta},
\end{align*}
as desired.
\end{proof}

It is convenient for us to replace large graphs with bounded maximum component size by many copies of the same bounded size graph. The next lemma does this.

\begin{lem}\label{lem:pigeonHole}\label{PH}
For all $C,\eps >0$ there exist $C'=C'_{\ref{PH}}(C,\eps)$ such that for every graph $H$ with maximum component size at most $C$ there exists a graph $J$ with $\v(J) \leq C'$ and $\ell\in\NN$ such that $H$ is isomorphic to an induced subgraph of $\ell\, J$, and $\v(\ell\,J) \leq (1 +\eps)\,\v(H)$. 
\end{lem}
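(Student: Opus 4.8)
The plan is to use a pigeonhole argument on the multiset of isomorphism types of components of $H$, grouping equal components into the copies of a single graph $J$.

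\textbf{Approach.} Write the components of $H$ as $D_1,\dots,D_m$, each with $\v(D_i)\le C$. Since a component has at most $C$ vertices, there are only finitely many isomorphism types of graphs on at most $C$ vertices; enumerate them as $F_1,\dots,F_N$, where $N=N(C)$ depends only on $C$. For each $j\in[N]$, let $m_j$ be the number of indices $i$ with $D_i\cong F_j$, so $\sum_{j=1}^N m_j = m$ and $H\cong \bigcup_{j=1}^N m_j\,F_j$. The naive choice $J := F_1 \cup \dots \cup F_N$ does not quite work because the $m_j$ need not be equal, so we pad: let $M := \max_j m_j$, and define $J$ to be the disjoint union, over $j\in[N]$, of $F_j$ together with some extra isolated vertices whose number we will choose. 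We want $H$ to be an induced subgraph of $\ell\,J$ for a suitable $\ell$, and $\v(\ell\,J)\le(1+\eps)\v(H)$.

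\textbf{Key steps.} First I would handle the case where $\v(H)$ is small (say $\v(H)\le C'$ for the $C'$ we produce): then $J:=H$, $\ell:=1$ works trivially, so assume $\v(H)$ is large. The real construction: for a parameter $\ell$ to be chosen, set $J := \bigcup_{j=1}^N \big(F_j \cup \overline{K}_{p_j}\big)$ where $\overline{K}_{p}$ denotes $p$ isolated vertices and $p_j$ is chosen so that (roughly) $\ell\,p_j$ absorbs the deficiency $\ell m_j'$ between the desired count and what $\ell$ copies give — more precisely, fix $\ell$ large enough that each $m_j \le \ell$ is false in general, so instead choose $\ell := 1$ is wrong; rather, the clean route is: let $\ell := M = \max_j m_j$ and for each $j$ put into $J$ one copy of $F_j$ plus, to fix the mismatch, note $\ell$ copies of $J$ contain $\ell = M$ copies of $F_j$, which is $\ge m_j$, so $H = \bigcup_j m_j F_j$ is an induced subgraph of $\ell J$ once we discard the surplus $M-m_j$ copies of each $F_j$. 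The surplus vertices number $\sum_j (M-m_j)\v(F_j) \le NC\cdot M \le NC\,\v(H)$ (since $M\le m\le\v(H)$), which is far too large: it is linear in $\v(H)$ with a bad constant, violating the $(1+\eps)$ bound. So $\ell=M$ is wrong.

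\textbf{Main obstacle and its resolution.} The genuine difficulty is exactly this excess control: we must choose $\ell$ and the structure of $J$ so that the padding is only an $\eps$-fraction of $\v(H)$. The fix is to let $J$ itself already contain many copies of each $F_j$ in roughly the right proportions. Choose a large integer $Q$ (depending on $C$ and $\eps$), and approximate the proportions $m_j/m$ by rationals $q_j/Q$ with $q_j := \lceil Q m_j/m\rceil$, so $q_j/Q \ge m_j/m$ and $q_j/Q \le m_j/m + 1/Q$. Set $J := \bigcup_{j=1}^N q_j\,F_j$, so $\v(J)\le \sum_j q_j C \le N(Q+1)C =: C'$, depending only on $C$ and $\eps$. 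Now take $\ell := \lceil m/Q\rceil$. Then $\ell\,J = \bigcup_j (\ell q_j)\,F_j$ and $\ell q_j \ge (m/Q)(Q m_j/m) = m_j$, so $H$ is an induced subgraph of $\ell\,J$. For the vertex count, $\v(\ell J) = \ell\sum_j q_j\v(F_j) \le \ell Q \sum_j (q_j/Q)\v(F_j) \le \ell Q\sum_j (m_j/m + 1/Q)\v(F_j) \le \ell Q\big(\tfrac{1}{m}\v(H) + \tfrac{NC}{Q}\big)$. Using $\ell Q \le m + Q \le \v(H) + Q$ and estimating, one gets $\v(\ell J) \le (1 + O(Q/m) + O(NC\ell/\v(H)))\v(H)$; choosing $Q$ large (so the $NC/Q$ error is $\le\eps/2$) and noting that for $\v(H)$ large the remaining errors are $\le\eps/2$, this gives $\v(\ell J)\le(1+\eps)\v(H)$. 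Absorbing the finitely many small $H$ into $C'$ by enlarging it completes the proof. The only thing requiring care is bookkeeping the two error terms against the threshold $\v(H)\ge C'$, which is routine.
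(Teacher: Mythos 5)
Your construction is correct and is essentially the paper's proof: both pigeonhole the components of $H$ by isomorphism type, let $J$ contain roughly a $1/\ell$ fraction (rounded up) of the copies of each type, and bound the rounding excess by (number of types)$\cdot C\cdot\ell$, which is an $\eps$-fraction of $\v(H)$ once the parameters are tuned and small $H$ is absorbed into $C'$. Your parametrization via $Q$ with $q_j=\lceil Qm_j/m\rceil$ and $\ell=\lceil m/Q\rceil$ is just a reindexing of the paper's choice $\ell=\lceil 2\v(H)/C'\rceil$ with $\lceil a_i/\ell\rceil$ copies of each component type.
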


\begin{proof} 
Assume without loss of generality that $\eps \leq 1$.
Let $\mc{G}=\{G_1,G_2,\dots,G_s\}$ be a set of representatives of all isomorphism classes of connected graphs on at most $C$ vertices. We show that $C' := \frac{3s C}{\eps}$ satisfies the lemma.
If $\v(H) \leq C'$ then $J=H$ and $\ell=1$ satisfies the lemma, and so we assume $\v(H) \geq C'$. Thus $sC \leq \frac{\eps}{3}\, \v(H)$. 
Let $a_i$ be the number of components of $H$ isomorphic to $G_i$. Let  $\ell := \lceil \frac{2 \v(H)}{C'} \rceil$, and let $J_i$ denote the disjoint union of $\lceil \frac{a_i}{\ell} \rceil$ copies of $G_i$, and let $J$ be obtained by taking the disjoint union of $J_1,J_2,\dots,J_s$. By construction, $H$ is isomorphic to an induced subgraph of $\ell\, J$. Moreover, \begin{align*}
	\ell \,\v(J) \leq \v(H)+\ell sC  \leq  \left(1 + \frac{2sC}{C'}\right)\,\v(H)+sC
	 &\leq  \left(1 + \frac{2sC}{C'} + \frac{\eps}{3}\right) \v(H) \\ 
	 & \leq (1+\eps)\,\v(H), 
	\end{align*} 
and $\v(J) \leq \frac{ 2\v(H)}{\ell} \leq C'$. Thus $\ell$ and $J$ satisfy all the requirements of the lemma.
\end{proof}

Combining \cref{Eppstein,PH}, we represent  large graphs in s.s.s.\ classes in a form which is amenable to the application of \cref{addedges}.  
	
\begin{lem}	
\label{ReduceToBoundedDecomp}\label{Reduce}
For  $\mc{F}$ be a graph family with strongly sublinear separators. Then for every $\eps >0$ there exists $C=C_{\ref{Reduce}}(\mc{F},\eps)$ such that for every graph $H \in \mc{F}$ there exists a graph $H'$  such that:
\begin{itemize}
	\item[(i)] $H$ is a minor of $H'$,
	\item[(ii)] there exists $F \subseteq E(H')$ with $|F| \leq \eps \v(H)$ and  $H' \setminus F$ is isomorphic to the graph $\ell\,J$ for some $\ell\in\NN$ and graph $J$ with $\v(J) \leq C$,
	\item[(iii)] $H' \setminus X$ is isomorphic to a subgraph of $H$ for some $X \subseteq V(H')$ with $|X| \leq \eps\, \v(H).$ 
\end{itemize} 
\end{lem}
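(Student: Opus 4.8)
The plan is to combine the two preceding lemmas, \cref{Eppstein} and \cref{PH}, in a straightforward way. The only subtlety is that a bounded decomposition of $H$ is not literally a vertex-disjoint union of bounded graphs; we must pay for the ``excess'' by adding a few vertices (hence the need for item (iii), where $H'\setminus X$ is only a subgraph of $H$) and then show the pieces, which share only few vertices, can be made disjoint at the cost of deleting few edges (hence item (ii)).

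First I would apply \cref{Eppstein} with error parameter $\eps/3$ (say) to obtain a constant $C_0 = C_{\ref{Eppstein}}(\mc{F},\eps/3)$ such that $H$ admits a $C_0$-bounded decomposition $\mc{B} = \{B_1,\dots,B_m\}$ with excess $\sum_i |B_i| - \v(H) \leq \tfrac{\eps}{3}\v(H)$. From $\mc{B}$ I build an auxiliary graph $H_0$ on the disjoint union of vertex sets $\widehat{B_1},\dots,\widehat{B_m}$, where $\widehat{B_i}$ is a disjoint copy of $B_i$, and where $H_0[\widehat{B_i}]$ is an isomorphic copy of $H[B_i]$; note $\v(H_0) = \sum_i |B_i| \leq (1+\tfrac{\eps}{3})\v(H)$. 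There is a natural quotient map $\pi: V(H_0) \to V(H)$ identifying the copies of each original vertex. By the decomposition property, every edge of $H$ lifts to an edge of $H_0$, so $\pi$ realizes $H$ as a minor (indeed a contraction of a spanning subgraph) of $H_0$; and $H_0$ has maximum component size at most $C_0 m$... which is not bounded. So instead I work component-by-component: the disjoint union $\bigsqcup_i H[B_i]$ (before any identification) has maximum component size at most $C_0$, and its vertex count is at most $(1+\tfrac{\eps}{3})\v(H)$.

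The heart of the argument: to recover $H$ as a minor we need to re-identify the copies of shared vertices, but instead of contracting I will add, for each vertex $v\in V(H)$ lying in $B_{i_1},\dots,B_{i_r}$, a set of $r-1$ edges forming a path (or star) linking its $r$ copies; call the resulting edge set $F$. The total number of such added edges, summed over all $v$, is $\sum_{v}(r_v - 1) = \sum_i |B_i| - \v(H) \leq \tfrac{\eps}{3}\v(H)$. After adding $F$, contracting each such path identifies all copies of $v$, so $H$ is a minor of the graph $(\bigsqcup_i H[B_i]) + F$. Now I apply \cref{PH} with parameters $C_0$ and $\eps/3$ to the graph $\bigsqcup_i H[B_i]$ of maximum component size $\leq C_0$: this yields a constant $C := C'_{\ref{PH}}(C_0, \eps/3)$, a graph $J$ with $\v(J) \leq C$, and $\ell \in \NN$ such that $\bigsqcup_i H[B_i]$ is (isomorphic to) an induced subgraph of $\ell J$, with $\v(\ell J) \leq (1+\tfrac{\eps}{3})\v(\bigsqcup_i H[B_i]) \leq (1+\tfrac{\eps}{3})(1+\tfrac{\eps}{3})\v(H) \leq (1+\eps)\v(H)$ for $\eps$ small. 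Set $H' := (\ell J) + F$, treating $F$ as edges on the vertex set of $\ell J$ via the embedding of $\bigsqcup_i H[B_i]$ into $\ell J$ (this is legitimate since the endpoints of edges in $F$ are vertices of $\bigsqcup_i H[B_i]$).

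It remains to check the three conclusions. For (i): $H$ is a minor of $(\bigsqcup_i H[B_i]) + F$, which is a subgraph of $H'$, so $H$ is a minor of $H'$. For (ii): $H' \setminus F = \ell J$ with $\v(J) \leq C$ and $|F| \leq \tfrac{\eps}{3}\v(H) \leq \eps \v(H)$. For (iii): let $X := V(\ell J) \setminus V(\bigsqcup_i H[B_i])$, the ``padding'' vertices added by \cref{PH}; then $|X| = \v(\ell J) - \v(\bigsqcup_i H[B_i]) \leq \tfrac{\eps}{3}\v(H) \leq \eps\v(H)$, and $H' \setminus X$ is obtained from $\bigsqcup_i H[B_i]$ (an induced subgraph) by adding the edges of $F$; since each edge of $F$ joins two copies of the same vertex $v$ of $H$, and contracting them recovers a subgraph of $H$, in fact $H'\setminus X$ with the edges of $F$ re-routed is already isomorphic to a subgraph of $H$ after identifying copies --- more carefully, one chooses a single representative copy for each $v$ and observes that $H'\setminus X$ restricted to these representatives is exactly a subgraph of $H$, while the remaining copies together with $F$ do not create any edge outside $H$. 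Re-scaling $\eps$ at the start absorbs the constant factors. I expect the main obstacle to be the bookkeeping in (iii): making precise the claim that after re-routing the edges in $F$ the graph $H'\setminus X$ embeds as a subgraph of $H$, rather than merely as a minor of $H$. This is handled by choosing $F$ to be a \emph{spanning forest} on the copies of each $v$ whose contraction is well-defined, and by deleting, instead of contracting, the surplus copies --- i.e.\ one checks directly that the subgraph of $\bigsqcup_i H[B_i]$ induced on one chosen copy per vertex is isomorphic to $H$ itself (since $\mc{B}$ is a decomposition, every edge of $H$ is present in some $B_i$ and hence on the chosen copies after we pick, per edge, a block containing it), so in fact $H$ itself --- not merely a subgraph --- embeds, which is more than enough.
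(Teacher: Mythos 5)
Your proposal is correct and follows essentially the same route as the paper: apply \cref{Eppstein}, take the disjoint union of the blocks $H[B_i]$, add a tree of edges $F$ among the copies of each vertex so that $H$ is recovered as a minor by contraction, embed the disjoint union into $\ell\,J$ via \cref{PH}, and set $H'=(\ell\,J)+F$; the paper's $H'=H_2\cup H_3$ is the same graph. Your first choice of $X$ (padding vertices only) does not work, since $H'\setminus X$ then still contains the $F$-edges and has $\sum_i|B_i|>\v(H)$ vertices, but your final correction---deleting the surplus copies as well, so that no $F$-edge survives---is exactly what the paper does (it deletes \emph{all} copies of multiply-occurring vertices, costing at most twice the excess, which is still $O(\eps)\v(H)$). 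One caveat: your parenthetical claim that the representatives induce a copy of $H$ \emph{itself} is false in general (an edge $uv$ lying only in $B_1$ is lost if the chosen copy of $u$ lies in $B_1$ but the chosen copy of $v$ lies in $B_2$), but this overclaim is harmless since (iii) only requires a subgraph of $H$, and that does hold.
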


\begin{proof} We assume $\eps \leq 1$ without loss of generality.
Let $C_1 :=C_{\ref{Eppstein}}(\mc{F},\frac{\eps}{4}).$ We show that $C :=C'_{\ref{PH}}(C_1,\frac{\eps}{3})$ 	satisfies the lemma.
By \cref{Eppstein} there exists a decomposition $\mc{B}$ of $H$ such that $|B| \leq C_1$ for every $B \in \mc{B}$ and $\sum_{B \in \mc{B}}|B|\leq (1+\frac{\eps}{4})\,\v(H)$. Let the graph $H_1$ be formed by taking disjoint union of the graphs $\{H[B]\}_{B \in \mc{B}}$. Let $H_2$ be obtained from $H_1$ by adding for every $v \in V(H)$ a set of edges $F_v$  joining distinct  vertices corresponding to $v$ in $H_1$ so that the resulting subgraph induced by these copies is a tree. Thus $|F_v|=|\{B \in \mc{B} : v\in B\}|-1$. Let $F := E(H_2)-E(H_1)$. Then 
\begin{equation}\label{e:ReduceF}
|F|=\sum_{v \in V(H)}|F_v|=\s{\sum_{B \in \mc{B}}|B|} - \v(H) \leq \frac{\eps}{4}\,\v(H).
\end{equation}
Note that $H$ can be obtained from $H_2$ by contracting the edges in $F$. Thus $H$ is a minor of $H_2$.
 
Since the maximum component size of $H_1$  is at most $C_1$, by \cref{PH} there exists $\ell\in\NN$ and a graph $J$ with $\v(J) \leq C$ such that $H_1$ is an induced subgraph of a graph  $H_3$ isomorphic to $\ell\,J$, and $\v(H_3) \leq (1 + \frac{\eps}{3} )\,\v(H_1)$. 

Finally, we show that $H'=H_2 \cup H_3$ satisfies the lemma. Condition (i) is satisfied since $H$ is a minor of $H_2$, and (ii) is satisfied by \eqref{e:ReduceF}. Let $X_1$ be the set of vertices of $H_1$ (or equivalently $H_2$)  corresponding to the vertices of $H$ that appear in at least two sets in $\mc{B}$. Let $X_2 := V(H_3)-V(H_1)$ and $X := X_1 \cup X_2$. Then $H' \setminus X = H_1 \setminus X_1$ is isomorphic to a subgraph of $H$
and 
$$|X|=|X_1| +|X_2| \leq  \frac{\eps}{2}\v(H) + \frac{\eps}{3}\s{1+\frac{\eps}{4}}\v(H) \leq \eps\,\v(H),$$
thus (iii) also holds.
\end{proof}

We now finish the proof of \cref{t:main1}, which we restate below for convenience, deriving it from \cref{addedges} and \cref{Reduce}.  

\Main*

\begin{proof} 
Given	 $0< \eps \leq 1,$  let $\delta :=\delta_{\ref{addedges}}(\frac{\eps}{4})$,  $C:=C_{\ref{Reduce}}(\mc{F},\delta)$ and let $N := C \cdot L_{\ref{addedges}}(\frac{\eps}{4},C)$. We show  that for every graph $H \in\mc{F}$ with $\v(H) \geq N$, 
$$c(H) \leq (1+\eps)c_f(H).$$
Clearly, this implies the theorem.

 By the choice of $C$, there exists graphs $H'$ and $J$ and there exists $\ell\in\NN$ and $X \subseteq V(H')$  satisfying \cref{Reduce} (i) and (ii) with $\eps$ replaced by $\delta$.
Since $$C \ell \geq \ell\,\v(J) =\v(H') \geq \v(H) \geq C \cdot L_{\ref{addedges}}(\tfrac{\eps}{4},C),$$ we have $\ell \geq L_{\ref{addedges}}( \frac{\eps}{2},C)$. Since $H' \setminus X$ is  isomorphic to a subgraph of $H$ for some $X \subseteq V(H')$ with $|X| \leq \frac{\eps}{4}\v(H),$ 
$$c_f(H) \geq c_f(H' \setminus X) \geq c_f(H') - |X| \geq c_f(H') - \tfrac{\eps}{4}\v(H) \geq c_f(H') - \tfrac{\eps}{2}c_f(H),$$
where the second inequality holds by \cref{lem:boundedDiff} and the last inequality holds since $c_f(H) \geq \frac{\v(H)}{2}$.
Thus $$c_f(H') \leq \s{1 + \tfrac{\eps}{2}}\,c_f(H).$$
Since $H' \setminus F$ is isomorphic to $\ell J$ for some $F \subseteq E(H')$ with $|F| \leq \delta \v(H)$,  \cref{addedges} is applicable to $H'$. Therefore   
\begin{align*}
c(H) \leq c(H') \leq \s{1+ \tfrac{\eps}{4}} \ell c_f(J) \leq \s{1+ \tfrac{\eps}{4}}c_f(H') & \leq  \s{1+ \tfrac{\eps}{4}} \s{1+ \tfrac{\eps}{2}}c_f(H) \\
& \leq (1 +\eps)c_f(H), 
\end{align*}
as desired.
\end{proof}

\section{Proof of \cref{t:upper0}}\label{s:upper}

In this section we prove \cref{t:upper0}, deriving it from \cref{t:main1} and \cref{c:tau}.
First, we give a general asymptotically tight upper bound on the ratio $\frac{c_f(H)}{\v(H)}$ in   s.s.s.\ graph families.

Let $\mc{F}$ be a graph class. Define the \defn{nucleus $\brm{nuc}(\mc{F})$} of $\mc{F}$ to be the set of all graphs $H$ such that $k\,H \in \mc{F}$ for every $k\in\NN$. Let 
$$ \rho(\mc{F}) := \sup_{H \in \brm{nuc}(\mc{F})}\frac{c_f(H)}{\v(H)}.$$

Note that $\mc{F}$ contains arbitrarily large graphs $G$ with $c(G)=(\rho(\mc{F}) -o(1))\,\v(G)$, since $\frac{c(k\,H)}{\v(k\,H)} \geq  \frac{c_f(H)}{\v(H)} -\frac{1}{k}$ for every non-null graph $H$ and $k\in\NN$. The next theorem shows that the above bound is tight for s.s.s.\ graph families $\mc{F}$; that is, $c(G) \leq (\rho(\mc{F})+o(1))\,\v(G)$ for every $G\in\mc{F}$. 

\begin{thm}\label{t:upper}
	For every s.s.s.\  graph family $\mc{F}$, 
	$$
	 \lim_{n \to \infty}\max_{\substack{G \in \mc{F} \\ \v(G)=n }}\frac{c(G)}{n} = \rho(\mc{F}). 
	$$
(In particular the above limit exists.)	
\end{thm}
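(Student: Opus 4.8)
The inequality $\liminf_{n\to\infty}\max_{G\in\mc{F},\,\v(G)=n}\frac{c(G)}{n}\geq\rho(\mc{F})$ is essentially immediate: for any $H\in\brm{nuc}(\mc{F})$ we have $k\,H\in\mc{F}$ for all $k$, and $\frac{c(k\,H)}{\v(k\,H)}\geq\frac{c_f(k\,H)-2}{k\,\v(H)}=\frac{c_f(H)}{\v(H)}-\frac{2}{k\,\v(H)}$ by \cref{l:newlower} (using $c_f(k\,H)=k\,c_f(H)$, which follows from the scaling-limit characterization established at the end of \cref{s:connect}), so taking $k\to\infty$ and then the supremum over $H\in\brm{nuc}(\mc{F})$ gives the bound. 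The real content is the matching upper bound: for every $\eps>0$, every graph $G\in\mc{F}$ with $\v(G)$ sufficiently large satisfies $c(G)\leq(\rho(\mc{F})+\eps)\,\v(G)$.

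\textbf{Upper bound, main argument.} Fix $\eps>0$. The strategy is to reduce $c(G)$ to $c_f$ of a bounded-size gadget via \cref{Reduce} and \cref{addedges}, exactly as in the proof of \cref{t:main1}, and then bound the resulting $c_f$ by $\rho(\mc{F})\,\v(\cdot)$ using the definition of $\rho$. Concretely: apply \cref{Reduce} to $G$ with a small parameter $\delta=\delta_{\ref{addedges}}(\eps/4)$ to obtain $H'$ and $J$ with $\v(J)\leq C=C_{\ref{Reduce}}(\mc{F},\delta)$ and $\ell\in\NN$ with $H'\setminus F$ isomorphic to $\ell\,J$ for some $F\subseteq E(H')$ with $|F|\leq\delta\,\v(G)$, and $H'\setminus X$ isomorphic to a subgraph of $G$ for some $X$ with $|X|\leq(\eps/4)\,\v(G)$. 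Since $H'$ is a minor of $G$ is \emph{not} what we need here — rather, $c(G)\leq c(H')$ because $G$ is a minor of $H'$. Now \cref{addedges} gives $c(H')\leq(1+\eps/4)\,\ell\,c_f(J)$ once $\ell\geq L_{\ref{addedges}}(\eps/4,C)$, which holds for $\v(G)$ large since $C\ell\geq\v(H')\geq\v(G)$. The crucial new step is to bound $c_f(J)$: here we must argue that (a possibly slightly modified) $J$ lies in $\brm{nuc}(\mc{F})$, so that $c_f(J)\leq\rho(\mc{F})\,\v(J)$. The subtlety is that \cref{Reduce} only guarantees $H'\setminus X$ is a subgraph of $G\in\mc{F}$, and $\mc{F}$ is monotone, so $H'\setminus X\in\mc{F}$; but we need $k$-fold disjoint unions of the gadget in $\mc{F}$.

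\textbf{The main obstacle.} Getting $J$ (or a close variant) into the nucleus is the delicate point. One route: instead of a single graph $G$, apply the reduction to $k\,G$ for growing $k$. Since $G\in\mc{F}$ and $\mc{F}$ need not be closed under disjoint union, this does not obviously help. A cleaner route is to observe that in \cref{PH} (invoked inside \cref{Reduce}), the gadget $J$ is built as a disjoint union of representatives of connected graphs on $\leq C_1$ vertices, so $\ell\,J$ itself is just many disjoint copies of bounded connected pieces; and $H_1$ (hence a subgraph of $H$, hence in $\mc{F}$ by monotonicity) already contains $\lfloor\ell/\text{something}\rfloor$ disjoint copies of the relevant connected pieces. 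More carefully: for each connected piece $P$ appearing in $J$, some bounded number of disjoint copies of $P$ is a subgraph of $G$, hence in $\mc{F}$; but to conclude $m\,P\in\mc{F}$ for \emph{all} $m$ we would need $G$, or graphs in $\mc{F}$, to contain arbitrarily many disjoint copies of $P$. This is where one must be careful, and I expect the actual proof to pass through $\brm{nuc}(\mc{F})$ via a compactness/pigeonhole argument across \emph{all} large $G\in\mc{F}$ simultaneously: if no graph in $\brm{nuc}(\mc{F})$ has $\frac{c_f}{\v}$ close to the $\limsup$, one derives that some connected gadget recurs in unboundedly many disjoint copies across the family, contradicting the definition of $\rho(\mc{F})$ as a supremum over $\brm{nuc}(\mc{F})$.

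\textbf{Assembling.} Granting that the gadget analysis places the relevant connected components' disjoint unions in $\brm{nuc}(\mc{F})$, we get $c_f(J)\leq\rho(\mc{F})\,\v(J)$, hence $c(H')\leq(1+\eps/4)\,\rho(\mc{F})\,\ell\,\v(J)=(1+\eps/4)\,\rho(\mc{F})\,\v(H')\leq(1+\eps/4)(1+\delta)\,\rho(\mc{F})\,\v(G)$ using $\v(H')\leq(1+\delta)\,\v(G)$ from \cref{Reduce}(ii) together with the excess bound. Choosing $\delta$ small relative to $\eps$ and $\v(G)$ large, this yields $c(G)\leq(\rho(\mc{F})+\eps)\,\v(G)$. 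Combining with the easy lower bound gives $\lim_{n\to\infty}\max_{G\in\mc{F},\,\v(G)=n}\frac{c(G)}{n}=\rho(\mc{F})$, and in particular the limit exists. Deriving \cref{t:upper0} from this is then a matter of checking, under the hypothesis $r_{\mc{F}}\geq\frac23$, that $\rho(\mc{F})=r_{\mc{F}}$: the inequality $\rho(\mc{F})\geq r_{\mc{F}}$ follows from $c_f(H)\geq\tau(H)$ (shown in \cref{cfLowerBound}), and $\rho(\mc{F})\leq r_{\mc{F}}$ follows from \cref{c:tau} applied to each $H\in\brm{nuc}(\mc{F})$, since every subgraph $H'$ of such an $H$ also lies in $\brm{nuc}(\mc{F})$ (as $\mc{F}$ is monotone and $k\,H'\subseteq k\,H\in\mc{F}$), giving $\tau(H')\leq r_{\mc{F}}\,\v(H')$.
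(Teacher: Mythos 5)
The lower-bound half of your argument is fine and matches the paper. The problem is in the upper bound: the step you yourself flag as ``the main obstacle'' --- getting the bounded-size pieces into $\brm{nuc}(\mc{F})$ --- is exactly the content of the theorem, and you do not close it. Your proposed resolution (``a compactness/pigeonhole argument across all large $G\in\mc{F}$ simultaneously \dots contradicting the definition of $\rho(\mc{F})$'') is not an argument: having a gadget recur in unboundedly many disjoint copies across the family would simply \emph{place} it in the nucleus by monotonicity, it contradicts nothing, and in any case you never extract from this a bound on $c_f$ of the specific gadget $J$ produced by \cref{Reduce} for the specific $G$ at hand. The missing idea is a finite, uniform threshold: since there are only $M$ isomorphism types of graphs on at most $C$ vertices, and for each such type $J\notin\brm{nuc}(\mc{F})$ there is a finite $k_J$ with $k_JJ\notin\mc{F}$ (hence $kJ\notin\mc{F}$ for all $k\ge k_J$ by monotonicity), one may set $k_0:=\max_J k_J$ so that for \emph{every} $J$ on at most $C$ vertices, $k_0J\in\mc{F}$ already forces $J\in\brm{nuc}(\mc{F})$. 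Then, after using \cref{Eppstein} to delete $X$ with $|X|\le\frac{\eps}{2}\v(G)$ so that $G\setminus X$ has components of size at most $C$, every component type occurring at least $k_0$ times lies in the nucleus (as $k_0$ disjoint copies form a subgraph of $G\setminus X\in\mc{F}$), and the remaining types occupy at most $k_0MC=O(1)$ vertices in total, so they can be deleted into a set $Y$ with $|Y|\le\eps\v(G)$ once $\v(G)$ is large. That is the pigeonhole; without it your proof has a genuine hole.

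Beyond the gap, your route is also needlessly heavy compared to the paper's. The paper does not re-run the \cref{Reduce}/\cref{addedges} machinery: it invokes \cref{t:main1} once as a black box to replace $c(G)$ by $c_f(G)$, and then the whole upper bound is the three lines
$$c_f(G)\le c_f(G\setminus Y)+|Y|\le\rho(\mc{F})\,\v(G\setminus Y)+|Y|\le(\rho(\mc{F})+\eps)\,\v(G),$$
using \cref{lem:boundedDiff} iteratively to delete $Y$ and the subadditivity of $c_f$ over the disjoint components of $G\setminus Y$, each of which lies in $\brm{nuc}(\mc{F})$ and hence satisfies $c_f\le\rho(\mc{F})\cdot\v$. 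Note that your route would need this same subadditivity anyway (the gadget $J$ of \cref{PH} is a disjoint union of several connected representatives, so bounding $c_f(J)$ by $\rho(\mc{F})\v(J)$ requires combining the representatives), plus the nucleus pigeonhole, plus all the error bookkeeping of \cref{addedges} --- so you gain nothing by avoiding the direct appeal to \cref{t:main1}. I recommend restructuring the upper bound along the paper's lines.
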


\begin{proof}
	As mentioned above, it is not hard to see that $$\rho(\mc{F}) \leq \liminf_{n \to \infty}\max_{\substack{G \in \mc{F} \\ \v(G)=n }}\frac{c(G)}{n}.$$
	It remains to show that for every $\eps>0$ there exists $N$ such that 	
	\begin{equation}
	\label{e:upper}c(G) \leq (\rho(\mc{F})+\eps)\,\v(G)
	\end{equation} 
	for every $G \in \mc{F}$ with $\v(G) \geq N$. By \cref{t:main1}, we can replace $c(G)$ in \cref{e:upper} by $c_f(G)$.
	Let $C:=C_{\ref{Eppstein}}(\mc{F},\frac{\eps}{2})$. Let $k_0$ be chosen so that  if $k_0 J \in \mc{F}$ for some graph $J$ with $\v(J) \leq C$ then  $J\in \brm{nuc}(\mc{F})$. Let $M$ be the number of isomorphism classes of graphs on at most $C$ vertices.  
	Let $n :=\v(G)$. By the choice of $C$ there exists $X \subseteq V(G)$ with $|X| \leq \frac{\eps}{2}\v(G)$ such that $G \setminus X$ has maximum component size at most $C$. By the choice of $M$, $k_0$ and $N$, it further follows that the total size of components of $G \setminus X$ that do not belong to the nucleus of  $\mc{F}$  is less than $k_0M <\frac{\eps}{2}\v(G).$ Thus there exists $Y \subseteq V(G)$ with $|Y| \leq \eps\,\v(G)$ such that 	every component of $G \setminus Y$  belongs to $\brm{nuc}(\mc{F})$. By \cref{lem:boundedDiff} and the subadditivity of $c_f$, 
	\begin{align*}
	c_f(G) \leq c_f(G \setminus Y)+ |Y| \leq \rho(\mc{F})\,\v(G \setminus Y) + |Y| \leq \rho(\mc{F}+\eps)\,\v(G),
	 \end{align*}
	 as desired.
\end{proof}

In the case   when $\tau(H) \geq \frac{2}{3}\v(H)$ for some $H \in \brm{nuc}(\mc{F})$, \cref{c:tau} yields a particularly simple formula for $\rho(\mc{F})$. 

\begin{cor}\label{c:rho}
	Let  $\mc{F}$  be a monotone  graph family. Let 
	$$r := \sup_{H \in \brm{\brm{nuc}(\mc{F})}}\frac{\tau(H)}{\v(H)}.$$
	If $r \geq \frac23 $ then $\rho(\mc{F}) = r$.  	
\end{cor}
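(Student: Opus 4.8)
The plan is to prove the two inequalities $\rho(\mc{F})\ge r$ and $\rho(\mc{F})\le r$ separately. The first is essentially immediate from the known lower bounds on $c_f$, while the second is a direct packaging of \cref{c:tau}.

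For the lower bound I would observe that every non-null $H\in\brm{nuc}(\mc{F})$ satisfies $c_f(H)\ge\tau(H)$: this follows from \cref{cfLowerBound} together with the fact that $c_T(H)\ge\tau(H)$, since $\tau(H)$ is the first term in the supremum defining $c_T(H)$. Dividing by $\v(H)$ and taking the supremum over $H\in\brm{nuc}(\mc{F})$ gives $\rho(\mc{F})\ge r$.

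For the upper bound I would fix a non-null $H\in\brm{nuc}(\mc{F})$ and use the key observation that $\brm{nuc}(\mc{F})$ is closed under taking non-null subgraphs: if $H'$ is a non-null subgraph of $H$, then for every $k\in\NN$ the graph $k\,H'$ is a subgraph of $k\,H\in\mc{F}$, so $k\,H'\in\mc{F}$ because $\mc{F}$ is monotone, and hence $H'\in\brm{nuc}(\mc{F})$. Consequently $\tau(H')\le r\,\v(H')$ holds for every non-null subgraph $H'$ of $H$. Since $r\ge\frac23$ by hypothesis, \cref{c:tau} applies and yields $c_f(H)\le r\,\v(H)$. Taking the supremum over $H\in\brm{nuc}(\mc{F})$ gives $\rho(\mc{F})\le r$, and combining the two bounds completes the proof.

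There is no substantive obstacle: the corollary is a repackaging of \cref{c:tau} and \cref{cfLowerBound}, and the only step requiring a moment's care is verifying that the nucleus is closed under non-null subgraphs, which is exactly where monotonicity of $\mc{F}$ enters. One should also be slightly careful to restrict attention to non-null graphs throughout, since the ratios $\tau(\cdot)/\v(\cdot)$ and the quantities $\rho(\mc{F})$ and $c_f(\cdot)$ are only meaningful for non-null graphs.
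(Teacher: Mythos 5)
Your proof is correct and follows the same route as the paper: the lower bound via \cref{cfLowerBound} and the upper bound via \cref{c:tau}. The detail you spell out --- that monotonicity of $\mc{F}$ makes $\brm{nuc}(\mc{F})$ closed under non-null subgraphs, so the hypothesis of \cref{c:tau} is met --- is left implicit in the paper but is exactly the right justification.
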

\begin{proof}
	 By \cref{cfLowerBound} we have
	$\rho(\mc{F}) \geq r$.
	On the other hand,  \cref{c:tau} implies $c_f(H) \leq r \v(H)$ for every $H \in \brm{nuc}(\mc{F})$, and so 	$\rho(\mc{F}) \leq r$.
\end{proof}

Note that \cref{t:upper} and \cref{c:rho} immediately imply \cref{t:upper0}.

\smallskip

Finally,  we derive from \cref{c:rho} a formula for $\rho(\mc{F})$ for every proper minor-closed graph family which is equivalent to an open weakening of Hadwiger's conjecture, as discussed in the introduction.

Let $\mc{C}_t$ denote the class of all graphs with maximum component size at most $t$.

\begin{thm}\label{thm:minorclosed}
The following statements are equivalent for any integer $t\geq 3$:
	\begin{itemize}
		\item[(i)] $\alpha(H) \geq \frac{\v(H)}{t-1}$ for every non-null $K_t$-minor free graph $H$, and
		\item[(ii)] $\rho(\mc{F}) \leq \frac{t-2}{t-1}$ for every minor-closed class of graphs $\mc{F}$ such that $\mc{C}_t \not \subseteq \mc{F}.$ 
	\end{itemize}
\end{thm}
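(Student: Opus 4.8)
The plan is to prove the two implications separately, using \cref{c:rho} as the bridge between the combinatorial statement about independence numbers and the quantity $\rho(\mc{F})$. Throughout, the key observation is that for a minor-closed family $\mc{F}$, the nucleus $\brm{nuc}(\mc{F})$ consists precisely of the graphs $H$ such that $kH\in\mc{F}$ for all $k$; since $\mc{F}$ is minor-closed this is equivalent to $H$ itself lying in $\mc{F}$ together with the condition that disjoint unions do not escape $\mc{F}$, and the natural objects to track are the maximal-density members of $\brm{nuc}(\mc{F})$.

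For the implication $(i)\Rightarrow(ii)$, fix a minor-closed class $\mc{F}$ with $\mc{C}_t\not\subseteq\mc{F}$. I would first argue that every $H\in\brm{nuc}(\mc{F})$ is $K_t$-minor free: indeed, if $K_t\preceq H$ then $kK_t\preceq kH\in\mc{F}$ for every $k$, so $kK_t\in\mc{F}$ for all $k$, hence (as $\mc{F}$ is minor-closed and every graph with maximum component size $<t$ is a subgraph of some $kK_{t-1}$, while every graph with component size exactly $t$ is a minor of some $kK_t$) we would get $\mc{C}_t\subseteq\mc{F}$, a contradiction. Actually the cleanest route: $\mc{C}_t\not\subseteq\mc{F}$ forces $K_t\notin\mc{F}$ (since $\mc{F}$ minor-closed and $K_t\in\mc{C}_t$), and then any $H$ with $K_t\preceq H$ has $H\notin\mc{F}$, so in particular $H\notin\brm{nuc}(\mc{F})$. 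Now for any non-null $H'\subseteq H$ with $H\in\brm{nuc}(\mc{F})$, the graph $H'$ is also $K_t$-minor free, so by $(i)$, $\alpha(H')\geq\frac{\v(H')}{t-1}$, i.e. $\tau(H')\leq\frac{t-2}{t-1}\v(H')$. Thus $r:=\sup_{H\in\brm{nuc}(\mc{F})}\frac{\tau(H)}{\v(H)}\leq\frac{t-2}{t-1}$. If $r\geq\frac23$ then \cref{c:rho} gives $\rho(\mc{F})=r\leq\frac{t-2}{t-1}$; if $r<\frac23$ then I need the separate bound $\rho(\mc{F})\leq\frac23\leq\frac{t-2}{t-1}$ (valid since $t\geq3$), which follows because for every $H\in\brm{nuc}(\mc{F})$ with $\tau(H)<\frac23\v(H)$ either $c_f(H)\leq\frac23\v(H)$ directly, or \cref{c:tau} applied with $r=\frac23$ (whose hypothesis $\tau(H')\leq\frac23\v(H')$ holds for all subgraphs since each is $K_t$-minor free and $t\geq3$ forces $\tau(H')\leq\frac{t-2}{t-1}\v(H')\leq$ — wait, this needs $\frac{t-2}{t-1}\leq\frac23$, i.e. $t\leq4$) — so for $t\geq5$ one uses the weaker bound $\tau(H')\leq\frac{t-2}{t-1}\v(H')<\v(H')$ and invokes \cref{c:tau} with $r=\max\{\frac23,\frac{t-2}{t-1}\}=\frac{t-2}{t-1}$, giving $c_f(H)\leq\frac{t-2}{t-1}\v(H)$ in all cases.

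For the converse $(ii)\Rightarrow(i)$, suppose $(i)$ fails, so there is a non-null $K_t$-minor free graph $H_0$ with $\alpha(H_0)<\frac{\v(H_0)}{t-1}$, i.e. $\tau(H_0)>\frac{t-2}{t-1}\v(H_0)$. Let $\mc{F}$ be the class of all graphs isomorphic to a minor of some $kH_0$, $k\in\NN$; this is minor-closed, and $H_0\in\brm{nuc}(\mc{F})$. I must check $\mc{C}_t\not\subseteq\mc{F}$: every member of $\mc{F}$ is a minor of some $kH_0$ and hence $K_t$-minor free (as $K_t\not\preceq H_0$ and $K_t$ is connected, so $K_t\not\preceq kH_0$), whereas $K_t\in\mc{C}_t$ is not $K_t$-minor free; thus $K_t\notin\mc{F}$. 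Now $\frac{\tau(H_0)}{\v(H_0)}>\frac{t-2}{t-1}\geq\frac{1}{2}$; to apply \cref{c:rho} I need this ratio to be at least $\frac23$, which holds iff $t\geq4$. For $t\geq4$, \cref{c:rho} (or directly \cref{cfLowerBound}, which gives $c_f(H_0)\geq\tau(H_0)$) yields $\rho(\mc{F})\geq\frac{\tau(H_0)}{\v(H_0)}>\frac{t-2}{t-1}$, contradicting $(ii)$. The case $t=3$ must be handled separately: here $K_3$-minor free means forest, $\alpha$ of a forest on $n$ vertices with $e$ edges is at least $n-e\geq n-(n-1)$, and in fact $\alpha(H)\geq\frac{n}{2}$ for any triangle-free graph, so $(i)$ holds unconditionally for $t=3$; I would just verify $(ii)$ also holds for $t=3$ directly (every $\mc{F}$ with $\mc{C}_3\not\subseteq\mc{F}$ contains only forests in its nucleus, and $c_f(\text{forest})\leq\frac{\v}{2}$), making the equivalence trivially true at $t=3$.

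The main obstacle I anticipate is the bookkeeping around the threshold $\frac23$: \cref{c:rho} and \cref{c:tau} only apply when the relevant ratio is at least $\frac23$, and the target ratio $\frac{t-2}{t-1}$ only reaches $\frac23$ at $t=4$. So the argument naturally splits into the generic case $t\geq4$ (where everything flows through \cref{c:rho}/\cref{c:tau}) and the boundary case $t=3$ (where one instead uses the elementary fact that bipartite/triangle-free graphs have independence number at least half their order, together with $c_f\leq\v/2$ for such graphs from \cref{t:4colored}). Care is also needed to ensure, in $(i)\Rightarrow(ii)$, that \cref{c:tau}'s hypothesis — the bound on $\tau(H')$ for \emph{every} non-null subgraph $H'$, not just $H$ itself — is met; this is automatic since subgraphs of $K_t$-minor free graphs are $K_t$-minor free, so $(i)$ applies to all of them.
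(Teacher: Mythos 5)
Your proof is correct and follows essentially the same strategy as the paper: for (ii)$\Rightarrow$(i) you build a minor-closed family generated by the counterexample $H_0$ and use $c_f(H_0)\geq \tau(H_0)$ (\cref{cfLowerBound}) to force $\rho(\mc{F})>\frac{t-2}{t-1}$; for (i)$\Rightarrow$(ii) you observe that the nucleus consists of $K_t$-minor-free graphs and push the hypothesis through \cref{c:tau}/\cref{c:rho}, with a separate elementary treatment of forests. The paper organises the case analysis slightly differently (it splits on whether $\mc{C}_3\subseteq\mc{F}$ rather than on $t=3$ versus $t\geq 4$, and goes through \cref{c:rho} rather than applying \cref{c:tau} graph-by-graph), but these are cosmetic differences; your direct use of \cref{c:tau} with $r=\max\{\frac23,\frac{t-2}{t-1}\}$ even avoids the $r<\frac23$ worry for $t\geq4$.

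One warning: the remark you label ``the cleanest route'' is false. $\mc{C}_t\not\subseteq\mc{F}$ does \emph{not} force $K_t\notin\mc{F}$ for a minor-closed $\mc{F}$: take $\mc{F}$ to be the class of all minors of $K_t$ (equivalently, all graphs on at most $t$ vertices); then $K_t\in\mc{F}$ but $2K_t\in\mc{C}_t\setminus\mc{F}$. Minor-closedness of $\mc{F}$ only yields that all \emph{minors of} $K_t$ lie in $\mc{F}$, not all of $\mc{C}_t$. Your first argument --- that $K_t\preceq H$ for $H\in\brm{nuc}(\mc{F})$ gives $k\,K_t\in\mc{F}$ for all $k$ and hence $\mc{C}_t\subseteq\mc{F}$ --- is the correct one (and is what the nucleus is for), so keep that and discard the ``cleaner'' shortcut. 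Similarly, the parenthetical ``$\frac23\leq\frac{t-2}{t-1}$ valid since $t\geq3$'' should read $t\geq 4$; you catch this yourself later, but the final write-up should state the threshold correctly from the start.
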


\begin{proof}
	{\bf (ii) $\Rightarrow$ (i)}. Suppose that (i) does not hold. Let $H$ be a graph such that $\alpha(H) < \frac{\v(H)}{t-1}$ and $K_t$ is not a minor of $H$. 
	Let $\mc{F}$ consist of all graphs $G$ such that every component of $G$ is a minor of $H$. Then $\mc{F}$ is minor-closed, and $K_t \not \in \mc{F}$ implying that $\mc{C}_t \not \subseteq \mc{F}.$ On the other hand, $H \in \brm{nuc}(\mc{F})$ and so $$\rho(\mc{F}) \geq \frac{c_f(H)}{\v(H)} \geq \frac{\v(H) - \alpha(H)}{\v(H)}> \frac{t-2}{t-1}.$$ 
	Thus (ii) does not hold, as desired.
	
\smallskip
	{\bf (i) $\Rightarrow$ (ii)}.
	Let $\mc{F}$ be minor-closed class of graphs with $\mc{C}_t \not \subseteq \mc{F}.$  If $\mc{C}_3 \not \subseteq \mc{F}$ then every graph in $H \in \brm{nuc}(\mc{F})$ is a forest. Therefore $c_f(H)=\frac{1}{2}\v(H)$ for every such $H$, implying $\rho(\mc{F}) = \frac{1}{2}$, as desired.
	Now assume that $\mc{C}_3 \subseteq \mc{F}.$ Thus $K_3 \in \brm{nuc}(\mc{F})$.  Let $$r := \sup_{H \in \brm{\brm{nuc}(\mc{F})}}\frac{\tau(H)}{\v(H)}.$$
	Then $r \geq \frac{\tau(K_3)}{\v(K_3)} \geq \frac23 $, and $\rho(\mc{F}) = r$ by \cref{c:rho}. Since $\mc{C}_t \not \subseteq \mc{F}$, every graph $H \in \brm{\brm{nuc}(\mc{F})}$ is $K_t$-minor free, and so $\tau(H) \leq \frac{t-2}{t-1}\v(H)$ for every such $H$ by (i). This implies $\rho(\mc{F}) = r \leq \frac{t-2}{t-1}$, as desired.   
\end{proof}

In summary, \cref{t:upper,thm:minorclosed} jointly imply that if  Hadwiger's conjecture (or its weakening \cref{thm:minorclosed} (i)) holds for  $K_t$-minor free graphs, then for every minor-closed class $\mc{F}$ such that 
$\mc{C}_{t-1} \subseteq \mc{F}$  and $\mc{C}_{t} \not \subseteq \mc{F},$  
$$c(G) \leq \frac{t-2}{t-1}\,\v(G) + o(\v(G)),$$ for every graph $G \in \mc{F}$.
Moreover, the coefficient $\frac{t-2}{t-1}$ cannot be improved.

%%%%%%%%%%%%%%%%%%%%%%
\section{General bounds}\label{s:general}

In this section we extend our investigation beyond graph families with strongly sublinear separators and consider families with unbounded density. We show that the maximum density of families of regular graphs with the extremal function linear in the number of vertices is logarithmic. In particular, we show that the extremal function of hypercubes is linear. We also prove upper and lower bounds on the extremal function of regular graphs with density slightly above the logarithmic threshold. We make no attempt to optimize the constant coefficients in this section.

We use two additional external tools in our proofs. The first one allows us to only look for minors in very dense graphs, more specifically in graphs $G$ with density substantially larger than $\frac{\v(G)}{2}$, at a cost of constant factor loss in the bounds on the extremal function. It is  due to Reed and Wood~\cite{ReeWoo15} and extends an earlier similar result by Mader~\cite{Mader68}.

\begin{lem}[{\cite[Lemma 8]{ReeWoo15}}]\label{lem:ReedWood}
	For every integer $k > 1$, every graph $G$ with  $\d(G) \geq 2k$ contains a  minor $G'$  such that $\delta(G') \geq \max\{k-1, 0.64\cdot \v(G')\}$, $2\delta(G') - \v(G') > 0.46k$, and $\v(G') \leq 4k.$
\end{lem}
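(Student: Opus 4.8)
The plan is to obtain $G'$ by a Mader-style density-increment argument. As a preliminary normalisation, note that $\d(G)\ge 2k$ means $G$ has average degree at least $4k$, so $G$ has a subgraph — hence a minor — with minimum degree at least $2k$; thus we may assume $\e(G)\ge 2k\,\v(G)$ and it suffices to find $G'$ as a minor of $G$.

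The engine is a threshold function $g\colon\NN\to\RR$ together with the choice of a minor $G'$ of $G$ with $\v(G')$ minimum among those satisfying $\e(G')\ge g(\v(G'))$. Writing $n:=\v(G')$, minor-minimality yields, in the usual way: deleting a vertex of degree $\delta(G')$ produces a minor on $n-1$ vertices below the threshold, so
$$\delta(G')\ \ge\ g(n)-g(n-1);$$
and contracting any edge (which removes that edge plus one edge per common neighbour of its endpoints) is likewise below threshold, so every edge of $G'$ lies in at least $g(n)-g(n-1)-1$ triangles. The art is entirely in the choice of $g$, which must do three incompatible-looking things at once: (a) be satisfiable by $G$, which — since $\e(G)\ge 2k\,\v(G)$ with no upper bound on $\v(G)$ — forces $g(n)\le 2kn$ in the large-$n$ range; (b) be unsatisfiable for $n$ below roughly $k$, so that the minimiser cannot shrink away the bound $\delta(G')\ge k-1$ — this is arranged by making $g(n)$ exceed $\binom n2$ there, e.g. by letting $g$ agree with $(k-1)n-\binom k2+1$ near $n=k$, whose per-step increment is exactly $k-1$; and (c) have per-step increment at least $0.64\,n$ throughout the window $[\,k,\,4k\,]$ where the minimiser will live, which forces a quadratic branch $g(n)=\lambda n^2+\mu n+\nu$ with $2\lambda\ge 0.64$ (and $\lambda<\tfrac12$ for satisfiability) in that window. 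Taking $g$ to be the upper envelope of such a linear and such a quadratic piece, tuned — with all parameters allowed to depend on $k$ — so that the linear piece governs only $n\lesssim k$ and $n$ large while the quadratic piece governs the middle window and caps the minimiser at $n\le 4k$, gives $\delta(G')\ge 0.64\,n$ and $n\le 4k$ together with $\delta(G')\ge k-1$; if convenient, one may instead first pass (via \cref{Mader}) to a minor of bounded order before applying a pure quadratic threshold. The last required inequality is then pure arithmetic: from $\delta(G')\ge 0.64\,n$ and the lower bound on $\delta(G')$ coming from the linear slope, $2\delta(G')-n\ge(2\cdot0.64-1)n$ plus a surplus linear in $k$ exceeds $0.46\,k$.

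The main obstacle is precisely the simultaneous calibration in (a)--(c): one must pin down a break-point and the coefficients $(\lambda,\mu,\nu)$ of the quadratic piece so that the unique minimiser is provably trapped in the window $[\,k,\,4k\,]$ with per-step increment stably above $0.64\,n$, with no slack lost at either end — in particular obtaining the clean bound $\v(G')\le 4k$ rather than $(4+o(1))k$. This is a small constrained optimisation over the constants, and it is exactly what produces the specific numbers $0.64$ and $0.46$ in the statement; everything else is the routine minor-minimality bookkeeping above.
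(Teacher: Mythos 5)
First, a point of reference: the paper does not prove this statement at all --- it is quoted verbatim from Reed and Wood~\cite{ReeWoo15}, so there is no internal proof to compare against. Judged on its own terms, your proposal correctly identifies the Mader-style framework (take a minor-minimal minor $G'$ subject to $\e(G')\ge g(\v(G'))$, read off $\delta(G')> g(n)-g(n-1)$ from vertex deletion and a triangle count from edge contraction), but it contains a genuine gap that is not a matter of ``calibrating constants''. As you note in (a), satisfiability by $G$ --- whose order is unbounded --- forces $g(n)\le 2kn$ for all $n\ge 4k+1$, hence forces the per-step increment of $g$ to be about $2k$ in the large-$n$ regime. In that regime minimality yields only $\delta(G')\gtrsim 2k$ and every edge in roughly $2k$ triangles, which places no upper bound whatsoever on $\v(G')$: the minimiser is small only if some small minor above the threshold exists, and that existence is precisely the content of the lemma. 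The $\binom{n}{2}$-ceiling mechanism cannot exclude the range $n>4k+1$, because there $2kn<\binom{n}{2}$, so no admissible $g$ exceeds $\binom{n}{2}$ there. Consequently neither $\v(G')\le 4k$ nor $\delta(G')\ge 0.64\,\v(G')$ follows from the envelope construction you describe, for any choice of parameters.

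The missing idea is a separate step that manufactures a minor of bounded order before (or instead of) running the quadratic threshold. Your parenthetical fallback --- first passing via \cref{Mader} to a minor of bounded order --- does not work, since \cref{Mader} guarantees connectivity, density and a triangle condition but no bound on the order of the minor it returns. What does work (and is essentially what Reed--Wood do, mirrored by \cref{FindG'} in this paper) is to exploit the triangle condition structurally: in a minor where every edge lies in at least $s$ triangles, each closed neighbourhood $N[u]$ induces a subgraph of minimum degree at least $s$, and a minimal union of closed neighbourhoods of low-degree vertices yields a subgraph with both bounded order and large minimum degree, after which connectivity or density is restored. Without some such step your argument cannot reach the conclusions $\v(G')\le 4k$ and $\delta(G')\ge 0.64\,\v(G')$. (A small additional slip: your opening normalisation is both unnecessary --- $\d(G)\ge 2k$ already means $\e(G)\ge 2k\,\v(G)$ by the paper's definition of density --- and incorrect as stated, since a subgraph of minimum degree $2k$ is only guaranteed density $k$.)
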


Secondly, we need a powerful theorem of Lee~\cite{LeeRamsey17} that guarantees the existence of prescribed bipartite subgraphs in sufficiently large and dense host graphs.

\begin{thm}[{\cite[Theorem 1.3]{LeeRamsey17}}]\label{t:Lee0}
	Let  $d,n \geq 2$ be integers, and let $\alpha, \eps>0$ be real such that $\alpha^{d(d-2)} \leq \eps < 1$. Let $G$ be a graph such that $\v(G) \geq (1+\eps)\alpha^{-d}n$ and $\d(G) \geq \frac{\alpha}{2}\v(G)$, and let $H$ be a bipartite graph on $n$ vertices with bipartition $(A,B)$ such that
	$\deg(v) \leq d$ for every $v \in A$ and $$\frac{|B|^d}{|B|(|B|-1)\cdots(|B|-d+1)} \leq 1+\eps.$$ 
	Then $G$ contains a subgraph isomorphic to $H$.
\end{thm}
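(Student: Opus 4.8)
The plan is to prove \cref{t:Lee0} by the dependent random choice method, reconstructing the argument of Lee in the spirit of his proof of the Burr--Erd\H{o}s conjecture. Write $N:=\v(G)$, $b:=|B|$ and $a:=|A|=n-b$; the hypothesis $\d(G)\ge\frac{\alpha}{2}\v(G)$ says exactly that the average degree of $G$ is at least $\alpha N$. Since every vertex of $A$ has at most $d$ neighbours and $B$ is independent in $H$, the vertices of $H$ can be ordered with $B$ first (in any order) and then $A$, so that every vertex has at most $d$ earlier neighbours, all lying in $B$. Hence it suffices to find a set $U\subseteq V(G)$ with $|U|\ge b$ and an injection $\phi\colon B\to U$ such that $|N_G(\phi(N_H(v)))|\ge n$ for every $v\in A$, where $N_G(S):=\bigcap_{s\in S}N_G(s)$: one first places $B$ in $U$ via $\phi$, and then processes the vertices of $A$ in order, at each step sending the current vertex $v$ to a so-far-unused common neighbour of the (at most $d$) already-placed images $\phi(N_H(v))$, which exists because fewer than $n$ vertices have been used.

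The first step is the standard dependent random choice move. Pick $v_1,\dots,v_t\in V(G)$ independently and uniformly (with $t$ a parameter close to $d$), and set $U_0:=N_G(v_1)\cap\dots\cap N_G(v_t)$. By convexity of $x\mapsto x^t$ and the degree hypothesis, the expectation of $|U_0|$ equals $\sum_{u\in V(G)}(\deg(u)/N)^t\ge\alpha^t N$, and for any $d$-subset $S$ one has $\brm{Pr}[S\subseteq U_0]=(|N_G(S)|/N)^t$. Calling a $d$-subset \emph{bad} if $|N_G(S)|<n$, the expected number of bad $d$-subsets contained in $U_0$ is at most $\binom{N}{d}(n/N)^t$, which using $N\ge\alpha^{-d}n$ is at most $\alpha^{d(t-d)}n^d/d!$. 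The hypothesis $N\ge(1+\eps)\alpha^{-d}n$ is calibrated so that already for $t=d$ the expectation of $|U_0|$ is at least $(1+\eps)n>b$, so common neighbourhoods of the right size are abundant; the whole difficulty is to obtain a large one in which essentially no $d$-subset is bad, and this is where the remaining two hypotheses are spent.

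The main obstacle is precisely that one cannot just delete one vertex of $U_0$ per bad $d$-subset: when $a$ is large the number of bad $d$-subsets inside $U_0$ can be of order $b^d$, dwarfing $|U_0|\approx b$. The route I would take is a win/win. Having chosen the random vertices, instead of deleting vertices one embeds $B$ into $U_0$ by a uniformly random injection $\phi$; then for each $v\in A$ the set $\phi(N_H(v))$ is a uniformly random $\deg_H(v)$-subset of $U_0$, so the probability it is bad equals the fraction of bad small subsets of $U_0$, and here the falling-factorial hypothesis is exactly what keeps the denominator large, via $\binom{|U_0|}{d}\ge\binom{b}{d}\ge|B|^d/((1+\eps)\,d!)$. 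If the expected number of $v\in A$ whose image under $\phi$ is bad is below $1$, a good $\phi$ exists and the greedy scheme above finishes. Otherwise bad $d$-subsets are so numerous that a counting argument extracts from them a small set $W\subseteq V(G)$ every $d$-tuple of which has common neighbourhood of size well above $n$ --- a dense ``blob'' --- into which $H$ embeds directly by the same greedy argument. The arithmetic threshold $\alpha^{d(d-2)}\le\eps$ is exactly what makes these two cases meet, so the constants close; carrying out this dichotomy with the precise constants, and handling the minor issues (the $v_i$ need not be distinct from $U_0$ or from each other, small rounding in $|U_0|$), is the bulk of the work.
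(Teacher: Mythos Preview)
The paper does not prove \cref{t:Lee0}: it is quoted as an external result from \cite[Theorem~1.3]{LeeRamsey17} and used as a black box (to derive \cref{t:Lee} and then \cref{c:Lee}). There is therefore no proof in this paper to compare your proposal against.

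As a reconstruction of Lee's argument, your outline is in the right spirit---dependent random choice followed by a random injection of $B$ and a greedy extension to $A$---and you correctly identify where the falling-factorial hypothesis enters. However, the ``win/win'' branch you describe at the end is vague and not obviously how the constants close: you assert that if the expected number of bad images is at least $1$ then a counting argument yields a small dense blob, but you give no mechanism for extracting such a $W$ from an abundance of bad $d$-subsets, nor do you show how the threshold $\alpha^{d(d-2)}\le\eps$ makes the two cases dovetail. If you intend to supply a full proof rather than cite Lee, that branch needs to be made precise (or replaced by a single-case argument with a more careful choice of $t$ and a deletion or resampling step), since as written it is a promissory note rather than an argument.
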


\begin{cor}\label{t:Lee}
Let  $d \geq 2$ be an integer, and let $H$ be a  bipartite graph with bipartition $(A,B)$ such that
$\deg(v) \leq d$ for every $v \in A$. Then every graph $G$ with $\v(G) \geq (\v(H)+d^2)2^{d+1}$ and $\d(G) \geq \frac{\v(G)}{4}$  contains a subgraph isomorphic to $H$.
\end{cor}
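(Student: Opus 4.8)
The plan is to apply Lee's theorem, \cref{t:Lee0}, with the parameters $\alpha = \tfrac12$ and $\eps = \tfrac23$. The choice $\alpha = \tfrac12$ is dictated by the density hypothesis: since $\d(G) \ge \tfrac14\v(G) = \tfrac{\alpha}{2}\v(G)$, the density assumption of \cref{t:Lee0} holds exactly. For $d \ge 3$ one has $\alpha^{d(d-2)} = 2^{-d(d-2)} \le 2^{-3} = \tfrac18 \le \eps < 1$, so the arithmetic constraint on $\eps$ is satisfied; the trivial case ($H$ edgeless, so that it embeds for free) and the case $d = 2$ are dealt with separately. It then remains to arrange the other two hypotheses of \cref{t:Lee0}: the near-integrality condition on the part $B$, and the lower bound on $\v(G)$.

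First I would handle the possibility that $B$ is small, which is the only reason $\tfrac{|B|^d}{|B|(|B|-1)\cdots(|B|-d+1)}$ could exceed $1+\eps$. Pad: let $H'$ be obtained from $H$ by adding $\max\{0,\,d^2-|B|\}$ isolated vertices to $B$, so the enlarged part $B'$ has $|B'| \ge d^2$ while $\v(H') \le \v(H)+d^2$ and the $A$-side and its degree bound are unchanged; since $H$ is a subgraph of $H'$ it suffices to find $H'$ in $G$. For $|B'| \ge d^2$ a routine estimate gives
\begin{align*}
\frac{|B'|^d}{|B'|(|B'|-1)\cdots(|B'|-d+1)}
&= \prod_{i=0}^{d-1}\frac{|B'|}{|B'|-i}
\le \exp\!\left(\sum_{i=0}^{d-1}\frac{i}{|B'|-i}\right) \\
&\le \exp\!\left(\frac{d(d-1)/2}{d^2-d+1}\right)
< e^{1/2} < \tfrac53 = 1+\eps ,
\end{align*}
so the near-integrality hypothesis holds. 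For the size hypothesis, the slack built into the statement absorbs the padding: $\v(G) \ge (\v(H)+d^2)2^{d+1} \ge 2^{d+1}\v(H') = 2\cdot 2^d\v(H') \ge (1+\eps)2^d\v(H') = (1+\eps)\alpha^{-d}\v(H')$, using $2 \ge \tfrac53$. Now \cref{t:Lee0} with $n := \v(H')$ produces a copy of $H'$, hence of $H$, in $G$.

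The one genuine gap is the case $d = 2$, where $d(d-2)=0$ makes the constraint $\alpha^{d(d-2)} \le \eps < 1$ read $1 \le \eps < 1$, so \cref{t:Lee0} is vacuous. I would handle this directly: a bipartite graph with all $A$-degrees at most $2$ has at most $2\v(H)$ edges and is $2$-degenerate, hence extremely sparse relative to $G$, which has at least $\tfrac14\v(G)^2$ edges and at least $8\v(H)+32$ vertices; one embeds $H$ greedily by first placing the vertices of $B$ and then routing each at-most-two-edge neighbourhood of an $A$-vertex through private common neighbours in $G$, the required supply of private neighbours following from the density and size of $G$. I expect this to be the only part needing a self-contained (but routine) estimate. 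For $d \ge 3$ the real content is just checking that the three hypotheses of \cref{t:Lee0} can hold simultaneously: there is a tension between wanting $1+\eps$ near $1$ (to weaken the requirement $\v(G) \ge (1+\eps)\alpha^{-d}n$) and needing $\eps \ge \alpha^{d(d-2)}$ and $1+\eps \ge \tfrac{|B|^d}{|B|\cdots(|B|-d+1)}$; taking $\alpha=\tfrac12$ keeps $\alpha^{d(d-2)} \le \tfrac18$, padding $B$ up to $d^2$ vertices keeps the ratio below $e^{1/2}$, and the gap between $2^{d+1}$ in the hypothesis and the required $\tfrac53\cdot 2^d$ is exactly what pays for the $d^2$ padding vertices.
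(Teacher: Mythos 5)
Your argument for $d \ge 3$ is essentially the paper's own proof: pad $B$ with at most $d^2$ isolated vertices so that the enlarged part has at least $d^2$ elements, bound the falling-factorial ratio by a constant strictly below $1+\eps$ (the paper takes $\eps=\sqrt{e}-1$ and bounds the ratio by $\sqrt{e}$ via $\bigl(1+\tfrac{d-1}{|B'|-d+1}\bigr)^{d/2}$; your $\eps=\tfrac23$ with $e^{1/2}<\tfrac53$ is the same estimate), and let the gap between $2^{d+1}$ and $(1+\eps)2^{d}$ absorb the padding. Your point about $d=2$ is well taken: the hypothesis $\alpha^{d(d-2)}\le\eps<1$ of \cref{t:Lee0} is indeed unsatisfiable when $d=2$, and the paper's proof silently overlooks this (harmlessly, since every later application has $d\ge 3$). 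However, your proposed repair for $d=2$ is not as routine as you suggest: $\d(G)\ge\frac{\v(G)}{4}$ does not guarantee that two prescribed vertices of $G$ have \emph{any} common neighbour (take $G$ to be a disjoint union of two cliques), so ``routing through private common neighbours'' only works after one first confines the image of $B$ to a single sufficiently dense and large piece of $G$; that step needs an actual argument, not just the density count you invoke.
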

\begin{proof} Let $H'$ be a  bipartite graph with bipartition $(A,B')$ obtained from $H$ 
	By adding at most $d^2$ isolated vertices to $B$ so that $|B'| \geq d^2$.  Then
	\begin{align*}
	\frac{|B'|^d}{|B'|(|B'|-1)\cdots(|B'|-d+1)} &\leq  \fs{|B'|^2}{|B'|(|B'|-d+1)}^{d/2} = \s{1+\frac{d-1}{|B'|-d+1}}^{d/2} \\ &\leq e^{\frac{d}{2} \cdot \frac{d-1}{|B'|-d+1}} \leq \sqrt{e}, 
	\end{align*}
	and $\v(G) \geq  2\cdot 2^{d} \cdot \v(H')$.
Thus 	 $G$ and $H'$ satisfy the conditions of \cref{t:Lee0} with $\alpha = 1/2$, $\eps = \sqrt{e}-1$, and $n =\v(H')$. It follows, that  $G$ contains a subgraph isomorphic to $H'$, and consequently a  subgraph isomorphic to $H$, as desired.
\end{proof}

It is easy to extend \cref{t:Lee} to graphs with bounded maximum component size and a looser bound on the number of vertices.

\begin{cor} \label{c:Lee}
Let $d,C\in\NN$ and let $H$ be a bipartite graph with bipartition $(A,B)$ such that $\deg(v) \leq d$ for every $v \in A$ and the maximum component size of $H$ is at most $C$. Then every graph $G$ with $\v(G) \geq C2^{d+1}+ d^2 2^{d+1} + \v(H)$ and $\d(G) \geq \frac{\v(G)}{4} + \v(H)$  contains a subgraph isomorphic to $H$.
\end{cor}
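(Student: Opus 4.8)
The plan is to reduce the bounded-component-size statement to the connected case handled by \cref{t:Lee}, using a greedy peeling argument that is essentially identical in spirit to the proof of \cref{c:Lee} in the body of the literature on this topic. First I would split $H$ into its connected components $H_1,\dots,H_m$, each on at most $C$ vertices, and order them arbitrarily. I would then embed the $H_i$ one at a time into pairwise vertex-disjoint parts of $G$. Suppose $H_1,\dots,H_{i-1}$ have already been embedded, using a set $U$ of at most $\v(H)$ vertices of $G$. The idea is to apply \cref{t:Lee} to the graph $G' := G - U$: each component $H_i$ is itself a bipartite graph with bipartition inherited from $(A,B)$ in which every vertex on the $A$-side has degree at most $d$, and $\v(H_i) \le C$.

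The key step is verifying that $G' = G - U$ still satisfies the hypotheses of \cref{t:Lee} with $H$ replaced by $H_i$. For the vertex count: $\v(G') \ge \v(G) - \v(H) \ge C2^{d+1} + d^2 2^{d+1} \ge (\v(H_i) + d^2)2^{d+1}$, using $\v(H_i) \le C$. For the density condition we need $\d(G') \ge \tfrac14 \v(G')$; since removing $|U| \le \v(H)$ vertices destroys at most $|U| \cdot \v(G) \le \v(H)\,\v(G)$ edges (crudely bounding degrees by $\v(G)$), we get
\begin{align*}
\e(G') \ge \e(G) - \v(H)\,\v(G) = \v(G)\,\d(G) - \v(H)\,\v(G) \ge \v(G)\s{\tfrac14\v(G) + \v(H)} - \v(H)\,\v(G) = \tfrac14 \v(G)^2 \ge \tfrac14 \v(G')^2,
\end{align*}
so $\d(G') = \e(G')/\v(G') \ge \tfrac14\v(G') \ge \tfrac14\v(G'')$ in the notation of \cref{t:Lee}. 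Hence \cref{t:Lee} applies and yields a copy of $H_i$ inside $G'$, which is disjoint from all previously embedded components. After $m$ steps we have embedded all of $H$, using at most $\v(H)$ vertices total, so $G$ contains a subgraph isomorphic to $H$.

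I expect the only real subtlety to be bookkeeping the edge loss when deleting $U$: one must make sure the assumed slack $+\v(H)$ in the density hypothesis $\d(G) \ge \tfrac14\v(G) + \v(H)$ exactly absorbs the worst-case loss of $|U| \cdot \v(G)$ edges, which it does because $|U| \le \v(H)$ and each deleted vertex kills at most $\v(G)-1 < \v(G)$ edges. Everything else is a direct invocation of \cref{t:Lee} component by component; no new ideas are needed beyond the greedy disjoint-embedding scheme.
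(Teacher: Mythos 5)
Your proposal is correct and follows essentially the same route as the paper: the paper takes a maximal embeddable union $H'$ of components and derives a contradiction via \cref{t:Lee}, which is just the contrapositive phrasing of your greedy component-by-component embedding, and the key estimates ($\v(G')\geq \v(G)-\v(H)$ and $\d(G')\geq \d(G)-\v(H)\geq \tfrac14\v(G)\geq\tfrac14\v(G')$) match yours. No gaps.
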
	

\begin{proof}
	Let $H'$ be a maximal subgraph of $H$ consisting of a union of connected components  of $H$ such that $G$ contains a subgraph isomorphic to $H'$. Suppose for a contradiction that $H' \neq H$. Let $J$ be a component of $H \setminus V(H')$ and let $G'$ be obtained from $G$ by deleting the vertex set of the subgraph of $G$ isomorphic to $H'$.  Then $\v(G') \geq \v(G) - \v(H) \geq C2^{d+1}+ d^2 2^{d+1}$, and   $\d(G') \geq \d(G) - \v(H) \geq \frac{\v(G)}{4} \geq \frac{\v(G')}{4}$.  By \cref{t:Lee},  $J$ is isomorphic to a subgraph of $G'$, implying that $H' \cup J$ is isomorphic to a subgraph of $G$, in contradiction to the choice of $H'$.
\end{proof}

\cref{lem:ReedWood} and \cref{c:Lee} imply the following upper bound.

\begin{lem} \label{l:bipcomp}
Let $d,C\in \NN$ with $C \geq d^2$. Let $H$ be a graph and let $H'=H\setminus F$ be a spanning subgraph of $H$ for some $F \subseteq E(H)$ with $|F| \leq \v(H)$.   If $H'$ is bipartite and admits a bipartition $(A,B)$ such that $\deg(v) \leq d$ for every $v \in A$, and  the maximum component size of $H'$ is at most $C$, then $$c(H)=O(\v(H) + C2^d).$$
\end{lem}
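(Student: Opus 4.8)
The plan is to bound $c(H)$ by first applying \cref{lem:ReedWood} to reduce to finding $H$ as a minor in a graph $G'$ whose density is linear in $\v(G')$ (roughly a constant times $\v(G')$), and then to produce such a minor by embedding a slightly perturbed copy of $H$ as a \emph{subgraph} using \cref{c:Lee}. Concretely, suppose $\d(G) \geq 2k$ for a large constant $k$ to be chosen of order $\v(H)+C2^d$; it suffices to show $H \preceq G$. By \cref{lem:ReedWood}, $G$ has a minor $G'$ with $\delta(G') \geq 0.64\,\v(G')$, hence $\d(G') \geq 0.32\,\v(G') \geq \frac{\v(G')}{4} + \v(H)$ once $\v(G')$ is large, and with $\delta(G') \geq k-1$ large. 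So it is enough to find $H$ as a minor of a graph $G'$ with $\d(G') \geq \frac{\v(G')}{4}+\v(H)$, and we may additionally assume $\v(G')$ is large since otherwise $\delta(G') \geq k-1 \geq \v(G')$ forces $G'$ to be large (or we increase $k$).

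The second step handles the $\v(G')$-small case and then the main case. If $\v(G')$ is bounded by a function of $C,d,\v(H)$, then $\delta(G') \geq k-1$ with $k$ chosen larger than that bound is impossible, so this case is vacuous for large enough $k$. In the main case $\v(G')$ is large, and we invoke \cref{c:Lee} with the bipartite graph $H'$, whose parameters $(A,B)$, maximum degree $d$ on the $A$-side, and maximum component size $C$ are exactly as required. The hypotheses of \cref{c:Lee} are $\v(G') \geq C2^{d+1} + d^2 2^{d+1} + \v(H')$ and $\d(G') \geq \frac{\v(G')}{4} + \v(H')$; the first holds for large $\v(G')$, the second holds since $\d(G') \geq \frac{\v(G')}{4} + \v(H) \geq \frac{\v(G')}{4} + \v(H')$. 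Thus $G'$ contains a subgraph isomorphic to $H'$. Finally we restore the deleted edges of $F$: since $|F| \leq \v(H)$ and $\delta(G') \geq 0.64\,\v(G') \geq |F| + \v(H)$, we can route the $|F|$ missing adjacencies through distinct private vertices of $G'$ not used by the embedded copy of $H'$ — each such vertex, adjacent to at least $0.64\,\v(G')$ others, can be contracted onto one endpoint of a missing edge to create the required adjacency — yielding $H$ as a minor of $G'$, and hence of $G$.

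Keeping track of constants, one sets $k = \Theta(\v(H) + C2^d)$ large enough that all the "large $\v(G')$'' and "$k$ beats the small-case bound'' thresholds are met; then $\d(G) \geq 2k$ implies $H \preceq G$, which is exactly $c(H) = O(\v(H)+C2^d)$.

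The main obstacle is the bookkeeping in the reduction via \cref{lem:ReedWood}: one must simultaneously guarantee that $\v(G')$ is large enough to apply \cref{c:Lee}, that $\delta(G')$ exceeds $|F|+\v(H)$ so the deleted edges can be restored, and that the degenerate small-$\v(G')$ case is ruled out — all by a single choice of $k$ linear in $\v(H)+C2^d$ (using $C \geq d^2$ to absorb the $d^2 2^{d+1}$ term into $C2^{d+1}$). The edge-restoration step is routine given the linear minimum degree of $G'$, since each missing adjacency costs only one fresh vertex and there are at most $\v(H)$ of them.
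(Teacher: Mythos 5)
Your proposal is correct and follows essentially the same route as the paper: reduce via \cref{lem:ReedWood} to a minor $G'$ with linear minimum degree, embed $H'$ as a subgraph using \cref{c:Lee}, and restore the edges of $F$ greedily, with $k=\Theta(\v(H)+C2^d)$. One small imprecision: to restore a missing edge $uv$ you need a \emph{common} neighbour of $u$ and $v$ outside the used vertices, so the relevant count is $2\delta(G')-\v(G')$ (which \cref{lem:ReedWood} bounds below by $0.46k$, and which your bound $\delta(G')\geq 0.64\,\v(G')$ also makes at least $0.28\,\v(G')$), not $\delta(G')$ alone — but the constants still work.
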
	

\begin{proof} 
	Let $k := 10(\v(H)+C2^d)$. It suffices to show that every graph $G$ with  $\d(G) \geq 2k$ contains $H$ as a minor. 
	By \cref{lem:ReedWood}, $G$ contains a minor $G'$ such that $\delta(G') \geq   \max\{k-1, 0.64 \v(G')\}$, $2\delta(G') - \v(G') > 0.46k$, and $k \leq \v(G') \leq 4k.$  Since  $$\d(G') \geq \frac{\delta(G')}{2} \geq \frac{\v(G')}{4} + 0.23 k \geq \frac{\v(G')}{4} +\v(H),$$
	and 
	$$\v(G') \geq k \geq \v(H) + C2^{d+1}+ d^2 2^{d+1},$$  the graph $G'$ contains $H'$ as a subgraph by \cref{c:Lee}. Since $$\v(H) +|F|< 0.46k < 2\delta(G') - \v(G'),$$ by greedily adding 2-edge paths corresponding  to edges of $F$  to this subgraph, we can extend it to the desired subdivision of $H$ in $G'$.
\end{proof}

Extending \cref{l:bipcomp} to our main result requires an additional straightforward lemma that shows that every graph is a minor of a bipartite graph with small degrees on one side of the bipartition and not too many vertices.

\begin{lem}\label{l:degenerate}
	Let  $\Delta \geq d \geq 2$ be positive integers. Let $H$ be a $\Delta$-degenerate graph. There exists a bipartite graph $H'$ with bipartition $(V(H),W)$ such that $\deg(v) \leq d$ for every $v \in W$, $H \preceq H'$ and $$|W| = \left\lceil \frac{\Delta}{d-1}\right\rceil\v(H).$$
\end{lem}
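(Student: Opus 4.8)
The plan is to exhibit $H$ as a topological minor of $H'$ by subdividing each edge of $H$ exactly once, distributing the subdivision vertices among the new vertices of $W$ in a way that respects a degeneracy ordering of $H$.

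First I would fix a degeneracy ordering $v_1,\dots,v_n$ of $H$ (where $n=\v(H)$), so that for each $i$ the back-neighbourhood $N^-(v_i):=N_H(v_i)\cap\{v_1,\dots,v_{i-1}\}$ has size at most $\Delta$. Set $m:=\lceil\frac{\Delta}{d-1}\rceil$. For each $i\in[n]$ introduce $m$ fresh vertices $w_i^1,\dots,w_i^m$ and let $W:=\{\,w_i^t : i\in[n],\ t\in[m]\,\}$, so that $|W|=mn=\lceil\frac{\Delta}{d-1}\rceil\v(H)$, exactly as required. Since $|N^-(v_i)|\le\Delta\le m(d-1)$, one can partition $N^-(v_i)$ into sets $S_i^1,\dots,S_i^m$ each of size at most $d-1$. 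Define $H'$ on vertex set $V(H)\cup W$ with edge set consisting of $v_iw_i^t$ for all $i\in[n],\,t\in[m]$, together with $uw_i^t$ for every $u\in S_i^t$.

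Then I would verify the three required properties. The graph $H'$ is bipartite with classes $V(H)$ and $W$, since every edge joins some $v_i$ to some $w_j^t$. Each $w_i^t\in W$ is adjacent only to $v_i$ and to the vertices of $S_i^t$, hence $\deg_{H'}(w_i^t)\le 1+(d-1)=d$. Finally, to see $H\preceq H'$, use the branch sets $B_i:=\{v_i\}\cup\{w_i^1,\dots,w_i^m\}$: each $B_i$ induces a connected subgraph (a star centred at $v_i$), the $B_i$ are pairwise disjoint, and if $v_iv_j\in E(H)$ with $j<i$ then $v_j\in N^-(v_i)\subseteq S_i^1\cup\dots\cup S_i^m$, so $v_j$ is adjacent in $H'$ to some $w_i^t\in B_i$, giving an edge between $B_j$ and $B_i$. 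Thus the $B_i$ form a model of $H$ in $H'$, so $H\preceq H'$ (indeed $H$ is a subdivision of a subgraph of $H'$, which is even stronger, but only minor containment is needed).

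There is essentially no difficult step here; the only points requiring a little care are that the degeneracy ordering bounds the \emph{back}-neighbourhoods (not the full neighbourhoods) by $\Delta$, that the count $mn=\lceil\frac{\Delta}{d-1}\rceil\v(H)$ comes out exactly on the nose — which is why we allocate precisely $m$ new vertices per vertex of $H$, even for vertices with few or no back-neighbours — and that the hypothesis $d\ge 2$ is what makes $d-1\ge 1$, so that the partition of each $N^-(v_i)$ into parts of size at most $d-1$ is meaningful.
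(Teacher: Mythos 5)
Your proof is correct and is essentially identical to the paper's: a degeneracy ordering, $\lceil\Delta/(d-1)\rceil$ new vertices per vertex of $H$ whose neighbourhoods cover the back-neighbours in blocks of size at most $d-1$, and the star branch sets giving the minor. The only quibble is your parenthetical claim that $H'$ contains a subdivision of $H$ --- two back-edges $v_jv_i$ and $v_{j'}v_i$ with $v_j,v_{j'}\in S_i^t$ would be routed through the same internal vertex $w_i^t$, so this is false in general --- but as you note only minor containment is needed, and your branch-set model is valid.
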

\begin{proof}
	Let $v_1,\dots,v_n$ be an ordering of $V(H)$ such that each $v_i$ has at most $\Delta$ neighbours in the set $\{v_j \mid j < i\}$ and let $\ell:=\left\lceil \frac{\Delta}{d-1}\right\rceil$. Let $H^*$ be obtained from $H$ by adding for each $i\in [v(H)]$ an independent set $W_i:=\{w_{i,j}\mid j\in [\ell]\}$ of new vertices, each of degree at most $d$, such that each is adjacent to $v_i$ and the union of their neighbourhoods covers the neighbours of $v_i$ in $\{v_j \mid j < i\}$. Now $H':=H^*-E(H)$ has the desired properties.
\end{proof}

The following theorem is the main result of this section. It provides an upper bound on the extremal functions of sparse, easily decomposable graphs. 

\begin{thm}\label{t:general} Let $\Delta \geq 3$, $m \geq 0$ and $C \geq \Delta^2$ be integers. Let  $H$ be a $\Delta$-degenerate graph such that $\v(H) > C$, and let $\mc{B}$ be a $C$-bounded decomposition of $H$ with excess at most $m$. 
Then \begin{equation}\label{e:general}
c(H) = O\s{\v(H)+ m + \frac{\Delta\cdot \v(H)}{\log(\v(H)/C)}}.
\end{equation}
\end{thm}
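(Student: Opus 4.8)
The plan is to build a single graph $G^\dagger$ with $H\preceq G^\dagger$ that falls within the scope of \cref{l:bipcomp}, and then to tune the degree parameter $d$ of that lemma against $\v(H)$ and $C$. Since the extremal function is monotone under the minor relation, $c(H)\le c(G^\dagger)$, so the estimate supplied by \cref{l:bipcomp} will finish the proof. As a preliminary step I would add the isolated vertices of $H$ to $\mc B$ in blocks of size at most $C$: this keeps $\mc B$ a $C$-bounded decomposition and raises its excess to some $m'\le m+\v(H)=O(\v(H)+m)$, while now every vertex of $H$ lies in a block, so $\sum_{B\in\mc B}|B|=\v(H)+m'$.

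The key idea is to make $H$ ``almost bipartite with bounded degrees on one side'' \emph{before} cutting it along $\mc B$. Fix $d$ with $2\le d\le\Delta$ (chosen at the end). I would run the construction from the proof of \cref{l:degenerate} on $H$, but refine the grouping of back-neighbours so that it respects $\mc B$: processing $v_i$ in a $\Delta$-degeneracy order, first choose for each earlier neighbour $v_j$ of $v_i$ a block $\beta(v_iv_j)\in\mc B$ containing $\{v_i,v_j\}$ (one exists since $\mc B$ is a decomposition), then split each class $\{v_j : \beta(v_iv_j)=B\}$ into groups of size at most $d-1$, and for each group introduce one new vertex $w$ adjacent to $v_i$ and to that group, ``assigned'' to $B$. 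Each $w$ has degree at most $d$, and contracting every $w$ into its $v_i$ returns $H$, so if $\hat H$ denotes the resulting bipartite graph with parts $V(H)$ and $W$ (the new vertices) then $H\preceq\hat H$. Since $v_i$ has at most $\Delta$ earlier neighbours spread over at most $b_i:=|\{B\in\mc B : v_i\in B\}|$ classes, it spawns at most $\Delta/(d-1)+b_i$ new vertices, whence $|W|\le\frac{\Delta}{d-1}\v(H)+\sum_i b_i=\frac{\Delta}{d-1}\v(H)+\sum_{B\in\mc B}|B|$. Putting $B^\ast:=B\cup\{w : w\text{ assigned to }B\}$ gives a decomposition $\mc B^\ast:=\{B^\ast : B\in\mc B\}$ of $\hat H$ (every edge incident to a new vertex $w$ lies inside the block $B^\ast$ to which $w$ is assigned, because that $B$ contains $v_i$ and the whole group); it is $C^\ast$-bounded with $C^\ast:=3C\Delta/(d-1)\ge C\ge\Delta^2\ge d^2$; and since each $w$ is assigned to exactly one block, $\operatorname{excess}(\mc B^\ast)=\operatorname{excess}(\mc B)=m'$.

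Next I would convert this decomposition into bounded component size exactly as in the proof of \cref{ReduceToBoundedDecomp}: let $\hat H_1$ be the disjoint union of the induced subgraphs $\hat H[B^\ast]$, $B^\ast\in\mc B^\ast$, so that $\hat H_1$ has maximum component size at most $C^\ast$, is bipartite with the copies of the $W$-vertices forming one side of maximum degree at most $d$, and $\v(\hat H_1)=\v(\hat H)+m'=\v(H)+|W|+m'=O\bigl(\v(H)+m+\tfrac{\Delta\v(H)}{d}\bigr)$; moreover $\hat H$ (hence $H$) is obtained from $\hat H_1$ by contracting a forest $\hat F$ of $m'$ edges joining the copies of each vertex, with $\hat F\cap E(\hat H_1)=\emptyset$ and $|\hat F|=m'\le\v(\hat H_1)$. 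Thus $H\preceq\hat H\preceq G^\dagger:=\hat H_1\cup\hat F$, and \cref{l:bipcomp} applies to $G^\dagger$ (with $G^\dagger\setminus\hat F=\hat H_1$ as its spanning bipartite subgraph), yielding $c(H)\le c(G^\dagger)=O\bigl(\v(\hat H_1)+C^\ast 2^d\bigr)=O\bigl(\v(H)+m+\tfrac{\Delta\v(H)}{d}+\tfrac{C\Delta}{d}2^d\bigr)$. Finally I would take $d:=\min\{\Delta,\max\{2,\lfloor\log_2(\v(H)/C)\rfloor\}\}$, legitimate since $\Delta\ge3$ and $\v(H)>C$: in the main range $2\le\lfloor\log_2(\v(H)/C)\rfloor\le\Delta$ one has $2^d\le\v(H)/C$ and $d\ge\tfrac12\log_2(\v(H)/C)$, so $\tfrac{C\Delta}{d}2^d\le\tfrac{\Delta\v(H)}{d}=O\bigl(\tfrac{\Delta\v(H)}{\log(\v(H)/C)}\bigr)$; the boundary cases $d=2$ (then $\log(\v(H)/C)=O(1)$, so both terms are $O\bigl(\tfrac{\Delta\v(H)}{\log(\v(H)/C)}\bigr)$) and $d=\Delta$ (then $\v(H)\ge C2^\Delta$, so $\tfrac{\Delta\v(H)}{d}=\v(H)$ and $\tfrac{C\Delta}{d}2^d=C2^\Delta\le\v(H)$) are checked directly, and in all cases the bound collapses to \eqref{e:general}.

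I expect the main obstacle to be the refined version of \cref{l:degenerate}: the $W$-vertices must be attached to $H$ \emph{first}, and the induced decomposition $\mc B^\ast$ of $\hat H$ must inherit from $\mc B$ both the block-size bound (up to the factor $O(\Delta/d)$) and, crucially, the exact excess. If one instead split $H$ along $\mc B$ first and only then applied \cref{l:degenerate} to the disjoint union of the blocks, the $m'$ extra copies would be multiplied by $\Theta(\Delta/d)$, producing an unwanted term $\tfrac{\Delta m}{\log(\v(H)/C)}$ in place of the clean $m$ demanded by \eqref{e:general}. A lesser nuisance is the three-way case analysis in the choice of $d$, which is only there to absorb the small regimes $\v(H)\le 8C$ and $\v(H)\ge C2^{\Delta}$.
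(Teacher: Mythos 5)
Your argument is correct, and it uses exactly the paper's toolkit in the same order of importance: bipartize $H$ along a $\Delta$-degeneracy order as in \cref{l:degenerate}, cut along the decomposition to get bounded component size as in \cref{ReduceToBoundedDecomp}, feed the result to \cref{l:bipcomp}, and tune $d\approx\log_2(\v(H)/C)$ with the same three-regime case analysis. The one genuine difference is the order of the two reductions, and it is a real improvement rather than a cosmetic one. The paper first splits $H$ into the disjoint union of the blocks $H[B]$ and then applies \cref{l:degenerate} to each block separately, so the blow-up factor $\Theta(1+\Delta/d)$ multiplies $\sum_{B\in\mc{B}}|B|=\v(H)+\mathrm{excess}(\mc{B})$, i.e.\ it multiplies $m$ as well as $\v(H)$; the resulting $\v(H')=O\s{(1+\Delta/d)(\v(H)+m)}$ is absorbed by \eqref{e:general} only when $m=O(\v(H))$ or $\Delta=O(\log(\v(H)/C))$ (which holds in all of the paper's applications, e.g.\ \cref{c:component,c:hypercube}, where $m\le\v(H)$). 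You instead bipartize first, grouping each vertex's back-neighbours block by block so that the induced decomposition $\mc{B}^\ast$ of the bipartized graph keeps the excess equal to $m'$ exactly while only the block sizes are inflated by $O(\Delta/d)$; this delivers the clean $+m$ term of \eqref{e:general} for all $m$. Your bookkeeping checks out: each new vertex is assigned to a unique block containing its whole neighbourhood, so $\mc{B}^\ast$ is a decomposition with $|B^\ast|\le 3C\Delta/(d-1)\ge\Delta^2\ge d^2$ as \cref{l:bipcomp} requires, $|\hat F|=m'\le\v(G^\dagger)$, and the choice of $d$ gives $C^\ast 2^d=O(\Delta\v(H)/d)$ in the main range with the two boundary regimes handled directly. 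In short, you have reproved the theorem by the paper's method with the steps reordered, and your reordering is what makes the stated bound come out with $m$ rather than $\Delta m/\log(\v(H)/C)$.
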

\begin{proof}	Let $$d := \min\s{\Delta,  \left\lfloor \log_2\fs{\v(H)}{C} \right \rfloor + 3}.$$
By \cref{l:degenerate} for each $B \in \mc{B}$, there exists a bipartite graph  $H'_B$  such that $H'_B$ admits a bipartition such that the maximum degree of vertices in one of the parts is at most $d$,  $H[B]$ is a minor of $H'_B$, and  $$\v(H'_B) = \s{\left\lceil \frac{\Delta}{d-1} \right\rceil +1 }\v(H[B]) \leq C\s{\left\lceil \frac{\Delta}{d-1} \right\rceil +1 }.$$
Let  $H'$ be obtained from the disjoint union of graphs $\{H'_B\}_{B \in \mc{B}}$ by adding  a set  $F$ of edges with $|F| \leq \min\{m,\v(H')\}$ joining the vertices corresponding to the same vertex of $H$ so that $H$ is a minor of $H'$. 
By \cref{l:bipcomp}, %changed from {l:degenerate}
$$c(H') = O\s{\v(H') + \frac{\Delta}{d}C 2^d} = O\s{\v(H') + \frac{\Delta\cdot \v(H)}{\log(\v(H)/C)}}.$$
Since $\v(H')=O\s{\v(H)+ m + \frac{\Delta\cdot \v(H)}{\log(\v(H)/C)}}$, the theorem follows.
\end{proof}

\cref{t:general} immediately implies the following.

\begin{cor}\label{c:component} 
For every graph $H$ and integer $k \geq 2$, 
\begin{equation}
\label{e:component}
\frac{c(k\,H)}{\v(k\,H)} = O\s{1+ \frac{\v(H)}{\log{k}}}.
\end{equation}
\end{cor}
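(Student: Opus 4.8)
The plan is to apply \cref{t:general} to the graph $k\,H$, after separately handling the case where $k$ is small compared with $\v(H)$ (since there the hypothesis $\v(H)>C$ of \cref{t:general} may fail). Write $n:=\v(H)$; one may assume $n\ge 2$ and $H$ has an edge, as otherwise $k\,H$ is edgeless and $c(k\,H)=0$. Observe first that $k\,H$ is a graph on $kn$ vertices, hence a subgraph of $K_{kn}$, so if $\d(G)>c(K_{kn})$ then $K_{kn}\preceq G$ and thus $k\,H\preceq G$; consequently $c(k\,H)\le c(K_{kn})=O(kn\sqrt{\log(kn)})$ by \eqref{e:Thomason}. In the regime $k\le 4n^2$ we have $kn\le 4n^3$ and $\log k=O(\log n)$, so $c(k\,H)=O(kn\sqrt{\log n})$; since $(\log n)^{3/2}=O(n)$, this yields $\sqrt{\log n}=O(1+n/\log k)$, whence $c(k\,H)=O\!\left(kn\left(1+n/\log k\right)\right)=O\!\left(\v(k\,H)\left(1+n/\log k\right)\right)$, as required.

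For the main regime $k>4n^2$ I would apply \cref{t:general} to $k\,H$ with $\Delta:=\max\{3,n-1\}$ (so $k\,H$ is $\Delta$-degenerate and $\Delta=O(n)$), with $C:=\max\{\Delta^2,n\}$ (so $\Delta^2\le C\le 4n^2$), with $m:=0$, and with the decomposition $\mc B$ of $k\,H$ consisting of the $k$ vertex sets of the copies of $H$; this $\mc B$ is $C$-bounded with excess $0$, and $\v(k\,H)=kn>4n^3\ge C$, so the hypotheses hold. The theorem then gives $c(k\,H)=O\!\left(kn+\Delta\cdot kn/\log(kn/C)\right)=O\!\left(kn+n\cdot kn/\log(kn/C)\right)$. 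The one estimate needing verification is $\log(kn/C)=\Omega(\log k)$: from $C\le 4n^2$ and $n<\tfrac12\sqrt k$ (which follows from $k>4n^2$) one gets $kn/C\ge k/(4n)>\tfrac12\sqrt k$, and since $k>16$ this gives $\log(kn/C)>\tfrac12\log k-\log 2>\tfrac14\log k$. Substituting, $c(k\,H)=O\!\left(kn+kn^2/\log k\right)=O\!\left(\v(k\,H)\left(1+n/\log k\right)\right)$, completing the proof.

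Essentially all of the content sits in \cref{t:general}; the corollary is bookkeeping. I expect the only mildly delicate points to be (a) the separate treatment of small $k$, forced because \cref{t:general} requires $\v(H)>C$ while here $C$ can be as large as $\Theta(\v(H)^2)$ and $\v(k\,H)=k\,\v(H)$, and (b) the elementary inequality $\log(\v(k\,H)/C)=\Omega(\log k)$ needed to convert the $1/\log(\v(H)/C)$ appearing in \cref{t:general} into the $1/\log k$ of the statement; both are routine, so there is no genuine obstacle.
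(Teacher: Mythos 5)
Your proof is correct and follows essentially the same route as the paper's: split at a polynomial threshold in $\v(H)$, use the complete-graph bound \eqref{e:Thomason} for small $k$, and apply \cref{t:general} with $m=0$, $\Delta=\Theta(\v(H))$, $C=\Theta(\v(H)^2)$ and the decomposition into the $k$ copies of $H$ for large $k$ (you are in fact slightly more careful than the paper about the hypotheses $\Delta\geq 3$ and $C\geq\Delta^2$). The only slip is the parenthetical claim that $c(k\,H)=0$ when $H$ is edgeless (it is about $\frac{\v(k\,H)}{2}-1$, since $K_{k\v(H)-1}$ has no $k\,H$ minor), but this case is covered by your $c(K_{k\v(H)})$ bound anyway, so nothing is affected.
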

\begin{proof}
If  $k \leq (\v(H))^3$, then the claim  holds since $$\frac{c(k\,H)}{\v(k\,H)} \leq \frac{c(K_{\v(k\,H)})}{\v(k\,H)} = O(\sqrt{\log\v(k\,H)}) = O(\sqrt{\log\v(H)}) = O\s{1+ \frac{\v(H)}{\log{k}}}.$$	
Now assume $k \geq (\v(H))^3$.
Apply \cref{t:general} to $k\,H$ with $\Delta = \v(H)$, $C = \v^2(H)$ and $m=0$. By our assumption on $k$, we have $\log(\frac{\v(k\,H)}{C}) = \Omega(\log k)$, and thus \eqref{e:general} implies \eqref{e:component}.
\end{proof}	

As proved in the previous sections, 
$$1 > \frac{c_f(H)}{\v(H)}  = \lim_{k \to \infty}\frac{c(k\,H)}{\v(k\,H)}.$$
\cref{c:component} sheds some light on the speed of convergence of the sequence towards its limit.
In particular, it shows that $c(k\,H)$ is linear in $\v(k\,H)$ when $k$ is exponential in $\v(H)$. 
Thus, for example, graphs in the family $\{2^t \cdot K_t \}$ have logarithmic density and linear extremal function. Another corollary of \cref{t:general} shows that the family of hypercubes has the same properties.

\begin{cor}
\label{c:hypercube} 
Let $Q_d$ be the $d$-dimensional hypercube. Then $$c(Q_d) = \Theta(2^d).$$
\end{cor}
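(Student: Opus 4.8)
## Proof proposal for $c(Q_d) = \Theta(2^d)$

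\textbf{Lower bound.} The hypercube $Q_d$ has $2^d$ vertices and $d\cdot 2^{d-1}$ edges, so $\d(Q_d) = d/2$. More to the point, $Q_d$ is bipartite with $\alpha(Q_d) = 2^{d-1} = \tfrac{1}{2}\v(Q_d)$, hence $\tau(Q_d) = 2^{d-1}$. By the naive bound \eqref{e:naive2}, $c(Q_d) \geq \tau(Q_d) - 1 = 2^{d-1} - 1 = \Omega(2^d)$. (Equivalently, $Q_d$ is not a minor of $K_{2^{d-1}-1,n}$ for any $n$.) This already gives the lower half of the $\Theta$.

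\textbf{Upper bound, setup.} The plan is to apply \cref{t:general} with $H := Q_d$. First I record the relevant parameters. The hypercube $Q_d$ is $d$-degenerate (order the vertices by any coordinate-respecting order, or just note $\Delta(Q_d) = d$), so take $\Delta := d$. Next I need a $C$-bounded decomposition of $Q_d$ with small excess and small $C$, where $C$ should be polynomial in $d$ so that $\log(\v(Q_d)/C) = \log(2^d/\mathrm{poly}(d)) = \Theta(d)$. The natural choice: partition the $d$ coordinates into blocks — say write $d = a + b$ with $a = \lceil \log_2 d \rceil$ or so — and cover each edge of $Q_d$ by the sub-hypercube obtained by fixing the coordinates outside the block containing the direction of that edge. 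Concretely, fixing $a$ of the coordinates in all $2^a$ ways and letting the other $b$ vary gives $2^a$ disjoint copies of $Q_b$; doing this for each of a constant number of coordinate-block choices covers every edge, and each part has size $2^b$. Choosing the blocks so that $b = O(\log d)$ gives $C = 2^b = \mathrm{poly}(d)$ and the number of parts is $O(2^a) = O(d^{O(1)})$ per block-choice, so the excess $\sum_{B}|B| - \v(Q_d)$ is $O(2^{a}\cdot 2^{b}) = O(\v(Q_d))$, i.e.\ $m = O(2^d)$. (Any decomposition with $C = \mathrm{poly}(d)$ and $m = O(2^d)$ works; the cleanest is probably: let $C := d^3$, find some such decomposition — for instance recursively halving the coordinate set — and verify $m = O(2^d)$.)

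\textbf{Upper bound, conclusion.} With $\v(Q_d) = 2^d$, $\Delta = d$, $C = \mathrm{poly}(d)$, $m = O(2^d)$, \cref{t:general} yields
\[
c(Q_d) = O\!\left(2^d + 2^d + \frac{d\cdot 2^d}{\log(2^d/\mathrm{poly}(d))}\right) = O\!\left(2^d + \frac{d\cdot 2^d}{\Theta(d)}\right) = O(2^d),
\]
since $\log(2^d/d^{O(1)}) = d\log 2 - O(\log d) = \Theta(d)$. Combined with the lower bound this gives $c(Q_d) = \Theta(2^d)$, as claimed.

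\textbf{Main obstacle.} The only real work is producing the bounded decomposition of $Q_d$ with the right trade-off between the bound $C$ and the excess $m$: one needs $C$ small enough (polynomial in $d$, so the logarithm in \cref{e:general} stays $\Theta(d)$ and the third term does not blow up) while keeping the total excess $O(2^d)$ so that the $+m$ term is absorbed. The coordinate-block construction above does this, but it must be checked carefully that a constant (or at worst $O(\log d)$) number of block-choices suffices to cover all $d$ edge-directions and that the resulting excess is genuinely $O(2^d)$ rather than, say, $2^d\log d$ — the latter would still be fine for a $\Theta$-statement only if one is slightly more generous, so it is worth getting the bookkeeping right. Everything else is a direct substitution into \cref{t:general}.
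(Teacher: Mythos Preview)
Your overall strategy matches the paper's: the lower bound via \eqref{e:naive2} (the paper uses the equivalent \eqref{NaiveLowerBound}, giving the same $2^{d-1}-1$) and the upper bound via \cref{t:general} applied to a coordinate-block decomposition of $Q_d$. The gap is in the decomposition parameters. You aim for $C = \mathrm{poly}(d)$, but this is far stronger than needed and cannot be achieved with excess $O(2^d)$: covering all $d$ coordinate directions with blocks of size $b = O(\log d)$ requires $\Omega(d/\log d)$ blocks, and since each block contributes exactly $2^d$ to $\sum_{B}|B|$ (there are $2^{d-b}$ sub-cubes each of size $2^b$), the excess is $\Omega(2^d\cdot d/\log d)$. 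Plugging this $m$ into \eqref{e:general} gives only $c(Q_d)=O(2^d\cdot d/\log d)$, not $O(2^d)$. Your ``$d=a+b$ with $a=\lceil\log_2 d\rceil$'' sketch is also inconsistent: two blocks of sizes $\log_2 d$ and $d-\log_2 d$ give $C=2^{d-\log_2 d}=2^d/d$, so $\log(\v(Q_d)/C)=\Theta(\log d)$ and the third term of \eqref{e:general} becomes $\Theta(2^d\cdot d/\log d)$ again.

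The fix is to relax the target for $C$. For the third term in \eqref{e:general} to be $O(2^d)$ you only need $\log(2^d/C)=\Theta(d)$, i.e.\ $C\le 2^{(1-c)d}$ for some fixed $c>0$; polynomial $C$ is overkill. The paper takes the simplest possible choice: split the $d$ coordinate directions into just \emph{two} blocks of sizes $\lceil d/2\rceil$ and $\lfloor d/2\rfloor$, and let $\mc{B}$ be the vertex sets of the components of the two resulting spanning subgraphs. This gives $C=2^{\lceil d/2\rceil}$ (so $C\ge\Delta^2=d^2$ for large $d$ as required), $\sum_{B}|B|=2\cdot 2^d$, hence excess $m=2^d$, and \cref{t:general} with $\Delta=d$ yields $c(Q_d)=O\bigl(2^d+2^d+d\cdot 2^d/\Theta(d)\bigr)=O(2^d)$ immediately.
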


\begin{proof}
The lower bound $c(Q_d)\geq 2^{d-1}-1$ follows from \cref{NaiveLowerBound}. For the upper bound, let $(F_1,F_2)$ be a partition of $E(Q_d)$ into two sets, such that the edges of $F_1$ are parallel to the first $\lceil \frac{d}{2} \rceil$ standard basis vectors in the natural embedding of $Q_d$ in $\bb{R}^d$, and the edges of  $F_2$ are parallel to 	the remaining $\lfloor \frac{d}{2} \rfloor$ basis vectors. Let $H_i$ be the spanning subgraph of $Q_d$ with $E(H_i)=F_i$ for $i\in[2]$, and let $\mc{B}$ be the collection of vertex sets of components of $H_1$ and of $H_2$. Then $\mc{B}$ is a $2^{\lceil d/2\rceil}$-bounded decomposition of $Q_d$ with excess $2^d$. Since $Q_d$ is $d$-degenerate, the result follows  from \cref{t:general} applied with $\Delta=d$, $m=2^d$ and $C=2^{\lceil d/2\rceil}$.
\end{proof}
	
The next lemmma complements \cref{c:component} by providing a lower bound on the extremal function of regular graphs that is superlinear in the number of vertices for graphs with superlogarithmic density. 

\begin{lem}\label{l:genlower} 
For every $d\in\NN$ and for every $d$-regular graph $H$ with $d \geq \log \v(H)$, 
$$c(H) = \Omega\s{  \v(H)\sqrt{\log \fs{d}{\log \v(H)}}}.$$ 
\end{lem}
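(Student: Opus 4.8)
Write $h:=\v(H)$, so $H$ has $\tfrac{dh}{2}$ edges and $d\geq\log h$. It suffices to exhibit a graph $G$ with $H\not\preceq G$ and $\d(G)=\Omega\!\left(h\sqrt{\log(d/\log h)}\right)$; then $c(H)\geq\d(G)$ gives the bound. The plan is to take $G$ to be a binomial random graph $G(n,p)$ with $p:=1-e^{-\alpha}$ for a suitable $\alpha\in(0,1]$ and $n$ chosen with $n\sqrt{\alpha}=\Theta\!\left(h\sqrt{\log(d/\log h)}\right)$; since $\d(G(n,p))=(1+o(1))\tfrac{pn}{2}=\Theta(\alpha n)=\Theta\!\left(h\sqrt{\alpha\log(d/\log h)}\right)$, optimising $\alpha$ à la Thomason (so that $\tfrac{1-e^{-\alpha}}{2\sqrt\alpha}$ is near $\lambda$) yields the claimed density. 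The crux is to show that $G(n,p)$ a.a.s.\ contains no model of $H$, so that some graph of the required density is $H$-minor-free. This is exactly the regime analysed by Thomason~\cite{Thomason01} for complete graphs and by Norin, Reed, Thomason and Wood~\cite{NRTW20} for general (generic) host minors, and I would adapt their method.

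\textbf{Counting minor models.} A model of $H$ in $G$ assigns to each $v\in V(H)$ a branch set $B_v$; put $a_v:=|B_v|$ and $s:=\sum_v a_v\leq n$. Since ``$G[B_v]$ connected'' and ``$G$ has an edge between $B_u$ and $B_v$'' depend on disjoint sets of potential edges, the probability that fixed disjoint sets $(B_v)$ form a model is at most $\prod_v\min\{1,a_v^{a_v-2}p^{a_v-1}\}\cdot\prod_{uv\in E(H)}\bigl(1-(1-p)^{a_ua_v}\bigr)$. Summing over the at most $n^{s}/\prod_v a_v!$ choices of $(B_v)$ with prescribed sizes, and then over all size vectors, bounds $\mathbb{E}[\#\text{models}]$. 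I would split the branch sets into \emph{small} ones — where the spanning-tree estimate $a_v^{a_v-2}p^{a_v-1}$ is used, which forces the relevant sizes to be $\gtrsim\tfrac{\log h}{\alpha}$ for the contribution of disconnected small sets to be negligible after the union bound over the $h$ vertices — and \emph{large} ones, where $(1-p)^{a_ua_v}\leq e^{-\alpha a_ua_v}$ makes the link factors exponentially small unless $a_ua_v=O\!\left(\tfrac{\log(dh)}{\alpha}\right)$. The tension between these two requirements (connectivity already ``costs'' branch sets of size $\sim\log h/\alpha$, and the $d$ links at each vertex then eat into the remaining budget) is what forces $n\lesssim h\sqrt{\log(d/\log h)/\alpha}$ for $\mathbb{E}[\#\text{models}]$ to stay below $1$, and is precisely where $\log(d/\log h)$ appears in place of $\log d$.

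\textbf{The main obstacle.} The first moment alone is not enough: for $p$ bounded below, once $n=\omega(h)$ the expected number of (near‑balanced) models is already huge, so $\mathbb{E}[\#\text{models}]<1$ cannot be achieved at a superlinear $n$. Thus, exactly as in \cite{Thomason01,NRTW20}, one needs a second‑moment / local‑deletion argument: either show that the number of models of $H$ in $G(n,p)$ is concentrated about its expectation, or delete a sparse ``bad'' vertex set of $G$ (on which otherwise-rare small models could be completed) and argue that the remainder still has no $H$-minor while retaining density $\Omega\!\left(h\sqrt{\log(d/\log h)}\right)$. Organising the size vectors $(a_v)$ and the small/large dichotomy so that both the first- and the second-moment estimates go through, with all error terms absorbed using the hypothesis $d\geq\log h$, is the technical heart of the proof; the regularity of $H$ enters only through the uniform bound $\e(H')\leq\tfrac{d}{2}\v(H')$ for subgraphs $H'\subseteq H$, which is what makes the concentration estimate feasible for every $d$-regular $H$ (and not just generic ones).
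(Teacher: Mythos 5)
Your proposal never actually proves the bound: it sets up a first-moment computation, declares it insufficient, and then defers the claimed ``technical heart'' (a second-moment or local-deletion argument) to an unexecuted adaptation of \cite{Thomason01,NRTW20}. Worse, the premise of that deferral is false, and this is precisely where the hypothesis $d\geq \log \v(H)$ enters. Write $h=\v(H)$. There are at most $(h+1)^n=\exp(O(n\log h))$ ways to assign vertices of $G(n,\frac12)$ to branch sets of $H$ (or to none). For a fixed assignment, at least $2h/3$ of the branch sets have size at most $3n/h$ (they are disjoint), so at least $dh/6$ edges of $H$ join two such small branch sets; each of these edges independently fails to be realised with probability at least $2^{-9n^2/h^2}$, since the relevant potential edges of $G$ are disjoint across distinct edges of $H$. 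Hence the probability that a fixed assignment is a model is at most $\exp\bigl(-\frac{dh}{6}2^{-9n^2/h^2}\bigr)$. Taking $n=\eps h\sqrt{\log_2 x}$ with $x=d/\log h$ and $\eps$ a small constant, the exponent equals $\frac{h\log h}{6}x^{1-9\eps^2}$, which dominates the entropy term $n\log h=\eps h\log h\sqrt{\log_2 x}$ once $x$ is large (the case $x=O(1)$ follows from $c(H)\geq \v(H)/2-1$). So the expected number of models is $o(1)$ at exactly the superlinear scale you need, Markov's inequality applies, and since $\d(G(n,\frac12))\geq (n-1)/2$ with probability at least $\frac12$, some $H$-minor-free graph of the required density exists. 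Your heuristic that ``once $n=\omega(h)$ the expected number of near-balanced models is already huge'' ignores that the $dh/2\geq \frac12 h\log h$ edge constraints collectively contribute $\exp(-\Omega(dh\cdot 2^{-O(n^2/h^2)}))$, which beats $\exp(O(n\log h))$ in this regime; no concentration or deletion argument is needed.

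Two further points where your plan creates obstacles the statement does not require you to overcome. First, optimising $p=1-e^{-\alpha}$ \`a la Thomason is pointless for an $\Omega(\cdot)$ bound; the paper fixes $p=\frac12$ and makes no attempt at constants. Second, the connectivity conditions (M1) can simply be discarded when upper-bounding the probability of a model, so the spanning-tree estimates, the small/large dichotomy, and the ``tension'' you describe between connectivity cost and link cost all disappear. As written, the proposal is a research plan rather than a proof, and its central structural claim about what the proof must contain is incorrect.
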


\begin{proof}
Let $t:=\v(H)$.	Since $c(H) \geq \frac{\v(H)}{2} -1 $ for every graph $H$, the lemma holds in the regime $\frac{d}{\log t} = O(1)$, and so we may assume without loss of generality that $\frac{d}{\log t} \geq 8$. Since $d < t$, in particular, $t \geq 9$.

Let $G=G(n,\frac12)$ be the Erd\H{o}s--Renyi random graph with vertex set $[n]$, where each pair of distinct vertices are adjacent with probability $\frac12$. 
	
We upper bound the probability that a fixed map $\mu:V(H) \to \mc{P}([n])$, mapping vertices of $H$ to pairwise disjoint subsets of $V(G)$ is a model of $H$. Let $$X = \left\{v \in V(H) \,|\, |\mu(H)| \leq \frac {3n}t \right\}.$$ Then $|X| \geq \frac{2t}{3}$, and $\e(H[X]) \geq \frac{dt}{2} - d(|V(H)-X|) \geq \frac{dt}{6}$. 

For each edge $uv \in E(H[X])$ the probability that there exists an edge of $G$ with one end in $\mu(u)$ and the other in $\mu(v)$ is
$$ 1-2^{-|\mu(v)||\mu(u)|} \leq 1-2^{-9n^2/t^2}  \leq \exp \s{-2^{-9n^2/t^2}}.$$	
Thus the probability that $\mu$ is a model is at most
$$ \prod_{uv \in E(H[X])}\s{1-2^{-|\mu(v)||\mu(u)|}} \leq  \exp \s{\e(H[X]) \cdot 2^{-9n^2/t^2}} \leq \exp \s{\frac{dt}{6} 2^{-9n^2/t^2}}.$$
Since there exist at most $t^n$ maps $\mu$ as above, the probability that $H$ is a minor of $G$ is at most 
\begin{equation}\label{e:exp}
\exp \s{n\log t - \frac{dt}{6}\, 2^{-9n^2/t^2}}.
\end{equation}
Since $\d(G) \geq (n-1)/2$ with probability at least $\frac12$, it  suffices to show that  if $n =\eps t\sqrt{\log_2  \fs{d}{\log t}}$	for some $\eps < \frac16$ then the expression in \eqref{e:exp} is at most $\frac12$.
Substituting the above formula for $n$ using $\eps$ into \eqref{e:exp}, and defining $x := d/\log{t} $ for brevity, 
\begin{align*}
\exp& \s{n\log t - \frac{dt}{6} 2^{-9n^2/t^2}} = \exp \s{t\log t \s{\eps\sqrt{\log_2  x} - \frac{1}{6}x^{1-9\eps^2}} } \\ & \leq \exp \s{ \frac{1}{6} t\log t (\sqrt{\log_2{x}} - x^{7/16})} \leq  \exp \s{ \frac{\sqrt{3} - 2}{6} t\log t} \leq \frac1e, 
\end{align*} 	
as desired, where the last two inequalities follow since $x \geq 8$ and $t \geq 9$.
\end{proof}

\cref{l:genlower} combined with the upper bound of \citet{ThoWal19} implies that for every $\eps >0$ and every graph $d$-regular graph $G$ with $d \geq \log^{1+\eps}\v(H)$, 
$$c(H) = \Theta_{\eps}\s{\v(H)\sqrt{\log d}}.$$
 
Note that the assumption on regularity in the above lower bounds is necessary. For example, as shown by Kapadia, Norin and Qian~\cite{KNQ21}, 
$$c(K_{s,t})= \Theta(\sqrt{st \log s}+s+t)$$
for all integers $t \geq s \geq 2$. Thus  the extremal function of graphs $K_{s, \lfloor s \log s \rfloor}$ is linear in the number of vertices, while they have much higher density than any  regular graphs with this property.

\subsection*{Acknowledgements} This work was partially completed while SN was visiting Monash University in 2019 supported by a Robert Bartnik Visiting Fellowship. SN thanks the School of Mathematics at Monash University for its hospitality.

We thank J\'er\'emie Turcotte for valuable comments.

%%%%%%%%%%%%%  Squashing the bibliography 
%\fontsize{10pt}{10pt}
\let\oldthebibliography=\thebibliography
\let\endoldthebibliography=\endthebibliography
\renewenvironment{thebibliography}[1]{%
	\begin{oldthebibliography}{#1}%
		\setlength{\parskip}{0ex}%
		\setlength{\itemsep}{0ex}%
	}{\end{oldthebibliography}
}

\appendix

\section{Derivation of \cref{emb}}

Let $G$ be a graph.
For disjoint sets $X,Y \subseteq V(G)$, let  \defn{$\e_G(X,Y)$} denote the number of edges of $G$ between $X$ and $Y$. The \defn{density} of $(X,Y)$ is defined as $\d_G(X,Y) :=\frac{\e_G(X,Y)}{|X||Y|}.$ A pair $(A,B)$ of disjoint subsets of $V(G)$ is \defn{$\eps$-regular} if $|\d_G(X,Y) - \d_G(A,B)| \leq \eps$ for all $X \subseteq A$, $Y \subseteq B$ such that $|X| \geq \eps|A|$ and $|Y| \geq \eps |B|$. The following degree version of the regularity lemma is the first of the two ingredients we use to derive \cref{emb}.

\begin{thm}[{\cite[Theorem 1.10]{KomSim93}}]\label{t:reg1}
	For every $\eps >0$ there exists $T=T_{\ref{t:reg1}}(\eps)$ such that for any graph $G$ and any $d>0$ there exists a partition $(V_0,V_1,\dots,V_{l})$ of $V(G)$ and a spanning subgraph $G'$ of $G$ satisfying the following:
	\begin{itemize}
		\item $l \leq T$,
		\item $|V_0| \leq \eps \v(G)$,
		\item $|V_1|=|V_2|=\dots=|V_l| \leq \lceil \eps \v(G) \rceil$,
		\item $\deg_{G'}(v) > \deg_G(v) - (d+\eps)\,\v(G)$ for every $v \in V(G)$,
		\item $\e(G'[V_i])=0$ for all $i \geq 1$,
		\item the pair $(V_i,V_j)$ is $\eps$-regular in $G'$ with $\d_{G'}(V_i,V_j) \geq d$, or $\d_{G'}(V_i,V_j) =0$ for all $1 \leq i < j \leq l$.
	\end{itemize} 
\end{thm}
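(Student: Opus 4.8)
The plan is to deduce \cref{t:reg1}, the degree form of Szemer\'edi's regularity lemma, from its standard equitable partition form; the only real content is to turn the guarantee on the \emph{number} of irregular pairs, together with the defining sparsity of low-density pairs, into the required per-vertex degree bound. Fix $\eps>0$ and an auxiliary parameter $\eps'$, small in terms of $\eps$. Applying Szemer\'edi's regularity lemma to $G$ with regularity parameter $\eps'$ and a large enough lower bound on the number of parts yields an equitable partition $(W_0,W_1,\dots,W_k)$ of $V(G)$ with $k\le T=T_{\ref{t:reg1}}(\eps)$, $|W_0|\le\eps'\v(G)$, $|W_1|=\dots=|W_k|=L\le\ceil{\eps'\v(G)}$, $kL\le\v(G)$, and at most $\eps'k^2$ of the pairs $(W_i,W_j)$ ($1\le i<j\le k$) failing to be $\eps'$-regular in $G$. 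Call a pair $(W_i,W_j)$ \emph{sparse} if $\d_G(W_i,W_j)\le d+\eps'$.

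The crux is three cleanups, each banishing at most $O(\sqrt{\eps'})\v(G)$ vertices into an enlarged exceptional set. First, a cluster $W_i$ is \emph{heavy} if it lies in more than $\sqrt{\eps'}k$ irregular pairs; since there are at most $\eps'k^2$ irregular pairs, there are fewer than $2\sqrt{\eps'}k$ heavy clusters. Second, for a sparse $\eps'$-regular pair $(W_i,W_j)$ let $X_{ij}\subseteq W_i$ consist of the vertices with more than $(d+2\eps')L$ neighbours in $W_j$; $\eps'$-regularity forces $|X_{ij}|\le\eps'L$, since otherwise $X_{ij}$ would be a set of size $\ge\eps'|W_i|$ with density $>\d_G(W_i,W_j)+\eps'$ towards $W_j$. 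Calling $v\in W_i$ \emph{bad} if $v\in X_{ij}$ for more than $\sqrt{\eps'}k$ indices $j$, the estimate $\sum_{v\in W_i}|\{j:v\in X_{ij}\}|=\sum_j|X_{ij}|\le\eps'Lk$ gives fewer than $\sqrt{\eps'}L$ bad vertices per cluster. Move all heavy clusters and all bad vertices into the exceptional set, then trim each surviving cluster to the common size $L'\in[(1-\sqrt{\eps'})L,L]$, banishing at most $\sqrt{\eps'}L$ more vertices per cluster. Let $V_0$ be the resulting exceptional set and $V_1,\dots,V_l$ ($l\le k\le T$) the surviving trimmed clusters; then $|V_0|\le(\eps'+4\sqrt{\eps'})\v(G)\le\eps\v(G)$, $L'\le L\le\ceil{\eps\v(G)}$, and --- since each $V_i$ keeps at least half of its original cluster --- every pair $(V_i,V_j)$ coming from an $\eps'$-regular pair $(W_i,W_j)$ is $2\eps'$-regular in $G$ with $|\d_G(V_i,V_j)-\d_G(W_i,W_j)|\le\eps'$.

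Now let $G'$ be $G$ with the following edges deleted: all edges inside some $V_i$ ($i\ge1$); all edges between $V_i$ and $V_j$ whenever $(W_i,W_j)$ is not $\eps'$-regular; and all edges between $V_i$ and $V_j$ whenever $\d_G(V_i,V_j)\le d$. Edges meeting $V_0$ are kept. Then $\e(G'[V_i])=0$ for $i\ge1$, and each surviving pair $(V_i,V_j)$, $1\le i<j\le l$, either loses all its edges (so $\d_{G'}(V_i,V_j)=0$) or keeps all of them, in which case $(W_i,W_j)$ is $\eps'$-regular and $\d_G(V_i,V_j)>d$, so $(V_i,V_j)$ is $\eps$-regular in $G'$ (for $\eps'\le\eps/2$) with $\d_{G'}(V_i,V_j)>d$. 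For the degrees, a vertex of $V_0$ loses nothing, while $v\in V_i$ with $i\ge1$ loses fewer than $L$ edges inside $V_i$; at most $\sqrt{\eps'}kL\le\sqrt{\eps'}\v(G)$ edges to partners $V_j$ with $(W_i,W_j)$ irregular (as $W_i$ is not heavy); and, to partners $V_j$ with $\d_G(V_i,V_j)\le d$ --- for which $(W_i,W_j)$ is necessarily sparse --- at most $(d+2\eps')L$ for each index $j$ with $v\notin X_{ij}$ plus at most $L$ for each of the $\le\sqrt{\eps'}k$ indices with $v\in X_{ij}$, which is at most $(d+O(\sqrt{\eps'}))\v(G)$ using $kL\le\v(G)$. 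Summing, $\deg_G(v)-\deg_{G'}(v)<(d+\eps)\v(G)$ once $\eps'$ is small enough (the same choice also giving $|V_0|\le\eps\v(G)$), so all six conclusions hold.

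The only genuinely delicate point is the cleanup of the second paragraph. Szemer\'edi's lemma bounds the \emph{number} of irregular and sparse pairs but is silent on how many edges an individual vertex sends into them, so a priori one vertex could lose linearly many edges; the non-routine ingredient is to exploit $\eps'$-regularity of a sparse pair \emph{itself} to confine each side to only $O(\eps'L)$ high-degree vertices, which is exactly what keeps the bad-vertex set --- and hence $V_0$ --- of size $o(\v(G))$. Everything else is parameter bookkeeping.
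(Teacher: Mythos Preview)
The paper does not prove \cref{t:reg1}; it is quoted verbatim as \cite[Theorem~1.10]{KomSim93} and used as a black box in Appendix~A to derive \cref{emb}. So there is no in-paper argument to compare against.

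That said, your derivation from the standard (equitable) Szemer\'edi lemma is correct and is essentially the folklore proof of the degree form. You have identified the one genuine obstacle --- that the global bound of $\eps'k^2$ irregular pairs does not by itself control how many irregular pairs a single cluster sits in, nor how many edges a single vertex sends into sparse pairs --- and your two-stage cleanup (discard the $\le 2\sqrt{\eps'}k$ heavy clusters, then within each surviving cluster discard the $\le\sqrt{\eps'}L$ bad vertices) resolves both. The check that $\d_G(V_i,V_j)\le d$ forces the parent pair $(W_i,W_j)$ to be sparse, via $\eps'$-regularity applied to the large subsets $V_i\subseteq W_i$, $V_j\subseteq W_j$, is the step that ties your $X_{ij}$ bookkeeping back to the edges actually deleted; you use it implicitly and it goes through since $|V_i|\ge(1-\sqrt{\eps'})L\ge\eps'L$. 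One cosmetic point: you want strict inequality $|X_{ij}|<\eps'L$ rather than $\le$, since the regularity contradiction fires once $|X_{ij}|\ge\eps'|W_i|$; this does not affect the subsequent averaging.
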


Let $\mc{V}=(V_0,V_1,\dots,V_l)$ be  a partition of $V(G)$ satisfying the conclusion of \cref{t:reg1} for given $d,\eps >0$, and let $k=|V_1|=|V_2|=\dots=|V_l|$. We associate with $\mc{V}$ a graph $R=R_d(G,\mc{V})$ with $V(R)=[l]$, where $\{i,j\} \in E(R)$ if and only if  $\d_{G'}(V_i,V_j) \geq d$. We say that $R$ is a \defn{$(d,k,\eps)$-regular reduction} of $G$.  
The second ingredient we need is the following embedding lemma.

\begin{thm}[{\cite[Theorem 2.1 and a follow-up remark]{KomSim93}}]\label{t:emb1}
For all $d>0$ there exists $\eps'_0=\eps'_{\ref{t:reg1}}(d) >0$ such that for every $0 < \eps' < \eps'_0$ there exists $\eps_0=\eps_{\ref{t:reg1}}(d,\eps')>0$ satisfying the following. Let $G$ be a graph, and let   $R$ be a $(d, k',\eps)$-regular  reduction of $G$ for  some $0 < \eps \leq \eps_0$. Let $k \leq (1-\eps')k'$ be a positive integer, and  let $H$  be a subgraph of $R^{(k)}$ such that every component of $H$ has at most $\eps'k$ vertices. Then $H$ is isomorphic to a subgraph of $G$.
\end{thm}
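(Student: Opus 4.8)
The plan is to run the classical greedy embedding scheme of Komlós and Simonovits, embedding $H$ into a spanning subgraph $G'$ of $G$ for which every pair $(V_i,V_j)$ with $ij\in E(R)$ is $\eps$-regular with density at least $d$ (such a $G'$ is part of the data of a regular reduction, so embedding into $G'$ suffices). Write $V(R)=[l]$ and let $V_1,\dots,V_l$ be the corresponding parts of $G'$, each of size $k'$, and let $\phi\colon V(R^{(k)})\to[l]$ be the projection. Since $H\subseteq R^{(k)}$, each vertex $x$ of $H$ carries a prescribed target part $V_{\phi(x)}$ and every edge of $H$ joins parts $V_i,V_j$ with $ij\in E(R)$; the goal is an injective $\psi\colon V(H)\to V(G')$ with $\psi(x)\in V_{\phi(x)}$ that preserves edges. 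Because every component of $H$ has at most $\eps' k$ vertices and at most $k\le(1-\eps')k'$ vertices of $H$ are assigned to any one part, I would process the components $C_1,C_2,\dots$ of $H$ one at a time, embedding each one while at least an $\eps'$-fraction of every part remains unused.

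For a single component $C$ with vertices $x_1,\dots,x_m$ (so $m\le\eps' k$), I would maintain for every not-yet-embedded vertex $x$ a candidate set $\mathrm{Cand}(x)\subseteq V_{\phi(x)}$, namely the vertices of $V_{\phi(x)}$ that are still unused by $\psi$ and are adjacent in $G'$ to $\psi(x_j)$ for every already-embedded neighbour $x_j$ of $x$ in $C$. To embed $x_i$, pick $\psi(x_i)\in\mathrm{Cand}(x_i)$ that is \emph{typical}: for every not-yet-embedded neighbour $x_j$ of $x_i$ it has at least $(d-\eps)\abs{\mathrm{Cand}(x_j)}$ neighbours inside $\mathrm{Cand}(x_j)$. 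Then update the candidate sets of the neighbours of $x_i$ (intersecting with $N_{G'}(\psi(x_i))$) and delete $\psi(x_i)$ from all candidate sets in $V_{\phi(x_i)}$. The tool that makes a typical choice available is the basic robustness of $\eps$-regular pairs: if $(A,B)$ is $\eps$-regular with density at least $d$ and $Y\subseteq B$ with $\abs{Y}\ge\eps\abs{B}$, then all but at most $\eps\abs{A}$ vertices of $A$ have at least $(d-\eps)\abs{Y}$ neighbours in $Y$ (otherwise the bad vertices, together with $Y$, violate regularity). Summing the forbidden sets over the neighbours of $x_i$ shows a typical $\psi(x_i)$ exists provided every candidate set currently in play is large enough relative to $\eps k'$ and to the number of neighbours of $x_i$.

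The heart of the argument, and where the stated quantifier structure is forced, is the invariant that every candidate set stays comfortably above the threshold $\eps k'$ all the way up to the moment its vertex is embedded (when it only needs to be nonempty). A candidate set $\mathrm{Cand}(x)$ is multiplied by a factor at least $d-\eps$ each time one of its neighbours is embedded, and loses at most one further vertex per step to "used" vertices, of which there are never more than $k\le(1-\eps')k'$ in a part; choosing $\eps_0=\eps_0(d,\eps')$ small enough is exactly what is needed to balance these two competing losses — the geometric shrinkage from respecting embedded neighbours against the additive loss from consumed vertices. Passing from $C_s$ to $C_{s+1}$ changes nothing except that a larger (but still at most $(1-\eps')k'$) block of each part is already used when the new candidate sets are initialised. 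I expect the main obstacle to be precisely this bookkeeping: pinning down $\eps_0(d,\eps')$, and ordering the vertices of each component, so that the candidate sets never dip below $\eps k'$ — this is what ties the admissible component size to $\eps' k$ and the admissible regularity parameter to $\eps_0(d,\eps')$.
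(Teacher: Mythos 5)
You are attempting to prove a statement that the paper itself does not prove: \cref{t:emb1} is quoted from Koml\'os--Simonovits, and the standard proof of their Key Lemma is precisely the greedy candidate-set embedding you outline (typical choices, robustness of $\eps$-regular pairs, one component at a time). So the approach is the right one; the question is whether the bookkeeping closes with the quantifiers as stated, and it does not. The gap is the sentence asserting that ``choosing $\eps_0=\eps_0(d,\eps')$ small enough is exactly what is needed to balance these two competing losses.'' The number of multiplicative $(d-\eps)$-shrinkages suffered by $\mathrm{Cand}(x)$ equals the number of neighbours of $x$ inside its component, and the hypotheses bound this only by $\eps'k-1$, which is unbounded as $k\to\infty$. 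To keep $\mathrm{Cand}(x)$ above the regularity threshold $\eps k'$ you need $(d-\eps)^{\deg(x)}\geq\eps$, which caps $\deg(x)$ by a constant depending on $d$ and $\eps$ --- but $\eps$ is fixed before $k$ is. The typicality step has the same defect: the union of the exceptional sets (each of size up to $\eps k'$) over the not-yet-embedded neighbours of $x_i$ swamps $\mathrm{Cand}(x_i)$ once $x_i$ has more than roughly $1/\eps$ of them. This is exactly why the Key Lemma of Koml\'os--Simonovits takes $\eps_0=(d-\eps)^{\Delta}/(\Delta+2)$ with $\Delta=\Delta(H)$: the admissible regularity parameter must depend on the maximum degree of $H$, not only on $d$ and $\eps'$.

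This is not a repairable bookkeeping issue, because with the stated quantifiers the conclusion is false. Suppose $R$ contains an edge $\{1,2\}$, so $R^{(k)}\supseteq K_{k,k}$, and let $H=K_{s,s}$ with $s=\lfloor\eps' k/2\rfloor$; this is a connected subgraph of $R^{(k)}$ on at most $\eps'k$ vertices. If the pair $(V_1,V_2)$ in $G$ is a random bipartite graph of density $\tfrac12$, then with high probability it is $\eps$-regular of density at least $\tfrac14$ for every fixed $\eps>0$, yet it contains no $K_{s,s}$ with $s$ exceeding $O(\log k')$, let alone $s=\Theta(k')$. Your argument, and the theorem, become correct once one imposes a bound on $\Delta(H)$ (or on the component size) that $\eps_0$ is permitted to depend on; that is the regime in which the appendix actually invokes the result, since there the components of $H$ have a bounded number of vertices. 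If you want the theorem in a form with $\eps_0$ independent of the degree, you need a genuinely different tool (the blow-up lemma does not remove the dependence either); otherwise, state and prove the degree-dependent version.
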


We now derive \cref{emb}, which we restate for convenience, from \cref{t:reg1,t:emb1}.

\Emb*

\begin{proof}
	Let $d := \frac{\eps}{7}$, let $\eps'_0 := \eps'_{\ref{t:reg1}}(d)$, let $\eps' := \min (\frac{\eps}{7}, \eps'_0)$, let $\eps_0 := \eps_{\ref{t:reg1}}(d,\eps')$, and let $\eps_1 := \min (\frac{\eps}{7}, \eps_0)$. By the choice of $d$, $\eps'$ and $\eps_1$, 
	 $$d+2\eps_1+4\eps' \leq \eps.$$
	We prove the theorem for $$T:=T_{\ref{t:reg1}}(\eps_1) \qquad \mathrm{and}\qquad N: = \frac{KT}{\eps'(1-2\eps')(1-\eps_1)}.$$
	
	Let $G$ be a graph with $\v(G) \geq N$. Let  $\mc{V}=(V_0,V_1,\dots,V_l)$ be a partition of $V(G)$ and let $G'$ be a subgraph of $G$ satisfying the conditions of \cref{t:reg1} applied with $\eps_1$ in place of $\eps$. Let $R=R_d(G,\mc{V})$ be a $(d, k',\eps_1)$-regular  reduction of $G$ associated with $\mc{V}$. Then $\v(R)  = l \leq  T$, as desired. Note also that $k'\v(R)=\v(G)-|V_0|$, and so $(1-\eps_1)\,\v(G) \leq k'\v(R) \leq \v(G).$
	
	Let $k:= \lfloor (1-\eps')k' \rfloor \geq (1-2\eps')k'$, where the last inequality holds since $$\eps'k' \geq \eps'(1-\eps_1)\frac{\v(G)}{T} \geq \eps'(1-\eps_1)\frac{N}{T} \geq 1$$ by the choice of $N$. Thus $k\v(R) \leq k'\v(R) \leq \v(G),$ and $$k\v(R) \geq (1-2\eps')k'\v(R) \geq (1-\eps_1)(1-2\eps')\,\v(G) \geq (1 -\eps)\,\v(G).$$
	Thus the first condition of the theorem holds.
	
	 Let $G'':=G' \setminus V_0$. Note that $G''$ is a spanning subgraph of $R^{(k')}$. Thus $$k'\delta(R) \geq \delta(G'') \geq \delta(G') - |V_0| \geq \delta(G) - (d+2\eps_1)\,\v(G).$$
	This in turn implies
	$$k \delta(R) \geq (1-2\eps')k'\delta(R)  \geq  \delta(G) - (d+2\eps_1+2\eps')\,\v(G) \geq   \delta(G) - \eps\,\v(G),$$
	verifying the second condition of the theorem.
	
	Similarly,
	\begin{align*}(k')^2 \e(R) = \e(R^{(k')}) \geq\e(G'')& \geq \e(G) - |V_0|\cdot \v(G) - (d+\eps_1)\v^2(G) \\ & \geq \e(G) - (d+2\eps_1)\v^2(G),\end{align*}
	and so 
	$$k^2 \e(R) \geq (1-2\eps')^2 (k')^2  \e(R) \geq \e(G) - (d+2\eps_1+4\eps')\v^2(G) \geq   \e(G) - \eps\,\v^2(G),$$
	verifying the third condition.

Finally, by the choice of $\eps_1$ to satisfy the conditions of \cref{t:reg1}  every subgraph $H$  of $R^{(k)}$ with maximum component size at most $\eps'k$ is isomorphic to a subgraph of $G$. Since $$\eps'k \geq \frac{\eps'(1-2\eps')(1-\eps_1)}{T}\v(G) \geq \eps'(1-2\eps')(1-\eps_1)\frac{N}{T} = K,$$ by the choice of $N$, the last condition of the theorem holds.
\end{proof}

\begin{aicauthors}
	\begin{authorinfo}[kevin]
		Kevin Hendrey\\
		Discrete Mathematics Group\\
		Institute for Basic Science\\
		Korea\\
		kevinhendrey\imageat{}ibs\imagedot{}re\imagedot{}kr\\
		\url{https://sites.google.com/view/kevinhendrey}
	\end{authorinfo}
	\begin{authorinfo}[sergey]
		Sergey Norin\\
		Department of Mathematics and Statistics\\
		McGill University\\
		Montr\'eal, Canada\\
		snorin\imageat{}math\imagedot{}mcgill\imagedot{}ca\\
		\url{https://www.math.mcgill.ca/snorin/}
	\end{authorinfo}
	\begin{authorinfo}[david]
		David R. Wood\\
		School of Mathematics\\
		Monash University\\
		Melbourne, Australia\\
		david.wood\imageat{}monash\imagedot{}edu\\
		\url{https://users.monash.edu.au/~davidwo/}
	\end{authorinfo}
\end{aicauthors}
\end{document}